\documentclass[english,10pt]{amsart}
\usepackage[english]{babel}

\usepackage[matrix,arrow]{xy}
\xyoption{all}
\usepackage{amscd,amssymb,amsfonts,amsmath}

\usepackage{graphics}
\usepackage{epsfig}
\usepackage{mathrsfs}

\unitlength=1cm

\newcommand\mypagesizel{
\textwidth= 6.5in
\textheight=9in
\voffset-.55in
\hoffset -0.75in
\marginparwidth=56pt
}

\newcommand{\Pic}{\textup{Pic}}

\newcommand{\Supp}{\textup{Supp}}

\newcommand{\RatCurves}{\textup{RatCurves}}

\newcommand{\Chow}{\textup{Chow}}
\newcommand{\codim}{\textup{codim}}

\newcommand{\p}[0]{{\mathbb P}}
\newcommand{\rat}[0]{\operatorname{RatCurves}^n}

\newcommand{\NE}{\overline{\textup{NE}}}

\newcommand{\epsi}{\varepsilon}
\renewcommand{\phi}{\varphi}
\newcommand{\into}{\hookrightarrow}
\newcommand{\map}{\dashrightarrow}

\renewcommand{\le}{\leqslant}
\renewcommand{\ge}{\geqslant}

\newcommand{\bN}{\textbf{N}}

\newcommand{\bQ}{\textup{\textbf{Q}}}

\newcommand{\bZ}{\textup{\textbf{Z}}}

\newcommand{\cC}{\mathcal{C}}

\newcommand{\cO}{\mathcal{O}}

\newcommand{\cQ}{\mathcal{Q}}

\newcommand{\sA}{\mathscr{A}}

\newcommand{\sE}{\mathscr{E}}
\newcommand{\sF}{\mathscr{F}}
\newcommand{\sG}{\mathscr{G}}
\newcommand{\sH}{\mathscr{H}}
\newcommand{\sI}{\mathscr{I}}

\newcommand{\sK}{\mathscr{K}}
\newcommand{\sL}{\mathscr{L}}
\newcommand{\sM}{\mathscr{M}}
\newcommand{\sN}{\mathscr{N}}
\newcommand{\sO}{\mathscr{O}}

\newcommand{\sQ}{\mathscr{Q}}
\newcommand{\sR}{\mathscr{R}}

\newcommand{\sT}{\mathscr{T}}

\newcommand{\sV}{\mathscr{V}}
\newcommand{\sW}{\mathscr{W}}

\mypagesizel

\newtheorem{thm}{Theorem}[section]
\newtheorem*{thm*}{Theorem}

\newtheorem{question}[thm]{Question}
\newtheorem{lemma}[thm]{Lemma}
\newtheorem{cor}[thm]{Corollary}

\newtheorem{prop}[thm]{Proposition}

\theoremstyle{definition}
\newtheorem{defn}[thm]{Definition}

\newtheorem{say}[thm]{}

\newtheorem{const}[thm]{Construction}

\newtheorem{notation}[thm]{Notation}

\newtheorem{defn-thm}[thm]{Definition-Theorem}

\newtheorem{rem}[thm]{Remark}

\theoremstyle{remark}

\newtheorem{step}{Step}

\newtheorem*{not-and-def}{Notation and definitions}

\numberwithin{equation}{section}

\begin{document}

\title[On Fano foliations]{On Fano foliations}

\author{Carolina \textsc{Araujo}} 

\address{\noindent Carolina Araujo: IMPA, Estrada Dona Castorina 110, Rio de
  Janeiro, 22460-320, Brazil} 

\email{caraujo@impa.br}

\author{St\'ephane \textsc{Druel}}

\address{St\'ephane Druel: Institut Fourier, UMR 5582 du
  CNRS, Universit\'e Grenoble 1, BP 74, 38402 Saint Martin
  d'H\`eres, France} 

\email{druel@ujf-grenoble.fr}

\thanks{The first named author was partially supported by CNPq and Faperj Research 
  Fellowships}

\thanks{The second named author was partially supported by the CLASS project of the 
A.N.R}

\subjclass[2010]{14M22, 37F75}

\begin{abstract}
In this paper we address Fano foliations on complex projective varieties.
These are foliations $\sF$ whose anti-canonical class $-K_{\sF}$ is ample. 
We focus our attention on a special class of Fano foliations, namely \emph{del Pezzo} foliations on complex projective manifolds.
We show that these foliations are algebraically integrable, with one exceptional case when the ambient space 
is $\p^n$. 
We also provide a classification of del Pezzo foliations with mild singularities.
\end{abstract}

\maketitle

\tableofcontents


\section{Introduction}

In the last few decades, much progress has been made in the classification of complex projective varieties.
The general viewpoint is that complex projective manifolds $X$ should be classified according to the behavior of their canonical class $K_X$.
As a result of the minimal model program, we know that every complex projective manifold can be build up from
3 classes of (possibly singular) projective varieties, namely,  varieties $X$ for which  $K_X$ is $\bQ$-Cartier, and satisfies
$K_X<0$, $K_X\equiv 0$ or $K_X>0$. 
Projective manifolds $X$ whose anti-canonical class $-K_X$ is ample are called \emph{Fano manifolds}, and are quite special.
For instance, Fano manifolds are known to be rationally connected (see \cite{campana} and  \cite{kmm3}).

One defines the index  $\iota_X$ of a Fano manifold $X$ to be the largest integer dividing $-K_X$ in $\Pic(X)$. 
A classical result of Kobayachi-Ochiai's asserts that $\iota_X\leq \dim X+1$, and equality 
holds if and only if $X\simeq \p^n$. Moreover, $\iota_X= \dim X$ if and only if $X$ is a 
quadric hypersurface (\cite{kobayashi_ochiai}). 
Fano manifolds whose index satisfies $\iota_X= \dim X-1$ were classified by Fujita in \cite{fujita82a} and  \cite{fujita82b}. 
These are called \emph{del Pezzo} manifolds.  
The philosophy behind these results is that 
Fano manifolds with high index are the simplest projective manifolds.

Similar ideas can be applied in the context of \emph{foliations} on complex projective manifolds.
If $\sF\subsetneq T_X$ is a foliation on a  complex projective manifold $X$, we define its canonical class to be 
$K_{\sF}=-c_1(\sF)$. In analogy with the case of projective manifolds, one expects the numerical properties of $K_{\sF}$
to reflect geometric aspects of $\sF$. 
In fact, ideas from the minimal model program have been successfully applied to the theory of  foliations 
(see for instance \cite{brunella} and \cite{mcquillan08}), and led to a birational classification in the case of rank one foliations on surfaces
(\cite{brunella}).  
More recently, Loray, Pereira and Touzet have investigated the structure of codimension $1$ foliations with 
 $K_{\sF}\equiv 0$  in \cite{loray_pereira_touzet}.

In this paper we propose to investigate \emph{Fano foliations} on complex projective manifolds.
These are foliations $\sF\subsetneq T_X$ whose anti-canonical class $-K_{\sF}$ is ample 
(see Section~\ref{section:foliations} for details). 
As in the case of Fano manifolds, we expect Fano foliations to present very special behavior.
This is the case for instance if the rank of $\sF$ is $1$, i.e., $\sF$ is an ample invertible subsheaf
of $T_X$. By Wahl's Theorem \cite{wahl83}, this can only happen if $(X,\sF)\simeq \big(\p^n,\sO(1)\big)$.

Guided by the theory of Fano manifolds, we define the index $\iota_{\sF}$ of a foliation $\sF$ on a 
complex projective manifold $X$ to be the largest integer dividing $-K_{\sF}$ in $\Pic(X)$. 
The expected philosophy is that Fano foliations with high index are the simplest ones. 
For instance, when $X=\p^n$,  the index of a foliation $\sF\subsetneq T_{\p^n}$  of rank $r$ 
satisfies $\iota_{\sF}\leq r$. By \cite[Th\'eor\`eme 3.8]{cerveau_deserti}, 
equality holds if and only if $\sF$ is induced by a  linear projection 
$\p^n \dashrightarrow \p^{n-r}$, i.e., it comes from  the family $r$-planes in $\p^n$
containing a fixed $(r-1)$-plane. 
Fano foliations in $\sF\subsetneq T_{\p^n}$ satisfying $\iota_{\sF} = r-1$ were 
classified in 
\cite[Theorem 6.2]{loray_pereira_touzet_2}. They fall into one of the following two classes. 
\begin{enumerate}
\item Either $\sF$ is induced by a dominant rational  map $\p^n\dashrightarrow \p(1^{n-r},2)$,
defined by $n-r$ linear forms and one quadratic form, or 
\item $\sF$ is the linear pullback of a foliation on $\p^{n-r+1}$ induced by a global 
holomorphic vector field.
\end{enumerate}

In analogy with Kobayachi-Ochiai's theorem, we have the following result.

\begin{thm*}[{\cite[Theorem 1.1]{adk08}}] Let $\sF\subsetneq T_X$ be a Fano foliation of rank $r$ on a
complex projective manifold $X$. 
Then $\iota_{\sF}\leq r$, and equality holds only if $X\cong \p^n$.
\end{thm*}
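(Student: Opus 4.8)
The plan is to study $\sF$ through rational curves tangent to it, mimicking the study of a Fano manifold through its minimal rational curves. Since $-K_\sF$ is ample, $\sF$ has positive degree on every curve in $X$; hence, by the algebraicity and uniruledness theorems of Bogomolov--McQuillan (see also Kebekus--Sol\'a Conde--Toma), $X$ is dominated by a family of rational curves tangent to $\sF$ and not contained in $\textup{Sing}(\sF)$. Fix a minimal such dominating family, let $C$ be a general member with normalisation $f\colon\p^1\lra X$, and write $\cG:=\textup{im}\big(f^*\sF\lra f^*T_X\big)\subseteq f^*T_X$, with saturation $\overline{\cG}$ in $f^*T_X$. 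Tangency of $C$ to $\sF$ means exactly that $T_{\p^1}=\sO_{\p^1}(2)\into\overline{\cG}$, and one always has $-K_\sF\cdot C=\deg f^*\sF\ge\deg\cG$. A bend-and-break argument for the $\sF$-tangent deformations of $f$ (which move $C$ inside the leaf through $f(0)$, of dimension $\le r$) shows that $-K_\sF\cdot C\ge r+2$ would yield a reducible $\sF$-tangent member, contradicting minimality; so $-K_\sF\cdot C\le r+1$. Writing $-K_\sF=\iota_\sF H$ with $H$ ample Cartier and using $H\cdot C\ge1$ we get $\iota_\sF\le\iota_\sF(H\cdot C)=-K_\sF\cdot C\le r+1$, so $\iota_\sF\le r+1$.

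To improve this to $\iota_\sF\le r$ I rule out $\iota_\sF=r+1$. In that case $H\cdot C=1$ and $\deg f^*\sF=r+1$ attains the bend-and-break bound, which --- after arranging that $C$ avoids $\textup{Sing}(\sF)$ --- forces $f^*\sF\cong\overline{\cG}\cong\sO_{\p^1}(2)\oplus\sO_{\p^1}(1)^{\oplus(r-1)}$, an ample bundle. By the strong form of Bogomolov--McQuillan's theorem $\sF$ is then algebraically integrable with rationally connected general leaf, and the curves $C$ give a dominating family of minimal rational curves on the closure $L$ of a general leaf with $-K_\sF\cdot C=r+1=\dim L+1$, so $L\cong\p^r$ by Cho--Miyaoka--Shepherd-Barron. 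As $\sF\subsetneq T_X$ we have $r<n$, so $\sF$ genuinely fibres $X$: there is $\psi\colon X\map B$ with $\dim B=n-r\ge1$ and general fibre $\cong\p^r$. Restricting $-K_\sF=(r+1)H$ to a general leaf gives $H|_L\cong\sO_{\p^r}(1)$; over a general complete-intersection curve $\beta\subset B$ this produces a smooth $\p^r$-bundle $S=\p(E)\lra\beta$ with $-K_{S/\beta}=-K_\sF|_S=(r+1)\,(H|_S)$ and $H|_S$ ample. Writing $H|_S=\sO_{\p(E)}(1)\otimes\pi^*L'$ this forces $\det(E)\otimes(L')^{\otimes(r+1)}$ to be trivial, so $E\otimes L'$ would be an ample vector bundle of degree-zero determinant on a curve --- impossible. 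Hence $\iota_\sF\le r$.

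It remains to show that $\iota_\sF=r$ forces $X\cong\p^n$. Now $-K_\sF\cdot C=r$ and $H\cdot C=1$, so the members of our family are lines for the polarisation $H$ and the family is unsplit. One shows that the tangent directions at a general point $x$ of the members through $x$ sweep out the linear subspace $\p(\sF_x)\subseteq\p(T_xX)$, and then, by a careful analysis of the position of $C$ relative to $\textup{Sing}(\sF)$ together with the positivity of $f^*T_X$ coming from $\sO_{\p^1}(2)\into\overline{\cG}$ and from the deformations of $C$ in $X$, that $-K_X\cdot C=n+1$. The characterisations of projective space of Cho--Miyaoka--Shepherd-Barron and Kebekus then give $X\cong\p^n$.

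The main obstacle is exactly this equality analysis, and more broadly the bookkeeping around $\textup{Sing}(\sF)$: it is the failure of $f^*\sF$ to inject into $f^*T_X$ along curves meeting $\textup{Sing}(\sF)$ --- equivalently the gap between $\deg f^*\sF$, $\deg\cG$ and $\deg\overline{\cG}$ --- that both allows $\iota_\sF$ to reach $r$ on $\p^n$ (via the linear-projection foliations) and makes the passage from ``the general leaf resembles $\p^r$'' to ``$X$ itself is $\p^n$'' genuinely delicate. This is also where one uses that $-K_\sF$ is ample, and not merely nef.
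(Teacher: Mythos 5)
This statement is not proved in the paper: it is quoted from \cite[Theorem 1.1]{adk08}, where it is established in the more general form of Fano Pfaff fields $\eta:\Omega^r_X\to\sL$; the foliation version follows because a saturated $\sF\subsetneq T_X$ has rank $r<\dim X$, which excludes the two ``$r=\dim X$'' alternatives of that theorem. So your proposal must be measured against the argument in the cited reference, and against it two of your steps have genuine gaps.

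First, the opening reduction is not available. Ampleness of $-K_{\sF}=c_1(\sF)$ does not let you apply Bogomolov--McQuillan to $\sF$ itself: Theorem~\ref{bogomolov_mcquillan} requires $\sF|_C$ to be an \emph{ample vector bundle} on a curve avoiding $\textup{Sing}(\sF)$, and positivity of the determinant on every curve controls only the slope. What one actually obtains (via the Harder--Narasimhan filtration with respect to a movable class, exactly as in Section~\ref{section:thma} of this paper) is a dominating family of rational curves tangent to the maximal positive sub\emph{foliation} $\sF^{+}\subseteq\sF$, which may be strictly smaller than $\sF$. Your bend-and-break therefore runs inside a leaf of $\sF^{+}$ and bounds $-K_{\sF^{+}}\cdot C$, not $-K_{\sF}\cdot C$; and as written it presupposes that the tangent deformations of $C$ stay inside a \emph{proper} $r$-dimensional leaf of $\sF$, i.e.\ algebraic integrability, which is unknown at that stage and false for Fano foliations in general. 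Concretely, for the del Pezzo foliation $\sO_{\p^n}(1)^{\oplus(r-1)}\oplus\sO_{\p^n}\subsetneq T_{\p^n}$ of type (2) in \ref{example:del_Pezzo_on_p^n}, the generic member has no algebraic leaves, and the rational curves tangent to it are lines lying in the $(r-1)$-dimensional fibers of the linear projection --- leaves of the smaller subfoliation, contained in no algebraic leaf of $\sF$. So the inequality $-K_{\sF}\cdot C\le r+1$ is not established by your argument. Second, the case $\iota_{\sF}=r$, which is the actual content of the theorem, is disposed of with ``one shows \dots by a careful analysis \dots that $-K_X\cdot C=n+1$''; that equality is precisely the difficulty, and nothing in the sketch produces it.

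For contrast, the proof in \cite{adk08} never asks for tangency and so sidesteps integrability entirely: the foliation induces a Pfaff field $\eta:\Omega^r_X\to\sO_X(K_{\sF})$, one restricts $\eta$ to a general member $f:\p^1\to X$ of a minimal dominating family of rational curves on $X$ (which exists since $X$ is uniruled by Miyaoka), and the splitting type $f^*T_X\simeq\sO_{\p^1}(2)\oplus\sO_{\p^1}(1)^{\oplus d}\oplus\sO_{\p^1}^{\oplus(n-d-1)}$ forces $\iota_{\eta}\,(H\cdot C)\le 2+\min(r-1,d)\le r+1$ at once, the boundary cases being pinned down by identifying which summand of $\wedge^rf^*\Omega^1_X$ the map factors through and then invoking the characterizations of $\p^n$. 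If you want a curve-theoretic proof, that is the mechanism to emulate.
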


We say that a Fano foliation $\sF\subsetneq T_X$ of rank $r$ on a complex projective manifold $X$
is a  \emph{del Pezzo foliation} if $\iota_{\sF} = r-1$.
Ultimately we would like to classify del Pezzo foliations.
In addition to the above mentioned foliations on $\p^n$, 
we know examples of del Pezzo foliations of any rank on quadric hypersurfaces, 
del Pezzo foliations of rank $2$ on certain Grassmannians, 
and del Pezzo foliations of rank $2$ and $3$ on $\p^m$-bundles over $\p^l$. 
These examples are described in Sections~\ref{section:examples} and 
\ref{section:examples_pn_bundles}.

We note that the generic del Pezzo foliation on $\p^n$ of type (2) above does not have algebraic leaves.
Our first main result says that this is the only del Pezzo foliation that is not algebraically integrable.
We also describe the geometry of the general leaf in all other cases.

\begin{thm}\label{thma}
Let $\sF\subsetneq T_X$ be a del Pezzo foliation on a complex projective manifold $X\not\simeq \p^n$.
Then $\sF$ is algebraically integrable, and its general leaves are rationally connected.
\end{thm}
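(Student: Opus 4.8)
The plan is to run the algebraic-integrability criterion of Bogomolov–McQuillan (as refined by Kebekus–Solá Conde–Toma and by Araujo–Druel–Kovács), which says that a foliation $\sF\subsetneq T_X$ is algebraically integrable with rationally connected general leaves provided that, through a general point of $X$, there passes a rational curve $C$ that is tangent to $\sF$ and satisfies $-K_{\sF}\cdot C > \operatorname{rk}\sF$ — more precisely, one wants the restriction $\sF|_C$ to be an ample vector bundle on the normalization of a general such $C$. So the whole argument reduces to producing, through a general point $x\in X$, a rational curve along which $\sF$ is "as positive as a projective space tangent bundle."

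First I would invoke the Kobayashi–Ochiai-type theorem quoted above: since $X\not\simeq\p^n$, a del Pezzo foliation has index exactly $r-1 < r$, so $-K_{\sF} = (r-1)H$ for some ample (indeed, by the classification philosophy, positive) class $H$, and in particular $-K_{\sF}$ is divisible. Next I would use that $X$ itself is Fano: one checks that $-K_X = -K_{\sF} - K_{T_X/\sF}$, and since $-K_{\sF}$ is ample and $X$ carries enough positivity (this is where one must show $-K_X$ is big, e.g. because the normal bundle $N_{\sF} = (T_X/\sF)$ has $\det N_{\sF}$ controlled), $X$ is rationally connected; hence $X$ is covered by rational curves. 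The key step is then to select, among all rational curves through a general point $x$, a minimal covering family and to arrange that its general member is tangent to $\sF$: this is forced because the deformations of the curve fill up $X$, so their tangent directions at $x$ must meet $T_X$ generically, but a positivity/degree estimate using $-K_{\sF}\cdot C = (r-1)(H\cdot C)$ together with $H\cdot C \ge 1$ and the bound on anticanonical degree of minimal curves shows the curve cannot be transverse to $\sF$ — it must lie in a leaf. Finally, restricting $\sF$ to the normalization of such a general minimal curve $C\simeq\p^1$, the splitting type is governed by $-K_{\sF}\cdot C = (r-1)(H\cdot C) \ge 2(r-1) > r-1 = \operatorname{rk}(\sF|_C)$ once $H\cdot C\ge 2$; a short argument (or the case $H\cdot C=1$, handled separately via the structure of the examples, where $H$ is the tautological class) ensures $\sF|_C$ is ample, so the Bogomolov–McQuillan criterion applies and yields both algebraic integrability and rational connectedness of the general leaf.

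The main obstacle I expect is the borderline case where the minimal rational curves $C$ tangent to $\sF$ have $-K_{\sF}\cdot C = r-1$ exactly (i.e. $H\cdot C = 1$): then $\sF|_C$ has degree only $r-1$ on $\p^1$ with rank $r-1$, so it is merely nef and not obviously ample, and the clean form of the integrability criterion does not directly apply. To handle this I would either (i) show that in this situation $\sF|_C$ is in fact $\cO_{\p^1}(1)^{\oplus(r-1)}$ — i.e. rule out a trivial summand — by arguing that a trivial sub-line-bundle of $\sF$ along a covering family would produce a (pencil of) $\sF$-invariant divisors or a degenerate direction contradicting that $-K_{\sF}$ is ample on $X$; or (ii) pass to a family of higher-degree tangent rational curves (e.g. joining two minimal curves, or using that leaves, being covered by these curves, are themselves rationally connected Fano-type varieties on which one can find curves of anticanonical degree $\ge r$). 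The other technical point requiring care is verifying that a minimal covering family of rational curves on $X$ is genuinely tangent to $\sF$ rather than transverse; this I would extract from a dimension count on the space of such curves together with the fact that $T_X/\sF$ has strictly smaller "positivity" than $\sF$ along these curves, which is exactly the content of the del Pezzo condition $\iota_{\sF}=r-1$ combined with the Kobayashi–Ochiai bound excluding $\p^n$.
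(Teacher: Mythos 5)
There is a genuine gap, and it sits at the heart of your strategy. You propose to find a rational curve $C$ through a general point of $X$ with $\sF|_C$ ample and then apply the Bogomolov--McQuillan criterion directly. But for a del Pezzo foliation one has $-K_{\sF}=(r-1)H$ with $r=r_{\sF}$ the \emph{rank} (note your proposal repeatedly conflates the rank $r$ of $\sF|_C$ with the index $r-1$), so on a curve $C$ from a minimal dominating family with $H\cdot C=1$ one gets $\deg(\sF|_C)=r-1<r=\textup{rk}(\sF|_C)$, and $\sF|_C$ can never be ample. This is not a ``borderline case'' to be patched: the paper's Lemma~\ref{lemma:Fano_ample} shows that if $f^*\sF$ \emph{is} ample for a general member of a minimal dominating family, then $(X,\sF)\simeq(\p^n,\sO(1)^{\oplus r})$ --- so under the hypothesis $X\not\simeq\p^n$ the direct application of Theorem~\ref{bogomolov_mcquillan} to a curve in $X$ is precisely what is unavailable. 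Your intermediate claim that a general minimal rational curve must be tangent to $\sF$ is also false: in the main case of the paper's proof the splitting type is $f^*\sF\simeq\sO_{\p^1}(1)^{\oplus r-1}\oplus\sO_{\p^1}$, which contains no $\sO_{\p^1}(2)$, so the curve does not lie in a leaf (this is exactly what happens for the del Pezzo foliations on quadrics and on $G(2,V)$). The assertion that $X$ is Fano/rationally connected is likewise unjustified and unnecessary; only uniruledness (Remark~\ref{uniruledness}) is used.

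What actually carries the proof is the machinery you defer to a parenthesis. The paper never finds a single curve in $X$ on which $\sF$ is ample; instead it (a) determines the possible splitting types of $f^*\sF$ on a minimal family and treats each separately, (b) builds algebraically integrable subfoliations via Harder--Narasimhan filtrations (Proposition~\ref{prop:foliation_max}) to settle the case $\rho(X)=1$, (c) passes to rationally connected quotients and to the closure in $\Chow(X)$ of the space of leaves of a subfoliation, and applies Theorem~\ref{bogomolov_mcquillan} to the \emph{induced} foliation on the base or on that parameter space, with the needed positivity extracted from the non-ampleness of relative anticanonical divisors (Theorem~\ref{thm:-KX/Y_not_ample}) rather than from a direct degree count, and (d) uses the common-point property of log canonical Fano foliations (Proposition~\ref{lemma:common_point}) to exclude degenerate configurations. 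Your option (ii) --- ``pass to a family of higher-degree tangent rational curves'' --- gestures toward this, but without the quotient construction and the relative $-K$ theorem there is no mechanism to produce such curves, so the argument as written does not close.
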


One of the key ingredients in the proof of Theorem~\ref{thma} is the following
criterion by Bogomolov and McQuillan for a foliation to be algebraically integrable
with rationally connected general leaf.

\begin{thm}[{\cite[Theorem 0.1]{bogomolov_mcquillan01}, \cite[Theorem 1]{kebekus_solaconde_toma07}}] \label{bogomolov_mcquillan}
Let $X$ be a normal complex projective variety, and $\sF$ a  foliation on $X$.
Let $C \subset X$ be a complete curve disjoint from the singular loci of $X$ and $\sF$.
Suppose that the restriction $\sF|_C$ is an ample vector bundle on $C$.
Then the leaf of $\sF$ through any point of $C$ is an algebraic variety, and the 
leaf of $\sF$ through a general point of $C$ is moreover rationally connected.
\end{thm}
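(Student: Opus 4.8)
The plan is to produce, through points of $C$, rational curves that are tangent to $\sF$ and ``very free'' inside their leaves, and then to organize these curves into a covering family whose image recovers the leaf. Let $X^\circ\subset X$ denote the open subset where both $X$ and $\sF$ are smooth, so that $C\subset X^\circ$ and $\sF|_{X^\circ}$ is a subbundle of $T_{X^\circ}$, say of rank $r$. Since $\sF|_C$ is ample, its degree is positive, i.e. $-K_\sF\cdot C=c_1(\sF)\cdot C=\deg(\sF|_C)>0$; this positivity is what drives the construction of rational curves.

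The first and main step is a foliated version of Mori's bend-and-break: through a general point $c\in C$ there passes a rational curve $\ell$ tangent to $\sF$, that is, an integral curve of the foliation. Let $\nu:\tilde C\to C\subset X$ be the normalization, of genus $g$. Deformations of $\nu$ that keep a marked point fixed and remain tangent to $\sF$ are governed by $H^0(\tilde C,\nu^*\sF)$, with expected dimension at least $\deg(\nu^*\sF)-rg$. As this may be non-positive on $C$ itself, I would spread $X$, $\sF$ and $C$ out over a finitely generated $\mathbb{Z}$-subalgebra, reduce modulo a large prime $p$, and precompose $\nu$ with a power of the Frobenius; this multiplies $\deg(\nu^*\sF)$ by $p^e$ while leaving $g$ fixed, making the deformation space positive-dimensional through the marked point. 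Breaking the family then yields a rational curve tangent to $\sF$ through that point, of bounded degree, and a standard specialization argument lifts its existence back to characteristic $0$. Choosing $\ell$ of minimal $-K_\sF$-degree makes it free in its leaf.

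Next I would let $H$ be the irreducible component of a space of morphisms $\p^1\to X$ (or of $\Chow(X)$) parametrizing deformations of $\ell$ that stay tangent to $\sF$, with universal family $\pi:U\to H$ and evaluation $e:U\to X$. Because these curves are tangent to $\sF$ they lie in leaves, and the ampleness of $\sF|_C$ propagates along the family to give that $\sF|_\ell=\bigoplus\mathcal{O}(a_i)$ has all $a_i\ge 1$; equivalently $\sF|_\ell$ is ample, so $\ell$ is very free in its leaf. The deformations of $\ell$ through a general point of $C$ then sweep out, in all $r$ tangent directions of $\sF$, an $r$-dimensional constructible subset of the leaf whose closure $\bar L$ is an algebraic variety; since an integral subvariety of $\sF$ of the maximal dimension $r$ that is dense in $\bar L$ must coincide with the leaf through its points, this shows the leaf through a general---and then, by specialization along $C$, through every---point of $C$ is algebraic. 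Finally, $\sF|_\ell=T_{\bar L}|_\ell$ is ample on $\ell\cong\p^1$, so $\ell$ is a very free rational curve on a smooth model of the general leaf $\bar L$, and the existence of a very free rational curve forces that smooth model, hence the general leaf, to be rationally connected.

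The main obstacle is the first step: extracting a rational curve tangent to $\sF$ from the mere positivity of $\sF|_C$ on a curve of arbitrary genus inside a possibly singular $X$. This is where the reduction to characteristic $p$ and the foliated bend-and-break do the essential work, and the delicate point---absent from Mori's original argument---is to guarantee that tangency to $\sF$, together with the smooth locus and the degree bounds, is preserved under spreading out, reduction mod $p$, breaking the family, and specializing back to characteristic $0$.
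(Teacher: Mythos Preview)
The paper does not prove this theorem. Theorem~\ref{bogomolov_mcquillan} is quoted in the introduction with attributions to \cite{bogomolov_mcquillan01} and \cite{kebekus_solaconde_toma07}, and is then used as a black box throughout (for instance in Lemma~\ref{lemma:foliation_by_curve_is_algebraic}, Proposition~\ref{prop:foliation_max}, and Propositions~\ref{proposition:p^n-bdle_F_factors} and \ref{prop:classification_bundle}). There is no argument in the paper for you to compare against; the authors simply invoke the result.

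As for your sketch itself: the overall shape---reduce mod $p$, amplify with Frobenius, run bend-and-break to find rational curves, then argue algebraicity and rational connectedness of leaves---is indeed the strategy of Bogomolov--McQuillan. But two points are handled too loosely. First, in positive characteristic a distribution $\sF\subset T_X$ need not be $p$-closed, so it is not literally a foliation and there is no well-defined notion of ``deformations tangent to $\sF$''; the actual argument uses $\sF$ only as a subbundle controlling the size of the deformation space of maps $\tilde C\to X$, and tangency to the foliation is established back in characteristic $0$ once one has the rational curves. Second, your claim that ``the ampleness of $\sF|_C$ propagates along the family'' to give $\sF|_\ell$ ample is the crux, not a side remark: in the cited references this step requires a genuine argument (via Harder--Narasimhan filtrations or a graph-construction as in Kebekus--Sol\'a~Conde--Toma) and is where most of the work lies. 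Your outline names the right ingredients but does not engage with either of these difficulties.
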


Given a del Pezzo foliation $\sF\subsetneq T_X$ on a complex projective manifold $X$, 
it is not clear a priori how to find a curve $C\subset X$ satisfying the hypothesis of 
Theorem~\ref{bogomolov_mcquillan}.
Instead, in order to prove Theorem~\ref{thma}, we will apply Theorem~\ref{bogomolov_mcquillan}
in several steps.
First we construct suitable subfoliations $\sH\subset \sF$
for which we can prove algebraic integrability and rationally connectedness
of general leaves.
Next we consider the the closure $W$ in $\Chow(X)$ of the 
subvariety parametrizing general leaves of $\sH$, as explained in
Section~\ref{section:algebraic_foliations}.
We then apply Theorem~\ref{bogomolov_mcquillan} to the foliation on $W$ induced by 
$\sF$.

In the course of our study of Fano foliations, we were led to deal with singularities of foliations.
We introduce new notions of singularities for foliations,
inspired by the theory of singularities of pairs, developed in the context of the minimal model program. 
In order to explain this, let $\sF\subsetneq T_X$ be an algebraically integrable foliation on a
complex projective manifold $X$, and denote by $i:\tilde F\to  X$ 
the normalization of the closure of a general leaf of $\sF$. 
Then there is an effective Weil divisor $\tilde \Delta$ on $\tilde F$ such that
$-K_{\tilde F}  =i^*(-K_{\sF}) + \tilde \Delta$. 
We call the pair $( \tilde F,  \tilde \Delta)$ a general \emph{log leaf} of $\sF$.  We say that 
$\sF$ has \emph{log canonical singularities along a general leaf} if 
$(\tilde F,\tilde \Delta)$ is log canonical (see Section~\ref{section:algebraic_foliations} for details).
Algebraically integrable Fano foliations having log canonical singularities along a general leaf
have a very special property: there is a common point contained in the closure of a general leaf
(see  Proposition~\ref{lemma:common_point}). 
This property is useful to derive classification results under some restrictions on the
singularities of $\sF$, such as the following (see also Theorem~\ref{thmb'}).

\begin{thm}\label{thmb}
Let $\sF\subsetneq T_X$ be a del Pezzo foliation of rank $r$ on a complex projective manifold $X\not\simeq \p^n$.
Suppose that $\sF$  has log canonical singularities and is locally free along a general leaf. 
Then either $\rho(X)=1$, or $r\leq 3$, $X$ is a $\p^m$-bundle over $\p^l$ and $\sF\not\subset T_{X/\p^l}$.
\end{thm}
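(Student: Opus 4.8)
The plan is to push Theorem~\ref{thma} and the common‑point property of Proposition~\ref{lemma:common_point} through the geometry of the general log leaf, and then to recognize $X$ from the way these leaves sweep it out. Since $\sF$ is a del Pezzo foliation we have $r\ge 2$ and $-K_{\sF}=(r-1)L$ for an ample class $L\in\Pic(X)$, which is primitive because $r-1=\iota_{\sF}$ is the largest integer dividing $-K_{\sF}$. By Theorem~\ref{thma}, $\sF$ is algebraically integrable with rationally connected general leaves, so Proposition~\ref{lemma:common_point} provides a point $x_{0}\in X$ lying in the closure of every general leaf. As $\sF\subsetneq T_X$, these leaves vary in a family of positive dimension, every member of which contains $x_{0}$; since they cover $X$ and are rationally connected, every general point of $X$ is joined to $x_{0}$ through rational curves lying in a leaf. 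Hence $X$ is rationally chain connected, and therefore — being smooth and projective — rationally connected, in particular uniruled.

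Next I would analyze the general log leaf. Let $i\colon\tilde F\to X$ be the normalization of the closure of a general leaf and $\tilde\Delta$ its boundary divisor, so that $(\tilde F,\tilde\Delta)$ is log canonical of dimension $r$ and
\[
-(K_{\tilde F}+\tilde\Delta)=(r-1)\,i^{*}L
\]
is ample; the hypothesis that $\sF$ is locally free along a general leaf makes $i^{*}\sF$ a subbundle of $T_{\tilde F}$ and yields extra control of the singularities of the pair. Thus $(\tilde F,\tilde\Delta)$ is a log del Pezzo pair of dimension $r$ whose anticanonical class is $(r-1)$ times an ample class; by Kobayashi--Ochiai–type bounds together with the classification of (log) del Pezzo varieties, $(\tilde F,\tilde\Delta)$ lies in a short explicit list, the principal cases being $\tilde F\cong\p^{r}$ with $\tilde\Delta$ a (possibly degenerate) quadric, and $\tilde F$ a quadric with $\tilde\Delta$ a hyperplane section. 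The decisive extra input is that $x_{0}$ lies on $i(\tilde F)$ for \emph{every} general leaf while distinct leaves are disjoint away from $\mathrm{Sing}(\sF)$: this forces the preimage of $x_{0}$ in $\tilde F$ into a special position — on a component of $\tilde\Delta$, or at the vertex of a cone structure on $\tilde F$ — so that each general leaf, after projecting away from $x_{0}$, is swept out by lines through $x_{0}$.

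Finally I would globalize. If $\rho(X)=1$ we are in the first alternative, so assume $\rho(X)\ge 2$. As $X$ is uniruled it carries a $K_X$‑negative extremal contraction $\pi\colon X\to Y$. Using the previous step, the general leaves dominate $Y$ — they cover $X$ and all pass through $x_{0}$, so they cannot be $\pi$‑fibres — whence $\sF\not\subset T_{X/Y}$. Feeding the covering family of rational curves through $x_{0}$ into the characterizations of projective space and of projective bundles in the style of Cho--Miyaoka--Shepherd-Barron and Kebekus, and using $X\not\simeq\p^{n}$, one identifies $\pi$ with a $\p^{m}$‑bundle, and then the parallel analysis of the base (again covered by the images of the rational curves through $x_{0}$, and of Fano type) gives $Y\cong\p^{l}$. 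The bound $r\le 3$ then drops out of the numerics: restricting $\pi$ to a general log leaf and matching $-(K_{\tilde F}+\tilde\Delta)=(r-1)i^{*}L$ against the relative anticanonical class of the bundle — with the boundary and cone structure of $\tilde F$ governed by the fibres — leaves only $r\in\{2,3\}$.

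The step I expect to be the main obstacle is this globalization: turning the leafwise picture (leaves that are projective spaces or cones through the common point $x_{0}$) into an honest $\p^{m}$‑bundle over $\p^{l}$, and extracting the sharp bound $r\le 3$. This requires careful deployment of bend‑and‑break and of the deformation theory of the rational curves through $x_{0}$, together with the fine classification of the general log leaves; no single soft argument will suffice.
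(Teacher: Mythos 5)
Your opening moves match the paper's: Theorem~\ref{thma} gives algebraic integrability with rationally connected leaves, and Proposition~\ref{lemma:common_point} gives the common point $x_0$; the observation that leaves sharing $x_0$ cannot sit inside the fibres of a fibration over a positive-dimensional base is indeed how the paper extracts the dichotomy (this is exactly how Theorem~\ref{thmb} is deduced from Theorem~\ref{thmb'}). After that, however, your argument diverges and has two genuine gaps.

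First, the claimed classification of the general log leaf is unjustified and, as stated, wrong. There is no off-the-shelf ``Kobayashi--Ochiai plus Fujita'' list for $r$-dimensional log canonical pairs $(\tilde F,\tilde\Delta)$ with $-(K_{\tilde F}+\tilde\Delta)=(r-1)\,i^*L$: the pair is only log canonical, $\tilde F$ may be singular, and $i^*L$ need not be primitive or very ample on $\tilde F$. Moreover the actual answer is not ``$\p^r$ with a quadric'' or ``a quadric with a hyperplane section'': in the classification the paper eventually obtains (see Remark~\ref{rmk:log_leaf_p^m_over_p^l} and the proof of Proposition~\ref{proposition:p^m_bundle_over_p^n}), the general log leaf is a $\p^{r-1}$-bundle over $\p^1$ with $\tilde\Delta$ a relative-degree-one divisor, possibly plus a fibre --- for $r=3$ this is neither $\p^3$ nor a quadric. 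So the step on which your later ``cone through $x_0$'' picture rests is not available.

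Second, the globalization --- which you yourself flag as the main obstacle --- is precisely where the content of the theorem lies, and none of it is carried out. The paper does not classify the leaf first and then recognize $X$; it runs a case analysis on the splitting type of $f^*\sF$ along a general minimal rational curve (the three cases from the proof of Theorem~\ref{thma}), and in each case either produces the $\p^m$-bundle structure (Proposition~\ref{proposition:p^m_bundle_over_p^n}: extremality of the ray via the common point and the section $\tilde\sigma$, equidimensionality, Fujita's lemma, then Wahl's theorem to get $\p^l$ as base), or derives a contradiction using the divisor $\Delta+D$ on a canonical desingularization of the leaf together with Lemma~\ref{lemma:horizontal} and Remark~\ref{rem:D+Delta}. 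The bound $r\le 3$ is not a consequence of leafwise numerics alone: it comes from Proposition~\ref{prop:classification_bundle}, where the quotient foliation forces an inclusion $\det(\sV^*)\otimes\sL\hookrightarrow T_{\p^l}$ with $\det(\sV^*)=\sO_{\p^l}(k)$, $k\ge r-1$, and Bott's formulae then give $r\le 3$ (and $r=2$ when $l\ge 2$). An appeal to Cho--Miyaoka--Shepherd-Barron/Kebekus-type characterizations of $\p^n$ does not substitute for these steps, since what must be recognized is a projective bundle over projective space together with the position of $\sF$ relative to it, not $\p^n$ itself.
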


Notice that a del Pezzo foliation $\sF$ on $X\not\simeq \p^n$ is algebraically integrable by Theorem~\ref{thma}. 
Hence it makes sense to ask that
$\sF$  has log canonical singularities along a general leaf in Theorem~\ref{thmb} above. 
We remark that del Pezzo foliations of codimension $1$ on Fano manifolds with Picard 
number $1$ were classified in \cite[Proposition 3.7]{loray_pereira_touzet_2}.

Theorem~\ref{thmb} raises the problem of classifying del Pezzo foliations on $\p^m$-bundles 
$\pi:X\to \p^l$. If $m=1$, then $X\simeq \p^1\times \p^l$, and $\sF$ is the pullback via $\pi$ of a  
foliation $\sO(1)^{\oplus i}\subset T_{\p^l}$ for some $i\in\{1,2\}$ (see \ref{say:special_cases}).
For $m\geq 2$,  we have the following result (see Theorems~\ref{thm:description} and \ref{thm:classification} for more details). 

\begin{thm}\label{thmc}
Let $\sF\subsetneq T_X$ be a del Pezzo foliation on a $\p^m$-bundle $\pi:X\to \p^l$, with $m\geq 2$.
Suppose that $\sF\not\subset T_{X/\p^l}$. Then there is an exact sequence of vector bundles 
$0\to \sK\to \sE\to \sQ \to 0$ on $\p^l$ such that $X\simeq \p_{\p^l}(\sE)$, and $\sF$ is the pullback via the relative linear projection $X\map Z=\p_{\p^l}(\sK)$ of a foliation $q^*\det(\sQ)\subset T_Z$. 
Here $q:Z\to \p^l$ denotes the natural projection.  
Moreover, one of the following holds.
			\begin{enumerate}
				\item $l=1$, $\sQ\simeq \sO(1)$, $\sK$ is an ample vector bundle
					such that $\sK\not\simeq\sO_{\p^1}(a)^{\oplus m}$ for any integer $a$,
					and $\sE\simeq\sQ \oplus\sK$ ($r_{\sF}=2$).
				\item $l=1$, $\sQ\simeq \sO(2)$, $\sK\simeq\sO_{\p^1}(a)^{\oplus m}$ 
					for some integer $a \ge 1$,
					and $\sE\simeq\sQ \oplus\sK$ ($r_{\sF}=2$).
				\item $l=1$, $\sQ\simeq \sO(1)\oplus \sO(1)$, 
					$\sK\simeq\sO_{\p^1}(a)^{\oplus m-1}$ 
					for some integer $a \ge 1$,
					and $\sE\simeq\sQ \oplus\sK$ ($r_{\sF}=3$).
				\item $l \ge 2$, $\sQ\simeq \sO(1)$, and $\sK$ is $V$-equivariant 	
					for some $V\in H^0\big(\p^l,T_{\p^l}\otimes\sO(-1)\big)\setminus \{0\}$
					($r_{\sF}=2$).
			\end{enumerate}
Conversely, given $\sK$, $\sE$ and $\sQ$ satisfying any of the conditions above, there exists a del Pezzo foliation of that type.
\end{thm}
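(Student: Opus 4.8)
Note that $X$, being a $\p^m$-bundle over $\p^l$, satisfies $X\not\simeq\p^n$, so by Theorem~\ref{thma} $\sF$ is algebraically integrable with rationally connected general leaves. The plan is to analyse $\sF$ fibrewise over $\p^l$, realise it as the pullback of a rank-one foliation under a relative linear projection, and then read off the finitely many numerical possibilities. Write $r=r_\sF$, let $H$ be the fundamental divisor so that $-K_\sF=(r-1)H$ with $H$ ample, set $\sG=\sF\cap T_{X/\p^l}$ (saturated in $T_X$), and let $\sI$ be the image of $d\pi|_\sF\colon\sF\to\pi^*T_{\p^l}$, so that $0\to\sG'\to\sF\to\sI\to 0$ with $\sG'\subseteq\sG$ and, by hypothesis, $\sI\neq 0$; put $s=\operatorname{rank}\sI\geq 1$ and $t=r-s=\operatorname{rank}\sG$.

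First I restrict everything to a general fibre $f\simeq\p^m$ of $\pi$, where $T_X|_f\simeq T_{\p^m}\oplus\sO_f^{\oplus l}$, with $\sI|_f$ landing in the trivial summand and $\sG|_f\subseteq T_{\p^m}$ a rank-$t$ foliation. If $\sG|_f\subsetneq T_{\p^m}$, then $\deg\det(\sG|_f)\leq t$, and since a subsheaf of a trivial bundle has determinant of non-positive degree, comparing degrees of determinants along $f$ gives $(r-1)\deg(H|_f)\leq t\leq r-1$; hence $\deg(H|_f)=1$, $s=1$, $t=r-1$, $\deg\det(\sG|_f)=r-1$, and the saturation divisors restrict trivially to $f$. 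The remaining possibility $\sG|_f=T_{\p^m}$ is excluded: it would force $\sF\supset T_{X/\p^l}$, hence the index of the induced foliation on $\p^l$ to be $\deg\det\sE\geq m+1$, contradicting the bound $\iota\leq\operatorname{rank}$ for foliations on $\p^l$. Two consequences follow: $H$ meets every $\pi$-fibre in a hyperplane, so after a twist $X\simeq\p_{\p^l}(\sE)$ with $\sO_X(H)=\sO_{\p(\sE)}(1)$ and $\sE$ ample of rank $m+1$; and $\sG|_f$ attains $\iota=\operatorname{rank}=r-1$, so by the theorem of Cerveau--D\'eserti recalled in the introduction it is induced by a linear projection $\p^m\dashrightarrow\p^{m-r+1}$ with centre an $(r-2)$-plane $\Lambda_f\subset f$.

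Next I globalise. Since $\sG\subset T_{X/\p^l}$ is reflexive and restricts to the linear-projection foliation on every general fibre, the swept-out locus of centres $\Lambda_f$ is a $\p^{r-2}$-subbundle $Y=\p_{\p^l}(\sQ)\subset X$, giving an exact sequence $0\to\sK\to\sE\to\sQ\to 0$ with $\operatorname{rank}\sQ=r-1$ (so $\codim Y=m-r+2\geq 2$), and $\sG$ is precisely the relative tangent foliation of the linear projection $\phi\colon X\dashrightarrow Z=\p_{\p^l}(\sK)$ away from $Y$; here $\pi=q\circ\phi$ and $\phi^*\sO_Z(1)=\sO_X(H)$. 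As $\sG\subset\sF$ with both involutive and $\sG$ the relative tangent foliation of $\phi$, the foliation $\sF$ is the pullback $\phi^{-1}\sG_Z$ of a rank-one foliation $\sG_Z\subsetneq T_Z$, and $\sF\not\subset T_{X/\p^l}$ forces $\sG_Z\not\subset T_{Z/\p^l}$. Using the relative Euler sequences of $\p_{\p^l}(\sE)$ and $\p_{\p^l}(\sK)$ together with $\phi^*\sO_Z(1)=\sO_X(H)$ one computes $-K_\sG=-K_X-\phi^*(-K_Z)=(r-1)H-\pi^*\det\sQ$; combining with $-K_\sF=-K_\sG+\phi^*(-K_{\sG_Z})=(r-1)H$ and the injectivity of $q^*$ on Picard groups yields $-K_{\sG_Z}=q^*\det\sQ$. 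Finally $\sO_X(H)|_Y=\sO_{\p(\sQ)}(1)$ is ample, so $\sQ$ is an ample vector bundle on $\p^l$.

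It remains to classify and to establish the converse. Composing $q^*\det\sQ\hookrightarrow T_Z\to q^*T_{\p^l}$ and pushing forward produces a nonzero $V\in H^0\big(\p^l,T_{\p^l}\otimes(\det\sQ)^\vee\big)$; this group vanishes once $\deg\det\sQ\geq 3$, and already for $\deg\det\sQ\geq 2$ when $l\geq 2$, while an ample bundle of rank $r-1$ on $\p^l$ has $\deg\det\geq r-1$. Hence $r\leq 3$, and for $l\geq 2$ necessarily $\sQ\simeq\sO(1)$ and $r=2$. A finer analysis — restricting $\sO_Z(-K_{\sG_Z})=q^*\det\sQ$ to fibres of $q$, using that $\sE$ (hence, when $l=1$, the extension $0\to\sK\to\sE\to\sQ\to 0$) must be ample, and unwinding the compatibility of $\sG_Z$ with $q$ encoded by $V$ — then gives exactly cases (1)--(4) for the triple $(\sE,\sK,\sQ)$, the condition ``$\sK$ is $V$-equivariant'' in (4) being precisely what lets $Z$ carry a rank-one foliation $\sG_Z$ with $-K_{\sG_Z}=q^*\sO(1)$ transverse to $q$. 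For the converse one runs the argument backwards: from the data one forms $Z=\p_{\p^l}(\sK)$, lifts $V$ to such a $\sG_Z$ (possible because $\sK$ splits, resp. is $V$-equivariant), and sets $\sF=\phi^{-1}\sG_Z$; involutivity is automatic, the same Euler-sequence computation gives $-K_\sF=(r-1)H$ with $H$ ample, and since $H$ is primitive in $\Pic(X)$ one has $\iota_\sF=r-1$, so $\sF$ is a del Pezzo foliation of the stated type. I expect the main obstacle to be the globalisation step — upgrading the fibrewise linear-projection structure of $\sG|_f$ to an honest $\p^{r-2}$-subbundle $Y$ and relative linear projection $\phi$ — together with the positivity bookkeeping that isolates cases (1)--(4), in particular deciding precisely when $\sE$ must split.
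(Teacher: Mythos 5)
Your skeleton coincides with the paper's: restrict to a fibre to see that $\sF\cap T_{X/\p^l}$ is a degree-zero foliation $\sO_{\p^m}(1)^{\oplus(r-1)}\subsetneq T_{\p^m}$, globalise this into a relative linear projection $X\map Z=\p_{\p^l}(\sK)$, identify $\sF$ as the pullback of a rank-one foliation $q^*\det(\sQ)\subset T_Z$, and bound $r$ and $\sQ$ via $H^0\big(\p^l,T_{\p^l}\otimes\det(\sQ)^{\vee}\big)$. The paper carries this out in Proposition~\ref{prop:classification_bundle} and Theorem~\ref{thm:description}, working with a general line in a fibre rather than the whole fibre, and producing the subbundle $\sQ^*=\sV\subset\sE^*$ as the saturation of $\pi_*\big((\sF\cap T_{X/\p^l})(-1)\big)$ rather than from the family of centres $\Lambda_f$ --- a cleaner route to the globalisation step you yourself flag as the obstacle. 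Up to that point your argument is essentially correct.

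The genuine gap is the passage you compress into ``a finer analysis \dots then gives exactly cases (1)--(4)'': this is the actual content of the classification and it does not follow from your degree bookkeeping. Two ingredients are missing. First, whether $q^*j:q^*\det(\sQ)\to q^*T_{\p^l}$ lifts to $T_Z$ is governed by an obstruction computed from the Atiyah class $at(\sK)$: by Lemma~\ref{lemma:extending_morphism}, a lift exists if and only if $j_*at(\sK)$ lies in the image of $H^1(\p^l,\det(\sQ)^{\vee})\to H^1(\p^l,\sE\textit{nd}(\sK)\otimes\det(\sQ)^{\vee})$. For $\sQ\simeq\sO(1)$ this is exactly $V$-equivariance of $\sK$ (and is automatic when $l=1$); for $\sQ\simeq\sO_{\p^1}(2)$, so that $\det(\sQ)^{\vee}\simeq\Omega^1_{\p^1}$, the image of $H^1(\p^1,\Omega^1_{\p^1})$ consists of scalar matrices while $at(\sK)$ is the diagonal matrix of the splitting degrees $a_i$, which forces $a_0=\cdots=a_k$, i.e.\ $\sK\simeq\sO_{\p^1}(a)^{\oplus m}$. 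This is where the splitting types in cases (2) and (3) come from, and nothing in your proposal produces it. Second, the exclusion $\sK\not\simeq\sO_{\p^1}(a)^{\oplus m}$ in case (1) is not a lifting obstruction but a saturation failure: on $Z\simeq\p^1\times\p^{m-1}$ every nonzero section of $T_Z\otimes q^*\sO(-1)$ vanishes along a divisor, so $q^*\det(\sQ)$ could not be saturated in $T_Z$. Your converse omits the corresponding check that the constructed map $q^*\det(\sQ)\to T_Z$ is a subbundle inclusion in codimension one (Lemma~\ref{lemma:saturation}), without which the pullback need not be a foliation in the paper's sense. (By contrast, the splittings $\sE\simeq\sQ\oplus\sK$ in (1)--(3) are the easy part: $H^1(\p^1,\sK\otimes\sQ^*)=0$ by ampleness.)
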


The paper is organized as follows. In Section~\ref{section:foliations} we introduce the basic notions 
concerning foliations and Pfaff fields on varieties. 
In Section~\ref{section:algebraic_foliations} we focus on algebraically integrable foliations, and
develop notions of singularities for these foliations.
In Section~\ref{section:examples} we describe examples of Fano foliations on Fano manifolds with Picard number $1$.
In Section~\ref{section:relative_-K} we study the relative anti-canonical bundle of a fibration, and provide
applications to the theory of Fano foliations.
In Section~\ref{section:rat_curves} we recall some results from the theory of rational curves on varieties, and explain 
how they apply to foliations. 
In Section~\ref{section:thma} we prove Theorem~\ref{thma}.
In Section~\ref{section:thmb} we address the problem of classifying Fano foliations with mild singularities. 
In particular we prove Theorem~\ref{thmb}.
In Section~\ref{section:examples_pn_bundles} we address del Pezzo foliations on 
projective space bundles.

We plan to address Fano foliations on Fano manifolds with Picard 
number $1$ and related questions in forthcoming works.

\

\noindent {\bf Notation and conventions.}
We always work over the field ${\mathbb C}$ of complex numbers. 
Varieties are always assumed to be irreducible.
We denote by $\textup{Sing}(X)$ the singular locus of a variety $X$.
Given a sheaf $\sF$ of $\sO_X$-modules on a variety $X$, we denote by $\sF^{*}$ the sheaf $\sH\hspace{-0.1cm}\textit{om}_{\sO_X}(\sF,\sO_X)$.
If $r$ is the generic rank of $\sF$, then we denote by $\det (\sF)$ the sheaf $(\wedge^r \sF)^{**}$.
If $\sG$ is another sheaf of $\sO_X$-modules on $X$, then we denote by  $\sF[\otimes]\sG$ the sheaf $(\sF\otimes\sG)^{**}$.
If $\sE$ is a locally free sheaf of $\sO_X$-modules on a variety $X$, we denote by $\p_X(\sE)$ the Grothendieck projectivization $\textup{Proj}_X(\textup{Sym}(\sE))$.
If $X$ is a normal variety and $X\to Y$ is any morphism, we denote by
$T_{X/Y}$ the sheaf $(\Omega_{X/Y}^1)^*$. In particular, $T_X=(\Omega_{X}^1)^*$.
If $X$ is a smooth variety and $D$ is a reduced divisor on $X$ with simple normal crossings support, 
we denote by $\Omega_X^1(\textup{log }D)$ the sheaf of differential $1$-forms with logarithmic poles
along $D$, and by $T_X(-\textup{log }D)$ its dual sheal $\Omega_X^1(\textup{log }D)^*$. Notice that
$\det(\Omega_X^1(\textup{log }D))\simeq\sO_X(K_X+D)$.

\

\noindent {\bf Acknowledgements.}
Much of this work was developed during the authors' visits to IMPA and Institut Fourier.
We would like to thank both institutions for their support and hospitality. 
We also thank our colleagues Julie D\'eserti and Jorge Vit\'orio Pereira for very helpful discussions.


\section{Foliations and Pfaff fields} \label{section:foliations}

\begin{defn}
Let $X$ be normal variety.
A \emph{foliation} on $X$ is a nonzero coherent subsheaf $\sF\subsetneq T_X$ satisfying
\begin{enumerate}
	\item $\sF$ is closed under the Lie bracket, and
	\item $\sF$ is saturated in $T_X$ (i.e., $T_X / \sF$ is torsion free).
\end{enumerate}

The \textit{rank} $r_\sF$ of $\sF$ is the generic rank of $\sF$.

The \textit{canonical class} $K_{\sF}$ of $\sF$ is any Weil divisor on $X$ such that 
$\sO_X(-K_{\sF})\simeq \det(\sF)$. 

A \textit{foliated variety} is a pair $(X,\sF)$ consisting of  a normal variety $X$ 
together with a foliation  $\sF$ on $X$.
\end{defn}

\begin{defn}\label{def:gorenstein}
A foliation $\sF$ on a normal variety is said to be \emph{1-Gorenstein} if 
its canonical class $K_{\sF}$ is a Cartier divisor.
\end{defn}

\begin{rem}\label{lemma:saturated_reflexive}
Condition (2) above implies that $\sF$ is reflexive. Indeed, 
$T_X$ is reflexive by \cite[Corollary 1.2]{hartshorne80}. 
Thus, the inclusion $\sF\subset T_X$ factors through $\sF \subset \sF^{**}$. 
The induced map $\sF^{**}\to T_X / \sF$ is generically zero.
Hence it is identically zero since
$T_X / \sF$ is torsion free by (2). Thus
$\sF=\sF^{**}$.
\end{rem}

\begin{defn}
Let $X$ be a variety, and $r$ a  positive integer.
A  \emph{Pfaff field of rank r} on $X$ is a nonzero map
$\eta : \Omega^r_X\to \sL$, where  $\sL$ is an
invertible sheaf on $X$ (see \cite{esteves_kleiman03}). 
The \textit{singular locus} $S$ of $\eta$
is the closed subscheme of $X$ whose ideal sheaf $\sI_S$ is the image of
the induced map $\Omega^r_X\otimes \sL^*\to \sO_X$.

A closed subscheme $Y$ of $X$ is said to be \textit{invariant} under $\eta$ if 
\begin{enumerate}
	\item no irreducible component of $Y$ is contained in the singular locus of $\eta$, and
	\item the restriction $\eta|_{Y} : {\Omega^r_X}|_{Y}\to\sL|_{Y}$ factors through the natural map
	${\Omega^r_X}|_{Y}\to\Omega^r_Y$, in other words, there is a commutative diagram 

\centerline{
\xymatrix{
\Omega^r_X|_{Y} \ar[r]^{\eta|_{Y}}\ar[d] & \sL|_{Y} \ ,\\
\Omega^r_Y\ar[ru]
}
}
\noindent where the vertical map is the natural one.
\end{enumerate}
\end{defn}

Notice that a 1-Gorenstein foliation $\sF$ of rank $r$  on normal variety $X$ naturaly gives rise to a Pfaff field of rank $r$ on $X$:
$$
\eta:\Omega_X^r=\wedge^r(\Omega_X^1) \to \wedge^r(T_X^*) \to \wedge^r(\sF^*) \to \det(\sF^*)\simeq\det(\sF)^{*}=\sO_X(K_\sF).
$$

\begin{defn}\label{defn:sing_locus} \label{def:regular}
Let  $\sF$ be a 1-Gorenstein foliation on a normal variety $X$.
The \textit{singular locus} of  $\sF$ is defined to be
the singular locus $S$ of the associated Pfaff field. 
We say that $\sF$ or $(X,\sF)$  is \textit{regular at a point} $x\in X$ if $x\not\in S$. 
We say that $\sF$ or $(X,\sF)$  is \textit{regular} if $S=\emptyset$.
\end{defn}

Using Frobenius' theorem, one can prove the following. 

\begin{lemma}[{\cite[Lemma 1.3.2]{bogomolov_mcquillan01}}]\label{lemma:regular_smooth}
Let $(X,\sF)$ be a 1-Gorenstein foliated variety.
Suppose that $\sF$ regular and locally free at a point $x\in X$. 
Then there exists an analytic open neighborhood $U$ of $x$, a  complex analytic space $W$,
and a smooth morphism 
$U \to W$ of relative dimension $r_\sF$ 
such that $\sF_U=T_{U/W}$.
\end{lemma}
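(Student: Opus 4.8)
This is a version of the holomorphic Frobenius integrability theorem, and the plan is to reduce to it after translating the regularity hypothesis into sheaf language. By definition, $\sF$ being regular at $x$ means $x$ lies outside the singular locus of the associated Pfaff field $\eta:\Omega^r_X\to\sO_X(K_\sF)$, i.e.\ $\eta$ is surjective in a neighbourhood of $x$. Now $\eta$ is the composition of $\wedge^r$ applied to the natural morphism $\varphi:\Omega^1_X\to T_X^*\to\sF^*$ with the reflexivization $\wedge^r\sF^*\to\det\sF^*$; since $\sF$, hence $\sF^*$, is locally free of rank $r$ near $x$ by hypothesis, that last map is an isomorphism there, so $\wedge^r\varphi$ is surjective near $x$. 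A Nakayama argument then forces $\varphi$ itself to be surjective near $x$: surjectivity of $\wedge^r\varphi$ at a point exhibits $r$ one-forms whose $\varphi$-images form a basis of the fibre of $\sF^*$. As $\sF^*$ is locally free, the exact sequence $0\to\ker\varphi\to\Omega^1_X\to\sF^*\to0$ splits near $x$; dualizing it and using $\sF=\sF^{**}$ (Remark~\ref{lemma:saturated_reflexive}), I conclude that near $x$ the given inclusion realizes $\sF$ as a locally free direct summand of $T_X$ of rank $r$. Note that this does not force $X$ to be smooth at $x$; it only recovers that $T_X/\sF$ is torsion free.

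Next I would carry out the classical Frobenius straightening directly on the (possibly singular) complex space $X$. Choose a frame $w_1,\dots,w_r$ of $\sF$ near $x$; since $\varphi$ is surjective and $\Omega^1_X$ is generated by exact forms, choose $y_1,\dots,y_r\in\sO_{X,x}$ with $\varphi(dy_1),\dots,\varphi(dy_r)$ a frame of $\sF^*$. Then the matrix $\big(w_i(y_j)\big)_{i,j\le r}$ is invertible near $x$, so replacing $(w_i)$ by its inverse transform produces a frame $v_1,\dots,v_r$ of $\sF$ with $v_i(y_j)=\delta_{ij}$ for $i,j\le r$. Since $\sF$ is closed under the Lie bracket, $[v_i,v_j]\in\sF$; but $[v_i,v_j](y_k)=0$ for $k\le r$, and the only section of $\sF$ killing all of $y_1,\dots,y_r$ is $0$ (expand in the frame $(v_l)$). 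Hence $[v_i,v_j]=0$: we obtain a pairwise commuting local frame of $\sF$.

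Finally I would integrate. The commuting holomorphic vector fields $v_1,\dots,v_r$ generate a holomorphic local action of $(\mathbb{C}^r,+)$ near $x$ — flows of holomorphic vector fields exist on complex spaces (reduce to the ambient smooth space via a local embedding and a tangent lift), and commute because the $v_i$ do — and this action is free near $x$ since $v_1(x),\dots,v_r(x)$ are linearly independent in $T_xX$. Let $W$ be a small neighbourhood of $x$ in the analytic subspace $\{y_1=y_1(x),\dots,y_r=y_r(x)\}$, which has dimension $\dim_xX-r$ as the $dy_i$ are independent at $x$. The map $\Psi:W\times\Delta^r\to X$ sending $(w,t)$ to the image of $w$ under the time-$t$ flow of the action is then a biholomorphism onto an open neighbourhood $U$ of $x$ (its inverse records, for $p$ near $x$, the point of $W$ on the leaf through $p$ together with the flow time, which one finds by the analytic implicit function theorem applied in the ambient smooth space). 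Transporting the projection $W\times\Delta^r\to W$ through $\Psi$ yields a smooth morphism $\pi:U\to W$ of relative dimension $r$ whose relative tangent sheaf $T_{U/W}$ is the span of $v_1,\dots,v_r$, that is $\sF_U$.

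The only point beyond routine sheaf theory is this integration over a possibly singular $X$: one needs the existence and holomorphic parameter dependence of flows of holomorphic vector fields on complex spaces, together with the analytic inverse/implicit function theorem to see that $\Psi$ is a local biholomorphism. These facts are standard, and this is essentially how \cite[Lemma~1.3.2]{bogomolov_mcquillan01} proceeds; alternatively, once the first step exhibits $\sF$ as an involutive locally free sub-bundle of $T_X$, one may simply invoke the holomorphic Frobenius theorem in that generality. The hypotheses — that $\sF$ be both regular \emph{and} locally free at $x$ — enter precisely in the sheaf-theoretic reduction of the first step.
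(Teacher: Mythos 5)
Your proof is correct, and it is exactly the Frobenius-theorem argument the paper alludes to: the paper itself gives no proof of this lemma, merely remarking that it follows from Frobenius' theorem and citing \cite[Lemma 1.3.2]{bogomolov_mcquillan01}. Your reduction (regularity plus local freeness gives $\sF$ as a locally free direct summand of $T_X$ near $x$, then straightening a commuting frame via the functions $y_i$ and integrating the flows, which indeed works on a normal, possibly singular, complex space) is the standard proof of that cited lemma.
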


\begin{lemma}\label{lemma:singularlocusfoliationveruspfafffield}
Let $X$ be a smooth variety, and $\sF$ a foliation 
of rank $r$
on $X$ with  singular locus $S$.
Let $S_1$ be the set of points $x\in X$ at which 
$\sF$ is not locally free, and $S_2$  the set of points $x\in X$ such that 
$\sF$ is locally free at $x$ and $\sF\otimes k(x) \to T_X\otimes k(x)$ is not injective.
\begin{enumerate}
\item Then $S \subset S_1\cup S_2$ as sets, and $S\setminus S_1=S_2$. 
\item Let $Y\subset X$ be an irreducible subvariety of dimension $r_\sF$
such that $Y \not\subset S_1\cup S_2$. 
Then $Y\setminus S_1\cup S_2$ is a leaf of
$\sF|_{X\setminus S_1\cup S_2}$ if and only if $Y$ is invariant under the associated Pfaff field 
$\eta:\Omega^{r}_X \to \sO_X(K_\sF)$.
\end{enumerate}
\end{lemma}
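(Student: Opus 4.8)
The plan is to work entirely locally on the smooth variety $X$, reducing both parts to a statement about the map $\sF \to T_X$ and its associated map on top exterior powers. First I would establish part (1). For the inclusion $S \subset S_1 \cup S_2$, note that away from $S_1 \cup S_2$ the sheaf $\sF$ is a subbundle of $T_X$ of rank $r$, so locally it is a direct summand; dualizing, $T_X^* \to \sF^*$ is a surjection of bundles, hence $\wedge^r T_X^* \to \det(\sF^*)$ is surjective, and composing with the surjection $\Omega_X^r \to \wedge^r T_X^*$ (which is an isomorphism since $X$ is smooth, $\Omega_X^r = \wedge^r \Omega_X^1$) shows $\eta$ is surjective there, i.e. $\sI_S = \sO_X$ locally, so the point is not in $S$. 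For the reverse on the complement of $S_1$: at a point $x \notin S_1$ where $\sF$ is locally free of rank $r$, pick local generators $v_1,\dots,v_r$ of $\sF$; then $\eta$ is, up to the isomorphism $\Omega_X^r \cong \wedge^r T_X^*$, the contraction against $v_1 \wedge \cdots \wedge v_r \in \wedge^r T_X$, and the ideal $\sI_S$ at $x$ is generated by the coefficients of this $r$-vector in a local frame of $T_X$. This $r$-vector is nonzero (equivalently $\sI_{S,x} \neq 0$, i.e. $x \notin S$) precisely when $v_1(x),\dots,v_r(x)$ are linearly independent in $T_X \otimes k(x)$, which is exactly the condition $x \notin S_2$. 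This gives $S \setminus S_1 = S_2$.

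For part (2), set $U = X \setminus (S_1 \cup S_2)$, so that $\sF|_U$ is a subbundle of $T_U$ and, by Lemma~\ref{lemma:regular_smooth} applied at each point (here $\sF|_U$ is regular by part (1) and locally free), $\sF|_U$ defines an honest smooth foliation in the analytic sense; its leaves are the maximal integral submanifolds. Let $Y \subset X$ be irreducible of dimension $r$ with $Y \not\subset S_1 \cup S_2$, and write $Y^\circ = Y \cap U = Y \setminus (S_1 \cup S_2)$, a dense open subset of $Y$. The claim is that $Y^\circ$ is a leaf of $\sF|_U$ iff $Y$ is invariant under $\eta$. I would unwind the definition of invariance: condition (1) in the definition of invariance, that no component of $Y$ lies in $\textup{Sing}(\eta) = S$, holds automatically once $Y \not\subset S_1 \cup S_2$ by part (1) together with irreducibility of $Y$. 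For condition (2), the factorization of $\eta|_Y : \Omega_X^r|_Y \to \sL|_Y$ through $\Omega_X^r|_Y \to \Omega_Y^r$: since $\dim Y = r$ and the generic point of $Y$ lies in $U$, at a smooth point $y \in Y^\circ$ the map $\Omega_X^r|_Y \to \Omega_Y^r$ is identified with restriction of top forms to the tangent space $T_yY$, and $\eta$ at $y$ is contraction against $v_1(y) \wedge \cdots \wedge v_r(y)$ spanning $\sF \otimes k(y)$; the factorization holds at $y$ precisely when this $r$-plane $\sF_y$ coincides with $T_yY$ (if they differ, contraction against $\sF_y$ does not annihilate forms vanishing on $T_yY$). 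Hence $Y$ is invariant under $\eta$ iff $T_yY = \sF_y$ for all $y$ in a dense open subset of $Y^\circ$, and by continuity (both are subbundles over $Y^\circ$ after shrinking, or using that the locus of agreement is closed) for all $y \in Y^\circ$. But $T_yY = \sF_y$ for all $y \in Y^\circ$ is exactly the statement that $Y^\circ$ is an integral submanifold of the foliation $\sF|_U$ of the maximal dimension $r$, i.e. (being also irreducible, hence connected) an open subset of a leaf, and since $\dim Y^\circ = r = r_\sF$ it is itself a leaf.

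The main obstacle I anticipate is the careful bookkeeping in part (2) around non-smooth points of $Y$ and the passage between "invariant at the generic point" and "invariant everywhere on $Y^\circ$": the definition of invariance via the factorization of sheaf maps is a condition on all of $Y$, whereas the geometric leaf condition is most transparent at smooth points, so one must check that the factorization through $\Omega_X^r|_Y \to \Omega_Y^r$ — where $\Omega_Y^r$ may behave badly at $\textup{Sing}(Y)$ — is equivalent to the pointwise tangency condition on the dense open $Y^\circ$, using that $\sL|_Y$ is a line bundle (torsion-free), so the factorization, if it holds generically, holds on all of $Y$; this is where I would spend the most care rather than on the essentially formal linear-algebra identifications elsewhere.
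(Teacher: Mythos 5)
Your proof is correct and follows essentially the same route as the paper's: both parts rest on describing $\eta$ near a point where $\sF$ is locally free as contraction against local generators $v_1,\dots,v_r$, identifying $S$ away from $S_1$ with the locus where the values $v_i(x)$ become linearly dependent, and characterizing invariance of $Y$ under $\eta$ by tangency of the $v_i$ along the smooth locus of $Y\setminus (S_1\cup S_2)$. One small slip in part (1): the condition $x\notin S$ means that $\sI_{S,x}$ is the unit ideal (equivalently, that the $r$-vector $v_1\wedge\cdots\wedge v_r$ is nonzero \emph{at} the point $x$), not merely that $\sI_{S,x}\neq 0$; since your operative criterion is linear independence of the values $v_i(x)$ in $T_X\otimes k(x)$, which is the correct one, nothing in the argument is affected.
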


\begin{proof}
Let $x\in X$ be a point at which  $\sF$ is locally free. 
Then there is an  open neighborhood of $x$ where 
$\det(\sF^*)$ is invertible.
Thus $x\in S$ if and only if $x\in S_2$, proving (1).

Let 
$x\in Y\setminus S_1\cup S_2$ be a smooth point of $Y$ and let $\vec v_1,\ldots,\vec v_r$ be local vector fields
that generate $\sF$ on an affine neighbourhood $U$ of $x$. Observe that 
$\eta|_{U}:{\Omega^{r}_X}|_{U} \to {\sO_X(K_\sF)}|_{U}$ is given by
\begin{eqnarray*}
H^0(U,\Omega^{r}_X|_{U}) & \longrightarrow & H^0(U,\sO_X(K_\sF)|_{U}) \\
 {\alpha}  & \longmapsto & \alpha(\vec v_1,\ldots,\vec v_r)\omega
\end{eqnarray*}
where $\omega\in H^0(U,\sO_X(K_\sF)|_{U})$ is such that 
$\omega(\vec v_1,\ldots,\vec v_r)=1$. It follows that $Y$ is invariant under $\eta$
if and only if, for any local function $f$ on $U$ vanishing
along $Y\cap U$, and any local $(r-1)$-differential form $\beta$ on $U$, we have
$(df\wedge \beta)(\vec v_1,\ldots,\vec v_r)=0$.
This happens if and only if,
for any $i\in \{1,\ldots,r\}$ and any local function $f$ on $U$ vanishing
along $Y\cap U$, we have $df(\vec v_i)=0$ .
This is in turn equivalent to requiring that 
$\vec v_i(x)\in T_{Y,x}$ for any $i\in \{1,\ldots,r\}$, which is saying precisely that  $Y\setminus S_1\cup S_2$ 
is a leaf of $\sF|_{X\setminus S_1\cup S_2}$. This proves $(2)$. 
\end{proof}

Next we define Fano foliations and Fano Pfaff fields.

\begin{defn}\label{def:fano_foliation}\label{def:index_fano_foliation}\label{def:fano_pfaff}\label{def:index_fano_pfaff}
Let $X$ be a normal projective variety.

Let $\sF$ be a 1-Gorenstein foliation on $X$.
We say that $\sF$ is a \emph{Fano foliation} if $-K_{\sF}$ is ample.
In this case, the index $\iota_{\sF}$ of $\sF$ is the largest positive integer such that
$-K_{\sF} \sim \iota_{\sF} H$ for a Cartier divisor $H$ on $X$.

Let $\sL$ be a line bundle on $X$, $r$ a positive integer, and
$\eta : \Omega^r_X\to \sL$ a Pfaff field.
We say that $\eta$ is a \emph{Fano Pfaff field} if $\sL^{-1}$ is ample.
In this case, the index $\iota_{\eta}$ of $\eta$ is the largest positive integer such that
$\sL^{-1} \sim \sA^{\otimes \iota_\eta} $ for a line bundle $\sA$ on $X$.
\end{defn}

\begin{rem}\label{uniruledness}
Let $X$ be a smooth complex projective variety. 
If $X$ admits a Fano foliation or a Fano Pfaff field, then $X$ is uniruled by 
\cite[Corollary 8.6]{miyaoka87}.
\end{rem}

In analogy with Kobayachi-Ochiai's theorem, we have the following.

\begin{thm}[{\cite[Theorem 1.1]{adk08}}] 
Let $X$ be a smooth complex projective variety,
$\sL$ a line bundle on $X$, $r$ a positive integer, and
$\eta : \Omega^r_X\to \sL$ a Fano Pfaff field. Then:
\begin{enumerate}
\item $\iota_{\eta} \le r+1$;
\item $\iota_{\eta} = r+1$ if and only if $r=\dim(X)$ and $(X,\sL)\simeq
(\p^{r},\sO_{\p^{r}}(-1))$;
\item $\iota_{\eta} = r$ if and only if 
either $(X,\sL)\simeq (\p^n,\sO_{\p^{n}}(-1))$ for some $n\geq r$,
or $r=\dim(X)$ and $(X,\sL)\simeq (Q_{r},\sO_{Q_{r}}(-1))$, where $Q_{r}$ denotes a smooth quadric 
hypersurface in $\p^{r+1}$ and $\sO_{Q_{r}}(-1)$ denotes the restriction of $\sO_{\p^{r+1}}(-1)$
to $Q_{r}$.
\end{enumerate}
\end{thm}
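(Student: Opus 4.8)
The plan is to read off all the numerical data from the restriction of $\eta$ to a general minimal rational curve, and then to upgrade the extremal numerical data to an actual isomorphism by invoking the Cho--Miyaoka--Shepherd-Barron and Kebekus characterizations of projective space and of the quadric. Since $\sL^{-1}$ is ample, $X$ is uniruled by Remark~\ref{uniruledness}, so it carries a minimal covering family $\cH$ of rational curves; let $f:\p^1\to X$ be a general member, with image $\ell$. By the standard theory of minimal rational curves, $\ell$ is free and $f^*T_X\simeq\sO(2)\oplus\sO(1)^{\oplus p}\oplus\sO^{\oplus q}$ with $p+q=\dim X-1$, so that $-K_X\cdot\ell=p+2$. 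As $\eta\neq 0$, its singular locus $S$ is a proper closed subset; because $\cH$ covers $X$, a general $\ell$ is not contained in $S$, and hence $f^*\eta:f^*\Omega^r_X\to f^*\sL$ is nonzero.

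First I would prove (1). Dualizing the splitting, $f^*\Omega^r_X$ is a direct sum of line bundles $\sO(-\sum_{i\in I}a_i)$ indexed by the $r$-element subsets $I$, where $a_i\in\{0,1,2\}$ are as above. A nonzero map to $f^*\sL$ is nonzero on some summand, so $\deg f^*\sL^{-1}$ is at most the largest value of $\sum_{i\in I}a_i$, namely $2+(r-1)=r+1$ when $p\ge r-1$ and $2+p$ otherwise; in either case $\deg f^*\sL^{-1}\le r+1$. On the other hand, writing $\sL^{-1}\simeq\sA^{\otimes\iota_\eta}$ with $\sA$ ample, we have $\deg f^*\sL^{-1}=\iota_\eta\,(\sA\cdot\ell)\ge\iota_\eta$. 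Combining the two bounds yields $\iota_\eta\le r+1$.

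Next I would treat the extremal case $\iota_\eta=r+1$. All the inequalities above must then be equalities: $\sA\cdot\ell=1$, $\deg f^*\sL^{-1}=r+1$, necessarily $p\ge r-1$, and $f^*\eta$ is an isomorphism onto the unique degree-$(r+1)$ summand of $f^*\Omega^r_X$, built from the $\sO(-2)$ factor and $r-1$ of the $\sO(-1)$ factors. The crucial claim is that this forces $r=\dim X$. Granting it, $\Omega^r_X=\omega_X$, so $\eta$ is a nonzero section of $\sL\otimes\omega_X^{-1}$, giving an effective divisor $D$ with $-K_X\sim\sL^{-1}+D$; since the general $\ell$ is not contained in $\Supp D$, we obtain $-K_X\cdot\ell\ge\sL^{-1}\cdot\ell=(r+1)(\sA\cdot\ell)=r+1=\dim X+1$. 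As conversely $-K_X\cdot\ell=p+2\le\dim X+1$, equality holds, and the Cho--Miyaoka--Shepherd-Barron characterization (refined by Kebekus) gives $X\cong\p^r$ with $\ell$ a line; on $\p^r$ the equality $\iota_\eta=r+1$ means $\sL^{-1}\simeq\sO(r+1)$, i.e. $\sL\simeq\omega_{\p^r}$. Case (3) is handled by the same mechanism: a similar equality analysis forces $\sA\cdot\ell=1$ and $\deg f^*\sL^{-1}=r$ (with the borderline $r=1$ needing a little extra care), after which the dichotomy between anticanonical degree $\dim X+1$ (yielding $\p^n$, now permitting $r\le n$) and degree $\dim X$ (yielding the quadric $Q_r$ through the corresponding characterization of the hyperquadric) reproduces the two families, with $\sL$ pinned down exactly as before.

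The main obstacle is precisely the claim used in the equality case: that maximal index forces the rank to equal the dimension, and more generally that the curve-wise positivity of $f^*\eta$ propagates to a global isomorphism type. A single minimal curve leaves the flat part $\sO^{\oplus q}$ completely unconstrained, so one cannot rule out $q>0$ on one curve; instead one must work with the whole family, using the variety of minimal rational tangents at a general point together with the finiteness of Kebekus' tangent map to track how the nonvanishing summand of $\eta$ varies, and it is exactly here that the deeper characterization theorems for $\p^n$ and the quadric enter. Carrying out this propagation, rather than the elementary degree estimate of part (1), is where the real work lies.
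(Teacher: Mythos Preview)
The paper does not prove this theorem: it is quoted verbatim from \cite{adk08} as background, with no argument given here. So there is no in-paper proof to compare against, and your sketch must be judged on its own.

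Your argument for (1) is correct and is the standard opening move: restrict $\eta$ to a general minimal free rational curve, split $f^*\Omega_X^r$ as a sum of line bundles of degree $-\sum_{i\in I}a_i$, and read off $\iota_\eta\le\deg f^*\sL^{-1}\le\max_I\sum a_i\le r+1$.

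For (2) and (3) you correctly identify the gap yourself, and it is a real one. From the equality analysis you extract $\sA\cdot\ell=1$, $\deg f^*\sL^{-1}=r+1$, and $p\ge r-1$; but this curve-wise information says nothing about the trivial summands $\sO^{\oplus q}$ of $f^*T_X$, and your ``crucial claim'' that $r=\dim X$ does not follow from it. Your proposed order --- first force $r=\dim X$, then feed $\eta:\omega_X\to\sL$ into Cho--Miyaoka--Shepherd-Barron --- is not how the argument in \cite{adk08} runs, and it is not clear how you would execute the first step at all. The actual proof proceeds in the opposite direction: one manufactures from $\eta$ a suitable subsheaf of $T_X$ (or works directly with the image of $\sL^{-1}\hookrightarrow\wedge^rT_X$), shows via the $H$-rationally connected quotient that $X$ itself is a projective space (this is essentially the mechanism of \cite[Proposition~2.7]{adk08}, which the present paper also invokes in Lemma~\ref{lemma:Fano_ample}), and only then reads off $r=n$ from Bott vanishing on $\p^n$. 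Your closing paragraph points vaguely at VMRT and Kebekus' tangent map, but does not supply the missing structural step; as written, parts (2) and (3) remain a plan rather than a proof.
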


\begin{defn}
Let $X$ be a smooth projective variety, and $\sF$ a Fano
foliation on $X$ of rank $r_\sF$ and index $\iota_{\sF}$.
We say that $\sF$ is a \textit{del Pezzo foliation} if $r_\sF\ge 2$ and $\iota_{\sF} = r_\sF-1$.
\end{defn}


\section{Algebraically integrable foliations}\label{section:algebraic_foliations}

\begin{defn}
Let $X$ be normal variety. A foliation $\sF$ on $X$ is said to be
\emph{algebraically integrable} if  the leaf of $\sF$ through a general point of $X$ is an algebraic variety. 
In this situation, by abuse of 
notation we often use the word ``leaf" to mean the closure in $X$ of a leaf of $\sF$. 
\end{defn}

\begin{lemma}\label{lemma:leaffoliation}
Let $X$ be normal projective variety, and $\sF$ an algebraically integrable foliation on $X$.
There is a unique irreducible closed subvariety $W$ of $\Chow(X)$ 
whose general point parametrizes the closure of a general leaf of $\sF$
(viewed as a reduced and irreducible cycle in $X$). In other words, if 
$U \subset W\times X$ is the universal cycle, with universal morphisms
$\pi:U\to W$ and $e:U\to X$,
then $e$ is birational, and, for a general point $w\in W$, 
$e\big(\pi^{-1}(w)\big) \subset X$ is the closure of a leaf of $\sF$.
\end{lemma}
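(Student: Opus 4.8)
The plan is to construct $W$ as a suitable subvariety of $\Chow(X)$, establish its irreducibility and uniqueness, and then verify the stated properties of the universal cycle. First I would pass to a Zariski-open subset: let $X^\circ \subset X$ be the smooth locus minus the singular locus of $\sF$, so that $\sF|_{X^\circ}$ is a genuine subbundle of $T_{X^\circ}$ and, by the classical Frobenius theorem (used as in Lemma \ref{lemma:regular_smooth}), there is through each point $x \in X^\circ$ a well-defined analytic leaf. Algebraic integrability says that for general $x$ this leaf is (an open subset of) an algebraic subvariety $F_x \subset X$ of dimension $r_\sF$; denote by $\overline{F}_x$ its closure, viewed as a reduced irreducible cycle. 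The first substantive point is that the assignment $x \mapsto [\overline{F}_x]$ is a morphism from a dense open subset $X^{\circ\circ} \subseteq X^\circ$ to $\Chow(X)$. This follows because the leaves fit together into a flat family over $X^{\circ\circ}$ after possibly shrinking: two general leaves are either equal or disjoint on $X^\circ$ (distinct leaves of a foliation do not meet in the regular locus), so the incidence subvariety $\{(x,y) : y \in \overline{F}_x\} \subset X^{\circ\circ} \times X$ has irreducible equidimensional fibers over the first factor, hence defines a family of cycles and thus a classifying map $\gamma : X^{\circ\circ} \to \Chow(X)$.

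Next I would set $W$ to be the closure of the image $\gamma(X^{\circ\circ})$ in $\Chow(X)$, with its reduced structure. It is irreducible because it is the closure of a continuous image of the irreducible variety $X^{\circ\circ}$. Let $U \subset W \times X$ be the restriction of the universal cycle over $\Chow(X)$, with projections $\pi : U \to W$ and $e : U \to X$. By construction, for $w = \gamma(x)$ with $x \in X^{\circ\circ}$ general, the fiber $\pi^{-1}(w)$ maps under $e$ onto $\overline{F}_x$, the closure of a leaf — this gives the last assertion of the lemma. For birationality of $e$: the map $e$ is dominant because the leaves through points of $X^{\circ\circ}$ cover a dense subset of $X$; and it is generically injective because through a general point $x \in X$ there passes exactly one leaf of $\sF$, so $e^{-1}(x)$ is a single point (the pair $(\gamma(x), x)$). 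A dominant generically injective morphism of varieties over $\mathbb{C}$ is birational, so $e$ is birational.

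Finally, uniqueness. Suppose $W'$ is another irreducible closed subvariety of $\Chow(X)$ whose general point parametrizes the closure of a general leaf of $\sF$. For a general point $x \in X$, the unique leaf through $x$ is parametrized by a (necessarily general, by a dimension count) point of $W'$; conversely a general member of $W'$ is $\overline{F}_y$ for some $y$, and by genericity we may take $y \in X^{\circ\circ}$, so that member lies in the image of $\gamma$, i.e. in $W$. Thus $W'$ and $W$ share a dense subset of points, and being closed and irreducible they coincide. The main obstacle in this argument is the first step — showing that the set-theoretic family of leaves over $X^{\circ\circ}$ is actually an algebraic family of cycles, so that the classifying morphism to $\Chow(X)$ exists; once $\gamma$ is in hand, everything else is formal. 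This is handled by the standard fact that for an algebraically integrable foliation the leaves are the fibers of a dominant rational map $X \map Y$ (take $Y$ to be the normalization of the image of $\gamma$), and the closures of general fibers of a rational map form a well-defined family in $\Chow(X)$; alternatively one invokes directly that the graph of the ``same leaf'' equivalence relation is a constructible subset of $X \times X$ whose general fibers are the leaves.
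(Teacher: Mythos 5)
There is a genuine gap, and it sits exactly where you flagged ``the main obstacle'': the claim that the incidence set $\{(x,y): y\in \overline{F}_x\}\subset X^{\circ\circ}\times X$ is an algebraic family, so that the classifying map $\gamma:X^{\circ\circ}\to\Chow(X)$ exists. The paper's definition of algebraic integrability only says that the leaf through a \emph{general individual point} is an algebraic variety; it gives no algebraic structure on the totality of leaves, and a priori the union $\bigcup_x \{x\}\times\overline{F}_x$ need not be constructible. Your two proposed ways out are not available: taking ``$Y$ to be the normalization of the image of $\gamma$'' presupposes $\gamma$, which is precisely what is being constructed (so the appeal to ``leaves are fibers of a rational map $X\map Y$'' is circular — that statement is essentially the content of the lemma); and the assertion that the graph of the ``same leaf'' equivalence relation is constructible is again equivalent to what must be proved, not a standard fact one can invoke.

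The paper gets around this with two ideas you do not use. First, a countability argument: $\Chow(X)$ has only countably many irreducible components while $\sF$ has uncountably many leaves, so some closed subvariety $W\subset\Chow(X)$ contains a Zariski-dense set of points parametrizing leaf closures and has universal cycle dominating $X$ — this produces $W$ without ever showing that $x\mapsto[\overline{F}_x]$ is a morphism. Second, since a priori only a \emph{dense subset} of $W$ parametrizes leaves, one must show the \emph{general} point of $W$ does; the paper does this by pulling the Pfaff field of $\sF$ back to $W\times X$, using torsion-freeness of $e^*\sO_X(K_\sF)$ to show that invariance of $U$ (a condition that holds on a dense set of fibers) propagates to all of $U$ and descends to $\Omega^r_{U/W}$, and then invoking Lemma~\ref{lemma:singularlocusfoliationveruspfafffield}. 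Your remaining steps (irreducibility of $W$, birationality of $e$ from generic injectivity, uniqueness of $W$) are fine and essentially formal once $W$ exists, but the construction of $W$ itself is the substance of the lemma and is not established by your argument.
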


\begin{notation}
We say that the subvariety $W$  provided by Lemma~\ref{lemma:leaffoliation} is 
\emph{the closure in $\Chow(X)$ of the subvariety parametrizing general leaves of $\sF$}.
\end{notation}

\begin{proof}[Proof of Lemma~\ref{lemma:leaffoliation}]
First of all, recall that $\Chow(X)$ has countably many irreducible components.
On the other hand, since we are working over $\mathbb C$, $\sF$ has uncountably many leaves.
Therefore, there is a closed subvariety $W$ of $\Chow(X)$ such that
\begin{enumerate}
	\item the universal cycle over $W$ dominates $X$, and
	\item the subset of points in $W$  parametrizing leaves of $\sF$ (viewed as reduced and irreducible cycles in $X$) is Zariski dense in $W$.
\end{enumerate}
Let $U \subset W\times X$ be the universal cycle over $W$, 
denote by $p:W\times X \to W$ and $q:W\times X \to X$ the natural projections, and by 
$\pi=p|_U:U\to W$ and $e=q|_U:U\to X$ their restrictions to $U$.
We need to show that, for a general point $w\in W$, 
$e\big(\pi^{-1}(w)\big) \subset X$ is the closure of a leaf of $\sF$.

To simplify notation, we suppose that $X$ is smooth. In the general case, in what follows one should replace 
$X$ with its smooth locus $X_0$, $W$ with a dense open subset 
$W_0\subset q(p^{-1}(X_0))$ and
$U$ with $U_0=q^{-1}(X_0)\cap p^{-1}(W_0)\cap U$.

Let $\eta_X:\Omega^{r}_X \to \sO_X(K_\sF)$ be the Pfaff field associated to $\sF$.
It induces a Pfaff field of rank $r$ on $W\times X$:
$$
\eta_{W\times X}:\Omega_{W\times X}^{r}
=\wedge^{r}(p^{*}\Omega_{W}^{1}\oplus q^{*}\Omega_{X}^{1})
\to \wedge^{r}(q^{*}\Omega_{X}^{1})\simeq q^{*}\Omega_{X}^{r}
\to q^*\sO_X(K_\sF).
$$ 
We claim that $U$ is invariant under $\eta_{W\times X}$. 
Indeed, let $K$ be the kernel of the natural morphism 
${\Omega_{W\times X}^{r}}|_{U}\twoheadrightarrow \Omega_U^{r}$.
The composite map 
$K\to {\Omega_{W\times X}^{r}}|_{U} \to e^*\sO_X(K_\sF)$
vanishes on a Zariski dense subset of $U$ by Lemma \ref{lemma:singularlocusfoliationveruspfafffield}.
Since $e^*\sO_X(K_\sF)$ is torsion-free, it vanishes identically, and thus the restriction 
$\eta_{W\times X}|_U:{\Omega_{W\times X}^{r}}|_{U} \to e^*\sO_X(K_\sF)$ factors through 
${\Omega_{W\times X}^{r}}|_{U}\twoheadrightarrow \Omega_U^{r}$.
Similarly, the morphism $\eta_U: \Omega_U^{r} \to e^*\sO_X(K_\sF)$ factors through 
the natural morphism $\Omega_U^{r} \twoheadrightarrow \Omega_{U/W}^{r}$.
Lemma \ref{lemma:singularlocusfoliationveruspfafffield} then implies that, for a general point $w\in W$,
$e\big(\pi^{-1}(w)\big) \subset X$ is the closure of a leaf of $\sF$.
\end{proof}

Next we come to the definition of a general log leaf of an  algebraically integrable foliation.

\begin{defn}\label{defn:log_leaf}
Let $X$ be normal projective variety, $\sF$ a $1$-Gorenstein algebraically integrable foliation of rank $r$ on $X$, and 
$\eta_\sF:\Omega_X^{r} \to \sO_X(K_\sF)$ the corresponding Pfaff field.
Let $F$ be the closure of a general leaf of $\sF$, and $n:\tilde F\to F\subset X$ its normalization.
By Lemma \ref{lemma:singularlocusfoliationveruspfafffield}, $F$ is invariant under $\eta_\sF$, i.e., the restriction
$\eta_\sF|_{F} : {\Omega^r_X}|_{F}\to\sO_X(K_\sF)|_{F}$ factors through the natural map
${\Omega^r_X}|_{F}\to\Omega^r_F$.
By Lemma~\ref{lemma:extensionpfafffields} below, the induced map
$\eta: \Omega^r_F\to\sO_X(K_\sF)|_{F}$ extends uniquely to a generically surjective map
$\tilde \eta: \Omega^r_{\tilde F}\to n^*\sO_X(K_\sF)$.
Hence there is a canonically defined effective Weil divisor $\tilde \Delta$ on $\tilde F$ such that 
$\sO_{\tilde F}(K_{\tilde F}+\tilde \Delta)\simeq n^*\sO_X(K_\sF)$. Namely, $\tilde \Delta$ is the divisor of zeroes of $\tilde\eta$.

We call the pair $(\tilde F,\tilde \Delta)$ a \emph{general log leaf} of $\sF$.
\end{defn}

\begin{lemma}[{\cite[Proposition 4.5]{adk08}}]\label{lemma:extensionpfafffields}
Let $X$ be a variety and $n:\widetilde X\to X$ its normalization.  Let
$\sL$ be a line bundle on $X$, $r$ a positive integer, and
$\eta : \Omega^r_X\to \sL$ a Pfaff field.
Then $\eta$ extends uniquely  to a Pfaff field  
$\tilde\eta : \Omega^r_{\tilde X}\to n^*\sL$ of rank $r$.
\end{lemma}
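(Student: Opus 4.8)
The plan is to reduce the statement to the local claim that $\eta$, viewed as a section of $(\Omega^r_X)^\ast\otimes\sL$, pulls back to a well-defined section of $(\Omega^r_{\tilde X})^\ast\otimes n^\ast\sL$ on $\tilde X$, and that such an extension is unique. Uniqueness is immediate: $\tilde X$ is integral and $n$ is birational, so any two extensions agree on the dense open set where $n$ is an isomorphism, hence agree everywhere since $(\Omega^r_{\tilde X})^\ast\otimes n^\ast\sL$ is torsion-free. So the content is existence, and existence is a local question on $\tilde X$, so we may assume $X=\Spec A$ is affine and $\tilde X=\Spec B$ with $B$ the integral closure of $A$ in its fraction field $K$.

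First I would dualize the problem. Giving $\eta:\Omega^r_X\to\sL$ is the same as giving a map $\sL^{-1}\to(\Omega^r_X)^\ast=\wedge^r T_X$ (more precisely a map $\sL^{-1}\otimes\Omega^r_X\to\sO_X$, equivalently an element of $\Hom_A(\wedge^r\Omega_{A},\,L)$ where $L$ is the invertible module of $\sL$); and similarly $\tilde\eta$ corresponds to an element of $\Hom_B(\wedge^r\Omega_{B},\,L\otimes_A B)$. Since $L$ is invertible I may as well assume $L=A$ and work with a homomorphism $\wedge^r\Omega_A\to A$, i.e. an alternating $A$-$r$-derivation $D:A^{\times r}\to A$, and I must extend it to an alternating $B$-$r$-derivation $D:B^{\times r}\to B$. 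The key point is that $\wedge^r$ of the universal derivation gives, for each choice of $r$ functions $f_1,\dots,f_r\in B$, an element $D(f_1,\dots,f_r)$ that a priori lies in $K$ (because each $df_i$ lies in $\Omega_B\otimes K=\Omega_A\otimes K$, on which $D$ is defined by $K$-linear extension), and one must show it actually lies in $B$.

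This integrality statement is exactly where the normality of $\tilde X$ enters, and it is the main obstacle. The argument I expect to use: the extended $r$-form $\tilde\eta$ is automatically defined and regular away from a closed set of codimension $\ge 2$ in $\tilde X$ — indeed it is regular wherever $n$ is an isomorphism, and more generally wherever $\tilde X$ is smooth, because there $\Omega_{\tilde X}$ is locally free and one can use that $B$ is regular in codimension one together with the fact that $\Omega_B\otimes K\cong\Omega_A\otimes K$ to see $\tilde\eta$ extends across codimension-one points (a local computation with a local parameter $t$: $d t$ and the $df$'s of a regular system of parameters span, and $D$ evaluated on them is a rational function with no pole since it already has no pole on the birational locus — one checks the order of the pole is forced to be zero). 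Then, since $\tilde X$ is normal, the sheaf $(\Omega^r_{\tilde X})^\ast\otimes n^\ast\sL$ is reflexive (being the dual of a coherent sheaf, cf.\ \cite[Corollary 1.2]{hartshorne80} as used in Remark \ref{lemma:saturated_reflexive}), hence satisfies $S_2$, so a section defined outside a codimension-$\ge 2$ set extends uniquely across it. Combining: $\tilde\eta$ is defined on all of $\tilde X$, and by construction it restricts to $\eta$ on the birational locus, hence (by torsion-freeness again) it is the unique Pfaff field extending $\eta$. For the claim that the extension is still \emph{nonzero} (so that it is genuinely a Pfaff field in the sense of the definition), note that it agrees with the nonzero $\eta$ on a dense open set.
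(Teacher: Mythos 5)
The paper does not actually prove this lemma: it is quoted verbatim from \cite[Proposition 4.5]{adk08}, so there is no internal proof to compare against. Judged on its own terms, your proposal has the right skeleton --- uniqueness via torsion-freeness of $n^*\sL$, and the reduction of existence to a codimension-one statement on $\widetilde X$ using that $\sH\hspace{-0.1cm}\textit{om}(\Omega^r_{\widetilde X},n^*\sL)$ is reflexive, hence normal, on the normal variety $\widetilde X$ --- but the codimension-one step, which is the entire content of the cited result, is asserted rather than proved. The non-normal locus of $X$ may well be a divisor, so its preimage in $\widetilde X$ is a divisor, and ``extending across codimension-one points'' is not a boundary case handled by a soft argument: it is the theorem. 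For $r=1$ this step is precisely Seidenberg's theorem on extending derivations to the integral closure (which the present paper cites separately, as \cite{seidenberg66}, in Lemma~\ref{lemma:singular_locus_normalization}); that is a genuine theorem in characteristic $0$, not a one-line valuation computation. For $r\ge 2$ even Seidenberg does not formally apply, since an alternating $r$-derivation $\wedge^r\Omega_A\to A$ is not assembled from $1$-derivations.

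To see concretely why your justification (``it already has no pole on the birational locus --- one checks the order of the pole is forced to be zero'') cannot work as stated, take the cusp $A=\c[t^2,t^3]\subset B=\c[t]$, $r=1$, and let $D$ be the $K$-linear derivation with $D(t)=1/(2t)$. Then $D(t^n)=(n/2)\,t^{n-2}\in B$ for all $n\ge 2$, so $D$ carries all of $A$ into $B$ --- in particular it is regular on the whole birational locus and its values on $dA$ have no poles anywhere on $\widetilde X$ --- yet $D$ does not extend to $\Omega^1_B\to B$. Of course this $D$ is not a Pfaff field on $X$ (its values do not lie in $A$: $D(t^3)=\tfrac32 t\notin A$), which is exactly the point: the extension theorem must use the full hypothesis $D(A)\subseteq A$ (here, that $A$ contains no degree-one monomials, which is what kills the polar part of $D(t)$ once one uses \emph{both} $D(t^2)\in A$ and $D(t^3)\in A$), and your sketch never invokes it --- it only uses regularity of the values away from the exceptional set, which the example shows is insufficient. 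So there is a genuine gap at the heart of the argument; filling it requires an honest local analysis at height-one primes of $B$ over the non-normal locus (or a reduction to such an analysis), which is what \cite[Proposition 4.5]{adk08} supplies.
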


Next we define notions of singularity for $1$-Gorenstein algebraically integrable foliations
according to the singularity type of their general log leaf. First we
recall some definitions of singularities of pairs, developed in the context of the minimal model program.
We refer to  \cite[section 2.3]{kollar_mori} for details.

\begin{say}[Singularities of pairs.]
Let $X$ be a normal projective variety, and
$\Delta=\sum a_i\Delta_i$ an effective $\bQ$-divisor on $X$, i.e., $\Delta$ is  a nonnegative $\bQ$-linear combination 
of distinct prime Weil divisors $\Delta_i$'s on $X$. 
Suppose that $K_X+\Delta$ is $\bQ$-Cartier, i.e.,  some nonzero multiple of it is a Cartier divisor on $X$. 

Let $f:\tilde X\to X$ be a log resolution of the pair $(X,\Delta)$. 
This means that $\tilde X$ is a smooth projective
variety, $f$ is a birational projective morphism whose exceptional locus is the union of prime divisors $E_i$'s, 
and the divisor $\sum E_i+f^{-1}_*\Delta$ has simple normal crossing 
support.  
There are uniquely defined rational numbers $a(E_i,X,\Delta)$'s such that
$$
K_{\tilde X}+f^{-1}_*\Delta = f^*(K_X+\Delta)+\sum_{E_i}a(E_i,X,\Delta)E_i.
$$
The $a(E_i,X,\Delta)$'s do not depend on the log resolution $f$,
but only on the valuations associated to the $E_i$'s. 

We say that $(X,\Delta)$ is \emph{log terminal} (or \emph{klt})  if  all $a_i<1$, and, for some  log resolution 
$f:\tilde X\to X$ of $(X,\Delta)$, $a(E_i,X,\Delta)>-1$ 
for every $f$-exceptional prime divisor $E_i$.
We say that  $(X,\Delta)$ is \emph{log canonical} if  all $a_i\leq 1$, and, for some  log resolution 
$f:\tilde X\to X$ of $(X,\Delta)$, $a(E_i,X,\Delta)\geq -1$ 
for every $f$-exceptional prime divisor $E_i$.
If these conditions hold for some log resolution of $(X,\Delta)$, then they hold for every  
log resolution of $(X,\Delta)$.
\end{say}

\begin{defn}\label{def:sing_fol}
Let $X$ be normal projective variety, $\sF$ a $1$-Gorenstein algebraically integrable foliation  on $X$,
and $(\tilde F,\tilde \Delta)$ its general log leaf. 
We say that $\sF$ has \emph{log terminal (respectively log canonical) singularities along a general leaf} if
$(\tilde F,\tilde \Delta)$ is log terminal (respectively log canonical).
In particular, if $\sF$ has log terminal singularities along a general leaf, then $\tilde \Delta=0$.
\end{defn}

\begin{rem}\label{remark:definition_log_leaf}
Let $X$ be normal projective variety,  and $\sF$ a $1$-Gorenstein algebraically integrable foliation of rank $r$ on $X$.
Let $W$ be the closure in $\Chow(X)$ of the subvariety parametrizing general leaves of $\sF$, and
$U \subset W\times X$ the universal cycle. Denote by $e:U\to X$ the natural morphism.
We saw in the proof of Lemma~\ref{lemma:leaffoliation} that $\sF$ induces a Pfaff field 
$\eta_U: \Omega_U^{r} \to e^*\sO_X(K_\sF)$, which factors through 
the natural morphism $\Omega_U^{r} 
\twoheadrightarrow \Omega_{U/W}^{r}$.

Let $\tilde W$ and $\tilde U$ be the normalizations of $W$ and $U$, respectively. Denote by 
$\tilde \pi:\tilde U\to \tilde W$ and $\tilde e:\tilde U\to X$ the induced morphisms. By Lemma
\ref{lemma:extensionpfafffields},
$\eta_U: \Omega_U^{r} \to e^*\sO_X(K_\sF)$
extends uniquely to a Pfaff field
$\eta_{\tilde U}:\Omega_{\tilde U}^{r} \to {\tilde e}^*\sO_X(K_\sF)$.
As before, this morphism factors through the natural morphism
$\Omega_{\tilde U}^{r} \twoheadrightarrow \Omega_{\tilde U/\tilde W}^{r}$,
yielding a generically surjective map 
$$
\Omega_{\tilde U/\tilde W}^{r}\to {\tilde e}^*\sO_X(K_\sF).
$$
Thus there is a canonically defined effective Weil divisor $\Delta$ on $\tilde U$ such that 
$\det(\Omega_{\tilde U/\tilde W}^1)[\otimes] \sO_{\tilde U}(\Delta) \simeq  {\tilde e}^*\sO_X(K_\sF)$.

Let $w$ be a general point of $\tilde W$, set $\tilde U_w := \tilde \pi^{-1}(w)$ and $\Delta_w:=\Delta|_{\tilde U_w}$.
Then $(\tilde U_w, \Delta_w)$ coincides with the general log leaf $(\tilde F,\tilde \Delta)$ defined above.
In particular, by \cite[Corollary 1.4.5]{BCHM}, $\sF$ has log terminal (respectively log canonical) 
singularities along a general leaf if and only if $(\tilde U,\Delta)$ has log terminal (respectively log canonical) 
singularities over the generic point of $\tilde W$.

The same construction can be carried out by replacing $W$ with a general closed subvariety of it.
\end{rem}

Next we compare the notions of singularities for algebraically integrable foliations introduced in 
Definition~\ref{def:sing_fol} with those introduced earlier
in \cite{mcquillan08}.
We recall McQuillan's definitions, which do not require algebraic integrability.

\begin{say}[{\cite[Definition I.1.2]{mcquillan08}}]
Let $(X,\sF)$ be a  foliated variety.
Given a birational morphism $\varphi:\tilde X \to X$,
there is a unique foliation $\tilde\sF$ on $\tilde X$ that agrees with $\varphi^*\sF$ on the open 
subset of $\tilde X$ where $\varphi$ is an isomorphism. 
We say that $\varphi:(\tilde X, \tilde \sF) \to (X,\sF)$ is a birational morphism of foliated varieties.

From now on assume moreover that $K_\sF$ is $\bQ$-Cartier and $\varphi$ is projective.
Then there are uniquely defined rational numbers $a(E,X,\sF)$'s such that
$$
K_{\tilde{\sF}}=\varphi^*K_{\sF}+ \sum_E a(E,X,\sF)E,
$$
where $E$ runs through all exceptional prime divisors for $\varphi$.
The $a(E,X,\Delta)$'s do not depend on the birational morphism $\varphi$,
but only on the valuations associated to the $E$'s. 

For an exceptional prime divisor $E$ over $X$, define 
$$\epsilon(E):= \left\{
\begin{array}{l}
0 \text{ \ if } E \text{ is invariant by the foliation, } \\
1 \text{ \ if } E \text{ is not invariant by the foliation.}
\end{array}
\right. $$

The foliated variety
$(X,\sF)$ is said to be
\begin{equation*}
\begin{cases}
\text{terminal}\\
\text{canonical}\\
\text{log terminal}\\
\text{log canonical}
\end{cases}
\text{in the sense of McQuillan if, for all $E$ exceptional over $X$, } a(E,X,\sF)
\begin{cases}
> 0,\\
\ge 0,\\
>-\epsilon(E),\\
\geq-\epsilon(E).
\end{cases}
\end{equation*}
\end{say}

\begin{lemma} Let $(X,\sF)$ be a $1$-Gorenstein foliated variety. If $\sF$ is regular, then 
$(X,\sF)$ is canonical in the sense of McQuillan.
\end{lemma}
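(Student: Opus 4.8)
The plan is to reduce to a local computation near a point where $\sF$ is regular, using Lemma~\ref{lemma:regular_smooth} to get a nice local normal form, and then to show that any exceptional divisor over $X$ has discrepancy $a(E,X,\sF)\geq 0$ — in fact $>-\epsilon(E)$, which is the stronger ``log terminal'' bound, but for ``canonical'' we only need $\geq 0$. The key point is that regularity of $\sF$ means the associated Pfaff field $\eta:\Omega^r_X\to\sO_X(K_\sF)$ has empty singular locus, so $\eta$ is surjective, and dually the inclusion $\sF\subset T_X$ is a subbundle inclusion at every point. Combined with $1$-Gorenstein-ness, this says $\sF$ is a locally free subbundle of $T_X$ of rank $r=r_\sF$ everywhere.

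First I would fix a birational projective morphism $\varphi:\tilde X\to X$ and an exceptional prime divisor $E$, and pick a general point $\tilde x\in E$; set $x=\varphi(\tilde x)$. Since $\sF$ is regular at $x$ and locally free there, Lemma~\ref{lemma:regular_smooth} gives an analytic neighborhood $U\ni x$ with a smooth morphism $p:U\to W$ of relative dimension $r$ such that $\sF|_U=T_{U/W}$. Choosing analytic coordinates $(z_1,\dots,z_n)$ on $U$ adapted to $p$, so that $p$ is the projection to $(z_{r+1},\dots,z_n)$, we have $\sF|_U=\langle\partial_{z_1},\dots,\partial_{z_r}\rangle$ and hence $\det(\sF)|_U$ is generated by the local section $\partial_{z_1}\wedge\cdots\wedge\partial_{z_r}$; equivalently $\sO_U(K_\sF)$ is generated by $dz_1\wedge\cdots\wedge dz_r$ (pulled back appropriately), and $\eta$ is ``contraction against $\partial_{z_1}\wedge\cdots\wedge\partial_{z_r}$'' in these coordinates.

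Next I would compute the discrepancy. Pulling back to $\tilde X$ near $\tilde x$, write $\varphi$ in the local coordinates above. The Pfaff field $\eta$ pulls back to $\tilde\eta:\Omega^r_{\tilde X}\to\varphi^*\sO_X(K_\sF)$, and by the functoriality discussion preceding Definition~\ref{def:regular} (the canonical Pfaff field attached to the $1$-Gorenstein foliation $\tilde\sF$ on $\tilde X$), one has $\sO_{\tilde X}(K_{\tilde\sF})=$ the image sheaf of $\tilde\eta$, twisted appropriately; more precisely $K_{\tilde\sF}=\varphi^*K_\sF+\sum_E a(E,X,\sF)E$ where $a(E,X,\sF)$ is exactly the order of vanishing along $E$ of the section $\varphi^*(dz_1\wedge\cdots\wedge dz_r)$ viewed as a section of $\Omega^r_{\tilde X}$, measured against a local generator of $\sO_{\tilde X}(K_{\tilde\sF})=\det\Omega^r_{\tilde\sF}{}^{-1}$... —actually the cleaner formulation: $a(E,X,\sF)=\mathrm{ord}_E\big(\tilde\eta(\varphi^*\omega)\big)$ where $\omega$ is a local generator of $\sO_X(K_\sF)$ near $x$ pulled back into $\Omega^r_X$ via the section corresponding to $\partial_{z_1}\wedge\cdots\wedge\partial_{z_r}$, i.e. $\omega=dz_1\wedge\cdots\wedge dz_r$. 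Since $\varphi^*(dz_1\wedge\cdots\wedge dz_r)$ is a genuine holomorphic $r$-form on $\tilde X$ (pullback of a holomorphic form under a morphism), it is a regular section of $\Omega^r_{\tilde X}$, hence its order of vanishing along $E$ is $\geq 0$, and therefore $a(E,X,\sF)\geq 0$. This holds for every exceptional $E$, so $(X,\sF)$ is canonical in the sense of McQuillan.

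The main obstacle I anticipate is bookkeeping: matching the various identifications of $\sO_X(K_\sF)$ (as $\det(\sF)$, as the target of $\eta$, and as the line bundle whose local generator is $dz_1\wedge\cdots\wedge dz_r$) so that the discrepancy $a(E,X,\sF)$ genuinely equals the order of vanishing of a \emph{holomorphic} form, with no spurious negative contribution coming from the twist relating $\Omega^r_{\tilde X}$ and $\sO_{\tilde X}(K_{\tilde\sF})$. The resolution is that near the general point of $E$ we may assume $\tilde\sF$ is still locally free (shrinking), and the canonical Pfaff field $\tilde\eta$ factors the natural surjection $\Omega^r_{\tilde X}\to\det\Omega^1_{\tilde\sF}$ followed by the identification $\det\Omega^1_{\tilde\sF}\simeq\sO_{\tilde X}(K_{\tilde\sF})$; chasing the generator $\varphi^*(dz_1\wedge\cdots\wedge dz_r)$ through this composition shows its image is a section vanishing exactly to order $a(E,X,\sF)$ along $E$, and since the starting form is holomorphic this order is nonnegative. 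Once that identification is nailed down the conclusion is immediate.
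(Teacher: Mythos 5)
Your argument is, at its core, the same as the paper's: regularity makes the Pfaff field $\eta:\Omega^r_X\to\sO_X(K_\sF)$ surjective, so a local generator of $\sO_X(K_\sF)$ is $\eta(\alpha)$ for a holomorphic $r$-form $\alpha$; pulling back and applying the Pfaff field of $\tilde\sF$ (after checking, via torsion-freeness of $\sO_{\tilde X}(K_{\tilde\sF})$, that the composite $\varphi^*\Omega^r_X\to\Omega^r_{\tilde X}\to\sO_{\tilde X}(K_{\tilde\sF})$ factors through $\varphi^*\eta$) produces a genuinely holomorphic section of $\sO_{\tilde X}(K_{\tilde\sF})$, whence every discrepancy is nonnegative. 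The paper runs exactly this factorization globally, as a statement about sheaf maps, rather than in local coordinates.

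One step of your write-up is not justified: you assert that regularity together with $1$-Gorenstein-ness forces $\sF$ to be a locally free subbundle of $T_X$ everywhere, and you need this to invoke Lemma~\ref{lemma:regular_smooth}, whose hypotheses explicitly include local freeness at the point \emph{in addition to} regularity. On a singular normal $X$ (the ambient variety is only assumed normal) there is no reason for a regular $1$-Gorenstein foliation to be locally free at every point, and the generic point of $\varphi(E)$ may well lie in $\mathrm{Sing}(X)$. Fortunately the coordinate normal form is not actually needed: all your computation uses is that a local generator of $\sO_X(K_\sF)$ near $x=\varphi(\tilde x)$ lifts under $\eta$ to a holomorphic section of $\Omega^r_X$, and that is exactly surjectivity of $\eta$ at $x$, i.e.\ regularity, plus Nakayama. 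Dropping the appeal to Lemma~\ref{lemma:regular_smooth} and arguing directly from surjectivity closes the gap and recovers the paper's proof.
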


\begin{proof}
Let $\varphi: (\tilde{X}, \tilde{\sF}) \to (X,\sF)$ be a birational projective morphism of foliated varieties with 
$\tilde X$ smooth. 
Let $\eta_\sF:\Omega^{r_\sF}_X \to \sO_X(K_\sF)$ and 
$\eta_{\tilde\sF}:\Omega^{r_\sF}_{\tilde X} \to \sO_{\tilde X}(K_{\tilde \sF})$ be the
associated Pfaff fields.
Since $\sF$ is regular, $\varphi^*\eta_\sF:\varphi^*\Omega_X^{r_\sF}\to \varphi^*\sO_X(K_\sF)$
is a surjective morphism.

We claim that the the composite map
$\varphi^*\Omega_X^{r_\sF} \to \Omega^{r_\sF}_{\tilde X} \to \sO_{\tilde X}(K_{\tilde \sF})$
factors through $\varphi^*\eta_\sF:\varphi^*\Omega_X^{r_\sF} \to \varphi^*\sO_X(K_\sF)$.
Indeed, denote by $K$ be the kernel of $\varphi^*\eta_\sF:\varphi^*\Omega_X^{r_\sF}\to \varphi^*\sO_X(K_\sF)$.
The composite map
$K \to
\varphi^*\Omega_X^{r_\sF} \to \Omega^{r_\sF}_{\tilde X} \to \sO_{\tilde X}(K_{\tilde \sF})$
vanishes over a dense subset of $\tilde X$.  
Since $\sO_{\tilde X}(K_{\tilde \sF})$ is torsion-free, it vanishes identically on $\tilde X$.
This proves the claim.
So we obtain a nonzero map $\varphi^*\sO_X(K_\sF) \to \sO_{\tilde X}(K_{\tilde \sF})$.
Thus there is an effective divisor $E$ on $\tilde X$ such that $K_{\tilde \sF}=\varphi^*K_{\sF}+E$.
\end{proof}

\begin{prop}
Let $X$ be a normal projective variety,  and $\sF$ a $1$-Gorenstein algebraically integrable foliation on $X$.
Let $W$ be the closure in $\Chow(X)$ of the subvariety parametrizing general leaves of $\sF$. 
If $(X,\sF)$ is log terminal (respectively log canonical) in the sense of McQuillan, then $\sF$
has log terminal (respectively log canonical) singularities along a general leaf. 
\end{prop}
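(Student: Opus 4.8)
The plan is to compare the two constructions directly by working on a common birational model. Let $(\tilde F,\tilde\Delta)$ be the general log leaf of $\sF$, obtained from the universal cycle $U\subset W\times X$ as in Remark~\ref{remark:definition_log_leaf}; concretely $\tilde F=\tilde U_w$ for a general point $w\in\tilde W$, and $\tilde\Delta=\tilde\Delta_w$ is cut out on $\tilde F$ by the divisor of zeroes of the extended relative Pfaff field $\Omega^r_{\tilde U/\tilde W}\to \tilde e^*\sO_X(K_\sF)$. By Remark~\ref{remark:definition_log_leaf} and \cite[Corollary 1.4.5]{BCHM}, it suffices to prove that $(\tilde U,\Delta)$ has log terminal (respectively log canonical) singularities over the generic point of $\tilde W$, and in fact one only needs this after further shrinking and base change, so we are free to compute discrepancies of divisors $E$ on $\tilde F$ over the \emph{generic leaf}.

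First I would fix a general leaf $F$, and take a common log resolution: a smooth variety $Y$ with birational morphisms $\mu:Y\to \tilde F$ and, simultaneously, such that the composition $Y\to \tilde F\to X$ factors through a birational morphism of foliated varieties $\varphi:(\tilde X,\tilde\sF)\to(X,\sF)$ with $\tilde X$ smooth, the strict transform $\tilde F_Y\subset\tilde X$ of $F$ smooth, and $\tilde F_Y\to Y$ an isomorphism (this is possible leaf by leaf over the generic point of $\tilde W$, after shrinking $W$; equivalently, resolve $U\to X$ and the foliation simultaneously). The key point is then a discrepancy comparison on $\tilde F_Y$: for an exceptional prime divisor $E$ over $\tilde F$, lying on $\tilde F_Y$, I claim
$$
a(E,\tilde F,\tilde\Delta) \;=\; a(E_{\tilde X},X,\sF)\;+\;\epsilon(E_{\tilde X})\;-\;(\text{mult}_E \text{ of }\tilde\Delta_Y-\mu^*\tilde\Delta\text{ correction}),
$$
more precisely I would show that the divisor of zeroes of the Pfaff field $\tilde\eta$ restricted to $\tilde F_Y$ differs from $\mu^*\tilde\Delta$ by exceptional terms controlled by $a(E_{\tilde X},X,\sF)$ and by whether $E_{\tilde X}$ is $\tilde\sF$-invariant. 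The mechanism is the one already used twice in the excerpt: restrict the Pfaff field on $\tilde X$ associated to $\tilde\sF$ to the invariant divisor $\tilde F_Y$, use that it factors through $\Omega^r_{\tilde F_Y}$, and compare divisors of zeroes. An exceptional $E_{\tilde X}$ that is \emph{not} $\tilde\sF$-invariant meets $\tilde F_Y$ in a divisor, and the extra vanishing it forces on $\tilde\eta|_{\tilde F_Y}$ is exactly accounted by the $\epsilon(E_{\tilde X})=1$ shift; an invariant $E_{\tilde X}$ either doesn't meet $\tilde F_Y$ in a divisor or contributes with $\epsilon=0$.

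Granting this comparison, the conclusion is immediate: if $(X,\sF)$ is log canonical in the sense of McQuillan then $a(E_{\tilde X},X,\sF)\ge-\epsilon(E_{\tilde X})$ for all exceptional $E_{\tilde X}$, so $a(E,\tilde F,\tilde\Delta)\ge -1$ for all $E$ obtained this way, hence for all exceptional divisors over $\tilde F$ (any such is captured on a high enough resolution); together with the fact that $\tilde\Delta$ has all coefficients $\le 1$ — which follows from the same local computation, since a component of $\tilde\Delta$ of coefficient $>1$ would be the image of an invariant exceptional divisor with discrepancy $<-1$, contradicting log canonicity applied to its valuation — this shows $(\tilde F,\tilde\Delta)$ is log canonical. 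The log terminal case is identical with strict inequalities, using that McQuillan log terminal forces $a(E_{\tilde X},X,\sF)>-\epsilon(E_{\tilde X})$, hence in particular $a>0$ for invariant divisors, so $\tilde\Delta=0$ and all discrepancies over $\tilde F$ are $>-1$, i.e. $\tilde F$ is klt.

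The main obstacle is the discrepancy comparison itself — specifically, bookkeeping the divisor of zeroes of $\tilde\eta|_{\tilde F_Y}$ against $\mu^*\tilde\Delta$ and the log-discrepancy divisor $\sum a(E_{\tilde X},X,\sF)E_{\tilde X}$ restricted to $\tilde F_Y$, and correctly tracking the role of $\epsilon(E_{\tilde X})$ (i.e., whether a $\varphi$-exceptional divisor is $\tilde\sF$-invariant and whether it restricts to a nonzero divisor on the leaf). This requires choosing the resolution carefully enough that $\tilde F_Y$ meets the exceptional divisors transversally and that all relevant divisors over $\tilde F$ already appear; the local Pfaff-field computation is then the same bilinear-form argument as in Lemma~\ref{lemma:singularlocusfoliationveruspfafffield}, applied on $\tilde X$ to the pair $(\tilde X,\tilde F_Y)$.
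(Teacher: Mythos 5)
Your overall shape is right --- the pair discrepancies of $(\tilde F,\tilde\Delta)$ should be read off from foliated discrepancies on a birational model --- but the crux of your argument, the displayed comparison $a(E,\tilde F,\tilde\Delta)=a(E_{\tilde X},X,\sF)+\epsilon(E_{\tilde X})-(\text{correction})$, is asserted rather than proved, and as stated it is not the right formula. In the correct computation there is no $\epsilon$-shift and no correction: the foliated discrepancy $a(E,X,\sF)$ of an exceptional divisor $E$ \emph{equals} the corresponding pair discrepancy over $(\tilde F,\tilde\Delta)$ (equivalently, equals minus the coefficient of the corresponding component of $\tilde\Delta$ when the restriction of $E$ to the leaf is not $\mu$-exceptional); the function $\epsilon$ enters only at the very end, through the inequality $-\epsilon(E)\ge -1$. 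You can test this on the degree-$0$ foliation $\sO(1)\subset T_{\p^2}$: blowing up the singular point gives a non-invariant exceptional divisor with $a(E,X,\sF)=-1$, while the log leaf is $(\p^1,1\cdot p)$ with coefficient exactly $1=-a(E,X,\sF)$. This example also exposes a second, concrete error: you attribute the components of $\tilde\Delta$ to \emph{invariant} exceptional divisors, but it is exactly the other way around. A divisor over $X$ that cuts a divisor on the general leaf meets every general leaf in a proper subvariety, so it is not a union of leaves and hence not invariant ($\epsilon=1$); the invariant exceptional divisors are vertical over the leaf space and miss the general leaf entirely. Your derivations of ``coefficients of $\tilde\Delta$ are $\le 1$'' and, in the log terminal case, ``$\tilde\Delta=0$'' both rest on this reversed assignment (the latter also needs the integrality of $\tilde\Delta$, which you do not invoke).

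The reason the paper's proof avoids this bookkeeping is that it does not work on a resolution of $X$ adapted to a single leaf, but on a log resolution $Y$ of the normalized universal family $\tilde U\to\tilde W$ (which is a birational model of $X$, since $e:\tilde U\to X$ is birational). There the induced foliation is literally $T_{Y/\tilde W}$, so for general $w$ one has $K_{\sF_Y}|_{Y_w}=K_{Y_w}$ on the nose; restricting $K_{\sF_Y}=g^*K_\sF+\sum a(E,X,\sF)E$ to the fiber $Y_w$ and substituting $K_{\tilde U_w}+\Delta_w=\tilde e_w^*K_\sF$ gives $K_{Y_w}=d_w^*(K_{\tilde U_w}+\Delta_w)+\sum a(E,X,\sF)E|_{Y_w}$ with no correction term, and the statement follows immediately from $a(E,X,\sF)\ge -\epsilon(E)\ge -1$ (resp.\ with strict inequalities). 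If you want to keep your single-leaf setup, you must prove the analogue of this identity on the strict transform $\tilde F_Y\subset\tilde X$, i.e.\ control the log leaf of $\tilde\sF$ on $\tilde X$ and compare it with $\mu^*\tilde\Delta$; that is precisely the ``correction'' you leave undefined, and it is the whole content of the proposition.
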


\begin{proof}
We follow the notation in Remark~\ref{remark:definition_log_leaf}.

Let $w\in \tilde W$ be a general point and let $(\tilde U_w,\Delta_w)$ be the corresponding log leaf.
We denote by $\tilde e_w:\tilde U_w\to X$ the natural morphism. 
Recall that 
\begin{equation}\label{eq:can2}
K_{\tilde U_w}+\Delta_w= \tilde e_w^*({K_{\sF}}).
\end{equation}
Suppose that $(X,\sF)$ is log terminal (respectively log canonical) in the sense of McQuillan.
We have to show that the pair $(\tilde U_w,\Delta_w)$ is log terminal (respectively log canonical).

Let $d:Y \to \tilde U$ be a log resolution of singularities, and consider the commutative diagram

\centerline{
\xymatrix{
Y\ar[r]^{d}\ar@/^1.5pc/[rr]^{g} 
& \tilde U \ar[r]^{\tilde e}\ar[d]_{\tilde \pi} & X. \\
& \tilde W &
}
}
\noindent 
Denote by $\sF_Y$  the foliation  induced by $\sF$ on $Y$, and notice that $\sF_Y=T_{Y/\tilde W}$. 
Write 
$$
K_{\sF_Y}=g^*K_{\sF}+ \sum a(E,X,\sF)E,
$$
where $E$ runs through all exceptional prime divisors for $g$. Note that the support of
the divisor  $\Delta$ on $\tilde U$ defined in Remark~\ref{remark:definition_log_leaf} is exceptional over $X$, so 
the strict transforms  of its components in $Y$ appear among the $E$'s.

Set $Y_w:=d^{-1}(\tilde U_w)$,  $d_w:=d|_{Y_w}:Y_w\to\tilde U_w$, and $E_w:=E|_{\tilde U_w}$. 
Since $w\in \tilde W$ is general, ${K_{\sF_Y}}|_{Y_w}=K_{Y_w}$. 
Thus
\begin{equation}\label{eq:can1}
K_{Y_w}=d_w^*\tilde e_w^*({K_{\sF}})+\sum a(E,X,\sF) E_w.
\end{equation}
Notice that $d_w:Y_w \to \tilde U_w$ is a log resolution of singularities.
From (\ref{eq:can1}) and (\ref{eq:can2}) we deduce that
\begin{equation*}\label{eq:can3}
K_{Y_w}=d^*(K_{\tilde U_w}+\Delta_w)+\sum a(E,X,\sF) E_w.
\end{equation*}
This proves the result.
\end{proof}

\begin{rem}
The notions of singularities of foliations discussed above do not say anything about the singularities 
of the ambient space.
For instance, let $Y$ be a smooth variety, $T$ any normal variety, and set $X:=Y\times T$, 
with natural projection $p:X\to Y$.
Set $\sF:= p^*T_Y\subset T_X$.
Then $(X,\sF)$ is a regular 1-Gorenstein foliated variety,  canonical in the sense of McQuillan,
while $X$ may be very singular.
\end{rem}


\section{Examples}
\label{section:examples}

\begin{say}[Foliations of rank $r$ and index $r$ on $\p^n$]\label{example:ample_on_p^n}
Let $\sF\subsetneq T_{\p^n}$ be a Fano foliation of rank $r$ and index $\iota_{\sF}=r$ on $\p^n$.
These are classically known as \emph{degree 0 foliations on } $\p^n$.
By \cite[Th\'eor\`eme 3.8]{cerveau_deserti}, $\sF$ is defined by a linear projection 
$\p^n \dashrightarrow \p^{n-r}$. 
The singular locus of $\sF$ is a linear subspace $S$ of dimension $r-1$.
The closure
of the leaf through a point $p\not\in S$ is the $r$-dimensional linear subspace $L$ of $\p^n$ containing both $p$ and  $S$. 
Let $p_1,\ldots,p_r \in S$ be $r$ linearly
independent points in $S$, and $v_i\in H^0(\p^n,T_{\p^n}(-1))$ a nonzero section 
vanishing at $p_i$. 
Then the $v_i$'s define an injective map $\sO_{\p^n}(1)^{\oplus r}\to T_{\p^n}$
whose image is $\sF$. Thus the restricted map $\sF|_{L}\to T_L$ is induced
by the sections ${v_i}|_{L}\in H^0(L,T_L(-1))\subset H^0(L,{T_{\p^n}(-1)}|_{L})$. In particular,
the zero locus of the map $\det({\sF})|_{L}\to \det(T_L)$ is the codimension one linear 
subspace $S\cap L\subset L$. Thus the log leaf  $(\tilde F, \tilde \Delta)=(L,S\cap L)$ is log canonical, and 
$\sF$ has log canonical singularities along a general leaf.
\end{say}

\begin{say}[Foliations of rank $r$ and index $r-1$ on $\p^n$]\label{example:del_Pezzo_on_p^n}
Let $\sF\subsetneq T_{\p^n}$ be a Fano foliation of rank $r$ and index $\iota_{\sF}=r-1$ on $\p^n$.
By \cite[Theorem 6.2]{loray_pereira_touzet_2},
\begin{itemize}
\item either $\sF$ is defined by a rational dominant map $\p^n\dashrightarrow \p(1^{n-r},2)$, 
defined by $n-r$ linear forms and one quadric form,
where $\p(1^{n-r},2)$ denotes the weighted projective space of type 
$(\underbrace{1,\ldots,1}_{r \text{ times}},2)$.
\item or $\sF$ is the linear pullback of a foliation on $\p^{n-r+1}$ induced by a global holomorphic vector field.
\end{itemize}

Note that a foliation on $\p^{n-r+1}$ induced by a global holomorphic vector field may or may not have algebraic leaves.
Moreover, algebraically integrable foliations of rank $r$ and index $r-1$ on $\p^n$
may or may not have log canonical singularities along a general leaf.
\end{say}

\begin{say}[Fano foliations on Grassmannians] \label{example:grass}
Let $m$ and $n$ be nonnegative integers, and $V$ a complex vector space of dimension $n+1$.
Let $G=G(m+1, V)$ be the Grassmannian of ($m+1$)-dimensional linear subspaces of $V$, with tautological exact sequence
$$0 \to \sK \to V\otimes \sO_G \to \sQ \to 0.$$
Let $k$ be an integer such that $0\le k \le n-m-1$, and 
$W$ a ($k+1$)-dimensional linear subspace of $V$.
Set 
$$\sF:=W\otimes \sK^*\subset V\otimes \sK^*.$$
The map $V\otimes \sK^*\to \sQ\otimes \sK^*$
induced by $V\otimes \sO_G \to \sQ$ yields a map
$\sF \to \sQ\otimes \sK^*\simeq T_G$. 
For a general point $[L]\in G$, $L\cap W=\{0\}$ since $k+m \le n-1$.
Thus the map $\sF\to T_G$ is injective at $[L]$. 
Since $\sF$ is locally free, $\sF\into T_G$ is injective.
Let $P$ be the linear span of $L$ and $W$ in $V$. It has dimension $m+k+2 \le n+1$.
Notice that the Grassmannian 
$G(m+1,P)\subset G$ is tangent to $\sF$ at 
a general point of $G(m+1,P)$. 

Suppose that $k\le n-m-2$ (or equivalently that $\dim(P)<\dim(V)$). 
Then $\sF$ is a subbundle of $T_G$ in codimension one, and thus saturated in $T_G$ by lemma \ref{lemma:saturation}. 
In particular $\sF$ is a Fano foliation on $G$ of rank $r=(m+1)(k+1)$.
Its singular locus $S$ is the set of points $[L]\in G$ such that
$\dim(L\cap W)\ge 1$.

Recall that $\textup{Pic}(G)=\bZ[\sO_G(1)]$ where $\sO_G(1)\simeq \det(\sQ)$ is the pullback of
$\sO_{\p(\wedge^{m+1}V)}(1)$ under the Pl\"ucker embedding. It follows that $\sF$ has index 
$\iota_{\sF}=k+1$. 
In particular, $\iota_{\sF}=r-1$
if and only if $m=1$ and $k=0$.
In this case,
$G=G(2,V)$ and $\sF$ is the rank $2$ foliation on $G$ whose 
general leaf 
is the $\p^2$ of  $2$-dimensional linear subspaces of  a general $3$-plane containing the line $W$.

Finally, observe that $S \cap G(m+1,P)$ is irreducible and has codimension one in $G(m+1,P)$.
Moreover, $\det(T_{G(m+1,P)})\simeq \sO_{G(m+1,P)}(m+k+2)$, and 
$\det(\sF)|_{G(m+1,P)}\simeq \sO_{G(m+1,P)}(k+1)$. It follows that the map
$\det(\sF)|_{G(m+1,P)}\to \det(T_{G(m+1,P)})$ vanishes at order $m+1$ along
$S \cap G(m+1,P)$. 
So the general log leaf of $\sF$ is  
$$
(\tilde F, \tilde \Delta)=\Big(G(m+1,P), (m+1)\cdot \big(S \cap G(m+1,P)\big)\Big).
$$
In particular, $\sF$ has log canonical singularities along a general leaf if and only if $m=0$, i.e., $G=\p^n$, and 
$\sF$ is the foliation described in \ref{example:ample_on_p^n} above.
In all other cases,  the closures of the leaves of $\sF$ do not have a 
common point in $G$.

When $m=1$ and $k=0$, we obtain a rank $2$ del Pezzo foliation on $G=G(2,V)$ with general log leaf $(\tilde F, \tilde \Delta)\simeq (\p^2, 2H)$,
where $H$ is a line in $\p^2$. 
\end{say}

Next we want to discuss Fano foliations on hypersurfaces of projective spaces.
In order to do so, it will be convenient to view foliations as given by differential forms.

\begin{say}[Foliations as $q$-forms] 
Let $X$ be a smooth variety of dimension $n\ge 2$, and 
$\sF\subsetneq T_X$ a foliation of rank $r$ on $X$. 
Set $N^*_\sF:=(T_X/\sF)^*$, and  $N_\sF:=(N^*_\sF)^*$.
These are called the \emph{conormal} and \emph{normal} sheaves of the foliation $\sF$, respectively.
The conormal sheaf $N^*_\sF$ is a saturated subsheaf of $\Omega^1_X$ of rank $q:=n-r$. 
The $q$-th wedge product of the inclusion
$N^*_\sF\subset \Omega^1_X$ gives rise to a nonzero twisted differential $q$-form $\omega$ with coefficients in the
line bundle $\sL:=\det(N_\sF)$, which is \emph{locally decomposable} and \emph{integrable}.
To say that $\omega\in H^0(X,\Omega^q_X\otimes \sL)$ is locally decomposable means that, 
in a neighborhood of a general point of $X$, $\omega$ decomposes as the wedge product of $q$ local $1$-forms 
$\omega=\omega_1\wedge\cdots\wedge\omega_q$.
To say that it is integrable means that for this local decomposition one has 
$d\omega_i\wedge \omega=0$ for $i\in\{1,\ldots,q\}$. 
Conversely, given a twisted $q$-form $\omega\in H^0(X,\Omega^q_X\otimes \sL)\setminus\{0\}$
which is locally decomposable and integrable, we define 
a foliation of rank $r$ on $X$ as the kernel
of the morphism $T_X \to \Omega^{q-1}_X\otimes \sL$ given by the contraction with $\omega$. 
\end{say}

\begin{lemma}\label{lemma:vanishing_hypersurface}
Fix $n \ge 3$, and let $X\subset\p^{n+1}$ be a smooth
hypersurface of degree $d \ge 3$. Let $k$ and $q$ be integers such that
$k \le q \le n-2$ and $q\ge 1$. Then $h^0(X,\Omega^q_X(k))=0$. 
\end{lemma}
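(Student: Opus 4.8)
The plan is to use the exact sequences relating differential forms on $X$ to differential forms on $\p^{n+1}$, together with Bott's formula for the cohomology of $\Omega^p_{\p^{n+1}}(m)$, and to run an induction on the degree $q$ of the form. First I would recall the conormal sequence $0\to \sO_X(-d)\to \Omega^1_{\p^{n+1}}|_X\to \Omega^1_X\to 0$ (using that $X$ is a smooth hypersurface of degree $d$, so its normal bundle is $\sO_X(d)$), and take $q$-th exterior powers to obtain the short exact sequence
\[
0\to \Omega^{q-1}_X(-d)\to \Omega^q_{\p^{n+1}}|_X\to \Omega^q_X\to 0.
\]
Twisting by $\sO_X(k)$ gives $0\to \Omega^{q-1}_X(k-d)\to \Omega^q_{\p^{n+1}}|_X(k)\to \Omega^q_X(k)\to 0$. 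So it suffices to show that $h^0\big(X,\Omega^q_{\p^{n+1}}|_X(k)\big)=0$ and that $h^1\big(X,\Omega^{q-1}_X(k-d)\big)=0$; the first is a Bott-type computation on $\p^{n+1}$ restricted to $X$, and the second should follow from the inductive hypothesis (in the form of a vanishing of $h^1$, obtained by also tracking one more cohomology group in the induction).

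Concretely, I would proceed as follows. To compute $h^0\big(X,\Omega^q_{\p^{n+1}}|_X(k)\big)$, use the restriction sequence $0\to \Omega^q_{\p^{n+1}}(k-d)\to \Omega^q_{\p^{n+1}}(k)\to \Omega^q_{\p^{n+1}}|_X(k)\to 0$, so that $h^0$ of the restriction is controlled by $h^0\big(\p^{n+1},\Omega^q_{\p^{n+1}}(k)\big)$ and $h^1\big(\p^{n+1},\Omega^q_{\p^{n+1}}(k-d)\big)$. By Bott's formula, $H^0\big(\p^{n+1},\Omega^q_{\p^{n+1}}(k)\big)=0$ whenever $k\le q$ and $q\ge 1$ — indeed it is nonzero only for $k>q\ge 1$ or for $k=0=q$ — and $H^1\big(\p^{n+1},\Omega^q_{\p^{n+1}}(k-d)\big)=0$ unless $k-d=0$ and $q=1$, which is excluded since $q\le n-2$ forces nothing but $k=d$ would need checking: here $k\le q\le n-2<n+1$ and $k-d\le q-3<0$ since $d\ge 3$, so the twist is strictly negative and Bott gives vanishing of all $H^i$ for $0\le i\le n$ except possibly $H^{n+1}$, which is irrelevant. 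This disposes of the $\p^{n+1}$-contributions. For the induction, the base case $q=1$ reads $h^0(X,\Omega^1_X(k))=0$ for $k\le 1$, which follows directly from the conormal sequence and Bott since $h^0(\p^{n+1},\Omega^1_{\p^{n+1}}(k))=0$ for $k\le 1$ (for $n\ge 3$) and $h^1(\p^{n+1},\sO_X(k-d))$-type terms vanish in the relevant range.

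The main obstacle I anticipate is bookkeeping the index ranges so that the inductive step genuinely closes: passing from $q$ to $q+1$ I need not just $h^0\big(\Omega^q_X(k-d)\big)=0$ but also a suitable $h^1$ vanishing, so the clean approach is to strengthen the statement to prove simultaneously $h^0(X,\Omega^q_X(k))=0$ and $h^1(X,\Omega^q_X(k))=0$ for $k\le q\le n-2$ (the second with perhaps a slightly different numerical hypothesis, e.g. $k\le q+1$ or $q\le n-3$, to be pinned down). Then in the sequence $0\to \Omega^{q}_X(k-d)\to \Omega^{q+1}_{\p^{n+1}}|_X(k)\to \Omega^{q+1}_X(k)\to 0$ the term $h^0\big(\Omega^{q+1}_{\p^{n+1}}|_X(k)\big)$ vanishes by the Bott computation above (valid since $k\le q+1$ and $q+1\ge 2$), and $h^1\big(\Omega^q_X(k-d)\big)$ vanishes by the strengthened inductive hypothesis since $k-d\le q-3<q$. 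Verifying that the hypotheses $k\le q\le n-2$, $d\ge 3$, $n\ge 3$ are exactly enough to keep every twist that appears either strictly negative (on $\p^{n+1}$) or within the range where Bott's formula gives zero, and that the $h^1$-strengthening survives the induction, is the only real work; everything else is assembling exact sequences.
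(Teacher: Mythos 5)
Your skeleton (conormal sequence $0\to \Omega^{q-1}_X(k-d)\to \Omega^q_{\p^{n+1}}|_X(k)\to \Omega^q_X(k)\to 0$, restriction sequence, Bott on $\p^{n+1}$) is exactly the paper's, but the way you propose to supply the $h^1$-vanishing on $X$ has a genuine gap. The paper does not run an induction at all: it quotes Flenner's vanishing theorem for hypersurfaces, namely $h^0(X,\Omega^r_X(s))=0$ for $s<r\le n-1$ and $h^1(X,\Omega^r_X(s))=0$ for $r\le n-2$, $s\le r-2$. The first statement disposes of every case $k<q$ outright, so only the borderline case $k=q$ remains, and there the needed term $h^1\big(X,\Omega^{q-1}_X(q-d)\big)$ vanishes by the second statement because $q-d\le q-3\le (q-1)-2$. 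No induction on $X$-cohomology is required.

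Your replacement for Flenner --- an induction on $q$ carrying both $h^0$ and $h^1$ --- does not close as stated, for two reasons. First, the strengthened claim ``$h^1(X,\Omega^q_X(k))=0$ for $k\le q$'' is false: take $q=1$, $k=0$, where $h^1(X,\Omega^1_X)=h^{1,1}(X)\ge 1$. The correct numerical condition is $k\le q-2$ (Flenner's), not the $k\le q+1$ you float, which goes in the wrong direction. Second, and more structurally: to prove the $h^1$ statement at level $q+1$ from the long exact sequence you need $h^2\big(X,\Omega^{q}_X(k-d)\big)=0$, which is not part of your inductive package; that in turn needs an $h^3$ at level $q-1$, and so on. The induction only closes if you track $h^i(X,\Omega^{q-i}_X(\cdot))$ for all $i$ simultaneously, descending until you bottom out at $H^i(X,\sO_X(m))=0$ for $0<i<n$ (true for a hypersurface). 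That is doable --- it is essentially a proof of Flenner's theorem --- but it is not the argument you wrote, and the ``only real work'' you defer is precisely where the proof currently fails.
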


Before we prove the lemma, we recall Bott's formulae.

\begin{say}[Bott's Formulae]
Let $n,p,q$ and $k$ be integers, with $n$  positive and $p$ and $q$ 
nonnegative.  Then
\begin{equation*}
h^q(\p^n,\Omega_{\p^n}^p(k)) =
\begin{cases}
\binom{k+n-p}{k}\binom{k-1}{p} & \text{for } q=0, 0\le p\le n \text{ and } k>p,\\
1 & \text{for } k=0 \text{ and } 0\le p=q\le n,\\
\binom{-k+p}{-k}\binom{-k-1}{n-p} & \text{for } q=n, 0\le p\le n \text{ and } k<p-n,\\
0 & \text{otherwise.}
\end{cases}
\end{equation*}
Let $r,s$ and $t$ be integers, with $r$ and $s$ 
nonnegative. 
Observe that the natural pairing 
$\Omega^p_{\p^n}\otimes\Omega^{n-p}_{\p^n} \to \Omega^n_{\p^n}$ is perfect. 
It induces an isomorphism 
$\wedge^{r}T_{\p^n}(t)\simeq \Omega_{\p^n}^{n-r}(t+n+1)$.
So 
the formulae above become
\begin{equation*}
h^s(\p^n,\wedge^{r}T_{\p^n}(t)) =
\begin{cases}
\binom{t+n+1+r}{t+n+1}\binom{t+n}{n-r} & \text{for } s=0, 0\le r \le n \text{ and } t+r\ge 0,\\
1 & \text{for } t=-n-1 \text{ and } 0\le n-r=s\le n,\\
\binom{-t-1+r}{-t-n-1}\binom{-t-n-2}{r} & \text{for } s=n, 0\le r\le n \text{ and } t+n+r+2\le 0,\\
0 & \text{otherwise.}
\end{cases}
\end{equation*}
\end{say}

\begin{proof}[Proof of Lemma~\ref{lemma:vanishing_hypersurface}]
By \cite[Satz 8.11]{flenner81},
\begin{enumerate}
\item $h^0(X,\Omega^r_X(s))=0$ for $s < r\le n-1$,
\item $h^1(X,\Omega^r_X(s))=0$ for $0\le r\le n-2$ and $s\le r-2$.
\end{enumerate}
Thus, it is enough to prove that $h^0(X,\Omega^q_X(q))=0$ for $1\le q \le n-2$. 
Let $q\in\{1,\ldots,n-2\}$. 
By Bott's formulae,
\begin{enumerate}
\item $h^0(\p^{n+1},\Omega^r_{\p^{n+1}}(r))=0$ for $r\ge 1$,
\item $h^1(\p^{n+1},\Omega^r_{\p^{n+1}}(s))=0$ for $s <r-1$.
\end{enumerate}
The cohomology of the exact sequence
of sheaves on $\p^{n+1}$
$$0\to \Omega^{q}_{\p^{n+1}}(q-d) \to \Omega^{q}_{\p^{n+1}}(q)
\to {\Omega^{q}_{\p^{n+1}}(q)}|_{X}
\to 0,$$
and the vanishing of $H^0(\p^{n+1},\Omega^{q}_{\p^{n+1}}(q))$
and $H^1(\p^{n+1},\Omega^{q}_{\p^{n+1}}(q-d))$ imply the vanishing
of $H^0(X,{\Omega^{q}_{\p^{n+1}}(q)}|_{X})$.

The cohomology of the exact sequence
of sheaves on $X$
$$0\to  \Omega^{q-1}_X(q-d) \to {\Omega^{q}_{\p^{n+1}}(q)}|_{X}
\to \Omega^{q}_X(q) \to 0,$$
and the vanishing of 
$H^0(X,{\Omega^{q}_{\p^{n+1}}(q)}|_{X})$
and $H^1(X,\Omega^{q-1}_X(q-d))$
yield the result.
\end{proof}

\begin{prop}[Fano foliations on hypersurfaces] \label{prop:foliation_hypersurface}
Fix $n \ge 3$, and let $X\subset\p^{n+1}$ be a smooth
hypersurface of degree $d \ge 3$. 
Let $r\in\{2, \cdots, n-1\}$, and $\iota$ be a positive integer.
Then there exists a Fano foliation of rank $r$ and index $\iota$ on $X$  if and only if $d+\iota \le r+1$.
\end{prop}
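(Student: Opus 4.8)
The plan is to translate the statement into one about the twisted differential $c$-form attached to a rank-$r$ foliation, where $c:=n-r$ is the corank. Throughout I would use that $\dim X=n\ge 3$, so $\Pic(X)=\bZ\cdot\sO_X(1)$ by Lefschetz, and that $K_X\simeq\sO_X(d-n-2)$ by adjunction; note $1\le c\le n-2$ since $2\le r\le n-1$. The ``only if'' direction will come from the cohomology vanishing of Lemma~\ref{lemma:vanishing_hypersurface}, and the ``if'' direction from an explicit construction: restricting to $X$ a suitable codimension-$c$ foliation on $\p^{n+1}$.

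For the ``only if'' direction, suppose $\sF\subsetneq T_X$ is a Fano foliation of rank $r$ and index $\iota$. Then $\det(\sF)\simeq\sO_X(\iota)$, and from $0\to\sF\to T_X\to T_X/\sF\to 0$ we get $\det(N_\sF)=\det(T_X)\otimes\det(\sF)^{-1}\simeq\sO_X(n+2-d-\iota)$. Wedging the saturated rank-$c$ inclusion $N^*_\sF\subset\Omega^1_X$ gives a nonzero section of $\Omega^c_X\otimes\det(N_\sF)$, so $H^0\big(X,\Omega^c_X(n+2-d-\iota)\big)\ne 0$. If $d+\iota\ge r+2$ then $k:=n+2-d-\iota\le n-r=c$, so $k\le c\le n-2$ with $c\ge 1$, and Lemma~\ref{lemma:vanishing_hypersurface} (with its ``$q$'' equal to $c$) forces $h^0\big(X,\Omega^c_X(k)\big)=0$, a contradiction. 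Hence $d+\iota\le r+1$.

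For the converse, assume $d+\iota\le r+1$ and put $e:=r+2-d-\iota\ge 1$. On $\p^{n+1}$, with coordinates $x_0,\dots,x_{n+1}$ and Euler vector field $R=\sum x_i\,\partial_{x_i}$, I would choose general linear forms $\ell_1,\dots,\ell_c$ and a general form $h$ of degree $e$, and form
$$\omega:=\iota_R\big(d\ell_1\wedge\cdots\wedge d\ell_c\wedge dh\big)\ \in\ H^0\big(\p^{n+1},\Omega^c_{\p^{n+1}}(c+e)\big).$$
This is exactly the twisted $c$-form of the codimension-$c$ foliation $\sG$ on $\p^{n+1}$ whose leaves are the fibres of the dominant map $[\ell_1:\cdots:\ell_c:h]\colon\p^{n+1}\map\p(1^c,e)$; in particular $\omega$ is nonzero, locally decomposable and integrable, and for general $\ell_\bullet,h$ its zero scheme has codimension $\ge 2$. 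Since $X$ has codimension one it is not contained in $\textup{Sing}(\sG)$, and restricting $\omega$ to $X$ followed by the surjection $\Omega^c_{\p^{n+1}}(c+e)|_X\twoheadrightarrow\Omega^c_X(c+e)$ yields a locally decomposable, integrable twisted $c$-form $\bar\omega\in H^0\big(X,\Omega^c_X(c+e)\big)$. Granting that $\bar\omega\ne 0$ with zero scheme of codimension $\ge 2$, it defines a foliation $\sF$ on $X$ of rank $n-c=r$ with $\det(N_\sF)\simeq\sO_X(c+e)$, whence $\det(\sF)=\det(T_X)\otimes\det(N_\sF)^{-1}\simeq\sO_X\big((n+2-d)-(c+e)\big)=\sO_X(\iota)$. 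Then $-K_\sF\simeq\sO_X(\iota)$ is ample, the index of $\sF$ is exactly $\iota$ because $\sO_X(1)$ is primitive in $\Pic(X)$, and $r\le n-1$ gives $\sF\subsetneq T_X$; so $\sF$ is a Fano foliation of rank $r$ and index $\iota$, which completes the equivalence.

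The main obstacle is the genericity input in the last step: since the statement is for an \emph{arbitrary} smooth $X$, one must show that for each such $X$ there is a choice of $\ell_1,\dots,\ell_c$ and $h$ making $\bar\omega$ nonzero with zero scheme purely of codimension $\ge 2$ in $X$ (this last point is what guarantees $\det(N_\sF)\simeq\sO_X(c+e)$ on the nose, so that the index is exactly $\iota$ and not a larger multiple of $\sO_X(1)$). These are open conditions on $(\ell_\bullet,h)$, so it suffices to exhibit one admissible choice per $X$; I would do this by a dimension count on the incidence variety of tuples $(X,\ell_\bullet,h,p)$ where $\bar\omega$, or a divisorial component of its zero locus, is degenerate at $p\in X$, or by a direct local computation of $\bar\omega$ at a general point of $X$, using that $\bar\omega$ vanishes at $x$ precisely when $dg|_x$ (with $g$ an equation of $X$) lies in the conormal space $N^*_{\sG,x}$. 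The remaining verifications---local decomposability and integrability of $\omega$ and $\bar\omega$, and the bookkeeping of $\det(N_\sF)$---are routine.
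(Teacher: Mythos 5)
Your proof follows the paper's argument: the ``only if'' direction is exactly the paper's application of Lemma~\ref{lemma:vanishing_hypersurface} to the twisted $(n-r)$-form in $H^0(X,\Omega^{n-r}_X(n+2-d-\iota))$ defining $\sF$, and the ``if'' direction is the paper's restriction to $X$ of a rank-$(r+1)$, index-$(d+\iota)$ Fano foliation on $\p^{n+1}$, which you merely make explicit as the foliation induced by the map to $\p(1^{n-r},\,r+2-d-\iota)$. The genericity point you flag at the end (that $\omega|_X$ is nonzero with zero locus of codimension $\ge 2$, so that the index is exactly $\iota$) is also left implicit in the paper, which simply takes $\omega$ general.
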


\begin{proof}Let $\sF$ be a Fano foliation on $X$ of rank $r$ and index $\iota$ defined  
by a twisted $(n-r)$-form
$\omega\in H^0(X,\Omega^{n-r}_X(n+2-d-\iota))$.
Notice that $1\le n-r \le n-2$.
By lemma \ref{lemma:vanishing_hypersurface}, we must have 
$$n-r < n+2-d-\iota ,$$ or, equivalently, $$d+\iota \le r+1.$$

Conversely, let $r\in\{2, \cdots, n-1\}$ and $\iota$ be such that $d+\iota \le r+1$.
Let $\omega\in H^0(\p^{n+1},\Omega^{n-r}_{\p^{n+1}}(n+2-d-\iota))$
be a general twisted $(n-r)$-form defining a Fano foliation of rank $r+1$ and index $d+\iota\leq r+1$ on $\p^{n+1}$.
Then $\omega|_{X}\in H^0(X,\Omega^{n-r}_{X}(n+2-d-\iota))$
defines a foliation on $X$ of rank $r$ and index $\iota$. 
\end{proof}

\begin{cor}
Fix $n \ge 3$, and let $X\subset\p^{n+1}$ be a smooth
hypersurface of degree $d \ge 2$. Then there exists a Fano foliation on $X$ of rank $r\in\{2, \cdots, n-1\}$
and index $\iota = r-1$ if and only if $d=2$.
\end{cor}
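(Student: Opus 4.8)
The plan is to split the biconditional, with the forward direction coming straight out of Proposition~\ref{prop:foliation_hypersurface} and the backward direction requiring an explicit construction on the quadric.

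\emph{The ``only if'' direction.} First I would observe that if $d\ge 3$ and $X$ carries a Fano foliation of some rank $r\in\{2,\dots,n-1\}$ and index $\iota=r-1$ (a del Pezzo foliation), then Proposition~\ref{prop:foliation_hypersurface} applied with this $\iota$ forces $d+(r-1)\le r+1$, i.e. $d\le 2$, which is absurd. Since $d\ge 2$ by hypothesis, the existence of such a foliation forces $d=2$.

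\emph{The ``if'' direction.} Assume now $d=2$, so that $X=Q_n$ is a smooth quadric, and fix any $r\in\{2,\dots,n-1\}$ (so $1\le n-r\le n-2$). I would reproduce the construction in the converse part of the proof of Proposition~\ref{prop:foliation_hypersurface}, noting that that construction never uses $d\ge 3$. Choose a general $r$-plane $S\subset\p^{n+1}$ and let $\sG$ be the associated degree~$0$ foliation of rank $r+1$ on $\p^{n+1}$ described in \ref{example:ample_on_p^n}, i.e. the one defined by the linear projection $\p^{n+1}\dashrightarrow\p^{n-r}$ with center $S$; its conormal twisted form is a nonzero $\omega_\sG\in H^0\big(\p^{n+1},\Omega^{n-r}_{\p^{n+1}}(n-r+1)\big)$ (nonzero by Bott's formulae), which is locally decomposable and integrable. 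Because $S$ is general, the general $(r+1)$-plane through $S$ meets $Q_n$ transversally, hence $X$ is not $\sG$-invariant, so $\omega:=\omega_\sG|_X$ gives a nonzero, locally decomposable, integrable element of $H^0\big(X,\Omega^{n-r}_X(n-r+1)\big)$, defining a foliation $\sF$ on $X$ of rank $r$.

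\emph{Identifying the index, and the main obstacle.} It remains to see that $\iota_\sF=r-1$. Writing $E\ge 0$ for the divisorial part of the zero scheme of $\omega$ (equivalently, the divisorial tangency locus of $X$ with the leaves of $\sG$), one has $\det N_\sF=\sO_X(n-r+1)\otimes\sO_X(-E)$, and since $\sO_X(-K_X)=\sO_X(n)$ this gives $\sO_X(-K_\sF)=\sO_X(-K_X)\otimes\det N_\sF^{-1}=\sO_X(r-1)\otimes\sO_X(E)$; as $n\ge 3$ we have $\Pic(X)=\bZ[\sO_X(1)]$, so $\sF$ is a Fano foliation of index $r-1+\deg E$. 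The one genuinely non-formal step is therefore to prove that $E=0$ for general $S$: every leaf of $\sF$ contains the fixed quadric $S\cap Q_n$, of dimension $r-1<n-1$, which is not a divisor in $X$; and away from it, the general $(r+1)$-plane $\Lambda\supset S$ cuts $Q_n$ in a smooth quadric $r$-fold and is transverse to $Q_n$ there, by a Bertini-type argument on the linear system of $(r+1)$-planes through $S$. Hence the tangency locus has codimension $\ge 2$ in $X$, so $E=0$, $\iota_\sF=r-1$, and $\sF$ is a del Pezzo foliation of rank $r$ on $X$, completing the proof.
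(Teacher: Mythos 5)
Your proof follows the same route as the paper's: the ``only if'' direction is the same application of Proposition~\ref{prop:foliation_hypersurface} (invoked for $d\ge 3$ to get a contradiction), and the ``if'' direction restricts a degree-$0$ foliation of rank $r+1$ from $\p^{n+1}$ to the quadric, exactly as in the converse part of that proposition's proof. You are in fact more explicit than the paper about the one point it leaves implicit, namely that the restricted $(n-r)$-form has no divisorial zero locus, so that the index is exactly $r-1$ rather than larger.

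One caveat on how you close that step: transversality of the \emph{general} $(r+1)$-plane $\Lambda\supset S$ with $Q_n$ does not by itself give codimension $\ge 2$ for the tangency locus --- a priori that locus could still be a divisor supported over the codimension-one discriminant in $\p^{n-r}$ parametrizing the non-transverse planes. The statement is nevertheless true, and the clean way to see it is direct: for $x\in Q_n\setminus S$, the plane $\langle x,S\rangle$ is tangent to $Q_n$ at $x$ if and only if $S\subset T_xQ_n$, i.e.\ if and only if $x$ lies in the polar $(n-r)$-plane $S^{\perp}$ of $S$ with respect to the quadric. Hence the degeneracy locus of $N^*_{\sG}|_X\to \Omega^1_X$ is $(S\cup S^{\perp})\cap Q_n$, of dimension at most $\max(r-1,\,n-r-1)\le n-2$, so $E=0$ and $\iota_{\sF}=r-1$ as you claim.
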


\begin{proof}
Suppose there exists a  Fano foliation on $X$ of rank $r\in\{2, \cdots, n-1\}$
and index $\iota = r-1$ on $X$.
By Proposition~\ref{prop:foliation_hypersurface}, we must have $d \le 2$. 
Conversely, a foliation of rank $r+1$ and 
and index $\iota = r+1$ on $\p^{n+1}$ induces a foliation
of rank $r$ and index $\iota=r-1$ on $X$.
\end{proof}

\begin{question}
Let $X\subset \p^{n+1}$ be a smooth hypersurface of degree $d\ge 2$ and dimension 
$n\ge 3$.
Let $\sF\subsetneq T_X$ be  a Fano foliation of rank $r$ and index $\iota$ on $X$, with 
$d+\iota = r+1$.
Is $\sF$ induced by a Fano foliation  of rank $r+1$ and index  $r+1$ on $\p^{n+1}$?
\end{question}

In Section~\ref{section:examples_pn_bundles}, we provide several examples of del Pezzo 
foliations on projective space bundles.

\section{The relative anticanonical bundle of a fibration and applications}
\label{section:relative_-K}

In  \cite[Theorem 2]{miyaoka93}, Miyaoka proved that the anticanonical bundle of a smooth 
projective morphism $f:X\to C$ onto a smooth proper curve cannot be ample. 
In \cite[Theorem 3.1]{adk08}, this result was 
generalized by dropping the smoothness assumption, and 
replacing $-K_{X/C}$ with $-(K_{X/C}+\Delta)$, 
where $\Delta$ is an effective Weil divisor on $X$ such that $(X,\Delta)$ is log canonical over the generic point of $C$. In this section we give a further generalization of this result and provide
applications to the theory of Fano foliations.

\begin{thm}\label{thm:-KX/Y_not_ample}
\label{cor:-KX/Y_not_nef_and_big}
Let $X$ be a normal projective variety, and $f:X\to C$ a surjective morphism 
with connected fibers onto a smooth curve. 
Let $\Delta_+\subseteq X$ and $\Delta_{-}\subseteq X$ be effective
Weil $\bQ$-divisors with no common components such that  $f_*\sO_X(k\Delta_{-})=\sO_C$ 
for every non negative integer $k$.
Set $\Delta:=\Delta_+-\Delta_{-}$, and assume that $K_X+\Delta$ is $\bQ$-Cartier. 
\begin{enumerate}
	\item If $(X,\Delta)$ is log canonical over the generic point of $C$, 
		then $-(K_{X/C}+\Delta)$ is not ample.
	\item If $(X,\Delta)$ is klt over the generic point of $C$,
		then $-(K_{X/C}+\Delta)$ is not nef and big.
\end{enumerate}
\end{thm}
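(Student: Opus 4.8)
The plan is to reduce to the known case treated in \cite[Theorem 3.1]{adk08} by absorbing the negative part $\Delta_-$ through a birational modification, and then pushing forward. More precisely, suppose for contradiction that $-(K_{X/C}+\Delta)$ is ample (respectively nef and big). First I would take a log resolution $\mu: \tilde X \to X$ of the pair $(X, \Delta_+ + \Delta_-)$ that is an isomorphism over the generic point of $C$, and write $K_{\tilde X} + \tilde\Delta_+ - \tilde\Delta_- = \mu^*(K_X+\Delta) + E$, where $\tilde\Delta_\pm$ are the strict transforms, $E$ is $\mu$-exceptional, and — crucially — by the log canonical (respectively klt) hypothesis over the generic point of $C$ the coefficients of $\tilde\Delta_-$ together with the negative part of $E$ can be controlled. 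The composite $\tilde f = f\circ\mu: \tilde X \to C$ still has connected fibers, and I would verify that $\tilde f_* \sO_{\tilde X}(k\tilde\Delta_-') = \sO_C$ for the relevant effective divisor $\tilde\Delta_-'$ (the sum of $\tilde\Delta_-$ and the negative exceptional contributions), using that $\mu$-exceptional divisors map into proper subvarieties of $C$ or have the projection formula kill their pushforward, and that $f_*\sO_X(k\Delta_-) = \sO_C$.

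The second step is to replace $\Delta_-$ (on $\tilde X$) by the zero divisor at the cost of only decreasing $K_{\tilde X/C}+\Delta$ in the appropriate sense. Concretely, since $\tilde\Delta_-' \ge 0$, we have $-(K_{\tilde X/C} + \tilde\Delta_+') = -(K_{\tilde X/C} + \tilde\Delta_+' - \tilde\Delta_-') - \tilde\Delta_-'$, and the first summand is $\mu^*$ of an ample (respectively nef and big) class plus an exceptional divisor; I would argue that $-(K_{\tilde X/C}+\tilde\Delta_+')$ is then nef and big over $C$ in a way that contradicts \cite[Theorem 3.1]{adk08} applied to $(\tilde X, \tilde\Delta_+')$ with $\tilde f: \tilde X \to C$ — once we know $(\tilde X, \tilde\Delta_+')$ is log canonical (respectively klt) over the generic point of $C$, which follows because over that generic point $\mu$ is an isomorphism. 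For part (2), the nef-and-big case, a little extra care is needed: ampleness of $-(K_{X/C}+\Delta)$ forces strict positivity that survives the modification and the subtraction of the effective $\tilde\Delta_-'$, whereas for nef and big one uses that nef-and-big is preserved under pullback and that adding back an effective exceptional divisor keeps it big.

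The main obstacle I anticipate is handling the negative part $\Delta_-$ cleanly: the hypothesis $f_*\sO_X(k\Delta_-)=\sO_C$ is exactly what is needed so that the ``negative'' contributions of $\Delta_-$ (and of the exceptional divisors one introduces) do not produce sections that would obstruct the vanishing/positivity argument in \cite[Theorem 3.1]{adk08}, and one must check this pushforward condition is inherited after the resolution and after merging in the exceptional divisors. A secondary subtlety is ensuring the log resolution can be chosen to be an isomorphism over the generic point of $C$ (so the singularity hypotheses transfer verbatim) while still resolving $\Delta_+ + \Delta_-$ — this is standard, but must be stated. Once these bookkeeping points are settled, the contradiction with \cite[Theorem 3.1]{adk08} is immediate, and the two cases (1) and (2) run in parallel with only the ample-versus-nef-and-big distinction propagating through.
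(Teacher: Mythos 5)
There is a genuine gap: the reduction to \cite[Theorem 3.1]{adk08} as a black box cannot work, because none of the hypotheses of that theorem survive the operation of ``absorbing'' $\Delta_-$. Concretely, $-(K_{X/C}+\Delta_+)=-(K_{X/C}+\Delta)-\Delta_-$ is an ample class \emph{minus} an effective divisor, and likewise on the resolution $-(K_{\tilde X/C}+\tilde\Delta_+')$ differs from $\mu^*$ of the ample class by the subtraction of the effective $\tilde\Delta_-'$; such a difference need not be ample, nef, or big, so the positivity hypothesis of \cite[Theorem 3.1]{adk08} is simply not available for the pair you propose to feed into it. (Your fallback that it is ``nef and big over $C$'' does not help: relative positivity over a curve is cheap --- e.g.\ $-K_{X/C}$ is relatively ample for $X=C\times\p^1$ with $C$ of any genus --- and the cited theorem genuinely needs absolute ampleness.) The singularity hypothesis fails to transfer as well: passing from $\Delta$ to $\Delta_+=\Delta+\Delta_-$ \emph{adds} an effective divisor to the boundary, which can destroy log canonicity over the generic point of $C$; and a log resolution of $(X,\Delta_++\Delta_-)$ that is an isomorphism over the generic point of $C$ does not exist in general, since nothing forces $(X,\Delta)$ to be snc along the generic fiber. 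Finally, the hypothesis $f_*\sO_X(k\Delta_-)=\sO_C$ has no slot in the statement of \cite[Theorem 3.1]{adk08}; it can only be exploited inside the proof, which is exactly the point you defer to ``the main obstacle I anticipate.''

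What the paper actually does is rerun the semipositivity argument rather than cite it. Assuming $-(K_{X/C}+\Delta)$ ample, it chooses an ample $A$ on $C$ and $m\gg 0$ with $D=-m(K_{X/C}+\Delta)-f^*A$ very ample, passes to a log resolution $\pi$, and sets $\tilde\Delta_+=\pi^{-1}_*\Delta_++\tfrac1m\tilde D+E_-$ with $\tilde D\in|\pi^*D|$ general (absorbing the ample class into the boundary while keeping it log canonical over the generic point). One then gets $K_{\tilde X}+\tilde\Delta_+\sim_{\bQ}\tilde f^*K_C+E_+ +\pi^{-1}_*\Delta_- -\tfrac1m\tilde f^*A$, and the hypothesis $f_*\sO_X(k\Delta_-)=\sO_C$, together with the $\pi$-exceptionality of $E_+$, is used precisely to show $\tilde f_*\sO_{\tilde X}\big(lm(E_++\pi^{-1}_*\Delta_-)\big)=\sO_C$, whence $\tilde f_*\sO_{\tilde X}\big(lm(K_{\tilde X/C}+\tilde\Delta_+)\big)\simeq\sO_C(-lA)$ has negative degree --- contradicting the semipositivity of such direct images (\cite[Theorem 4.13]{campana04}). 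Part (2) then follows from part (1) by perturbing with a small effective $\epsi N$. If you want to salvage your write-up, you must open up the proof of \cite[Theorem 3.1]{adk08} in this way; the statement alone is not strong enough to imply the theorem with a nonzero $\Delta_-$.
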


\begin{proof}
To prove (1), we assume to the contrary that $(X,\Delta)$ is log canonical over the generic 
point of $C$, and $-(K_{X/C}+\Delta)$ is ample.  Let $\pi: \tilde X\to X$ be a log
resolution of singularities of $(X,\Delta)$, $A$ an ample divisor on
$C$, and $m\gg 0$ such that $D=-m(K_{X/C}+\Delta)-f^*A$ is very
ample.  Then
$$
K_{\tilde X}+\pi^{-1}_* \Delta_+-\pi^{-1}_* \Delta_-=\pi^*(K_X +\Delta_+-\Delta_-)+E_+-E_-,
$$
where $E_+$ and $E_-$ are effective $\pi$-exceptional divisors with
no common components and the support of
$\pi^{-1}_*\Delta+E_++E_-$ is a snc divisor.

Set $\tilde f:=f\circ \pi$ and let $\tilde D\in |\pi^*D|$ be a general
member. Setting $\tilde\Delta_+=\pi^{-1}_*\Delta_+ +\frac 1m\tilde D+E_-$,
we obtain that $(\tilde X,\tilde \Delta_+)$ is log canonical 
over the generic point of $C$
and that
$$
 K_{\tilde X}+\tilde \Delta_+ \sim_{\bQ} \tilde f^*K_C +E_+ +\pi^{-1}_* \Delta_- 
-\frac{1}{m}\tilde f^*A. 
$$
 
Furthermore, since $E_+$ is effective and $\pi$-exceptional, 
$\pi_* \sO_{\tilde  X}(lE_+)=\sO_X$ for any $l\in\bN$. Then for any $l\in \bN$,

\begin{multline*}
\tilde f_*\sO_{\tilde X}(lm(K_{\tilde X/C}+\tilde \Delta_+))
\simeq
\tilde f_*\sO_{\tilde X}(l(mE_+ +m\pi^{-1}_* \Delta_-  -\tilde f^*A)) \\
\simeq
\tilde f_*\sO_{\tilde X}(l(mE_+ +m\pi^{-1}_*  \Delta_-))\otimes \sO_C(-lA).
\end{multline*}

Observe that $\tilde f_*\sO_{\tilde X}(lm(E_+ +\pi^{-1}_*  \Delta_-))=\sO_C$. Indeed, let $U\subseteq C$ be a non
empty open subset and let $\tilde \lambda\in H^0(\tilde f^{-1}(U),\sO_{\tilde X}(lm(E_+ +\pi^{-1}_*  \Delta_-)))$
that is, $\tilde \lambda$ is a rational function on
$\tilde X$ such that $\textup{div}(\tilde \lambda)+lm(E_+ +\pi^{-1}_* \Delta_- )\ge 0$ over $\tilde f^{-1}(U)$. 
Let $\lambda$ be the unique rational function on $X$ such that 
$\tilde \lambda=\pi \circ \lambda$. Then $\textup{div}(\lambda)+lm\Delta_- \ge 0$ over $f^{-1}(U)$ since
$E_+$ is $\pi$-exceptional. 
Since $f_*\sO_X(lm\Delta_{-})=\sO_C$ by assumption, 
there exists a regular function $\mu$ on $U$ such that 
$\lambda = f \circ \mu$ over $f^{-1}(U)$. Thus the natural map 
$\sO_C \hookrightarrow \tilde f_*\sO_{\tilde X}(lm(E_+ +\pi^{-1}_*  \Delta_-))$
is an isomorphism as claimed.

Finally, observe that 
$\tilde f_*\sO_{\tilde X}(lm(K_{\tilde X/C}+\tilde\Delta_+))$ is
semi-positive by \cite[Theorem 4.13]{campana04}, but that contradicts the fact that $A$ is ample.
This proves (1).

\medskip

To prove (2), we assume to the contrary that $(X,\Delta)$ is klt over the generic 
point of $C$, and $-(K_{X/C}+\Delta)$ is nef and big.
There exists an effective $\bQ$-Cartier $\bQ$-divisor $N$ on $X$ such that 
$-(K_{X/C}+\Delta)-\epsi N$ is ample for $0<\epsi \ll 1$.
Let $0<\epsi \ll 1$ be
such that $(X,\Delta+\epsi N)$ is klt over the generic
point of $C$. 
Set $\Delta_+':=\Delta_++\epsi N$, $\Delta_-':=\Delta_-$, and $\Delta':=\Delta+\epsi N$. 
Then $$-(K_{X/C}+\Delta')=-(K_{X/C}+\Delta)-\epsi N$$
is ample, contradicting part (1) above. This proves (2).
\end{proof}

\begin{rem}
Examples arising from Fano foliations show that 
Theorem~\ref{thm:-KX/Y_not_ample} is sharp.
To fix notation, let $\sF\subsetneq T_X$ be an  algebraically integrable Fano foliation  on a smooth
projective variety $X$, with general log leaf $(\tilde F, \tilde \Delta)$.
We let $C\subset \Chow(X)$ be a general complete curve contained in the closure of the subvariety parametrizing general leaves of $\sF$.
We denote by $U$ the normalization of the universal cycle over $C$, with universal morphism $e:U\to X$.
Since $C$ is general, $e:U\to X$ is birational onto its image. 
By Remark~\ref{remark:definition_log_leaf}, there is a canonically defined Weil divisor $\Delta$ on $U$ such that $-(K_{U/C}+\Delta)=e^*(-K_{\sF})$.
In particular, since $-K_{\sF}$ is ample, $-(K_{U/C}+\Delta)$ is always nef and big.
It is ample if and only if the leaves parametrized by $C$ have no common point. 
Moreover,  $(U,\Delta)$ is log canonical over the generic point of $C$ if and only if $(\tilde F, \tilde \Delta)$ is log canonical.

By choosing $\sF$ to have log canonical singularities along a general leaf,
we see that we cannot strengthen the conclusion of Theorem~\ref{thm:-KX/Y_not_ample} by replacing ``ample" with ``nef and big".

On the other hand, consider the rank $2$ del Pezzo foliation $\sF$ on $X=G(2,V)$ defined in \ref{example:grass}.
Then $(\tilde F, \tilde \Delta)\simeq  (\p^2, 2H)$, where $H$ is a line in $\p^2$. So it is not log canonical, while in this case
$-(K_{U/C}+\Delta)$ is ample. So we  cannot relax the assumption that 
$(U,\Delta)$ is log canonical over the generic point of $C$ in Theorem~\ref{thm:-KX/Y_not_ample}.
\end{rem}

As a first application of Theorem~\ref{thm:-KX/Y_not_ample}, we derive a special property of 
Fano foliations with mild singularities. This property will play a key role in our study of Fano foliations.

\begin{prop}\label{lemma:common_point}
Let $X$ be a normal projective variety, and $\sF\subsetneq T_X$ an algebraically integrable 
Fano foliation on $X$. 
If $\sF$ has log canonical singularities along a general leaf, 
then there is a common point in the closure of a general leaf of $\sF$.
\end{prop}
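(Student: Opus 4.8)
The plan is to argue by contradiction, using Theorem~\ref{thm:-KX/Y_not_ample} to pass from the global statement to a one--parameter family of leaves. Suppose the closures of the general leaves of $\sF$ have no common point. Let $W$ be the closure in $\Chow(X)$ of the subvariety parametrizing general leaves (Lemma~\ref{lemma:leaffoliation}), and for $w\in W$ denote by $L_w$ the leaf closure it parametrizes; since a general leaf is not all of $X$, we have $\dim W=\dim X-r_\sF\ge 1$. By assumption $\bigcap_w L_w=\emptyset$, the intersection being over general $w\in W$. As the $L_w$ are closed subsets of the noetherian space $X$, some finite subintersection is already empty: there are general points $w_1,\dots,w_k\in W$ with $L_{w_1}\cap\dots\cap L_{w_k}=\emptyset$.

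Next I would choose a sufficiently general irreducible curve $C\subseteq W$ passing through $w_1,\dots,w_k$ --- if $\dim W=1$ one takes $C=W$ (or its normalization), and if $\dim W\ge 2$ such curves cover $W$, a general one being suitable. Let $U_C$ be the normalization of the universal cycle over $C$, with morphisms $\pi_C\colon U_C\to C$ and $e_C\colon U_C\to X$. Since $C$ is general, $e_C$ is birational onto its image, and arguing as in Remark~\ref{remark:definition_log_leaf} there is a canonical effective Weil divisor $\Delta$ on $U_C$ with $-(K_{U_C/C}+\Delta)=e_C^*(-K_\sF)$; moreover the log leaf over a general point of $C$ is the general log leaf of $\sF$, so the hypothesis that $\sF$ has log canonical singularities along a general leaf gives that $(U_C,\Delta)$ is log canonical over the generic point of $C$. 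On the other hand, every leaf parametrized by $C$ is one of the $L_w$, so the leaves parametrized by $C$ have no common point, their joint intersection lying inside $L_{w_1}\cap\dots\cap L_{w_k}=\emptyset$; hence, by the ampleness criterion recorded in the Remark immediately following Theorem~\ref{thm:-KX/Y_not_ample} (concretely, $e_C$ is then finite onto its image, so the pullback of the ample class $-K_\sF$ stays ample), $-(K_{U_C/C}+\Delta)$ is ample. This contradicts Theorem~\ref{thm:-KX/Y_not_ample}(1) applied to $\pi_C\colon U_C\to C$ with $\Delta_+=\Delta$ and $\Delta_-=0$. Therefore the general leaves of $\sF$ must have a common point.

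The step I expect to need the most care is the choice of $C$: it must pass through the prescribed general points $w_1,\dots,w_k$ and simultaneously be general enough that $e_C$ is birational onto its image and that the general--log--leaf description of $(U_C,\Delta)$ over the generic point of $C$ --- hence its log canonicity there --- is inherited; one may also have to replace $C$ by a finite cover, via Stein factorization, so that $\pi_C$ has connected fibers, which changes nothing in the argument. When $\dim W=1$ all of this is automatic, since then the leaves parametrized by $C=W$ are exactly the general leaves of $\sF$, and the statement is literally the contrapositive of Theorem~\ref{thm:-KX/Y_not_ample}(1) via the ampleness/common--point equivalence stated just after it.
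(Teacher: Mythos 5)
Your proof is correct and follows the same strategy as the paper's: restrict the family of leaves to a curve $C$ in the parameter space $W$, produce the effective divisor $\Delta$ with $-(K_{U_C/C}+\Delta)=e_C^*(-K_\sF)$ via Remark~\ref{remark:definition_log_leaf} (plus Lemmas~\ref{lemma:reduced_fiber} and \ref{lemma:extensionpfafffields} for the finite cover making the fibers reduced), and contradict Theorem~\ref{thm:-KX/Y_not_ample}(1) using log canonicity over the generic point of $C$. The one place you genuinely diverge is in how you guarantee that $e_C$ is finite onto its image (hence that $e_C^*(-K_\sF)$ is ample): the paper takes a general complete intersection curve $C\subset W$ and argues that such a $C$ is contained in no $\pi(e^{-1}(x))$, whereas you first use the noetherian descending chain condition to extract finitely many general leaves $L_{w_1},\dots,L_{w_k}$ with empty intersection and then force $C$ through $w_1,\dots,w_k$, so that $\bigcap_{w\in C}L_w=\emptyset$ directly; since $\pi|_{e^{-1}(x)}$ is finite, this emptiness is indeed equivalent to $e_C$ contracting no curve. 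Your variant makes the finiteness of $e_C$ completely transparent at the cost of the tension you yourself flag (a curve that is both general and passes through prescribed points), which is resolvable when $\dim W\geq 2$ by taking complete intersections of general high-degree hypersurfaces through $w_1,\dots,w_k$ --- note that log canonicity is only required over the \emph{generic} point of $C$, so the prescribed points cause no harm there. The only imprecision is your stated reason for passing to a finite cover of $C$: the fibers of $\pi_C$ are already connected (they are irreducible cycles), and the cover is needed to make them \emph{reduced}, so that $\det(\Omega^1_{U_C/C})\simeq\sO_{U_C}(K_{U_C/C})$ and $\Delta$ is honestly effective.
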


\begin{proof}
Let $W$ be the normalization of the closure in $\Chow(X)$ of the subvariety parametrizing 
general leaves of $\sF$, and  $U$ the normalization of the universal cycle over $W$,
with universal family morphisms:

\centerline{
\xymatrix{
U \ar[r]^{e}\ar[d]_{\pi} & X \ . \\
 W &
}
}
\noindent Denote by $U_w$ the fiber of $\pi$ over a point $w\in W$.

For every $x\in X$,
${\pi}|_{e^{-1}(x)}:e^{-1}(x) \to W$ is finite. 
If we show that $\dim( e^{-1}(x)) \ge \dim(W)$ for some $x\in X$, then 
we conclude that
$\pi\big(e^{-1}(x)\big)=W$, and thus
$x \in e\big(U_w\big)$ for every $w\in W$, i.e., 
$x$ is contained in the closure of a general leaf of $\sF$.

Suppose to the contrary that $\dim\big( e^{-1}(x)\big) < \dim( W)$ for every $x\in X$. 
Let $C \subset W$ be a general complete intersection curve, and let $U_C$
be the normalization of $\pi^{-1}(C)$,
with natural morphisms $\pi_C: U_C\to C$ and $ e_C: U_C\to X$.
Since $C$ is general,  $C$ is not contained in $\pi \big( e^{-1}(x)\big)$  for any $x \in X$,
and thus the morphism $ e_C: U_C\to X$ is finite onto its image.
In particular, $e_C^*(- K_{\sF})$ is ample.

By Remark~\ref{remark:definition_log_leaf}, 
$\sF$ induces a generically surjective morphism
$\Omega_{U_C/C}^{r_{\sF}} \to e_C^*\det(\sF)^*$.
By Lemma~\ref{lemma:reduced_fiber} below followed by Lemma~\ref{lemma:extensionpfafffields}, 
after replacing $C$ with a finite cover if necessary, we 
may assume that $\pi_C$ has reduced fibers.
This implies that
$\det(\Omega_{U_C/C}^{1})\simeq \sO_{U_C}(K_{U_C/C})$. Thus
there exists an effective integral divisor $\Delta_C$ on $U_C$ such that 
$$
 -(K_{U_C/C}+\Delta_C) = e_C^*(- K_{\sF}).
 $$
Since  $\sF$ has log canonical singularities along a general leaf, 
the pair $(U_C, \Delta_C)$ is log canonical over the generic point of $C$.
But this contradicts Theorem~\ref{thm:-KX/Y_not_ample}, and the result follows.
\end{proof}

\begin{lemma}[{\cite[Theorem 2.1']{bosch95}}]\label{lemma:reduced_fiber} 
Let $X$ be a quasi-projective variety, and 
$f : X \to C$ a flat surjective morphism onto a smooth curve with
reduced general fiber. Then there exists a finite morphism $C' \to C$ such that $f' : X' \to C'$
is flat with reduced fibers. Here $X'$ denotes the normalization of $C' \times_C X$ and $f ' : X' \to C'$
is the morphism induced by the projection $C'\times_C X \to C'$.
\end{lemma}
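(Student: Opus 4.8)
The plan is to reduce to the case where $X$ is normal and then to make all fibres reduced by a suitably ramified finite base change, the crucial ingredient being a local computation of fibre multiplicities.

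First I would replace $X$ by its normalization $\bar X$. Since $X$ is irreducible and $\bar X\to X$ is finite and birational, $\bar X\to C$ is still flat (a dominant morphism from an irreducible variety to a smooth curve is flat), its generic fibre is a localization of the normal variety $\bar X$ and hence normal (in particular reduced), and for any finite morphism $C'\to C$ the normalizations of $C'\times_C\bar X$ and of $C'\times_C X$ coincide (the natural morphism between them is finite and, being an isomorphism over the generic point of $C$ in characteristic $0$, birational). So I may assume $X$ normal. As $f$ is flat of finite type, the locus of $c\in C$ with $X_c$ geometrically reduced is open, and it contains the generic point because $X_\eta$ is normal over the perfect field $K(C)$; thus only finitely many points $c_1,\dots,c_k\in C$ have non-reduced fibre. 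Writing the cycle $X_{c_i}=\sum_j m_{ij}D_{ij}$, I set $N_i:=\operatorname{lcm}_j m_{ij}$, and I would then choose a finite morphism of smooth curves $C'\to C$ whose ramification index over each $c_i$ is divisible by $N_i$; such a cover exists by a standard construction (present $C$ as a cover of $\p^1$ unramified over the $c_i$ and sending them to distinct points, pull back a cyclic cover of $\p^1$ totally ramified of order $\operatorname{lcm}_i N_i$ over the images of the $c_i$, adjusting by extra simple ramification elsewhere so that the cover is irreducible, and normalize). Put $X':=$ normalization of $C'\times_C X$; it is normal, and $X'\to C'$ is flat since $C'\times_C X\to C'$ is flat with fibres of pure dimension.

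The heart of the argument is a multiplicity computation. Fix $c'\in C'$ over $c\in C$, with uniformizers $s$ of $\sO_{C',c'}$ and $\pi$ of $\sO_{C,c}$; then $\pi$ equals a unit times $s^{e}$, where $e$ is the ramification index, divisible by the multiplicity $m$ of every component of $X_c$ (vacuous outside $\{c_1,\dots,c_k\}$, where $m=1$). Let $E$ be a prime component of $X'_{c'}$; its multiplicity in that fibre is $v_E(s)$, where $v_E$ is the divisorial valuation of $K(X')=K(X)$ along $E$. The finite morphism $X'\to X$ takes the generic point of $E$ to that of a component $D$ of $X_c$, and since $X$ is normal we have $v_E|_{K(X)}=e_E\,v_D$ with $e_E\ge 1$; evaluating at $\pi$, with $v_D(\pi)=m$, gives $e\,v_E(s)=e_E\,m$. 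Now Kummer theory — this is where characteristic $0$ enters — shows that every extension of $v_D$ to $K(X')$ has ramification index $e/\gcd(e,m)$, so $e_E=e/\gcd(e,m)$ and hence $v_E(s)=m/\gcd(e,m)=1$, because $m\mid e$. Thus every component of $X'_{c'}$ occurs with multiplicity one.

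Finally, $X'_{c'}=\operatorname{div}({f'}^{*}s)$ is an effective Cartier divisor on the normal — hence $S_2$ — variety $X'$, cut out by a nonzerodivisor, so $X'_{c'}$ is $S_1$; being also generically reduced by the previous step, it is reduced by Serre's criterion. This holds for every $c'\in C'$, and $X'\to C'$ is flat, which is the claim. The step I expect to be the main obstacle is the multiplicity computation: one must check carefully, via Kummer theory in characteristic $0$, that the component over $D$ picks up multiplicity exactly $m/\gcd(e,m)$ and not more; by contrast, the construction of a cover $C'$ with the required ramification and the Serre-criterion endgame are routine.
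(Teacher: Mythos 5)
The paper does not actually prove this lemma: it is quoted verbatim from Bosch--L\"utkebohmert--Raynaud's reduced fibre theorem \cite[Theorem 2.1']{bosch95}, so there is no in-paper argument to compare yours against. Your proof is a correct, self-contained replacement in the characteristic-zero quasi-projective setting the paper works in, and it is considerably more elementary than the cited general theorem (which is proved over arbitrary base rings, with possible inseparability, via rigid-geometric methods). The structure --- normalize $X$, kill the finitely many non-reduced fibres by a base change whose ramification index over each bad point $c_i$ is divisible by the lcm $N_i$ of the multiplicities of the components of $X_{c_i}$, compute that each component $E$ of $X'_{c'}$ then has multiplicity $v_E(s)=m/\gcd(e,m)=1$, and conclude reducedness from $R_0$ plus the $S_1$ property of a Cartier divisor in a normal variety --- is the standard semistable-reduction-style argument, and the key valuation-theoretic step checks out: in the completion at $v_D$ one has $\widehat{K}[T]/(T^e-\pi)$ with $\pi=u't^m$, the subextension generated by $\theta^{e/\gcd(e,m)}$ is unramified (a Kummer extension of a unit, residue characteristic $0$), and the remaining extension $T^{e_1}=(\mathrm{unit})\,t^{m_1}$ with $\gcd(e_1,m_1)=1$ is totally ramified of degree $e_1=e/\gcd(e,m)$, so $e_E=e/\gcd(e,m)$ exactly, as you claim.

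Two small inaccuracies are worth fixing, though neither damages the argument. First, $K(X')=K(X)$ is false in general: $K(X')$ is a factor of $K(C')\otimes_{K(C)}K(X)$, hence a finite extension of $K(X)$; your subsequent use of $v_E|_{K(X)}=e_E v_D$ is exactly right, so only the parenthetical equality should be deleted. Likewise ``every extension of $v_D$ to $K(X')$'' should read ``every extension of $v_D$ whose restriction to $K(C')$ is centered at $c'$'', since extensions lying over other points of $C'$ above $c$ may have different ramification. Second, in the reduction to $X$ normal, the map $C'\times_C\bar X\to C'\times_C X$ is birational not because it is an isomorphism over the generic point of $C$ (the non-normal locus of $X$ may well dominate $C$), but because it is a finite map of reduced schemes that is an isomorphism over the preimage of the normal locus of $X$, which is dense; this still gives equal total quotient rings and hence canonically isomorphic normalizations, which is all you need.
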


Using these ideas, next we give elementary proof of the Lipman-Zariski conjecture for klt spaces (see 
\cite[Theorem 6.1]{greb_kebekus_kovacs_peternell10}).

\begin{prop}\label{prop:minimal_singularities}
Let $\sF$ be an algebraically integrable  foliation on a normal  projective variety $X$.  Suppose
that $\sF$ has log terminal singularities and is locally free along a general leaf.
Then 
the leaf through a
general point of $X$ is proper and smooth and 
there exists an almost proper map $X \dashrightarrow Y$ whose general 
fibers are leaves of $\sF$.
\end{prop}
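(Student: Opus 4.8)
The plan is to work with the universal family constructed in Remark~\ref{remark:definition_log_leaf}. Let $W$ be the normalization of the closure in $\Chow(X)$ of the subvariety parametrizing general leaves of $\sF$, and let $\pi:U\to W$, $e:U\to X$ be the normalized universal family, with the canonically defined effective Weil divisor $\Delta$ on $U$ satisfying $\det(\Omega^1_{U/W})[\otimes]\sO_U(\Delta)\simeq e^*\sO_X(K_\sF)$. The hypothesis that $\sF$ has log terminal singularities along a general leaf means, by Remark~\ref{remark:definition_log_leaf}, that $(\tilde U_w,\Delta_w)=(\tilde F,\tilde\Delta)$ is log terminal for $w$ general; and Definition~\ref{def:sing_fol} records that then $\tilde\Delta=0$, i.e.\ $\Delta_w=0$ for general $w$. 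The hypothesis that $\sF$ is locally free along a general leaf should be used, via Lemma~\ref{lemma:singularlocusfoliationveruspfafffield}, to ensure that on a neighborhood of the general fiber $e$ is an embedding and the general leaf meets neither $S_1$ nor $S_2$.

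First I would establish smoothness of the general leaf. Since $(\tilde F,0)$ is log terminal, $\tilde F$ has (log) terminal, hence klt, singularities, so in particular $\tilde F$ is normal with rational singularities. To upgrade this to smoothness, I would invoke Lemma~\ref{lemma:regular_smooth}: at a general point $x$ of $X$ the foliation $\sF$ is regular (since $x\notin S$, as $S$ is a proper closed subset) and locally free (by hypothesis), so locally analytically $\sF$ is the relative tangent sheaf of a smooth fibration; hence the leaf through $x$ is smooth at $x$, and since $x$ was general on the leaf, the leaf is smooth. Combined with $\tilde\Delta=0$ and $n^*\sO_X(K_\sF)\simeq\sO_{\tilde F}(K_{\tilde F})$, the closure $F$ of the general leaf is a normal variety whose normalization is smooth; I then need to see that $F$ is itself smooth and equals $\tilde F$. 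The cleanest route: show $F$ is proper and that the normalization map $n:\tilde F\to F$ is an isomorphism, using that $e:U\to X$ is birational (Lemma~\ref{lemma:leaffoliation}) so that for general $w$ the map $e|_{U_w}:U_w\to F$ is birational onto the leaf closure, together with the local analytic description near a general point of $F$. Properness of the general leaf is automatic: the general fiber $U_w$ of the proper morphism $\pi$ is proper, and $e$ maps it finitely (indeed birationally) onto $F$.

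Next I would produce the almost proper map $X\dashrightarrow Y$. Take $Y:=W$ and the rational map $X\dashrightarrow W$ to be $e^{-1}$ followed by $\pi$; this is well defined on the dense open set where $e$ is an isomorphism, and its general fibers are exactly the general leaves of $\sF$ by Lemma~\ref{lemma:leaffoliation}. ``Almost proper'' means that, over a dense open $W^\circ\subseteq W$, the induced map $e(\pi^{-1}(W^\circ))\dashrightarrow W^\circ$ is proper; this follows because over $W^\circ$ the morphism $\pi$ is proper with fibers the leaf closures and $e$ is an isomorphism there onto its image. I expect the main obstacle to be the passage from ``normalization of the general leaf is smooth'' to ``the general leaf itself is smooth (and reduced as it sits in $X$)'' — i.e.\ controlling the geometry of $e$ along the general fiber rather than merely at a single general point. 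The key leverage is that the general leaf avoids the singular locus of $\sF$ entirely (not just generically): by Lemma~\ref{lemma:singularlocusfoliationveruspfafffield}(2) the leaf, being invariant under the Pfaff field and disjoint from $S_1\cup S_2$ at its general point, stays in $X\setminus(S_1\cup S_2)$, and there $\sF$ restricts to an honest subbundle of $T_X$ cutting out a smooth submanifold by Frobenius; combining this with the log terminal (hence normal) structure of $\tilde F$ forces $\tilde F\xrightarrow{\sim}F$ and $F$ smooth.
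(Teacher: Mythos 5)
Your setup is right as far as it goes: reducing to the normalized universal family, noting that log terminality forces $\tilde\Delta=0$ and $\tilde F$ klt, and defining the almost proper map as $\pi\circ e^{-1}:X\dashrightarrow W$ are all consistent with the paper. But the central step --- upgrading ``$\tilde F$ is klt'' to ``$F$ is smooth and the leaf is all of $F$'' --- has a genuine gap, and the leverage you propose for it is false. You claim the leaf closure stays entirely inside $X\setminus(S_1\cup S_2)$ because it is invariant under the Pfaff field and meets $X\setminus(S_1\cup S_2)$ at a general point. Lemma~\ref{lemma:singularlocusfoliationveruspfafffield} gives no such thing: invariance only forbids a component of $Y$ from being \emph{contained} in the singular locus, and leaf closures routinely meet $\textup{Sing}(\sF)$ --- in \ref{example:ample_on_p^n} every leaf closure $L$ contains the whole singular locus $S$. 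Even $\tilde\Delta=0$ only says that $F\cap\textup{Sing}(\sF)$ has no divisorial component in $F$ (it is the zero divisor of the extended Pfaff field, cf.\ Lemma~\ref{lemma:singular_locus_normalization}), not that it is empty. Consequently your appeal to Lemma~\ref{lemma:regular_smooth} yields smoothness of the leaf only at its \emph{general} points, which is vacuous; and ``properness of the leaf is automatic'' conflates the leaf with its closure, since a priori the leaf could be a dense open subset of $F$ whose boundary lies in $\textup{Sing}(\sF)$. Since the proposition immediately implies the Lipman--Zariski conjecture for klt spaces (see the corollary that follows it), an argument that never confronts the non-generic locus of $\tilde F$ cannot be complete.

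The missing idea is a discrepancy computation on a canonical desingularization. The paper takes $d:Y\to U$ with $d_*T_Y(-\log\Sigma)=T_U$, where $\Sigma$ is the largest reduced divisor in $d^{-1}(\textup{Sing}(U))$ (Theorem~\ref{thm:canonical_resolution}). Local freeness of $\sF$ along the general leaf gives an inclusion $g^*\sF|_{Y_0}\hookrightarrow T_{Y_0}(-\log\Sigma_0)$, hence an effective integral divisor $\Sigma'_0$ with $K_{Y_0}+\Sigma_0+\Sigma'_0=(g^*K_\sF)|_{Y_0}$. Restricting to a general fiber $Y_w$ and using $\Delta_w=0$ (so $K_{U_w}=(e^*K_\sF)|_{U_w}$) gives $K_{Y_w}+\Sigma_w+\Sigma'_w=d_w^*K_{U_w}$; log terminality of $U_w$ then forces the effective \emph{integral} divisor $\Sigma_w+\Sigma'_w$ to vanish, whence $U_w$ is smooth and, by Lemma~\ref{lemma:singular_locus_normalization}, $\sF$ is regular along all of $F$. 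Only at that point do Frobenius, the identification $\tilde F\simeq F$, and the properness of the leaf follow. This is the step your proposal does not supply.
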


\begin{proof}
Let $W$ be the normalization of the closure in $\Chow(X)$ of the subvariety parametrizing 
general leaves of $\sF$, and  $U$ the normalization of the universal cycle over $W$. 
By \cite[Theorem 3.35, 3.45]{kollar07} 
(see also  \cite[Corollary 4.7]{greb_kebekus_kovacs10} and Theorem~\ref{thm:canonical_resolution}), 
there is a log resolution of singularities
$d:Y \to U$ such that $d_*T_{Y}(-\log \Sigma)=T_{ U}$ where 
$\Sigma\subset Y$ is the largest
reduced divisor contained in $d^{-1}(\textup{Sing}( U))$.
Consider the commutative diagram:

\centerline{
\xymatrix{
Y\ar[r]^{d}\ar@/_0.5pc/[dr]\ar@/^1.5pc/[rr]^{g} 
&  U \ar[r]^{ e}\ar[d]_{ \pi} & X \\
&  W &
}
}

By assumption, there is a dense open subset  $ W_0\subset  W$ 
such that $e^*\sF$ is locally free along $U_0:=\pi^{-1}( W_0)$.
We set $Y_0:=d^{-1}( U_0)$ and $\Sigma_0:=\Sigma|_{Y_0}$.
By Remark~\ref{remark:definition_log_leaf}, since $e^*\sF$ is locally free along $U_0$,
$\sF$ induces a foliation  
${({e}^*\sF)}|_{U_0}\subset T_{U_0}$. Thus there is an injection 
$$
{g^*\sF}|_{Y_0}\hookrightarrow T_{Y_0}(-\log \Sigma_0)\subset T_{Y_0}.
$$
Hence there exists an effective integral
divisor $\Sigma'_0$ on $Y_0$ such that
$$K_{Y_0}+\Sigma_0+\Sigma'_0={(g^*K_\sF)}|_{Y_0}.$$

Let $w\in W_0$ be a general point, and $(U_w,\Delta_w)$ the corresponding log leaf. 
Set $Y_w:=d^{-1}(U_w)$, $d_w:=d|_{Y_w}:Y_w\to U_w$, $\Sigma_w:=\Sigma|_{Y_w}$, 
and $\Sigma'_w:=\Sigma'|_{Y_w}$. 
By assumption, $(U_w,\Delta_w)$ is log terminal. Thus $\Delta_w=0$.
Hence $K_{U_w}={(e^*K_{\sF})}|_{ U_w}$, and we get 
$$
K_{Y_w}+\Sigma_w+\Sigma'_w={(g^*K_\sF)}|_{Y_w}=d_w^* K_{U_w}.
$$
Notice that $d_w:Y_w \to  U_w$ is a log resolution of singularities, and recall
that $U_w$ is log terminal.
On the other hand, $\Sigma_w$ and $\Sigma'_w$ are effective integral divisors on $Y_w$.
So  we must have $\Sigma_w=\Sigma'_w=0$.
This implies that $U_w$ is smooth, and 
by Lemma~\ref{lemma:singular_locus_normalization} below, $\sF$ is regular along 
the image of $U_w$.
\end{proof}

\begin{lemma}\label{lemma:singular_locus_normalization}
Let $Y$ be a variety with normalization morphism $n:\tilde Y\to Y$. 
Let $\sG$ be a locally free sheaf of rank $r_\sG$ on $Y$ and $\eta : \Omega_Y^1 \to \sG$ be any morphism. Let 
$\tilde \eta : \Omega_{\tilde Y}^1 \to n^*\sG$ be the extension given by
 \cite{seidenberg66}. Let $r$ be a positive integer and
let $S_r(\eta)$ (resp. $S_r(\tilde\eta)$) be the locus where 
$\wedge^r \eta : \Omega_Y^r \to \wedge^r \sG$ 
(resp. $\wedge^r \tilde \eta : \Omega_{\tilde Y}^r \to n^*\wedge^r \sG$)
is not surjective.
Then $S_r(\tilde\eta)=n^{-1}(S_r(\eta))$.
\end{lemma}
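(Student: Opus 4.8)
The statement is local on $Y$, so I would work over an affine open set and argue fiberwise. The key observation is that $S_r(\eta)$ is the zero scheme of the ideal generated by the $r\times r$ minors of a matrix representing $\eta$ in local coordinates (after trivializing $\sG$), i.e. it is the cokernel support of $\wedge^r\eta\colon \Omega^r_Y\to \wedge^r\sG$; and similarly for $\tilde\eta$ upstairs. Since $n$ is a finite surjective birational morphism, the set-theoretic preimage statement $S_r(\tilde\eta)=n^{-1}(S_r(\eta))$ only requires comparing the two loci as \emph{sets}, which makes the problem purely a question about where each map drops rank.

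The plan is as follows. First, I would recall the construction of the Seidenberg extension $\tilde\eta$: the composite $n^*\Omega^1_Y\to \Omega^1_{\tilde Y}\to n^*\sG$, where the second arrow is a priori only rational, is in fact a morphism of sheaves on the normal variety $\tilde Y$ because $n^*\sG$ is reflexive (being locally free) and the indeterminacy locus of the middle map has codimension $\ge 2$; this is exactly \cite{seidenberg66} (compare Lemma~\ref{lemma:extensionpfafffields}, which is the analogous statement for Pfaff fields). In particular $\tilde\eta$ restricted to the preimage of the smooth locus $Y_{\mathrm{reg}}$ of $Y$ agrees with $n^*\eta$ under the isomorphism $\Omega^1_{\tilde Y}|_{n^{-1}(Y_{\mathrm{reg}})}\simeq n^*\Omega^1_Y|_{n^{-1}(Y_{\mathrm{reg}})}$. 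Hence over $n^{-1}(Y_{\mathrm{reg}})$ the equality $S_r(\tilde\eta)=n^{-1}(S_r(\eta))$ is immediate, and since $\mathrm{Sing}(Y)$ has codimension $\ge 1$ while the loci in question are closed, I must handle the points lying over $\mathrm{Sing}(Y)$.

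Second, I would establish the inclusion $n^{-1}(S_r(\eta))\subseteq S_r(\tilde\eta)$: if $\wedge^r\tilde\eta$ were surjective at a point $\tilde y$ with $n(\tilde y)=y$, then, pushing forward and using $n_*\sO_{\tilde Y}\supseteq\sO_Y$ together with the factorization of $\eta$ through $n_*\tilde\eta$, one deduces $\wedge^r\eta\otimes k(y)$ is surjective (the cokernel of $\wedge^r\eta$ is a coherent sheaf whose pullback to $\tilde Y$ vanishes near $\tilde y$, hence by finiteness of $n$ the sheaf itself vanishes near $y$). For the reverse inclusion $S_r(\tilde\eta)\subseteq n^{-1}(S_r(\eta))$, equivalently $n^{-1}(Y\setminus S_r(\eta))\subseteq \tilde Y\setminus S_r(\tilde\eta)$: if $\wedge^r\eta$ is surjective near $y$, it is surjective on a whole neighborhood, so $\eta$ itself is (generically) of rank $\ge r$ there; then $n^*\eta$ is surjective in degree $r$ on $n^{-1}$ of that neighborhood intersected with $n^{-1}(Y_{\mathrm{reg}})$, and I must argue the extension $\tilde\eta$ remains surjective in degree $r$ at the finitely many added points over $\mathrm{Sing}(Y)$. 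The cleanest route is: the cokernel $\cC$ of $\wedge^r\tilde\eta$ is a coherent sheaf on $\tilde Y$ supported in $n^{-1}(S_r(\eta))$ (by the first inclusion already proved on $n^{-1}(Y_{\mathrm{reg}})$ together with the previous paragraph), hence supported on a proper closed subset; but reflexivity of $n^*\wedge^r\sG$ on the normal variety $\tilde Y$, combined with the fact that $\wedge^r\tilde\eta$ is surjective in codimension $0$ wherever $\wedge^r\eta$ is, forces its image to be saturated, so the cokernel is torsion and supported exactly where it should be — I would make this precise by noting $S_r(\tilde\eta)$ is cut out by the ideal of maximal minors of the matrix of $\tilde\eta$, whose pullback to $n^{-1}(Y_{\mathrm{reg}})$ is the ideal of minors of $\eta$, and invoking that a closed subscheme of the normal variety $\tilde Y$ is determined by its restriction to the complement of a codimension $\ge 2$ set together with being cut out by an explicit ideal.

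The main obstacle will be the inclusion $S_r(\tilde\eta)\subseteq n^{-1}(S_r(\eta))$ at points over $\mathrm{Sing}(Y)$: a priori the extension process could \emph{create} new points where $\wedge^r\tilde\eta$ fails to be surjective, since passing from $\Omega^1_Y|_Y$ to the (in general larger) $\Omega^1_{\tilde Y}$ could destroy surjectivity. The resolution is that the natural map $n^*\Omega^1_Y\to\Omega^1_{\tilde Y}$ is \emph{generically an isomorphism} and $\tilde\eta$ factors as $\Omega^1_{\tilde Y}\to$ its double dual of the image of $n^*\Omega^1_Y$, i.e. $\tilde\eta$ is the composition $\Omega^1_{\tilde Y}\twoheadrightarrow (\text{torsion-free quotient})\to n^*\sG$; the rank-$r$ surjectivity of $\wedge^r(n^*\eta)$ on a dense open set, plus torsion-freeness of $n^*\wedge^r\sG$, is precisely what I exploited in the third paragraph, and it guarantees no new non-surjectivity points appear. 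I would write this out carefully, as it is the only non-formal point; everything else is bookkeeping with coherent sheaves under the finite birational morphism $n$.
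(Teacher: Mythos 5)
Your proposal has the two inclusions backwards as to where the difficulty lies, and the argument you give for the genuinely hard one does not work. The inclusion $S_r(\tilde\eta)\subseteq n^{-1}(S_r(\eta))$ is the formal direction: since $n^*\eta$ factors as $n^*\Omega_Y^1\to\Omega_{\tilde Y}^1\xrightarrow{\tilde\eta}n^*\sG$, the fiber rank of $\tilde\eta$ at $\tilde y$ is at least that of $\eta$ at $n(\tilde y)$, so passing to the extension can only \emph{gain} rank, never lose it; it cannot "create new points where $\wedge^r\tilde\eta$ fails to be surjective," contrary to what you call the main obstacle. This is exactly the paper's first (one-line) paragraph. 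The hard inclusion is $n^{-1}(S_r(\eta))\subseteq S_r(\tilde\eta)$: one must rule out that $\tilde\eta$ has strictly \emph{larger} rank at $\tilde y$ than $\eta$ has at $y=n(\tilde y)$. Your argument for it — that surjectivity of $\wedge^r\tilde\eta$ near $\tilde y$ forces the pullback of $\mathrm{coker}(\wedge^r\eta)$ to vanish near $\tilde y$ — is invalid, because that pullback is $\mathrm{coker}(\wedge^r n^*\eta)$, and $\wedge^r n^*\eta$ factors through $\wedge^r\tilde\eta$ via the natural map $n^*\Omega_Y^r\to\Omega_{\tilde Y}^r$, which is \emph{not} surjective at points over the non-normal locus (its cokernel is built from $\Omega^1_{\tilde Y/Y}$, already nonzero for the cuspidal cubic). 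So surjectivity of $\wedge^r\tilde\eta$ at $\tilde y$ gives no control on $\wedge^r n^*\eta$ there. Your appeals to torsion-freeness and reflexivity cannot close this gap: they only see behavior on dense open sets or in codimension $\ge 2$, whereas the non-normal locus of $Y$ (hence its preimage in $\tilde Y$) is typically a divisor, and the statement to be proved concerns fibers at individual closed points. For the same reason your description of the construction of $\tilde\eta$ as "reflexivity plus codimension $\ge 2$ indeterminacy" is not correct; the extension is Seidenberg's theorem on lifting derivations to the integral closure, which is not a codimension-two argument.

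The missing ingredient is precisely a pointwise vanishing theorem for extended derivations. The paper's proof shrinks $Y$ and splits $\sG\simeq\sO_Y^{\oplus r_\sG-r}\oplus\sG_1$ so that the component $\eta_0:\Omega_Y^1\to\sO_Y^{\oplus r_\sG-r}$ vanishes at the given point $y\in S_r(\eta)$, observes that the Seidenberg extension respects this decomposition, and then invokes \cite[Lemme 1.2]{druel04}: a derivation of $\sO_Y$ vanishing at $y$ extends to a derivation of $\sO_{\tilde Y}$ vanishing at every point of $n^{-1}(y)$ (a fact resting on Seidenberg's theorem and the conductor ideal, as the cusp example $t\mapsto(t^2,t^3)$ illustrates). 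Your proposal contains no substitute for this non-formal input, so as written it does not prove the lemma.
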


\begin{proof}
If $\eta$ has rank $> r$ at a some point $y$ in $Y$ then 
$n^*\eta$ has rank $> r$ at every point in $n^{-1}(y)$ and thus
$\tilde \eta$ has rank $> r$ at every point in $n^{-1}(y)$. Therefore
$S_r(\tilde\eta)\subset n^{-1}(S_r(\eta))$.

Let us assume that $\eta$ has rank $\le r$ at some point $y$ in $Y$. By shrinking $Y$ if necessary, we may decompose
$\sG$ as $\sO_Y^{\oplus r_\sG-r}\oplus \sG_1$ in such a way that the induced morphism 
$\eta_0:\Omega_Y^1 \to \sO_Y^{\oplus r_\sG-r}$ is zero at $y$.
Write $\eta=(\eta_0,\eta_1)$ and let $\tilde \eta_0:\Omega_{\tilde Y}^1 \to n^*\sO_Y^{\oplus r_\sG-r}$ 
(respectively $\tilde \eta_1:\Omega_{\tilde Y}^1 \to n^*\sG_1$)
be the extension of $\eta_0:\Omega_Y^1 \to \sO_Y^{\oplus r_\sG-r}$ (respectively $\eta_1:\Omega_Y^1 \to \sG_1$) given by 
\cite{seidenberg66}. Then $\tilde\eta=(\tilde\eta_0,\tilde\eta_1)$ and the claim then follows from
\cite[Lemme 1.2]{druel04}.
\end{proof}

\begin{cor}[Lipman-Zariski conjecture for klt spaces. See also {\cite[Theorem 6.1]{greb_kebekus_kovacs_peternell10}}]
Let $X$ be a klt space such that the tangent space $T_X$ is locally free. Then $X$ is smooth. 
\end{cor}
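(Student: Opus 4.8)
The strategy is to produce an algebraically integrable foliation on $X$ satisfying the hypotheses of Proposition~\ref{prop:minimal_singularities}, and then read off smoothness of $X$ from its conclusion. Since $T_X$ is locally free, the natural candidate is $\sF := T_X$ itself: this is a $1$-Gorenstein foliation of rank $r = \dim X$, it is regular (the Pfaff field $\eta:\Omega_X^r\to\det(T_X^*)=\sO_X(K_X)$ is the identity on $\det\Omega_X^r$ after untwisting, so its singular locus is empty), and it is locally free by hypothesis. It is trivially algebraically integrable, with a single leaf, namely (the smooth locus of) $X$ itself; equivalently, in the notation of Lemma~\ref{lemma:leaffoliation}, $W$ is a point and $U\to X$ is the normalization $\tilde X\to X$.

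First I would identify the general log leaf of $\sF = T_X$. By Definition~\ref{defn:log_leaf}, the log leaf is $(\tilde X, \tilde\Delta)$ where $n:\tilde X\to X$ is the normalization and $\sO_{\tilde X}(K_{\tilde X}+\tilde\Delta)\simeq n^*\sO_X(K_X)$; but since $X$ is normal (klt spaces are normal by definition), $n$ is an isomorphism, $\tilde\Delta = 0$, and the log leaf is just $(X, 0)$. Now I must check that $\sF$ has \emph{log terminal} singularities along its general leaf, i.e.\ that $(X,0)$ is log terminal (klt) in the sense of singularities of pairs. This is precisely the hypothesis that $X$ is a klt space. So Proposition~\ref{prop:minimal_singularities} applies: the leaf through a general point of $X$ is proper and smooth. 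Since that leaf is (a dense open subset of) $X$ itself, $X$ is smooth on a dense open set — but one needs smoothness everywhere.

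To upgrade from ``smooth on a dense open subset'' to ``smooth'', I would track the proof of Proposition~\ref{prop:minimal_singularities} more carefully, or invoke its internal mechanism directly. In that proof, with $W$ a point and $U = X$, one takes a log resolution $d:Y\to X$ with $d_*T_Y(-\log\Sigma) = T_X$, where $\Sigma$ is the largest reduced divisor in $d^{-1}(\mathrm{Sing}\,X)$; the argument produces effective integral divisors $\Sigma, \Sigma'$ on $Y$ with $K_Y + \Sigma + \Sigma' = d^*K_X$, and klt-ness of $(X,0)$ together with $d$ being a log resolution forces $\Sigma = \Sigma' = 0$. Vanishing of $\Sigma$ means $d^{-1}(\mathrm{Sing}\,X)$ contains no divisor; combined with $\Sigma' = 0$ and Lemma~\ref{lemma:singular_locus_normalization} (applied to $\eta = $ the identity, or directly to the inclusion $g^*T_X \hookrightarrow T_Y$), this forces the singular locus of $\sF = T_X$ to be empty, i.e.\ $\sF$ is regular on all of $X$. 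Then Lemma~\ref{lemma:regular_smooth} gives, near every point $x\in X$, a smooth morphism $U\to W$ of relative dimension $r = \dim X$ onto a complex analytic space $W$; as $r = \dim X$, $W$ is a point and $U\to W$ smooth of dimension $\dim X$ means $U$ — an analytic neighborhood of $x$ — is smooth. Hence $X$ is smooth everywhere.

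The main obstacle is the passage from the generic statement of Proposition~\ref{prop:minimal_singularities} to smoothness at \emph{every} point of $X$: the cited proposition as stated only asserts smoothness of the \emph{general} leaf and existence of an almost proper quotient map, so one cannot simply quote it as a black box. The resolution is to observe that when $\sF = T_X$ there is only one leaf and the ``general point'' argument in the proof of Proposition~\ref{prop:minimal_singularities} in fact shows $\Sigma = \Sigma' = 0$ globally (since $W$ is a point, ``general $w\in W$'' is the only $w$), which via Lemma~\ref{lemma:singular_locus_normalization} and Lemma~\ref{lemma:regular_smooth} yields smoothness of $X$ at every point. Everything else — checking $T_X$ is a $1$-Gorenstein regular foliation, that its log leaf is $(X,0)$, and that klt of $X$ is exactly the required singularity hypothesis — is routine.
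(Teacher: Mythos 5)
Your proof is built on the right mechanism --- Proposition~\ref{prop:minimal_singularities} applied to a foliation whose general log leaf is $(X,0)$, with klt-ness of $X$ supplying the log terminal hypothesis and local freeness of $T_X$ supplying the other --- and this is exactly the mechanism the paper uses. But your choice of foliation, $\sF=T_X$ on $X$ itself, is not admissible in the paper's framework: a foliation is by definition a \emph{proper} subsheaf $\sF\subsetneq T_X$, and the construction of the Chow family of leaves (Lemma~\ref{lemma:leaffoliation}), on which Proposition~\ref{prop:minimal_singularities} and the very notion of ``log leaf'' rest, is proved using the fact that a foliation has uncountably many leaves --- false for the single-leaf object $T_X$. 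So you cannot invoke Proposition~\ref{prop:minimal_singularities}, nor re-run its proof verbatim, with $\sF=T_X$.

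The paper's actual proof sidesteps this with a one-line trick: take a smooth complete curve $C$ and apply Proposition~\ref{prop:minimal_singularities} to the foliation induced by the projection $X\times C\to C$, i.e.\ to $T_{(X\times C)/C}$ on $X\times C$. This is a genuine (proper, locally free, algebraically integrable) foliation, its leaves are the fibers $X\times\{c\}\simeq X$, its general log leaf is $(X,0)$, and it has log terminal singularities along a general leaf precisely because $X$ is klt. The trick also dissolves the difficulty you spend your last paragraph on: since \emph{every} leaf is a copy of $X$, the conclusion ``the leaf through a general point is proper and smooth'' already says that $X$ is smooth everywhere, and there is no need to reopen the proof of Proposition~\ref{prop:minimal_singularities} to upgrade from generic to global smoothness. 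Your re-derivation of $\Sigma=\Sigma'=0$ is sound as a computation, but as written it rests on applying to $T_X$ a body of statements whose hypotheses $T_X$ does not satisfy; replacing $X$ by $X\times C$ repairs this and shortens the argument.
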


\begin{proof}The result follows from
proposition \ref{prop:minimal_singularities} applied to the foliation induced by the projection morphism $X\times C \to C$,
where $C$ is a smooth complete curve.
\end{proof}

\begin{prop}
Let $\sF$ be a $1$-Gorenstein algebraically integrable  foliation on a  normal projective variety $X$. Suppose
that $\sF$ has log terminal singularities along a general leaf.
Then $\det(\sF)$ is not nef and big. 
\end{prop}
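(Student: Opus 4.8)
The plan is to argue by contradiction, reducing to part~(2) of Theorem~\ref{thm:-KX/Y_not_ample}, in the spirit of the proof of Proposition~\ref{lemma:common_point}. So suppose $\det(\sF)=\sO_X(-K_\sF)$ is nef and big. Since $\sF\subsetneq T_X$, a general leaf cannot be dense in $X$, so the closure in $\Chow(X)$ of the subvariety parametrizing general leaves has positive dimension; I would then pass to its normalization $W$ and to the normalization $U$ of the universal cycle over it, with induced morphisms $\pi:U\to W$ and $e:U\to X$, where $e$ is birational, as in Remark~\ref{remark:definition_log_leaf}. Next I would choose a general complete intersection curve $C\subseteq W$ (so $C=W$ when $\dim W=1$), let $U_C$ be the normalization of $\pi^{-1}(C)$, and write $\pi_C:U_C\to C$ and $e_C:U_C\to X$ for the induced morphisms.

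The first step — and the one I expect to be the main obstacle — is to check that $e_C^*(-K_\sF)$ is nef and big. Nefness is clear. For bigness, I would first observe that, since $C$ is general, the generic point of $\pi^{-1}(C)$ is a general point of $U$ (a general complete intersection curve passes through a general point of $W$, so these curves sweep out $U$ under $\pi$); as $e$ is birational it is a local isomorphism there, hence $e_C$ is generically finite onto its image, which therefore has dimension $\dim U_C=r_\sF+1$. Then, using Kodaira's lemma and the fact that $-K_\sF$ is Cartier, I would write $-mK_\sF\sim H+E$ with $m\ge 1$, $H$ ample and $E$ effective on $X$; since $C$ — hence $e_C(U_C)$ — is general, $e_C(U_C)\not\subseteq\Supp E$, so $e_C^*E$ is effective, while $e_C^*H$ is big because $e_C$ is generically finite onto its image and $H$ is ample. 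Therefore $e_C^*(-mK_\sF)\sim e_C^*H+e_C^*E$ is big, and $e_C^*(-K_\sF)$ is nef and big. The delicate point here is precisely that nef-and-bigness is \emph{not} preserved under restriction to an arbitrary subvariety, so the genericity of $C$ is essential, both to ensure that $e_C$ contracts no divisor and that $C$ avoids the negative part $E$.

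For the remainder I would follow the proof of Proposition~\ref{lemma:common_point} closely: applying Remark~\ref{remark:definition_log_leaf} to the general subvariety $C\subseteq W$, and then Lemmas~\ref{lemma:reduced_fiber} and~\ref{lemma:extensionpfafffields} to reduce, after replacing $C$ by a finite cover, to the case where $\pi_C$ has reduced fibers, one obtains an effective integral divisor $\Delta_C$ on $U_C$ with
$$-(K_{U_C/C}+\Delta_C)=e_C^*(-K_\sF).$$
Since $\sF$ has log terminal singularities along a general leaf (so in fact $\tilde\Delta=0$), the pair $(U_C,\Delta_C)$ is klt over the generic point of $C$; and $K_{U_C}+\Delta_C=e_C^*K_\sF+\pi_C^*K_C$ is Cartier, hence $\bQ$-Cartier, because $\sF$ is $1$-Gorenstein. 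Thus $\pi_C:U_C\to C$ is a surjective morphism with connected fibers onto a smooth curve, $(U_C,\Delta_C)$ is klt over the generic point of $C$, and $-(K_{U_C/C}+\Delta_C)$ is nef and big — contradicting Theorem~\ref{thm:-KX/Y_not_ample}(2) (with $\Delta_+=\Delta_C$ and $\Delta_-=0$). This completes the proof.
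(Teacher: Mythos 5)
Your proof is correct and follows essentially the same route as the paper's: restrict to a general curve $C$ in the space parametrizing leaves, pass to the normalized universal cycle $U_C$ with reduced fibres after a finite base change, produce the effective divisor $\Delta_C$ with $-(K_{U_C/C}+\Delta_C)=e_C^*(-K_\sF)$, and invoke Theorem~\ref{thm:-KX/Y_not_ample}(2). The only difference is that you justify the bigness of $e_C^*(-K_\sF)$ carefully via Kodaira's lemma and the genericity of $C$, a point the paper dispatches with the one-line remark that $e$ is birational onto its image; your more detailed treatment is welcome but does not change the argument.
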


\begin{proof}
We let $C\subset \Chow(X)$ be a general complete curve contained in the closure of the subvariety parametrizing general leaves of $\sF$.
We denote by $U$ the normalization of the universal cycle over $C$, 
with natural morphisms $\pi:U\to C$ and $e:U\to X$.
Since $C$ is general, $e:U\to X$ is birational onto its image. 
Thus if $-K_{\sF}$ is nef and big, then so is $e^* (-K_{\sF})$. 

By Remark~\ref{remark:definition_log_leaf}, $\sF$ induces a Pfaff field
$\Omega_{U/C}^{r} \to e^*\sO_C(-K_{\sF})$, where $r$ denotes the rank of $\sF$.
By Lemma \ref{lemma:reduced_fiber} followed by Lemma \ref{lemma:extensionpfafffields}, 
after replacing $C$ with a finite cover if necessary,
we may assume that $\pi$ has reduced fibers. This implies that
$\det (\Omega_{U/C}^{r})\simeq \sO_{U}(K_{U/C})$. Thus
there exists a canonically defined effective divisor $\Delta$ on $U$ 
such that 
$$
-(K_{U/C}+\Delta) =  e^*(- K_{\sF}).
$$

By assumption, $(U,\Delta)$ is log terminal over the generic point of $C$.
So, by Theorem~\ref{thm:-KX/Y_not_ample}, $e^* (-K_{\sF})$ cannot be nef and big.
\end{proof}


\section{Foliations and rational curves}
\label{section:rat_curves}

If a smooth projective variety $X$ admits a Fano foliation $\sF$, then it is 
uniruled, as we have observed in Remark~\ref{uniruledness}.
In order to study the pair $(X,\sF)$, it is useful to understand the behavior of $\sF$ with respect 
to  families of rational curves on $X$. 
This is the theme of this section.
We start by recalling some definitions and results from the theory of rational curves on 
smooth projective
varieties. We refer to \cite{kollar96} for more details.

Let $X$ be a smooth projective variety, and $H$ a family of rational curves on $X$, i.e., 
an irreducible component of $\rat(X)$. 
If $C$ is a curve from the family $H$, with
normalization morphism $f:\p^1\to C\subset X$, then
we denote by $[C]$ or
$[f]$ any point of $H$ corresponding to $C$.  
We denote by $Locus(H)$ the locus of $X$ swept out by curves from $H$.
We say that $H$ is \emph{unsplit} if it is proper, and \emph{minimal} if, 
for a general point $x\in Locus(H)$,
the closed subset $H_x$ of $H$ parametrizing curves through $x$ is proper.
We say that $H$ is \emph{dominating}
if $\overline{Locus(H)}=X$.  
In this case we say that a curve $C$ parametrized by $H$ is a \emph{moving curve} on $X$,
and that any curve from $H$ is a deformation of $C$.

\begin{say}[Minimal dominating families of rational curves]
Let $X$ be a smooth projective uniruled variety. 
Then $X$ always carries a minimal dominating family of rational curves.
Fix one such family $H$, and let $[f]\in H$ be a general point. 
By \cite[IV.2.9]{kollar96},
$f^*T_X\simeq \sO_{\p^1}(2)\oplus \sO_{\p^1}(1)^{\oplus d}\oplus
\sO_{\p^1}^{\oplus (n-d-1)}$, where $d=\deg(f^*T_X)-2\geq 0$.

Given a general point $x\in X$, let $\tilde H_x$ be the normalization of $H_x$.
By \cite[II.1.7, II.2.16]{kollar96}, 
$\tilde H_x$ is a finite union of smooth projective varieties of dimension
$d=\deg(f^*T_X)-2$.  
Define the tangent map $ \tau_x: \ \tilde H_x \map  \p(T_xX^*) $ 
by sending a curve that is smooth at $x$ to its
tangent direction at $x$.  Define $\cC_x$ to be the image of
$\tau_x$ in $\p(T_xX^*)$.  This is called the \emph{variety of
minimal rational tangents} at $x$ associated to the minimal family  $H$.
The map $\tau_x: \ \tilde H_x \to \cC_x$ is in fact the normalization
morphism by \cite{kebekus02} and \cite{hwang_mok04}.  
\end{say}

\begin{defn} 
Let $\pi_0:X_0\to Y_0$ be a proper morphism
defined on a dense open subset of $X$. 
A family of rational curves $H$ on $X$ is said to be \emph{horizontal (with respect to $\pi_0$)}
if the general member of $H$ meets $X_0$ and is not contracted by $\pi_0$.
If moreover $Locus(H)$ dominates $Y_0$, then we say that $H$ is 
\emph{h-dominating (with respect to $\pi_0$)}.

Notice that if $X$ admits a horizontal family of rational curves,
then it admits a minimal horizontal family of rational curves. Indeed, it is enough to take a horizontal 
family having minimal degree with respect to some fixed ample line bundle on $X$.
Similarly for h-dominating families.
\end{defn}

\begin{lemma} \label{lemma:horizontal}
Let $X$ be a smooth projective variety, and $\pi_0:X_0\to Y_0$ a 
surjective proper morphism defined on a dense open subset of $X$. 
Suppose that  $H$ is a minimal horizontal family of rational curves with respect to $\pi_0$.
Then $-K_X\cdot H\leq \dim Y_0+1$, where  $-K_X\cdot H$ denotes the intersection number 
of  $-K_X$ with any curve from the family $H$. Moreover
\begin{itemize}
	\item If  $-K_X\cdot H= \dim Y_0+1$, then $H$ is dominating.
	\item If  $-K_X\cdot H= \dim Y_0$, then $Locus(H)$ has codimension at most 1 in $X$.
\end{itemize}
\end{lemma}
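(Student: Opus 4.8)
The plan is to run the standard bend-and-break estimate, relativised over the base $Y_0$. First I would pass to the normalization $\tilde H_x$ of the subfamily $H_x$ of curves through a general point $x\in Locus(H)$; by the usual deformation theory (as in \cite{kollar96}), since $H$ is a minimal family, $H_x$ is proper, and $\dim H_x \ge -K_X\cdot H - 2$, with curves through $x$ being immersed there and no curve breaking. Now use that $H$ is \emph{horizontal} with respect to $\pi_0:X_0\to Y_0$: a general curve $C$ from $H$ meets $X_0$ and is not contracted by $\pi_0$, so $\pi_0\circ f:\p^1\to Y_0$ is a nonconstant morphism, where $f:\p^1\to C$ is the normalization. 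The key point is that for $x$ a general point of $Locus(H)$ lying over a general point $y=\pi_0(x)\in Y_0$, the curves in $H_x$ cannot all stay inside the fiber $\pi_0^{-1}(y)$ — because $H$ is horizontal, a general such curve is not contracted — so the composite map $\tilde H_x \to Y_0$ sending $[C]$ to... actually the cleaner formulation: consider $Locus(H_x)\subset X$, the subvariety swept out by curves through $x$. Its image in $Y_0$ has dimension at least $1$ if any curve through $x$ is non-contracted, and in fact, by the minimality/rigidity of $H_x$, one shows $\dim \pi_0(Locus(H_x)) \ge \dim Locus(H_x) - \dim(\text{general fiber of }\pi_0|_{Locus(H_x)})$.

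Concretely, the main estimate I would use is the inequality
\[
\dim Locus(H_x) \ge -K_X\cdot H - 1,
\]
valid for $x$ general in $Locus(H)$ (this is \cite[IV.2.6]{kollar96} or its analogue: curves through a fixed general point sweep out a subvariety of dimension at least $-K_X\cdot H-1$, provided they don't break, which holds here since $H$ is minimal). Since $H$ is horizontal, for $x$ general the subvariety $Locus(H_x)$ is not contained in a fiber of $\pi_0$, so $\pi_0(Locus(H_x))$ has positive dimension; moreover $x$ was arbitrary general, so letting $x$ vary we can say more. The clean way: since $H$ is horizontal, a general curve $C\subset Locus(H_x)$ maps finitely to $Y_0$ near $x$; hence $\pi_0|_{Locus(H_x)}$ is generically finite onto its image when restricted to an appropriate subvariety — more precisely, the fibers of $\pi_0|_{Locus(H_x)}$ over a general point have dimension strictly less than $\dim Locus(H_x)$, so $\dim \pi_0(Locus(H_x)) \ge 1$. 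Iterating, or rather using that this holds after replacing $x$ by a general point of a fiber: I would argue that $Locus(H_x)$ dominates a subvariety of $Y_0$ of dimension $\ge \dim Locus(H_x) - \dim(F\cap Locus(H_x))$ where $F$ is a general fiber, and that the fiber part is accounted for by curves contracted by $\pi_0$, which a horizontal family avoids generically. The upshot is $\dim Y_0 \ge \dim \pi_0(Locus(H_x)) \ge -K_X\cdot H - 1 - (\text{contracted excess})$, and the horizontality forces the contracted excess to be $0$ for general $x$, giving $-K_X\cdot H \le \dim Y_0 + 1$.

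For the two refinements: if $-K_X\cdot H = \dim Y_0+1$, then equality forces $\dim Locus(H_x) = -K_X\cdot H - 1 = \dim Y_0$ and $\pi_0(Locus(H_x))=Y_0$; but then as $x$ varies over $Locus(H)$, and since through two general points of $Locus(H)$ there pass curves (by the equality case of bend-and-break, $Locus(H)$ would have to be all of $X$ — here one uses \cite[IV.2.6]{kollar96} which gives that $-K_X\cdot H = \dim X+1$ type bounds, adapted: actually the conclusion $H$ dominating comes from $\dim Locus(H) \ge \dim Locus(H_x) + (\text{something}) \ge \dim Y_0 + 1$ combined with the fibration structure, forcing $Locus(H)=X$). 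Similarly, if $-K_X\cdot H = \dim Y_0$, then $\dim Locus(H_x) \ge \dim Y_0 - 1$, and tracking the extra parameter $x$ (which moves in the general fiber direction, contributing at least the relative dimension) yields $\dim Locus(H) \ge \dim Y_0 - 1 + (\dim X - \dim Y_0) = \dim X - 1$, i.e.\ codimension at most $1$.

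The main obstacle I anticipate is making precise the bookkeeping between $\dim Locus(H_x)$, the relative dimension of $\pi_0$, and the dimension of $Locus(H)$ — in particular justifying that the curves through a general point do \emph{not} pile up inside a single fiber of $\pi_0$ (so that the "horizontal" hypothesis genuinely converts into a dimension gain in $Y_0$), and handling the fact that $\pi_0$ is only defined on a dense open $X_0$, so one must check the general curve from $H_x$ and the general point $x$ land in $X_0$ where everything is defined. Once that is set up, the inequalities are the routine bend-and-break counts from \cite[Chapter IV]{kollar96}.
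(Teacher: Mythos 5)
Your overall shape is right---combine a bend-and-break dimension count for $Locus(H_x)$ with an upper bound on that dimension coming from horizontality---but the two places where you yourself flag uncertainty are exactly where the proof lives, and neither is resolved by what you wrote. The crux is the upper bound $\dim Locus(H_x)\le \dim Y_0$. Your route is to estimate $\dim\pi_0(Locus(H_x))$ from below and argue that the ``contracted excess'' vanishes for general $x$; but horizontality only tells you that the \emph{general} curve from $H$ is not contracted by $\pi_0$, and says nothing a priori about the other curves through $x$, which could sweep out a large subvariety inside a single fiber. The paper closes this gap using minimality in an essential way: since $H$ is minimal, $H_x$ is proper, hence each irreducible component $Z$ of $Locus(H_x)$ is proper, and by \cite[IV.3.13.3]{kollar96} \emph{every} curve in $Z$ is numerically proportional in $X$ to a curve from $H$. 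A curve contracted by $\pi_0$ has degree zero against the pullback of an ample divisor from $Y_0$, while a curve from $H$ has positive degree (it is horizontal), so proportionality forbids any contracted curve in $Z$; therefore $\pi_0|_Z$ contracts no curve and $\dim Z\le\dim Y_0$. Without invoking IV.3.13.3 (or some substitute), the ``contracted excess $=0$'' claim is an assertion, not an argument.

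The second gap is in the two refinements. You use only the weak form $\dim Locus(H_x)\ge -K_X\cdot H-1$ of the bend-and-break estimate, which suffices for the main inequality $-K_X\cdot H\le\dim Y_0+1$ but not for the addenda. The paper uses the full form of \cite[IV.2.6.1]{kollar96},
$$
\dim X+(-K_X\cdot H)\ \le\ \dim Locus(H)+\dim Locus(H_x)+1,
$$
which together with $\dim Locus(H_x)\le\dim Y_0$ gives
$-K_X\cdot H\le\dim Y_0+1-\big(\dim X-\dim Locus(H)\big)$; the equality case then forces $\dim Locus(H)=\dim X$ directly, and the case $-K_X\cdot H=\dim Y_0$ forces $\codim Locus(H)\le 1$. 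Your sketch for the equality case concludes only $\dim Locus(H)\ge\dim Y_0+1$, which is not $\dim X$ in general, so the claimed conclusion ``$H$ is dominating'' does not follow from what you wrote. With the full inequality in hand, both refinements are one-line consequences and no further ``tracking of the extra parameter $x$'' is needed.
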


\begin{proof}
Let $x$ be a general point in $Locus(H)$, and denote by $Locus(H_x)$ the 
locus of $X$ swept out by curves from $H_x$. By assumption, any irreducible component 
$Z$ of $Locus(H_x)$ is proper. Moreover, by \cite[IV.3.13.3]{kollar96}, any curve in $Z$
is numerically proportional in $X$ to a curve from the family $H$. In particular $Z$ cannot 
contain any curve contracted by $\pi_0$.  Therefore $\dim\big(Locus(H_x)\big)\leq \dim(Y_0)$.

On the other hand, by \cite[IV.2.6.1]{kollar96},
$\dim(X)+ (-K_X\cdot H) \leq  \dim\big(Locus(H)\big) + \dim\big(Locus(H_x)\big) +1$. Thus
$$
-K_X\cdot H \ \leq \ \dim(Y_0)+1-\Big(\dim(X)-\dim\big(Locus(H)\big)\Big)\ \leq \  \dim Y_0+1.
$$
If $-K_X\cdot H= \dim Y_0+1$, then we must have $\dim\big(Locus(H)\big)=\dim(X)$, i.e., 
$H$ is dominating. If $-K_X\cdot H= \dim Y_0$, then we must have $\dim(X)-\dim\big(Locus(H)\big)\leq 1$.
\end{proof}

\begin{say}[Rationally connected quotients] 
Let $H_1, \dots, H_k$ be families of rational curves
on $X$.  For each $i$, let $\overline H_i$ denote the closure of $H_i$
in $\Chow(X)$.  Two points $x,y\in X$
are said to be $(H_1, \dots, H_k)$-equivalent if they can be connected by a chain
of 1-cycles from $\overline H_1\cup \cdots \cup \overline H_k$.  
This defines an equivalence relation on $X$.  
By \cite{campana} (see also \cite[IV.4.16]{kollar96}), there exists a
proper surjective equidimensional morphism $\pi_0:X_0 \to T_0$ from a
dense open subset of $X$ onto a normal variety whose fibers are $(H_1,
\dots, H_k)$-equivalence classes.  We call this map the
 \emph{$(H_1,  \dots, H_k)$-rationally connected quotient of $X$}.  
When $T_0$
is a point we say that $X$ is $(H_1, \dots, H_k)$-rationally
connected.
\end{say}

From now on we investigate the behavior of foliations on a smooth projective variety $X$
with respect  to  families of rational curves on $X$. We start with a simple but useful observation.

\begin{lemma}\label{lemma:curve_tangent_to_F}
Let $X$ be a smooth projective variety, $H$ a family of rational curves on $X$, 
and $\sF$ an algebraically integrable foliation on $X$.
Suppose that $\ell$ is contained in a leaf of $\sF$ and avoids the singular locus of $\sF$
for some $[\ell]\in H$. 
Then the same holds for general $[\ell]\in H$. 
\end{lemma}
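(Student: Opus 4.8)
The plan is to show that the subset $H^\sF\subseteq H$ parametrizing curves tangent to $\sF$ is closed, that it contains $[\ell]$, and that it is dense; then $H^\sF=H$, and combined with the openness of the locus of curves disjoint from $\textup{Sing}(\sF)$ and with Lemma~\ref{lemma:singularlocusfoliationveruspfafffield} this yields the statement for the general member of $H$.

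First I would set this up and prove closedness. Let $p\colon\mathcal U_H\to H$ be the universal $\p^1$-bundle and $u\colon\mathcal U_H\to X$ the evaluation morphism. The composition $T_{\mathcal U_H/H}\hookrightarrow T_{\mathcal U_H}\to u^*T_X\to u^*(T_X/\sF)$ is a morphism of coherent sheaves on $\mathcal U_H$; let $Z\subseteq\mathcal U_H$ be its zero scheme, a closed subscheme. A member $[\ell']$ is tangent to $\sF$ precisely when $p^{-1}([\ell'])\subseteq Z$, so $H\setminus H^\sF=p(\mathcal U_H\setminus Z)$ is open because the flat morphism $p$ is open; hence $H^\sF$ is closed. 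By hypothesis $[\ell]\in H^\sF$ (and, $\ell$ being disjoint from $\textup{Sing}(\sF)$, tangency actually forces $\ell$ into a leaf). Likewise $\{[\ell']\in H:\ell'\cap\textup{Sing}(\sF)=\emptyset\}$ is open and nonempty, hence dense.

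To see that $H^\sF$ is dense I would invoke algebraic integrability. Let $W\subseteq\Chow(X)$ be the closure of the subvariety parametrizing general leaves, $\pi\colon U\to W$ the universal cycle, and $e\colon U\to X$ the (birational) evaluation, as in Lemma~\ref{lemma:leaffoliation}; recall from its proof that the support of every fibre of $\pi$ is $\sF$-invariant. After a routine general-position reduction (using $\ell\cap\textup{Sing}(\sF)=\emptyset$, so that $\ell$ lies in a general leaf $L=e(U_{w_0})$ and meets the open set where $e$ is an isomorphism) the strict transform $\tilde\ell\subseteq U$ of $\ell$ is a rational curve contained in the single fibre $U_{w_0}$ of $\pi$. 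Let $\mathcal H$ be the irreducible component of $\RC(U)$ through $[\tilde\ell]$. The intersection number of a curve of $\mathcal H$ with $\pi^*A$, for $A$ ample on $W$, is constant along $\mathcal H$ and vanishes at $[\tilde\ell]$; hence every curve of $\mathcal H$ lies in a fibre of $\pi$, so its image under $e$ is tangent to $\sF$. Thus $e$ carries $\mathcal H$ into $H^\sF$. On the other hand the strict-transform construction provides a rational section of $e_*\colon\mathcal H\dashrightarrow H$ near $[\ell]$, so $e_*(\mathcal H)$ is dense in $H$; as $H^\sF$ is closed, $H^\sF=H$.

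I expect the delicate point to be precisely the general-position reduction: showing that a curve disjoint from $\textup{Sing}(\sF)$ and contained in a leaf genuinely sits inside a fibre of $\pi\colon U\to W$, and that the deformations of $\tilde\ell$ in $U$, which are a priori controlled only as curves in $U$, still project to leaves. Both are resolved by the contraction argument above (constancy of $\pi^*A\cdot[\,\cdot\,]$ on $\mathcal H$) together with the $\sF$-invariance of the fibres of $\pi$ recorded in the proof of Lemma~\ref{lemma:leaffoliation}.
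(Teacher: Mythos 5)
Your overall architecture (the tangency locus $H^{\sF}$ is closed, the locus of curves missing $\textup{Sing}(\sF)$ is open and dense, so it suffices to prove $H^{\sF}$ is dense) is sound, and the closedness step is fine. The gap is in the density step, and it sits exactly where you flag it: the ``routine general-position reduction'' is not routine, and it is not resolved by the contraction argument, because that argument presupposes the reduction. Nothing in the hypotheses guarantees that the leaf containing $\ell$ is a \emph{general} leaf, that its closure is $e(U_{w_0})$ for a point $w_0$ over which $e$ behaves well, or that $\ell$ meets the locus where $e$ is an isomorphism: the hypothesis concerns a single, possibly very special, member of $H$, lying in a possibly special leaf, and $\ell$ could a priori be contained in $e(\Exc(e))$, a set of codimension at least $2$ in $X$ which can perfectly well contain curves. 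In that case the strict transform $\tilde\ell$ is undefined, there is no preferred lift of $\ell$ into a single fibre of $\pi$, and the ``rational section'' of $e_*$ no longer connects $[\tilde\ell]$ to the strict transforms of general members of $H$, so you cannot conclude that all of these lie in one component $\cH$ of $\RC(U)$. The constancy of $\pi^*A\cdot(\,\cdot\,)$ on $\cH$ can only be invoked \emph{after} you know that $[\tilde\ell]\in\cH$ and that $\pi^*A\cdot\tilde\ell=0$; it cannot be used to establish those facts. Excluding the bad configuration requires a genuine argument about contracted components of the cycles $U_w$ (for instance, showing that a point of $\ell$ lying in the image of a contracted component must belong to $\textup{Sing}(\sF)$), which you have not supplied.

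The paper avoids the lifting problem altogether by transporting the numerical test to $X$: it sets $\sA:=q_*(p^*\sA_W)$ for a general very ample divisor $\sA_W$ on $W$ and observes that ``$\ell$ is contained in a leaf and avoids $\textup{Sing}(\sF)$'' is equivalent to ``$\ell\cap\textup{Sing}(\sF)=\emptyset$ and $\sA\cdot\ell=0$''. The first condition is open on $H$, and the second is numerical, hence constant on the irreducible family $H$; the lemma follows at once. The intersection-theoretic idea is the same as yours, but applying it to a divisor on $X$, where $H$ already lives, removes any need to lift $\ell$ to $U$ --- which is precisely the step your argument cannot complete. I would suggest either adopting that formulation, or, if you wish to keep your route, first proving that under the stated hypotheses $\ell$ cannot be contained in $e(\Exc(e))$.
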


\begin{proof}
Let $W$ be the closure in $\Chow(X)$ of the subvariety parametrizing general leaves of $\sF$, with
universal family morphisms:
\[
\xymatrix{
U \ar[d]_p \ar[r]^q & X. \\
W} 
\]
Let $\sA_W$ be a general very ample effective divisor on $W$, and set $\sA=q_*(p^*(\sA_W))$.

The condition that $\ell$ is contained in a leaf of $\sF$ and avoids the singular locus $S$ of $\sF$
is equivalent to the condition that $\ell\cap S=\emptyset$ and $\sA\cdot \ell=0$.
Hence, if this condition holds for some $[\ell]\in H$, then it holds for general $[\ell]\in H$.
\end{proof}

\begin{lemma}\label{lemma:foliation_rc_quotientB}
Let $X$ be a smooth projective uniruled variety, $H_1, \cdots, H_k$ 
unsplit families of rational curves on $X$, and $\sF$ 
an algebraically integrable foliation on $X$. 
Denote by $\pi_0:X_0 \to T_0$ the $(H_1, \cdots, H_k)$-rationally
connected quotient of $X$.
Suppose that a general curve from each of the families $H_i$'s
is contained in a leaf of $\sF$ and avoids the singular locus of $\sF$.
Then
there is an inclusion $T_{X_0/T_0} \subset \sF|_{X_0}$. 
\end{lemma}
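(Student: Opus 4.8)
The plan is to show that at a general point $x$ of $X_0$, the tangent space to the fiber $\pi_0^{-1}(\pi_0(x))$ is contained in the fiber of $\sF$ at $x$, and then conclude by saturatedness of $\sF$. The key point is that the fibers of $\pi_0$ are, by construction of the rationally connected quotient, exactly the $(H_1,\dots,H_k)$-equivalence classes, so they are covered by chains of curves from the families $H_i$. First I would fix a general point $x$ in $Locus(H_i)\cap X_0$ for a given $i$, and use Lemma~\ref{lemma:curve_tangent_to_F}: since a general curve from $H_i$ is contained in a leaf of $\sF$ and avoids $\textup{Sing}(\sF)$, the same holds for a general curve of $H_i$ through $x$. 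Such a curve $\ell$ has its tangent direction at $x$ lying in $\sF|_x$ (since $\ell$ lies in a leaf and $x\notin\textup{Sing}(\sF)$, by Lemma~\ref{lemma:singularlocusfoliationveruspfafffield} the leaf is tangent to $\sF$).

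Next I would analyze the tangent space to the fiber of $\pi_0$ at a general point. Since $\pi_0:X_0\to T_0$ is equidimensional with fibers the $(H_1,\dots,H_k)$-equivalence classes, a general fiber $F_0$ is covered by (chains of) deformations of curves from the $H_i$'s passing through its general point $x$. The subspace of $T_xX_0$ spanned by the tangent directions at $x$ of all curves from $H_i$ through $x$ (as $i$ varies and as the curves vary) is contained in $\sF|_x$ by the previous paragraph, and also contained in $T_{F_0,x}$ since those curves lie in the fiber through $x$. The nontrivial input is that these tangent directions actually span $T_{F_0,x}$ at a general point $x$; this follows from the fact that $F_0$ is the closure of the set of points reachable from $x$ by chains of such rational curves, so (bend-and-break / general position arguments as in \cite[IV.4]{kollar96}) the infinitesimal deformations of one-cycles through a general point fill up the tangent space of the fiber. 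Hence $T_{X_0/T_0}|_x = T_{F_0,x} \subseteq \sF|_x$ for general $x$.

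Therefore the generically defined inclusion $T_{X_0/T_0}\hookrightarrow T_{X_0}$ factors, over a dense open subset of $X_0$, through $\sF|_{X_0}$. I would then extend this over all of $X_0$: both $T_{X_0/T_0}$ and $\sF|_{X_0}$ are subsheaves of the locally free sheaf $T_{X_0}$, and $\sF$ is saturated in $T_X$ hence in $T_{X_0}$ (so $T_{X_0}/\sF|_{X_0}$ is torsion free). A map $T_{X_0/T_0}\to T_{X_0}/\sF|_{X_0}$ that vanishes on a dense open set must vanish identically, since $T_{X_0/T_0}$ is torsion free (it is generically a subbundle of $T_{X_0}$) — more carefully, one applies the argument to $T_{X_0/T_0}^{**}$ or restricts to the open locus where everything is a bundle and uses reflexivity/normality of $X_0$ after shrinking. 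This yields the desired inclusion $T_{X_0/T_0}\subset \sF|_{X_0}$.

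The main obstacle I expect is the spanning statement: verifying that at a general point $x$ the tangent directions of the curves from the families $H_i$ passing through $x$ (together with deformations along chains) span the full tangent space to the fiber $F_0 = \pi_0^{-1}(\pi_0(x))$, rather than just a proper subspace. This is where one genuinely needs the structure theory of the rationally connected quotient — that $\pi_0$ is equidimensional and its fibers are the equivalence classes — and a deformation-theoretic argument (in the spirit of \cite[IV.4.16]{kollar96} and the fact that a rationally connected variety is covered by free rational curves whose tangent directions at a general point fill up the tangent space). Everything else is formal manipulation of saturated subsheaves of $T_X$ plus the two lemmas already established.
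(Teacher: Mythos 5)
There is a genuine gap at the step you yourself flag as the main obstacle: the claim that at a general point $x$ of a fiber $F_0$ of $\pi_0$, the tangent directions of curves from the $H_i$'s \emph{passing through $x$} span $T_{F_0,x}$. The fibers of the rationally connected quotient are $(H_1,\dots,H_k)$-equivalence classes, i.e.\ they are swept out by \emph{chains} of curves, and a chain joining $x$ to a far point contributes to the tangent space at $x$ only through its first link. Since the $H_i$ are merely unsplit (not assumed dominating), some $H_i$ may have no member through a general $x$ at all, and $\dim F_0$ can far exceed $\sum_i\bigl(\dim (H_i)_x+1\bigr)$; the invocation of ``deformations along chains'' does not repair this, because the later links are tangent to $\sF$ at \emph{their} points, not at $x$. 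To propagate tangency along a chain you would need consecutive links to lie in the \emph{same} leaf, but the links of a chain reaching an arbitrary point of $F_0$ need not be general members of their families, so the hypothesis (which concerns only general curves) does not apply to them. In effect, establishing your spanning claim is equivalent to first proving that the whole fiber lies in a single leaf closure --- which is the actual content of the lemma. The final step (passing from generic tangency to the sheaf inclusion via torsion-freeness of $T_{X_0}/\sF|_{X_0}$) is fine.

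The paper avoids the infinitesimal argument entirely by a numerical one: with $U\to W\subset\Chow(X)$ the family of leaf closures and $p,q$ the projections, it sets $A=q_*\bigl(p^*(A_W)\bigr)$ for a general very ample divisor $A_W$ on $W$. A general curve $\ell$ from $H_i$ lies in a leaf and misses $\textup{Sing}(\sF)$, so $A\cdot\ell=0$; by \cite[IV.3.13.3]{kollar96} every proper curve in a fiber $X_t$ is numerically a combination of the classes of the $H_i$, hence $A|_{X_t}\equiv 0$, which forces $X_t\subset q\bigl(p^{-1}(w)\bigr)$ for some $w\in W$, i.e.\ the entire fiber is contained in one leaf. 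The inclusion $T_{X_0/T_0}\subset\sF|_{X_0}$ then follows from Lemma~\ref{lemma:foliation_morphism}. If you want to salvage a chain-based argument, you would need some device (such as this numerical triviality of an effective divisor built from the leaves) that applies to \emph{every} member of each $H_i$ and to every curve in the fiber, not just to general members.
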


\begin{proof}
Let $W$ be the closure in $\Chow(X)$ of the subvariety parametrizing general leaves of $\sF$, with
universal family morphisms:
\[
\xymatrix{
U \ar[d]_p \ar[r]^q & X. \\
W} 
\]
Let $A_W$ be a general very ample effective divisor on $W$, and set $A=q_*(p^*(A_W))$.
By assumption, a general curve $\ell\subset X$ parametrized by each $H_i$
is contained in a leaf of $\sF$, and avoids the singular locus of $\sF$. Thus $A\cdot \ell = 0$. 

Let $X_t=(\pi_0)^{-1}(t)$ be a general fiber of $\pi_0$. 
By \cite[IV.3.13.3]{kollar96}, 
every proper curve $C\subset X_t$ is numerically equivalent in $X$ to 
a linear combination of curves from the families $H_i$'s , and so
$A\cdot C=0$.  
This shows that $A|_{X_t}\equiv 0$, and thus $X_t \subset q(p^{-1}(w))$ for some $w\in W$, i.e.,
$X_t$ is contained in a leaf of $\sF$. 
We conclude that $T_{X_0/T_0}\subset \sF|_{X_0}$
by Lemma~\ref{lemma:foliation_morphism} below.
\end{proof}

\begin{lemma}\label{lemma:foliation_morphism}
Let $\sF$ be a foliation of rank $r_\sF$ on a normal variety $X$,  and $\pi : X \to Y$ an equidimensional
morphism with connected fibers onto a normal variety.
Suppose that the general fiber of $\pi$ is contained in a leaf of $\sF$. 
Then $\sF$ induces a foliation $\sG$ of rank $r_\sG=r_\sF-\big(\dim(X)-\dim(Y)\big)$ on $Y$,  
together with an
exact sequence $$0 \to T_{X/Y} \to \sF \to (\pi^*\sG)^{**}.$$
\end{lemma}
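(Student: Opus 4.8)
The statement is about descending a foliation $\sF$ on $X$ along an equidimensional fibration $\pi:X\to Y$ whose general fibers are tangent to $\sF$ (indeed contained in leaves). First I would work generically: over a dense open $Y_0\subseteq Y$ above which $\pi$ is smooth with smooth $X_0=\pi^{-1}(Y_0)$, the relative tangent sheaf $T_{X_0/Y_0}$ is a subbundle of $T_{X_0}$, and the hypothesis that general fibers lie in leaves says precisely that $T_{X_0/Y_0}$ is contained in $\sF|_{X_0}$ at a general point; since $\sF$ is saturated in $T_X$, hence reflexive (Remark~\ref{lemma:saturated_reflexive}), and $T_{X_0/Y_0}$ is a subbundle, the inclusion $T_{X_0/Y_0}\subset\sF|_{X_0}$ holds everywhere on $X_0$. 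This already gives the exact sequence $0\to T_{X/Y}\to\sF$ after saturating/taking reflexive hulls over all of $X$, because $T_{X/Y}=(\Omega^1_{X/Y})^*$ is itself reflexive and agrees with the subbundle on the big open set $X_0$.

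**Constructing $\sG$.** Next I would build the quotient foliation. Set $\sG_0:=\pi_{0*}(\sF|_{X_0}/T_{X_0/Y_0})$ on $Y_0$; along $X_0$ the differential $d\pi$ identifies $T_{X_0/Y_0}$ with the kernel of $T_{X_0}\to\pi_0^*T_{Y_0}$, so $\sF|_{X_0}/T_{X_0/Y_0}$ injects into $\pi_0^*T_{Y_0}$, and one checks it is the pullback of a subsheaf $\sG_0\subseteq T_{Y_0}$ by descent along the smooth morphism $\pi_0$ (fibers are connected, so $\pi_{0*}\pi_0^*=\mathrm{id}$ on the relevant sheaves). Then I would define $\sG$ on $Y$ as the saturation of $\sG_0$ (extended as a coherent subsheaf) inside $T_Y$; its generic rank is $r_\sF-(\dim X-\dim Y)$ by a fiber-dimension count. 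Involutivity of $\sG$ follows from involutivity of $\sF$: the Lie bracket on $\pi_0$-projectable vector fields is compatible with $d\pi_0$, and a bracket of two local sections of $\sG_0$ lifts to a bracket of sections of $\sF|_{X_0}$ modulo $T_{X_0/Y_0}$ — so $\sG_0$, hence $\sG=\sG_0^{\mathrm{sat}}$, is closed under the bracket. Saturatedness of $\sG$ is built in. The composite $\sF|_{X_0}\to\pi_0^*T_{Y_0}$ lands in $\pi_0^*\sG_0$, giving $\sF|_{X_0}\to\pi_0^*\sG|_{X_0}$; since $\sF$ and $(\pi^*\sG)^{**}$ are reflexive and $X\setminus X_0$ has codimension $\ge 2$ in the relevant sense (we only need the map of reflexive sheaves to extend, which it does by Hartogs/reflexivity), this extends to a map $\sF\to(\pi^*\sG)^{**}$ on all of $X$ with kernel containing $T_{X/Y}$; comparing ranks on $X_0$ shows the kernel is exactly $T_{X/Y}$, completing the exact sequence.

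**The main obstacle.** The delicate point is descent: I must be careful that $\pi$ is only assumed equidimensional with connected fibers, not smooth or flat, so all the structural work happens over $X_0$ where $\pi_0$ is genuinely smooth, and then I need that $X\setminus X_0$ is small enough — i.e. that $\pi$ is smooth in codimension one on $X$, or at least that the reflexive-sheaf maps extend across it — which follows because $Y\setminus Y_0$ has codimension $\ge 1$ in $Y$ but the fibers are equidimensional, so $X\setminus X_0$ has codimension $\ge 1$ in $X$ and one uses reflexivity rather than full normality. Passing from $\sG_0$ on $Y_0$ to a coherent $\sG$ on $Y$ and verifying that $(\pi^*\sG)^{**}$ receives the map requires only that the two reflexive sheaves involved agree generically and the morphism extends, which is standard. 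I would also need Frobenius-type or purely algebraic reasoning (closure under Lie bracket descends) rather than invoking leaves directly, since $\sG$ need not a priori be algebraically integrable. Once these bookkeeping issues about supports and reflexive hulls are handled, the rank count and the exact sequence are immediate.
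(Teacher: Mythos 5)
Your overall strategy matches the paper's: establish $T_{X/Y}\subset\sF$ from the torsion-freeness of $T_X/\sF$, construct $\sG$ over a big open set, and extend the exact sequence using reflexivity. The one genuinely different ingredient is the construction of $\sG$: the paper works analytically, using Frobenius (Lemma~\ref{lemma:regular_smooth}) to get a local first integral $p:U\to W$ of $\sF$ at a general point and defining $\sG$ locally by $p\circ s$ for a local section $s$ of $\pi$, with independence of $s$ coming from $T_{X/Y}\subset\sF$ and connectedness of fibers; you instead push forward $\sF/T_{X/Y}\subset\pi_0^*T_{Y_0}$ and invoke descent plus bracket-compatibility of projectable fields. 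Both work, but your justification of the descent is the weak point: $\pi_{0*}\pi_0^*=\mathrm{id}$ does \emph{not} imply that a subsheaf of $\pi_0^*T_{Y_0}$ is a pullback. What actually forces $d\pi_x(\sF_x)\subset T_yY$ to be independent of $x$ in the (connected) fiber is precisely the hypothesis that the fiber sits inside a single leaf together with involutivity of $\sF$ (equivalently, $[T_{X/Y},\sF]\subset\sF$ shows the quotient distribution is covariantly constant along fibers); this is the content the paper extracts from Frobenius, and it should be stated as the reason for descent rather than as an afterthought about $\sG$ being bracket-closed.

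The second point to tighten is the extension step. You take $X_0=\pi^{-1}(Y_0)$ with $\pi$ smooth over $Y_0$, so $X\setminus X_0$ may contain a divisor, and you propose to extend the map $\sF\to(\pi^*\sG)^{**}$ ``by reflexivity.'' Normality of reflexive sheaves only extends sections of $\sH\hspace{-0.1cm}\textit{om}(\sF,(\pi^*\sG)^{**})$ across closed sets of codimension $\ge 2$; across a divisor the extension can fail (a section may acquire a pole). The paper sidesteps this by choosing $X_0$ and $Y_0$ with $\codim_X(X\setminus X_0)\ge 2$ and $\codim_Y(Y\setminus Y_0)\ge 2$ --- possible because the non-subbundle loci of $\sF\subset T_X$ and $\sG\subset T_Y$ have codimension $\ge 2$ and equidimensionality of $\pi$ preserves this upon taking preimages --- and only then applies normality of reflexive sheaves. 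Alternatively, with your choice of $X_0$ you could argue that $(\pi^*\sG)^{**}$ is saturated in $(\pi^*T_Y)^{**}$ (it is a subbundle in codimension one, again by equidimensionality) and that the globally defined composite $\sF\to T_X\to(\pi^*T_Y)^{**}$ lands in it because the quotient is torsion-free and the map to the quotient vanishes generically; but some such argument is needed, and ``reflexivity across a codimension-$1$ locus'' as written is not enough.
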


\begin{defn}
Under the hypothesis of Lemma~\ref{lemma:foliation_morphism}, we say that $\sF$ is the 
\emph{pullback via $\pi$ of the foliation $\sG$}.
\end{defn}

\begin{proof}[Proof of Lemma~\ref{lemma:foliation_morphism}]
Notice that the induced map $T_{X/Y} \to T_X / \sF$ is generically zero by assumption.
Since $T_X / \sF$ is torsion free, it must be identically zero, hence we have an inclusion
$T_{X/Y} \subset \sF$.

First we define the foliation $\sG\subset T_Y$ induced by $\sF$ analytically. 
Let $y \in Y$ be a general point. Choose an analytic open neighborhood $V\subset Y$ of $y$,
and a local holomorphic section $s:V \to X$ of $\pi$. 
There exists an analytic open neighborhood $U\subset X$ of $x=s(y)$, and a complex analytic space $W$
such that the leaves
of $\sF|_U$ are the fibers of a holomorphic map $p : U \to W$.
After shrinking $V$ if necessary, we get a holomorphic map $p\circ s: V \to W$,
which defines a foliation $\sG_s$ on $V$. Notice that 
\begin{equation}\label{eq:f*F}
T_y\sG_s \ = \ d\pi_x\big(T_x\sF\big),
\end{equation}
where $d\pi_x:T_xX\to T_yY$ denotes the tangent map of $\pi$ at $x$, 
$T_x\sF$ and $T_y\sG$ denote the fibers of $\sF\subset T_X$ and $\sG_s\subset T_Y$ at 
$x$ and $y$, respectively.
Notice that $\sG_s$ does not depend on the choice
of local section $s:V \to X$, since $X$ is normal, $T_{X/Y} \subset \sF$, and 
$\pi$ has connected fibers. Moreover, these foliations defined locally glue and extend 
to a foliation $\sG$ of rank $r_\sG=r_\sF-\big(\dim(X)-\dim(Y)\big)$ on $Y$.

Next we give an algebraic description of $\sG$.
Since $\pi$ is equidimensional, there are  dense open subsets $X_0\subset X$ and 
$Y_0\subset Y$  such that 
$\codim_X\big(X\setminus X_0)\geq 2$, $\codim_Y\big(Y\setminus Y_0)\geq 2$,
$\pi_0:=\pi|_{X_0}$ maps $X_0$ into $Y_0$, $\sF_0:=\sF|_{X_0}$ is a subbundle of $T_{X_0}$,
and $\sG_0:=\sG|_{Y_0}$ is a subbundle of $T_{Y_0}$.
Consider the tangent map $d\pi_0 : T_{X_0} \to (\pi_0)^*T_{Y_0}$.
By \eqref{eq:f*F}, $\sG_0$ coincides with the saturation of the subsheaf 
$(\pi_0)_*\big(d\pi_0(\sF_0)\big)$ in $T_{Y_0}$, and the induced map 
$\alpha: d\pi_0(\sF_0)\to (\pi_0)^*T_{Y_0}\big/(\pi_0)^*\sG_0$ is generically zero.
Since $\sG_0$ is a subbundle of $T_{Y_0}$, $(\pi_0)^*T_{Y_0}\big/(\pi_0)^*\sG_0$
is torsion free, and hence the map $\alpha$ must be identically zero. 
So we have an exact sequence $0 \to T_{X_0/Y_0} \to \sF_0 \to (\pi_0)^*\sG_0$.
Note that the sheaves $T_{X/Y}$, $\sF$ and $(\pi^*\sG)^{**}$ are reflexive. Since reflexive
sheaves on a normal variety are normal sheaves (\cite[Proposition 1.6]{hartshorne80}), and $\codim_X\big(X\setminus X_0)\geq 2$, 
we obtain an exact sequence $0 \to T_{X/Y} \to \sF \to (\pi^*\sG)^{**}$.
\end{proof}

In the setting of Lemma~\ref{lemma:foliation_rc_quotientB}, if moreover the families 
$H_i$'s are dominating, then we may drop the assumption that $\sF$ is algebraically integrable.
This is the content of the next lemma.

\begin{lemma}\label{lemma:foliation_rc_quotient}
Let $X$ be a smooth projective variety, $H_1, \cdots, H_k$ unsplit  dominating 
families of rational curves on $X$, and $\sF$ a foliation on $X$. 
Denote by $\pi_0:X_0 \to T_0$ the $(H_1, \cdots, H_k)$-rationally
connected quotient of $X$.
If  $T_{\p^1} \subset f^*\sF$ for general $[f] \in H_i$, $0\leq i \leq k$, then
there is an inclusion $T_{X_0/T_0} \subset \sF|_{X_0}$. 
\end{lemma}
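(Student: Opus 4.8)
The plan is to reduce Lemma~\ref{lemma:foliation_rc_quotient} to Lemma~\ref{lemma:foliation_rc_quotientB} by showing that, under the hypothesis $T_{\p^1}\subset f^*\sF$ for general $[f]\in H_i$, the foliation $\sF$ becomes algebraically integrable after restricting to the open set $X_0$ — or, more precisely, that each general fiber $X_t=\pi_0^{-1}(t)$ is tangent to $\sF$, which is exactly what is needed to invoke Lemma~\ref{lemma:foliation_morphism}. The point where algebraic integrability was used in the proof of Lemma~\ref{lemma:foliation_rc_quotientB} was to produce the divisor $A$ (the pushforward of a very ample divisor from the Chow variety $W$) whose vanishing detects tangency to leaves; here we have no such $W$, so we must argue tangency of $X_t$ to $\sF$ directly from the infinitesimal condition $T_{\p^1}\subset f^*\sF$.

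First I would record the local consequence of the hypothesis: if $C\subset X$ is a general curve parametrized by $H_i$ with normalization $f:\p^1\to C$, then $T_{\p^1}\subset f^*\sF$ means the image $f(\p^1)$ is everywhere tangent to $\sF$ at the points where $\sF$ is a subbundle of $T_X$; in particular a general such curve avoids the singular locus $S$ of $\sF$ (since $\sF$ is saturated, $S$ has codimension $\ge 2$, and the $H_i$ are dominating, a general deformation of $C$ misses $S$) and lies in a leaf of $\sF|_{X\setminus S}$ by Frobenius. Next, fix a general point $x\in X_t$ and let $L$ be the leaf of $\sF$ through $x$ on the (analytic) open set where $\sF$ is regular. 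By \cite[IV.3.13.3]{kollar96}, every proper curve in $X_t$ is numerically a nonnegative combination of curves from the families $H_i$; since those curves are tangent to $\sF$, I want to conclude that every such curve, and hence a neighborhood of $x$ in $X_t$, is swept out by curves tangent to $\sF$ and therefore lies in $L$. The cleanest way to make this precise is: $X_t$ is rationally connected (it is the fiber of the $(H_1,\dots,H_k)$-rationally connected quotient, hence chain-connected by curves from the $H_i$), so a general point $x'\in X_t$ is joined to $x$ by a connected chain of such curves, each tangent to $\sF$ and missing $S$; since leaves are the maximal connected integral subvarieties of $\sF|_{X\setminus S}$, the whole chain lies in the single leaf $L$, whence $X_t\subset \overline L$. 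Thus the general fiber of $\pi_0$ is contained in a leaf of $\sF$.

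Having established that, Lemma~\ref{lemma:foliation_morphism} applies to $\pi_0:X_0\to T_0$ (it is equidimensional with connected fibers by the construction of the rationally connected quotient) and yields directly the inclusion $T_{X_0/T_0}\subset \sF|_{X_0}$ together with the exact sequence $0\to T_{X_0/T_0}\to \sF|_{X_0}\to (\pi_0^*\sG)^{**}$. I expect the main obstacle to be the step just above — passing from "every curve in $X_t$ from the families $H_i$ is tangent to $\sF$" to "$X_t$ is contained in a single leaf" — because a priori the leaf $L$ is only an analytic object defined off $S$, and one must be careful that the chain of rational curves can be taken to avoid $S$ and to connect to $x$ within a single leaf rather than jumping between leaves; the dominating hypothesis on the $H_i$ (which forces general curves, and general chains, to miss the codimension $\ge 2$ locus $S$) and the fact that being tangent to $\sF$ is a closed condition are what make this go through. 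Once tangency of the general fiber is in hand, the rest is a direct citation of Lemma~\ref{lemma:foliation_morphism}.
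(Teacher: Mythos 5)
Your reduction (show that the general fiber $X_t$ of $\pi_0$ is tangent to $\sF$, then invoke Lemma~\ref{lemma:foliation_morphism}) is the right target, and your first step --- that $T_{\p^1}\subset f^*\sF$ forces a general curve from each $H_i$ to lie in a leaf and avoid $\textup{Sing}(\sF)$ --- matches the paper. But the central step of your argument has a genuine gap. You pass from ``every curve from the $H_i$'s is tangent to $\sF$'' to ``$X_t$ lies in a single leaf'' by chain-connecting two points of $X_t$ and arguing that the chain stays in one leaf. A curve that is tangent to $\sF$ and not contained in $\textup{Sing}(\sF)$ does lie in a single analytic leaf away from $\textup{Sing}(\sF)$, but two consecutive links of a chain are only guaranteed to lie in the \emph{same} leaf if their intersection point avoids $\textup{Sing}(\sF)$; at a singular point the chain can jump between leaves. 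The chains you need connect two \emph{prescribed} points inside a \emph{prescribed} fiber, so they are not general chains, and the fact that $\textup{Sing}(\sF)$ has codimension $\ge 2$ and the $H_i$ are dominating does not by itself let you perturb them off $\textup{Sing}(\sF)$. You flag this as ``the main obstacle'' but do not close it, and closing it directly is exactly the hard point.

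The paper sidesteps this entirely. It defines $V(x)$ as the stable closure of the chain-connected locus through a general point $x$, observes that $V(x)$ is contained in the leaf of $\sF$ through $x$, and declares $\sV_x:=T_xV(x)$, obtaining a subfoliation $\sV\subset\sF$ that is \emph{algebraically integrable by construction} (its leaves are the $V(x)$). It then applies Lemma~\ref{lemma:foliation_rc_quotientB} to $\sV$: there the Chow variety $W$ of leaves of $\sV$ exists, one pushes forward a very ample divisor from $W$ to get an effective divisor $A$ on $X$ with $A\cdot\ell=0$ for a general --- hence, by unsplitness, every --- curve from each $H_i$, and \cite[IV.3.13.3]{kollar96} upgrades this to $A\cdot C=0$ for \emph{every} curve $C\subset X_t$, forcing $X_t$ into a single leaf of $\sV\subset\sF$ with no control of chains near $\textup{Sing}(\sF)$ required. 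If you want to salvage your route, you should either reproduce this construction of $\sV$ (which is what the dominating hypothesis buys you: $V(x)$ is well defined and positive-dimensional) or find some other way to convert tangency of general curves into a numerical condition; as written, the chain argument does not go through.
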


\begin{proof}
Recall that for general $[f] \in H_i$ one has $f^*T_X\simeq \sO_{\p^1}(2)\oplus \sO_{\p^1}(1)^{\oplus d_i}\oplus
\sO_{\p^1}^{\oplus (n-d_i-1)}$, where $n=\dim X$ and $d_i=\deg(f^*T_X)-2$.
Hence, the assumption  $\sO_{\p^1}(2)\simeq T_{\p^1} \subset f^*\sF$ implies that the natural  inclusion
$T_{\p^1} \subset f^* T_X$ factors through $f^*\sF\into f^*T_X$. 
Therefore a general curve from each of the families $H_i$'s
is contained in a  leaf of $\sF$.

Let $x\in X$ be a general point. We define inductively a sequence of (irreducible) subvarieties of $X$ as follows. 
Set $V_0(x) := \{x\}$, and let $V_{j+1}(x)$ be the closure of the union of curves from 
the families $H_i$, $0\leq i \leq k$, that pass through a general point of $V_j(x)$. 

Then $\dim V_{j+1}(x) \geq \dim V_{j}(x)$, and equality holds if and only if $V_{j+1}(x)=V_{j}(x)$. 
In particular,
there exists $j_0$ such that $V_{j}(x)=V_{j_0}(x)$ for every $j\geq j_0$. We set $V(x)=V_{j_0}(x)$.
Since $x$ is general, $V(x)$ is smooth at $x$. Notice also that $V(x)$ is irreducible, and
that $V(x)$ is contained in the leaf of $\sF$ through $x$  by construction.

We define the subfoliation $\sV\subset \sF$ by setting $\sV_x=T_xV(x)$ for general $x\in X$. The leaf of 
$\sV$ through $x$ is precisely $V(x)$. In particular $\sV$ is an algebraically integrable foliation of $X$.
Moreover, by construction, a general curve from each of the families $H_i$'s
is contained in a leaf of $\sV$, and avoids the singular locus of $\sV$ by \cite[II.3.7]{kollar96}.
The result then follows from Lemma~\ref{lemma:foliation_rc_quotientB}.
\end{proof}

Next we apply the results from the previous section to characterize pairs $(X,\sF)$
 when $\sF$ is a Fano foliation that is ample when restricted to a general member of
 a minimal covering family of rational curves on $X$.

\begin{lemma}\label{lemma:Fano_ample} 
Let $X$ be an $n$-dimensional smooth projective variety admitting a minimal dominating 
family of rational curves $H$.
Let $\sF\subsetneq T_X$ be a  Fano foliation of rank $r$ on $X$. 
If $f^*\sF$ is an ample vector bundle for general $[f]\in H$, then 
$(X,\sF)\simeq \big(\p^n,\cO(1)^{\oplus r}\big)$.
\end{lemma}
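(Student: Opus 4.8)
The hypothesis says $f^*\sF$ is ample for a general member $[f]$ of a minimal dominating family $H$; in particular $f^*\sF\subset f^*T_X$ is a subbundle containing the ample summand, so $T_{\p^1}\simeq\sO_{\p^1}(2)\subset f^*\sF$ for general $[f]\in H$. Applying Lemma~\ref{lemma:foliation_rc_quotient} with $k=1$ and $H_1=H$, the general curves of $H$ are tangent to $\sF$, and I get an inclusion $T_{X_0/T_0}\subset\sF|_{X_0}$, where $\pi_0:X_0\to T_0$ is the $H$-rationally connected quotient. The plan is to show first that $T_0$ is a point, so that $X$ is rationally connected by $H$ and the inclusion $T_{X_0/T_0}\subset\sF|_{X_0}$ is vacuous; then to produce a curve on which Theorem~\ref{bogomolov_mcquillan} applies to conclude $\sF$ is algebraically integrable with rationally connected leaf; and finally to bootstrap, using Wahl's theorem (as quoted in the introduction) or a direct index computation, to the conclusion $(X,\sF)\simeq(\p^n,\sO(1)^{\oplus r})$.

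For the first step I would argue by contradiction: if $\dim T_0>0$, then since $H$ is dominating and its general member is tangent to $\sF$, the general fiber $X_t$ of $\pi_0$ is contained in a leaf of $\sF$ (as in the proof of Lemma~\ref{lemma:foliation_rc_quotientB}), and $X_t$ is itself rationally connected by curves from $H$ with $f^*\sF$ ample along them. This says the foliation restricted to a general leaf is "everywhere ample along a covering family", which should force $\sF|_{X_0}=T_{X_0/T_0}$, i.e. $\sF$ is the relative tangent sheaf of a fibration $\pi_0$. But then $-K_\sF$ restricted to a general fiber $F$ of $\pi_0$ equals $-K_F$, which is ample (the $H$-curves through a general point of $F$ sweep out $F$ and have positive anticanonical degree), so $F$ is a Fano variety carrying a foliation by its full tangent bundle of rank $\dim F$; running Theorem~\ref{thm:-KX/Y_not_ample} (the non-ampleness of $-K_{X/C}$ for a fibration over a curve) on a general complete-intersection curve in $T_0$ gives the contradiction — exactly the mechanism used in Proposition~\ref{lemma:common_point}. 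Hence $T_0$ is a point and $X$ is $H$-rationally connected.

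Now $X$ is rationally connected, so $\rho(X)$ is finite and we may pick an ample line bundle. The leaf of $\sF$ through a general point of a general curve $[f]\in H$: since $f^*\sF$ is ample, Theorem~\ref{bogomolov_mcquillan} applied to $C=f(\p^1)$ (which meets neither $\textup{Sing}(X)=\emptyset$ nor, for general $[f]$, the singular locus of $\sF$) shows $\sF$ is algebraically integrable with rationally connected general leaf $F$. By adjunction $-K_F=-K_\sF|_F+(\text{effective})$ so in fact $\sF$ has log terminal singularities along a general leaf — more simply, since $f^*\sF$ is ample and $f$ moves in the general leaf, $\sF$ is regular and locally free along a general leaf, so the log leaf is $(F,0)$ with $-K_F$ ample: $F$ is a Fano manifold. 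The curve $f(\p^1)$ lies in $F$ and $T_F|_{f(\p^1)}\supset f^*\sF$ is ample of rank $r=\dim F$, so $T_F$ is ample restricted to a free rational curve; by Mori's theorem (or Cho–Miyaoka–Shepherd-Barron, or directly: an ample subsheaf of $T_F$ of full rank forces $F\simeq\p^r$) we get $F\simeq\p^r$.

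Finally I would promote this to the global statement. With every leaf a $\p^r$ and $\sF$ regular along the general leaf, the rational map $X\dashrightarrow Y$ to the leaf space has $\p^r$-fibers; restricting $\sF$ to a general leaf $F\simeq\p^r$ gives $\sF|_F\simeq T_{\p^r}$, but the hypothesis gives $f^*\sF$ ample, forcing the general leaf to be a \emph{linear} $\p^r$ and the family of leaves to be large enough that $X$ itself is covered by such linear spaces through a point — here I invoke the $\iota_\sF\le r_\sF$ theorem of \cite{adk08} quoted in the introduction together with the equality case analysis, or more directly observe that $\det(\sF)|_F\simeq\sO_{\p^r}(r+1)$ is divisible by $r+1=\dim F+1$, so $\iota_\sF\ge r+1>r$, contradicting the Araujo--Druel--Kebekus bound \emph{unless} the index computation is read on $X$ where it yields $\iota_\sF=r$ and hence $X\simeq\p^n$ and $(X,\sF)\simeq(\p^n,\sO(1)^{\oplus r})$ by the degree-$0$ classification of \ref{example:ample_on_p^n}. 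The main obstacle I anticipate is the middle step: ruling out a nontrivial rationally connected quotient cleanly, i.e. showing ampleness of $f^*\sF$ along a covering family genuinely propagates to "$\sF$ is relatively Fano and thus violates Theorem~\ref{thm:-KX/Y_not_ample}" rather than merely "$\sF$ is big" — this is where the precise non-ampleness (not just non-bigness) in Theorem~\ref{thm:-KX/Y_not_ample}(1) is essential.
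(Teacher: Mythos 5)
Your opening inference is false, and it undermines the whole argument. From $f^*T_X\simeq \sO_{\p^1}(2)\oplus \sO_{\p^1}(1)^{\oplus d}\oplus\sO_{\p^1}^{\oplus (n-d-1)}$ and the ampleness of the subbundle $f^*\sF$ you conclude $T_{\p^1}\simeq\sO_{\p^1}(2)\subset f^*\sF$; but an ample subbundle need not contain the $\sO(2)$ summand. The target example itself is a counterexample: for $\sF=\sO_{\p^n}(1)^{\oplus r}\subsetneq T_{\p^n}$ and $\ell$ a general line, $\sF|_{\ell}\simeq\sO_{\p^1}(1)^{\oplus r}$ is ample, yet $\ell$ is \emph{not} tangent to $\sF$ and $T_{\p^1}\not\subset\sF|_\ell$. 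Consequently Lemma~\ref{lemma:foliation_rc_quotient} does not apply (its hypothesis is precisely $T_{\p^1}\subset f^*\sF$, and it also requires $H$ unsplit, which a minimal dominating family need not be a priori), and the case $f^*\sF\simeq\sO_{\p^1}(1)^{\oplus r}$ --- the case actually realized by the conclusion of the lemma --- is never treated in your proposal. Your later steps inherit this defect: the claim that the general $H$-curve is tangent to $\sF$, the inclusion $T_{X_0/T_0}\subset\sF|_{X_0}$, and the reduction to ``$\sF|_{X_0}=T_{X_0/T_0}$'' all rest on it. The final ``bootstrap'' is also not sound as written: divisibility of $\det(\sF)|_F$ on a leaf $F$ does not give divisibility of $-K_\sF$ in $\Pic(X)$, so you cannot read off $\iota_\sF$ from a leaf.

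The paper's proof goes the other way around. By \cite[Proposition~2.7]{adk08}, after shrinking, the $H$-rationally connected quotient $\pi_0:X_0\to T_0$ is a $\p^k$-bundle and the inclusion $\sF|_{X_0}\into T_{X_0}$ factors through $T_{X_0/T_0}\into T_{X_0}$ --- note the inclusion $\sF|_{X_0}\subset T_{X_0/T_0}$, opposite to yours. One then splits into two cases: if $\sO(2)\subset f^*\sF$, the general fiber is contained in a leaf and saturation forces $\sF|_{X_0}=T_{X_0/T_0}$; if not, then $f^*\sF\simeq\sO(1)^{\oplus r}$ with $r<k$, and \cite[Th\'eor\`eme~3.8]{cerveau_deserti} identifies the induced foliation on each fiber $\p^k$ as $\sO(1)^{\oplus r}\into T_{\p^k}$. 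In either case $\sF$ is algebraically integrable with log canonical singularities along a general leaf, so Proposition~\ref{lemma:common_point} forces a common point in the closures of general leaves, which is only possible if $T_0$ is a point and $(X,\sF)\simeq(\p^n,\sO(1)^{\oplus r})$. To repair your write-up you would need to replace the tangency claim by the structural input of \cite[Proposition~2.7]{adk08} and add the fiberwise classification step.
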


\begin{proof}
Denote by $\pi_0:X_0 \to T_0$ the $H$-rationally
connected quotient of $X$.
By \cite[Proposition 2.7]{adk08}, after shrinking $X_0$ and $T_0$ if necessary,
we may assume that $\pi_0$ is a $\p^k$-bundle, and the inclusion
$\sF|_{X_0}\into T_{X_0}$ factors through the natural inclusion
$T_{X_0/T_0}\into T_{X_0}$. 
Recall that $f^*T_{X_0/T_0}\cong \cO(2)\oplus \cO(1)^{\oplus k-1}$.
If $\cO(2)\subset f^*\sF$ for general $[f]\in H$, then the general curve from $H$ is tangent 
to the foliation $\sF$. Hence the general fiber of $\pi_0$ is contained in a leaf of $\sF$. 
Since $\sF|_{X_0}\subset T_{X_0/T_0}$, we must have $\sF|_{X_0}=T_{X_0/T_0}$.
If $\cO(2)\not\subset f^*\sF$ for general $[f]\in H$, then $f^*\sF\cong \cO_{\p^k}(1)^{\oplus r}$.
Since $\sF|_{X_0}$ is saturated in $T_{X_0/T_0}$, we must have $r<k$.
Then \cite[Th\'eor\`eme 3.8]{cerveau_deserti} implies that 
the foliation induced by $\sF$
on a general fiber of $\pi_0$ is 
$\cO_{\p^k}(1)^{\oplus r}\into T_{\p^k}$.
In either case, we conclude that $\sF$ is algebraically integrable and has log canonical 
singularities along a general leaf. 
Proposition~\ref{lemma:common_point} then implies that 
there is a point $x\in X$ contained in the closure of a general leaf of $\sF$.
This is only possible if $T_0$ is a point, and
$(X,\sF)\cong \big(\p^n,\cO(1)^{\oplus r}\big)$.
\end{proof}

\begin{lemma}\label{lemma:extending_in_codim_1}
Let $X$ be a smooth projective variety, $X_0 \subset X$ a dense open subset, $T_0$ a 
positive dimensional normal variety, 
and $\pi_0:X_0 \to T_0$ a proper surjective equidimensional  morphism with 
and rationally connected general fiber. 
Let $T $ be the normalization of the closure of $T_0$ in $ \Chow(X)$, and
$U$ the universal cycle over $T$.

Through a general point of $T_0$ there 
exists a curve $C_0\subset T_0$ such that the following holds.
Let $C \to T$ be the normalization of the closure  of $C_0$ in $T$,
$U_C$ the normalization of $U \times_{T} C$, and $\pi_C: U_C \to C$ the induced morphism. Then
\begin{itemize}
\item all irreducible fibers of $\pi_C: U_C \to C$ are reduced,
\item $U_C \to X$ is finite, and
\item no fiber of $\pi_C$ is entirely mapped into the exceptional locus of the universal 
	morphism $U\to X$.
\end{itemize}
Moreover, given any subset $Z\subset T$ such that $\codim_T(Z)\geq 2$, $C_0$ can be chosen so that 
the image of $C$ in $T$ avoids $Z$.
\end{lemma}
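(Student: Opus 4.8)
The strategy is to take $C_0$ to be a general complete intersection curve in $T_0$ passing through a general point, and to absorb all the bad behaviour into a few closed subsets of $T$ of codimension $\ge 2$, which can then be added to the given $Z$. I begin with three structural remarks. Since $\pi_0$ is an honest morphism, each point of $X_0$ lies on a unique leaf, so the natural morphism $e\colon U\to X$ is birational. Since $\pi_0$ is moreover proper, each fiber $\pi_0^{-1}(t)$ with $t\in T_0$ is a complete variety, hence equals its closure $Z_t$ in $X$; thus the leaves parametrized by points of $T_0$ are contained in $X_0$, are closed in $X$, and, being distinct fibers of a morphism, are pairwise disjoint. It follows that $\Exc(e)\subseteq e^{-1}(X\setminus X_0)$, and that for $u\in\Exc(e)$ the relation $e(u)\in Z_{\pi(u)}\cap(X\setminus X_0)$ forces $\pi(u)\in T\setminus T_0$; so $\pi(\Exc(e))\subseteq T\setminus T_0$. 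Finally, applying Zariski's main theorem to the birational projective morphism $\widetilde U\to X$, where $\widetilde U$ is the normalization of $U$ and $X$ is smooth, the locus in $X$ over which this morphism is not an isomorphism has codimension $\ge 2$; in particular the positive-dimensional-fiber locus of $e$ is a closed subset $B\subseteq X$ of codimension $\ge 2$, disjoint from $X_0$.

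Now fix a very ample line bundle on $T$ and a general point $t_0\in T_0$, choose $\dim T-1$ general divisors through $t_0$ in that linear system, let $C_0$ be the intersection of their common zero locus with $T_0$, and let $C$ be the normalization of the closure of $C_0$ in $T$; by Bertini $C_0$ is a smooth irreducible curve, dense and open in $C$. Two standard facts will be used: a general such $C_0$ avoids any fixed closed subset of $T$ of codimension $\ge 2$, and the general complete intersection curve is contained in no divisor of $T$. I choose the divisors so that, beyond avoiding $Z$, the curve $C_0$ avoids the following closed subsets of $T$, each contained in $T\setminus T_0$ by the remarks above: the image in $T$ of the locus in $U$ where the fiber of $\pi$ fails to be reduced (over $T_0$ this sits inside the non-smooth locus of $\pi_0$, a proper closed subset by generic smoothness in characteristic zero); the locus $\{t\in T:\ U_t\subseteq\Exc(e)\}$; and the image in $T$ of the non-normal locus of $U$. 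The claim --- which is the technical heart of the proof --- is that, after discarding the parts of these loci that are already of codimension $\ge 2$, what remains can be handled by the ``not contained in a divisor'' fact, using the dimension bounds for $\Exc(e)$ and for $B$ recorded above.

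Granting such a choice of $C_0$, I verify the three assertions. For reducedness: $U_C$ is normal and $\pi_C\colon U_C\to C$ is flat over the smooth curve $C$ (its source is irreducible, as the general cycle is, and dominates $C$), so every fiber of $\pi_C$ is a Cartier divisor in $U_C$, hence $S_1$; since no irreducible fiber carries a multiple structure, each such fiber is generically reduced, hence reduced. For finiteness: if $e_C\colon U_C\to X$ were not finite, some fiber $e_C^{-1}(x)$ would contain a curve $\gamma$; but $e_C$ is finite on every fiber of $\pi_C$ (each maps finitely to the corresponding cycle), so $\gamma$ must dominate $C$ under $\pi_C$, whence $x\in Z_c$ for every $c\in C$, in particular for every $c\in C_0\subseteq T_0$; this puts $x$ on the leaf $\pi_0^{-1}(c)$ for all $c\in C_0$, which is absurd since $C_0$ has more than one point and distinct leaves over $T_0$ are disjoint. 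For the last assertion: by construction $C$ avoids $\{t\in T:\ U_t\subseteq\Exc(e)\}$, so no fiber of $\pi_C$ is mapped into $\Exc(e)$. The ``moreover'' clause is built into the choice of $C_0$.

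The main obstacle I anticipate is precisely the claim in the second paragraph: verifying that the loci in $T$ over which (i) the universal cycle acquires a multiple irreducible fiber and (ii) a whole fiber of $\pi$ lies in $\Exc(e)$ are, after removal of their codimension-$\ge 2$ parts, not of a shape that a general complete intersection curve could be contained in. This rests on a careful analysis of the family over the boundary $T\setminus T_0$ and of the non-normal locus of $U$; the leverage is the properness of $\pi_0$ --- which confines all exceptional behaviour over $X\setminus X_0$ and over $T\setminus T_0$ --- together with Zariski's main theorem applied to the normalization of $U$.
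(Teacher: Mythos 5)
Your proposal has two genuine gaps, and they sit exactly at the point you yourself flag as ``the technical heart.'' The first concerns reducedness of irreducible fibers. You propose to avoid ``the image in $T$ of the locus in $U$ where the fiber of $\pi$ fails to be reduced,'' justifying its properness by generic smoothness --- but generic smoothness only gives codimension $\geq 1$. A general complete intersection curve in $T$ through a general point \emph{meets} every divisor of $T$, so if the multiple-fiber locus were a divisor in $T_0$ your curve $C_0$ would pick up an irreducible non-reduced fiber and the first bullet would fail; the ``not contained in a divisor'' fact is irrelevant, since the problem is meeting the divisor, not lying inside it. This is precisely where the hypothesis that the general fiber of $\pi_0$ is rationally connected --- which your argument never uses --- must enter: the paper invokes the Graber--Harris--Starr theorem \cite{ghs03} to conclude that the locus $S_0\subset T_0$ of multiple fibers has codimension $\geq 2$ in $T_0$, so that its closure $S$ can be lumped with $Z$ and genuinely avoided.

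The second gap concerns the third bullet. The locus $\{t\in T:\ U_t\subset\Exc(e)\}$ is indeed contained in $T\setminus T_0$, but $T\setminus T_0$ may contain divisors, and this locus can perfectly well be one: each irreducible component $F$ of $E=\Exc(e)$ has pure codimension one in $U$ (as $X$ is smooth), and by equidimensionality of $\pi$ its image $\pi(F)$ has codimension one in $T$. A general complete intersection curve in $T$ therefore meets it, and over such a point the entire fiber of $\pi_C$ maps into $\Exc(e)$. The paper's route around this is the actual idea of the proof and is different from yours: one takes $C$ to be a general complete intersection curve in $X$ itself, avoiding the codimension-$\geq 2$ subset $e\big(E\cup\pi^{-1}(S\cup Z)\big)$ of the smooth variety $X$, and sets $C_0:=\pi_0(C\cap X_0)$. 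Writing $E'$ for the union of components $F$ of $E$ with $\pi^{-1}(\pi(F))\subset E$, every cycle $U_t$ with $t\in\pi(E')$ satisfies $e(U_t)\subset e(E)$, so the closure of $C_0$ in $T$ automatically misses the divisor $\pi(E')$; and for boundary points $t\in\pi(E)\setminus\pi(E')$ the fiber $U_t$ is reducible with at least one component not contained in $E$, which disposes of both the first and the third bullet over $C\setminus C_0$. Choosing the curve downstairs in $T$, as you do, cannot reproduce this. (Your discussion of finiteness of $U_C\to X$ is essentially sound and not where the difficulty lies.)
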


\begin{proof} Consider the universal morphisms 

\centerline{
\xymatrix{
U \ar[d]_{\pi }\ar[r]^{e} & X. \\
T
}
}
\noindent Given $t\in T$, we write $U_t=\pi^{-1}(t)$.

Since $X$ is smooth, the exceptional locus $E$ of $e$ has pure codimension one in $U$.
Let $F$ be an irreducible component of $E$.
Since $X_0\cap e(E)=\emptyset$ and $\pi$ is equidimensional,
$\pi(F)$ has codimension one in $T$. 
Moreover, $\forall t\in T$, either $F\cap U_t=\emptyset$, or $F\cap U_t$
is a union of irreducible components of $U_t$.
So we may assume that $X_0=X\setminus e\big(\pi^{-1}(\pi(E))\big)$.
Let $E'\subset E$ be the union of irreducible components $F$ of $E$ such that 
$\pi^{-1}(\pi(F))\subset E$.
If $t \in  \pi(E) \setminus \pi(E')$, then $U_t$ has at least two irreductible components,
at least one of which is not contained in $E$.

Let $S_0\subset T_0$ be the locus over which the fibers of $\pi_0$ are multiple, and let $S$ be its closure in $T$.
By \cite{ghs03}, $\codim_{T}(S)\geq 2$.
Let $C\subset X\setminus e\big(E \cup \pi^{-1}(S\cup Z)\big)$
be a general complete intersection curve, and set 
$C_0:=\pi_0(C\cap X_0)\subset T_0\setminus S$.
Let $U_C$ be the normalization of $U \times_{T} C$, and $\pi_C: U_C \to C$ the induced morphism. 
By construction, the irreducible fibers of $\pi_C$ over $C_0$ are reduced.
Moreover, the image of $C$ in $T$ does not meet $\pi(E')$.
Hence the  fibers of $\pi_C$ over $C\setminus C_0$ have at least two irreductible components,
at least one of which is not mapped into $E$.
\end{proof}

\begin{lemma}\label{lemma:foliation_by_curve_is_algebraic}
Let $X$ be a smooth projective variety, $X_0 \subset X$ a dense open subset, $T_0$ a 
positive dimensional normal variety, 
and $\pi_0:X_0 \to T_0$ a proper surjective equidimensional morphism of relative dimension $r-1$. 
Let $\sF$ be a rank $r$ Fano foliation on $X$, and assume that there is an exact sequence
$$
0 \to T_{X_0/T_0} \to \sF|_{X_0} \to (\pi_0^*\sG_0),
$$
where $\sG_0$ is an invertible subsheaf of $T_{T_0}$.
Suppose that the general fiber $F$ of $\pi_0$ is rationally connected, and satisfies
$c_1(\sA)^{r-1}\cdot F\leq 2$ for some ample line bundle $\sA$ on $X$. 

Then $\sG_0$ defines a foliation by rational curves on $T_0$.
\end{lemma}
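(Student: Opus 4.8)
The plan is to apply the Bogomolov--McQuillan criterion (Theorem~\ref{bogomolov_mcquillan}) to $\sG_0$, viewed as a rank one foliation on a compactification of $T_0$. First I would compactify: let $T$ be the normalization of the closure in $\Chow(X)$ of the locus parametrizing general fibres of $\pi_0$ (each such fibre, being rationally connected, is a reduced and irreducible cycle), let $\pi\colon U\to T$ be the universal cycle and $e\colon U\to X$ the natural morphism; since a general fibre is its own support, $e$ is birational. Shrinking if needed, I regard $T_0\subset T$ as a dense open subset, and I let $\sG\subset T_T$ be the saturated rank one foliation extending $\sG_0$ (passing to a resolution of $T$ if necessary, so that $\sG$ is invertible in codimension one and has a well defined singular locus). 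The morphism $e$ induces a rank $r$ foliation $\sF_U$ on $U$, and the hypothesis $0\to T_{X_0/T_0}\to\sF|_{X_0}\to\pi_0^*\sG_0$ together with Lemma~\ref{lemma:foliation_morphism} yields, over $\pi^{-1}(T_0)$, an exact sequence $0\to T_{U/T}\to\sF_U\to\pi^*\sG$ whose last map is generically surjective. The task becomes: find a complete curve $C\subset T$, disjoint from $\textup{Sing}(T)$ and from $\textup{Sing}(\sG)$, with $\sG|_C$ ample.

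Next I would cut down to a curve. Applying Lemma~\ref{lemma:extending_in_codim_1} (with $Z$ the union of $\textup{Sing}(T)$, $\textup{Sing}(\sG)$ and the codimension $\ge 2$ loci over which $\pi$ has multiple fibres), I obtain a complete curve $C\subset T$ through a general point of $T_0$, avoiding $Z$, such that the normalization $U_C$ of $U\times_T C$ carries an equidimensional morphism $\pi_C\colon U_C\to C$ with $e_C\colon U_C\to X$ finite onto its image and no fibre of $\pi_C$ contracted by $e$; after a finite base change of $C$ (Lemma~\ref{lemma:reduced_fiber}) I may assume all fibres of $\pi_C$ are reduced, so that $\det\Omega^1_{U_C/C}\simeq\sO_{U_C}(K_{U_C/C})$. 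Since $e_C$ is finite and $e_C(U_C)$ has dimension $r=\dim U_C$, the line bundle $e_C^*(-K_\sF)$ is ample; and since $C$ avoids the bad loci, $\det(\sF_U)|_{U_C}$ represents $e_C^*(-K_\sF)$. Restricting the exact sequence above to $U_C$ and taking first Chern classes, one obtains a canonically defined effective Weil divisor $\Delta_C$ on $U_C$ with
\[
-(K_{U_C/C}+\Delta_C)\ \equiv\ e_C^*(-K_\sF)-\deg_C(\sG)\cdot [F],
\]
where $[F]$ denotes the (nef) class of a fibre of $\pi_C$ and $\deg_C(\sG)=\deg(\sG|_C)$.

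The hard part is to verify that $(U_C,\Delta_C)$ is log canonical over the generic point of $C$; equivalently, that the log Fano pair $(F,\Delta_C|_F)$ is log canonical for a general fibre $F$ of $\pi_C$. By construction and reducedness of the fibres, $\Delta_C|_F$ is the effective divisor with $K_F+\Delta_C|_F\equiv K_\sF|_F$, and $F$ is rationally connected of dimension $r-1$. This is exactly where the hypothesis $c_1(\sA)^{r-1}\cdot F\le 2$ is used: a rationally connected variety of dimension $r-1$ polarized by an ample line bundle of top self-intersection at most $2$ must be $\p^{r-1}$ or a (possibly singular) quadric $Q^{r-1}$; in each case one computes $\Delta_C|_F$ explicitly --- it is a reduced hyperplane section or is empty --- and concludes that $(F,\Delta_C|_F)$ is log canonical. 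I expect this classification-and-computation step (identifying $F$, and pinning down $\Delta_C|_F$ using that $-K_\sF|_F$ is an ample multiple of a Cartier class) to be the main obstacle.

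Granting log canonicity, the conclusion is immediate. If $\deg_C(\sG)\le 0$, the displayed formula writes $-(K_{U_C/C}+\Delta_C)$ as the sum of the ample class $e_C^*(-K_\sF)$ and the nef class $-\deg_C(\sG)\cdot[F]$, hence as an ample divisor --- contradicting Theorem~\ref{thm:-KX/Y_not_ample}(1). Therefore $\deg_C(\sG)\ge 1$, i.e.\ $\sG|_C$ is ample. Since $C$ is disjoint from $\textup{Sing}(T)$ and $\textup{Sing}(\sG)$, Theorem~\ref{bogomolov_mcquillan} applies to $\sG$ along $C$: the leaf of $\sG$ through a general point of $C$ is algebraic and rationally connected, hence --- $\sG$ having rank one --- a rational curve. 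As $C$ passes through a general point of $T_0$, the general leaf of $\sG_0$ is a rational curve, so $\sG_0$ defines a foliation by rational curves on $T_0$.
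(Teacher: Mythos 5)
Your overall skeleton (compactify in $\Chow(X)$, cut to a curve via Lemma~\ref{lemma:extending_in_codim_1}, derive a relative anticanonical formula, rule out $\deg_C(\sG)\le 0$ via Theorem~\ref{thm:-KX/Y_not_ample}, conclude with Theorem~\ref{bogomolov_mcquillan}) matches the paper, and the final positivity step is correct. But the step you yourself flag as the main obstacle contains a genuine gap. Your proposed verification of log canonicity rests on the claim that a rationally connected variety of dimension $r-1$ carrying an ample line bundle of top self-intersection at most $2$ must be $\p^{r-1}$ or a quadric; this is false already in dimension $2$ (a del Pezzo surface of degree $1$ or $2$, polarized by $-K$, is rational and is neither). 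Nothing in the hypotheses forces such a classification of $F$, so the computation of $\Delta_C|_F$ as ``a reduced hyperplane section or empty'' cannot be carried out this way. The paper's actual mechanism is different and simpler: the boundary divisor is \emph{vertical} over $C$. Indeed, the degeneracy divisor $D_1$ of $\sF_{U_1}\to\pi_1^*\sG_1$ cannot dominate the base, because $\sF$ is regular at a general point of $X$ and hence at some point of each general fiber; so $D_1$ misses the general fiber entirely, and over the generic point of $C$ the pair is $(F,0)$ with $F$ smooth by generic smoothness --- trivially log canonical. The hypothesis $c_1(\sA)^{r-1}\cdot F\le 2$ is not used to identify $F$ at all: it is used to show that the \emph{reducible} special fibers of $\pi_C$ have exactly two components, each generically reduced, so that $\pi_C$ is smooth at the generic points of every fiber and $\det(T_{U_C/C})\simeq\sO_{U_C}(-K_{U_C/C})$ holds without further base change.

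A secondary inaccuracy: you assert that $\det(\sF_U)|_{U_C}$ represents $e_C^*(-K_\sF)$ because ``$C$ avoids the bad loci.'' This is not so --- the curve $C$ avoids bad loci in $T$, but fibers of $\pi_C$ over $C\setminus C_0$ may still have components mapped into the exceptional locus $E$ of $e$, where $\sF_U$ and $e^*\sF$ differ. This is exactly why the paper carries the correction terms $\Delta_+-\Delta_-$ supported on such components, checks via condition (c) of Lemma~\ref{lemma:extending_in_codim_1} that no entire fiber lies in their support (so that $(\pi_C)_*\sO_{U_C}(k\Delta_-)=\sO_C$), and invokes Theorem~\ref{thm:-KX/Y_not_ample} in its full form allowing a boundary with negative part. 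Your displayed formula silently absorbs these terms into an effective $\Delta_C$, which is not justified.
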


\begin{proof}
Let $T $ be the normalization of the closure of $T_0$ in $ \Chow(X)$, and
$U$ the normalization of the universal cycle over $T$. 
We denote by $\pi:U\to T$ and $e:U\to X$ the universal morphisms, and by
$E$ the exceptional locus of $e$.
Let $\sF_U$ be the foliation induced by $\sF$ on $U$.
Notice that $\sF_U$ is regular along the general fiber of $\pi$.
Moreover  $\sF_U$ and $e^*\sF$ agree on $U\setminus E$. 

By Lemma \ref{lemma:foliation_morphism}, there exists a smooth open subset $T_1\subset T$ with 
$\codim_T(T\setminus T_1)\geq 2$, a rank $1$ subbundle $\sG_1\subset T_{T_1}$ , and an exact sequence
$$
0 \to T_{U_1/T_1} \to \sF_{U_1} \to (\pi_1^*\sG_1),
$$
where $U_1=\pi^{-1}(T_1)$, $\sF_{U_1}=\sF_U|_{U_1}$, and $\pi_1=\pi|_{U_1}:U_1\to T_1$.
In particular, there is a canonically defined effective divisor $D_1$ on $U_1$ such that 
\begin{equation}\label{6.10.0}
\sO_{U_1}(-K_{\sF_{U_1}}) \ \simeq \ \pi_1^*\sG_1  \ \otimes \ \Big(\sO_{U_1}(-D_1) \ [\otimes] \ \det\big(T_{U_1/T_1}\big)\Big).
\end{equation} 
Moreover, since $\sF_{U_1}$ is regular along the general fiber of $\pi_1$, the divisor $D_1$ does not dominate $T_1$.

Let $C_0 \to T_0$ be the curve provided by Lemma \ref{lemma:extending_in_codim_1}, and $n:C\to T$
the normalization of its closure in $T$. We also require that $n(C)\subset T_1$.
Let $U_C$ be the normalization of $U_1 \times_{T_1} C$, and denote by $\pi_C:U_C\to C$, 
$q:U_C\to U_1$, and $e_C:U_C\to X$ the natural morphisms:
\[
\xymatrix{
U_C \ar[d]_{\pi_C}\ar[r]^{q}\ar@/^2pc/[rr]^{e_C} & U_1 \ar[d]^{\pi_1} \ar[r]^e & X. \\
C \ar[r]_n & T_1} 
\]
 By Lemma \ref{lemma:extending_in_codim_1},
\begin{enumerate}
\item[(a)] all irreducible fibers of $\pi_C: U_C \to C$ are reduced,
\item[(b)] $e_C:U_C \to X$ is finite, and
\item[(c)] no fiber of $\pi_C$ is entirely mapped into $E$ by $q$.
\end{enumerate}

Since $c_1(\sA)^{r-1}\cdot F\leq 2$, condition (a) above implies that every fiber of $\pi_C$ is reduced at all of its generic points.
Thus $\pi_C$ is smooth at the generic points of every fiber (see \cite[Chap. IV Corollaires 15.2.3 and 14.4.2]{ega28}). 
By shrinking $T_1$ if necessary, we may assume that the same holds for $\pi_1$.
By \cite[Lemme 4.4]{druel99}, 
$\det(T_{U_C/C})\simeq\sO_{U_C}(-K_{U_C/C})$, and 
$\det(T_{U_1/T_1})\simeq\sO_{U_1}(-K_{U_1/T_1})$. Thus
\begin{equation}\label{6.10.2}
\sO_{U_C}(-K_{U_C/C})\simeq q^*\det(T_{U_1/T_1}).
\end{equation}

Since $\sF_{U_1}$ and $e^*\sF$ agree on $U_1\setminus E$, there are effective divisors $\Delta_+$ and $\Delta_-$ on $U_C$,
both supported on components of fibers of $\pi_C$ that are mapped into $E$ by $q$, such that 
\begin{equation}\label{6.10.1}
q^*(-K_{\sF_{U_1}})\ =\ e_C^*(-K_{\sF}) \ + \ \Delta_+ \ - \ \Delta_-.
\end{equation} 
Condition (c) above implies that $\Delta_+$ and $\Delta_-$ are supported on reducible fibers of $\pi_C$, and no
fiber of $\pi_C$ is entirely contained in their supports. In particular, $(\pi_C)_*\sO_{U_C}(k\Delta_-)=\sO_C$ \  $\forall k\geq 0$.

By pulling back \eqref{6.10.1} to $U_C$, and combining it with \eqref{6.10.0} and \eqref{6.10.2}, 
we get that 
$$
\sO_{U_C}\big(-(K_{U_C/C}+D+  \Delta_+ -  \Delta_-)\big) \ \simeq \ \sO_{U_C}\big(e_C^*(-K_{\sF})\big) \ \otimes \ \pi_C^*(n^*\sG_1)^*,
$$
where $D$ is an effective divisor on $U_C$ that does not dominate $C$. 
By Theorem~\ref{thm:-KX/Y_not_ample}, $-(K_{U_C/C}+D+  \Delta_+ -  \Delta_-)$ is not ample. 
On the other hand, condition (b) above implies that $e_C^*(-K_{\sF})$ is ample.
So we must have $\deg_C(n^*\sG_1)>0$, and thus the general leaf of the foliation on $T_0$ defined by $\sG_0$
is a rational curve by  
Theorem~\ref{bogomolov_mcquillan}.
\end{proof}


\section{Algebraic integrability of del Pezzo foliations}\label{section:thma}

In this section we prove Theorem~\ref{thma}.
Our argument involves 
constructing subfoliations of del Pezzo foliations which 
inherit some of their positivity properties.
One way to construct such subfoliations
is via Harder-Narasimhan filtrations, as we now explain.

\begin{say}[Harder-Narasimhan filtration]
Let $X$ be an $n$-dimensional projective variety, and $\sA$ an ample
line bundle on $X$.  Let $\sF$ be a torsion-free sheaf of rank $r$ on $X$.  We
define the slope of $\sF$ with respect to $\sA$ to be
$\mu_{\sA}(\sF)=\frac{c_1(\sF)\cdot \sA^{n-1}}{r}$.  We
say that  $\sF$ is
\emph{$\mu_{\sA}$-semistable} if for any  
subsheaf $\sE$ of $\sF$ we have $\mu_{\sA}(\sE)\leq\mu_{\sA}(\sF)$.

Given a torsion-free sheaf $\sF$ on $X$, there exists a filtration
of $\sF$ by  subsheaves
$$
0=\sE_0\subsetneq \sE_1\subsetneq \ldots\subsetneq \sE_k=\sF,
$$
with $\mu_{\sA}$-semistable quotients $\cQ_i=\sE_i/\sE_{i-1}$, and
such that $\mu_{\sA}(\cQ_1) > \mu_{\sA}(\cQ_2) > \ldots >
\mu_{\sA}(\cQ_k)$.  This is called the \emph{Harder-Narasimhan
 filtration} of $\sF$ (see \cite{HN75}, \cite[1.3.4]{HuyLehn}).
\end{say}

\begin{lemma}\label{lemma:foliation_destabilizing}
Let $X$ be a normal projective variety, $\sA$ an ample line bundle on $X$,
and $\sF\subsetneq T_X$ a foliation on $X$.
Let 
$0=\sF_0 \subset \sF_1 \subset \cdots \subset \sF_k=\sF$ be the
Harder-Narasimhan filtration of $\sF$ with respect to $\sA$.
Then $\sF_i\subsetneq T_X$ defines a foliation on $X$ for every 
$i$ such that $\mu_{\sA}(\sF_i)>0$.
\end{lemma}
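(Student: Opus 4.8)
The plan is to show two things for each $\sF_i$ with $\mu_\sA(\sF_i)>0$: that $\sF_i$ is closed under the Lie bracket, and that $\sF_i$ is saturated in $T_X$. Saturation will be the easier half: since the Harder--Narasimhan filtration is built from the slopes $\mu_\sA(\cQ_j)$ of the semistable quotients, and the saturation $\overline{\sF_i}$ of $\sF_i$ in $T_X$ has the same rank as $\sF_i$ but $c_1(\overline{\sF_i})\cdot\sA^{n-1}\geq c_1(\sF_i)\cdot\sA^{n-1}$, one sees that if $\sF_i$ were not saturated then $\overline{\sF_i}$ would be a subsheaf of $\sF$ of the same rank as $\sF_i$ with strictly larger slope, contradicting uniqueness/maximality properties of the HN filtration (concretely, $\overline{\sF_i}\cap \sF_{i+1}$ would destabilize $\cQ_{i+1}$, or $\overline{\sF_i}$ would have slope exceeding $\mu_\sA(\cQ_i)$ while having rank $\leq \mathrm{rk}\,\sF_i$). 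I would phrase this cleanly using the standard fact that each step $\sF_i$ of the HN filtration is the maximal subsheaf of $\sF$ of slope $\geq \mu_\sA(\cQ_i)$, hence automatically saturated in $\sF$, and $\sF$ itself is saturated in $T_X$ by the definition of a foliation, so $\sF_i$ is saturated in $T_X$.

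The main obstacle is the integrability (Lie bracket) condition, and here the hypothesis $\mu_\sA(\sF_i)>0$ is essential. The Lie bracket on $T_X$ induces an $\sO_X$-linear map on an open subset $X^\circ\subset X$ where everything is a bundle (take $X^\circ$ smooth with $\sF$, all $\sF_j$, and all quotients locally free, so $\mathrm{codim}(X\setminus X^\circ)\geq 2$). The bracket $[\cdot,\cdot]:\sF_i\wedge\sF_i\to T_X$ is $\sO_X$-linear modulo $\sF_i$ because $\sF_i$ is a subsheaf of the integrable sheaf $\sF$: for local sections the bracket lands in $\sF$, so composing with $\sF\to\sF/\sF_i$ and then noting this is $\sO_X$-linear (the derivation terms cancel precisely when both arguments lie in $\sF_i$), we get a genuine $\sO_{X^\circ}$-linear morphism
$$
\psi:\wedge^2\sF_i\longrightarrow \sF/\sF_i.
$$
I would then argue $\psi=0$ by a slope estimate: $\wedge^2\sF_i$ is a quotient of $\sF_i^{\otimes 2}$, which has slope $2\mu_\sA(\sF_i)>0$, while $\sF/\sF_i$ has HN slopes all $<\mu_\sA(\cQ_i)\leq \mu_\sA(\sF_i)$ — wait, this needs care, so the actual comparison I would run is between the maximal destabilizing slope of $\wedge^2\sF_i$ and the minimal slope $\mu_\sA(\cQ_k)$ appearing in $\sF/\sF_i$; more robustly, I would use that any nonzero morphism from a sheaf whose HN slopes all satisfy $\mu>c$ to a sheaf whose HN slopes all satisfy $\mu\le c$ is zero, after checking $\wedge^2\sF_i$ has all HN slopes $\geq 2\mu_\sA(\cQ_i)$ (from $\mu_\sA(\cQ_j)>\mu_\sA(\cQ_i)$ for $j\le i$ and tensor/wedge properties of semistability) and $\sF/\sF_i$ has all HN slopes $\le \mu_\sA(\cQ_{i+1})<\mu_\sA(\cQ_i)$. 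The condition $\mu_\sA(\sF_i)>0$ enters to ensure we are not in the degenerate case $i=k$ where $\sF/\sF_i=0$ trivially, and more importantly it is what is actually recorded for downstream use; the strict inequality $\mu_\sA(\cQ_{i})>\mu_\sA(\cQ_{i+1})$ is what forces $\psi=0$.

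Once $\psi=0$ on $X^\circ$, the bracket of local sections of $\sF_i$ lies in $\sF_i$ on $X^\circ$; since $\sF_i$ is saturated in $T_X$ (hence reflexive, hence normal in the sense of extending sections over codimension $\geq 2$, by \cite[Proposition 1.6]{hartshorne80}) and $\mathrm{codim}_X(X\setminus X^\circ)\geq 2$, the bracket lies in $\sF_i$ on all of $X$. Therefore $\sF_i$ is a foliation. The only genuinely technical point to nail down carefully is the behavior of (semi)stability under tensor and exterior powers — I would cite the standard fact (valid in characteristic $0$, e.g. via \cite{HuyLehn}) that tensor products of semistable sheaves are semistable and hence wedge powers have controlled HN slopes — everything else is slope bookkeeping and the codimension-$2$ extension argument.
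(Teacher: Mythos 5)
Your overall strategy is the standard one, and it is the argument behind the reference the paper cites for this lemma (the paper itself gives no proof, only a pointer to Shepherd--Barron): show each $\sF_i$ is saturated in $T_X$, observe that the Lie bracket induces an $\sO_X$-linear O'Neill tensor $\psi\colon\wedge^2\sF_i\to\sF/\sF_i$ because $\sF$ is integrable and $\sF_i\subset\sF$, kill $\psi$ by a slope comparison, and propagate the conclusion from a big open set using torsion-freeness. The saturation half is fine (more simply: $\sF/\sF_i$ is a successive extension of the torsion-free semistable quotients $\cQ_{i+1},\dots,\cQ_k$, hence torsion-free, and $T_X/\sF$ is torsion-free by definition of a foliation, so $T_X/\sF_i$ is torsion-free), as are the $\sO_X$-linearity of $\psi$ and the codimension-two extension step --- indeed a section of the torsion-free sheaf $\sF/\sF_i$ vanishing generically is zero, so one does not even need reflexivity there.

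The gap is exactly at the point you flagged with ``wait, this needs care'' and then did not resolve. To conclude $\psi=0$ you need $\mu_{\sA,\min}(\wedge^2\sF_i)>\mu_{\sA,\max}(\sF/\sF_i)$, that is, $2\,\mu_{\sA}(\cQ_i)>\mu_{\sA}(\cQ_{i+1})$. The hypothesis of the lemma is $\mu_{\sA}(\sF_i)>0$, which only says that the \emph{weighted average} of $\mu_{\sA}(\cQ_1),\dots,\mu_{\sA}(\cQ_i)$ is positive; it does not give $\mu_{\sA}(\cQ_i)\ge 0$, since the earlier quotients may have large positive slope while $\mu_{\sA}(\cQ_i)<0$. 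When $\mu_{\sA}(\cQ_i)<0$ we have $2\mu_{\sA}(\cQ_i)<\mu_{\sA}(\cQ_i)$, and the strict inequality $\mu_{\sA}(\cQ_i)>\mu_{\sA}(\cQ_{i+1})$ --- which you assert ``is what forces $\psi=0$'' --- no longer yields $2\mu_{\sA}(\cQ_i)>\mu_{\sA}(\cQ_{i+1})$: take $\mu_{\sA}(\cQ_i)=-1$ and $\mu_{\sA}(\cQ_{i+1})=-3/2$. So as written your argument proves the lemma only for those $i$ with $\mu_{\sA}(\sF_i/\sF_{i-1})>0$, equivalently $\mu_{\sA,\min}(\sF_i)>0$, and not for every $i$ with $\mu_{\sA}(\sF_i)>0$. (That weaker case is in fact the only one the paper uses: Proposition~\ref{prop:foliation_max} applies the lemma to $\sF_{i_0}$ with $i_0=\max\{i\,:\,\mu_{\sA}(\sF_i/\sF_{i-1})>0\}$.) To obtain the statement in the generality claimed you would need a further idea for indices with $\mu_{\sA}(\sF_i)>0\ge\mu_{\sA}(\cQ_i)$; the slope bookkeeping shows $\psi$ vanishes on the image of $\sF_{i_0}\otimes\sF_i$ in $\wedge^2\sF_i$, but it does not control the induced map on $\wedge^2(\sF_i/\sF_{i_0})$, and I do not see how to close that case by these methods.
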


\begin{proof}
This is well known. See  for instance \cite[Lemma 9.1.3.1]{Shepherd-barron92}.
\end{proof}

\begin{notation}
Let $X$ be a normal projective variety,  $\sA$ an ample line bundle on $X$,
and $\sF$ a coherent torsion free sheaf of $\sO_X$-modules. 
Let $m_i\in\bN$, $1\leq i\leq \dim(X)-1$,  be large enough integers,
$H_i \in |m_i \sA|$ be general members, and set $C:=H_1\cap\cdots\cap H_{\dim(X)-1}$.
By the Mehta-Ramanathan Theorem
(see \cite[6.1]{MR_semistable_curves} or \cite[7.2.1]{HuyLehn}),
the Harder-Narasimhan filtration of $\sF$ with respect to $\sA$ commutes with
restriction to $C$.
In this case we say that $C$ is a \emph{general complete intersection curve
for $\sF$ and $\sA$ in the sense of Mehta-Ramanathan}. 
If $\sF$  and $\sA$ are clear from the context, we simply say that $C$ is a \emph{general complete intersection curve}.
\end{notation}

\begin{lemma}\label{lemma:HN_on_the_base}
Let $X$ and $Y$ be  smooth complex projective varieties with 
$\dim(Y) \ge 1$, $X_0$ an open subset of $X$ 
with $\codim_X(X\setminus X_0)\geq 2$, $Y_0$ a dense open subset of $Y$
and $\pi_0:X_0\to Y_0$
a proper surjective equidimensional morphism. 
Let $\sG$ be a coherent torsion-free sheaf of $\sO_Y$-modules on $Y$, and
$\sF$ a coherent  torsion-free sheaf of $\sO_X$-modules such that
$\sF|_{X_0}\simeq \pi_0^*\sG|_{Y_0}$.
Let $\sA$ be an ample line bundle on $X$, 
and $C\subset X_0$ a general complete intersection curve for $\sF$ and $\sA$ in the sense of
Mehta-Ramanathan. Suppose that $\sF|_{C}$ is not semistable.

Then there exists a subsheaf $\sH\subsetneq \sG$ such that
$\mu\Big(\big({\pi_0^*({\sH}|_{Y_0})}\big)|_{C}\Big)>\mu(\sF|_{C})$.
\end{lemma}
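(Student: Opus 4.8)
The plan is to transfer the instability of $\sF|_C$ to the Harder--Narasimhan filtration of $\sF$ on $X$ via Mehta--Ramanathan, then to descend its first term to $Y$ and compare slopes. First I would use that $C$ is a general complete intersection curve for $\sF$ and $\sA$ in the sense of Mehta--Ramanathan: the $\mu_\sA$-Harder--Narasimhan filtration $0=\sF_0\subsetneq\sF_1\subsetneq\cdots\subsetneq\sF_k=\sF$ of $\sF$ restricts to the Harder--Narasimhan filtration of $\sF|_C$, so that, $\sF|_C$ being unstable, $k\ge2$ and $\sF_1$ is the maximal destabilizing subsheaf of $\sF$ (of rank $r_1$), with $\mu_\sA(\sF_1)=\mu_\sA^{\max}(\sF)>\mu_\sA(\sF)$. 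I would also record that, since $\codim_X(X\setminus X_0)\ge2$, for any saturated subsheaf $\sH\subsetneq\sG$ the sheaf $\pi_0^*(\sH|_{Y_0})$ has a reflexive extension $\sH_X\subseteq\sF$ over $X$, and that slopes on a general high-degree complete intersection curve are the $\mu_\sA$-slopes multiplied by one and the same positive factor; hence the inequality to be proved is equivalent to $\mu_\sA(\sH_X)>\mu_\sA(\sF)$, and it suffices to find a saturated $\sH\subsetneq\sG$ with $\mu_\sA(\sH_X)=\mu_\sA(\sF_1)$.

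The core of the argument is that $\sF_1$ descends, that is, $\sF_1$ is, over a dense open subset of $X_0$ whose complement has codimension $\ge2$, the pullback of a saturated subsheaf $\sH\subsetneq\sG$. For this one works over the dense open subset of $Y_0$ on which $\sG$ is locally free, where $\sF|_{X_0}\simeq\pi_0^*\sG|_{Y_0}$ restricts on each fibre $F$ of $\pi_0$ to the trivial bundle $\cO_F^{\oplus r}$ (with $r$ the common rank of $\sF$ and $\sG$). Since $\cO_F^{\oplus r}$ is polystable of slope $0$ with respect to $\sA|_F$, every saturated subsheaf of it has $\sA|_F$-slope $\le0$, with equality exactly for trivial direct summands, and I claim $\sF_1|_F$ has $\sA|_F$-slope $0$ for general $F$. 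Granting the claim, $\sF_1|_F\cong\cO_F^{\oplus r_1}$ for general $F$, so over a dense open $Y_1\subseteq Y_0$ one has $\sF_1|_{X_1}=\pi_0^*\sH_1$ for a subbundle $\sH_1\subseteq\sG|_{Y_1}$; taking $\sH$ to be the saturation of $\sH_1$ in $\sG$ and using reflexivity of $\sF_1$ and $\sH_X$ on the smooth $X$ and $Y$, one gets that $\sH_X$ agrees with $\sF_1$ off a codimension $2$ set, so $\mu_\sA(\sH_X)=\mu_\sA(\sF_1)$ and the proof is complete. To prove the claim, suppose $\sF_1|_F$ had negative $\sA|_F$-slope and let $\sH_1\subseteq\sG|_{Y_1}$ be the smallest subbundle with $\sF_1|_{X_1}\subseteq\pi_0^*\sH_1$; then the inclusion $\sF_1\subseteq\pi_0^*\sH_1$ is proper along the general fibre, so $c_1(\pi_0^*\sH_1)$ exceeds $c_1(\sF_1)$ by a nonzero effective divisor dominating $Y$, and --- using in addition the positivity of the relative direction of $\pi_0$, whose fibres are covered by rational curves so that their anticanonical bundles are positive --- one deduces that $\mu_\sA$ of an appropriate pullback exceeds $\mu_\sA(\sF_1)=\mu_\sA^{\max}(\sF)$, a contradiction.

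The main obstacle is this descent step, and within it precisely the claim that the maximal destabilizing subsheaf $\sF_1$ cannot be relatively negative --- equivalently, that $\sF_1$ is trivial on the general fibre of $\pi_0$, hence pulled back from $Y$. The inequality $\mu_{\sA|_F}(\sF_1|_F)\le0$ comes for free from polystability of $\cO_F^{\oplus r}$; the reverse inequality is where the hypothesis $\sF|_{X_0}\simeq\pi_0^*\sG|_{Y_0}$ (relative triviality), the maximality of $\sF_1$ among $\mu_\sA$-destabilizing subsheaves, and the positivity of the fibres of $\pi_0$ (in the spirit of the non-ampleness of relative anticanonical classes, see Theorem~\ref{thm:-KX/Y_not_ample}) must be used together. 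Everything else --- extending a subbundle defined over a dense open subset of $Y_0$ to a saturated subsheaf of $\sG$ without changing first Chern classes, and reducing the slope comparison on $C$ to a $\mu_\sA$-comparison on $X$ --- is routine from Mehta--Ramanathan and reflexivity of saturated subsheaves on smooth varieties.
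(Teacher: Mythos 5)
Your reduction via Mehta--Ramanathan is fine: since $C$ computes the Harder--Narasimhan filtration of $\sF$, the instability of $\sF|_C$ gives $k\ge 2$, and comparing slopes on $C$ is the same as comparing $\mu_{\sA}$-slopes on $X$ up to the fixed factor $m_1\cdots m_{\dim X-1}$. The problem is the descent step, which is where all the content of the lemma lies. You need to show that the maximal destabilizing subsheaf $\sF_1$ of $\sF$ is, over a big open set, the pullback of a subsheaf of $\sG$, and your argument for the key claim that $\sF_1$ restricts to a degree-$0$ (hence trivial) subsheaf of $\sO_F^{\oplus r}$ on the general fibre $F$ has two genuine gaps. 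First, you invoke ``the positivity of the relative direction of $\pi_0$, whose fibres are covered by rational curves'': the lemma makes no such hypothesis --- $\pi_0$ is an arbitrary proper surjective equidimensional morphism, and its fibres need not carry any rational curves. Second, even setting that aside, the contradiction you sketch only works when the minimal subsheaf $\sH_1\subseteq \sG|_{Y_1}$ with $\sF_1\subseteq \pi_0^*\sH_1$ has the same rank as $\sF_1$ (then $\pi_0^*\sH_1/\sF_1$ is torsion supported on an effective divisor dominating $Y$, and $\mu_{\sA}(\pi_0^*\sH_1)>\mu_{\sA}(\sF_1)$ contradicts maximality). When $\mathrm{rank}(\sH_1)>\mathrm{rank}(\sF_1)$ --- which happens exactly when $\sF_1|_F$ is a nontrivial, non-split subsheaf of $\sO_F^{\oplus r}$ --- the quotient $\pi_0^*\sH_1/\sF_1$ has positive rank, and the inequality $\mu_{\sA}(\pi_0^*\sH_1)\ge\mu_{\sA}(\sF_1)$ you would need amounts to $\mu_{\sA}(\pi_0^*\sH_1/\sF_1)\ge\mu_{\sA}(\sF_1)$, which does not follow from anything you have written. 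This is precisely the hard case, and ``one deduces that $\mu_{\sA}$ of an appropriate pullback exceeds $\mu_{\sA}(\sF_1)$'' is not a proof.

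For contrast, the paper sidesteps the fibration-descent problem entirely: it cuts $X$ by $\dim(X)-\dim(Y)$ general members of $|m_i\sA|$ to get a smooth $Z$ of dimension $\dim(Y)$ that is \emph{finite} over $Y_0$, passes to the normalization $Z'$ of $Z$ in a splitting field of $K(Z_0)/K(Y_0)$ with Galois group $G$, and observes that the maximal destabilizing subsheaf of $(\psi^*\sF)^{**}=j'_*\big(\psi_0^*\varphi_0^*(\sG|_{Y_0})\big)$ is $G$-invariant by uniqueness, hence of the form $j_*\psi_0^*\varphi_0^*(\sH|_{Y_0})$ for a subsheaf $\sH\subsetneq\sG$; the slope comparison then follows from the behaviour of slopes under the finite cover $C'\to C$. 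If you want to salvage your route, you must either prove the degree-$0$ claim for $\sF_1|_F$ without extra positivity assumptions on the fibres, or switch to a finite-cover argument of this kind.
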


\begin{proof}
Consider  general elements $H_i \in |m_i \sA|$, for 
$i\in\{1, \ldots, \dim(X)-1\}$, where the $m_i\in\bN$ are large enough so that the 
Harder-Narasimhan filtration of $\sF$ commutes with restriction to the complete intersection curve
$C=H_1 \cap \cdots \cap H_{\dim(X)-1}$. Set $Z:= H_1 \cap \cdots \cap H_{\dim(X)-\dim(Y)}$, and $Z_0:= Z \cap X_0$. 
Then $Z$ is a smooth variety of 
dimension equal to $\dim(Y)$, and the restriction $\varphi_0 := \pi_0 |_{Z_0}:Z_0\to Y_0$ is a finite morphism.  
Denote by  $j: Z_0 \hookrightarrow Z$ the inclusion.

Let $K$ be a splitting field of the function field $K(Z_0)$ over $K(Y_0)$, and let $\psi: Z' \to Z\subset X$ be 
the normalization of $Z$ in $K$. Set $Z'_0:=\psi^{-1}(Z_0)$,  and let 
$j': Z'_0 \hookrightarrow Z'$ be the inclusion. Let $\psi_0$ be the restriction of $\psi$ to 
$Z'_0$, and set $\sF' := (\psi^* \sF)^{**} = j'_*\big(\psi_0^* \varphi_0^* (\sG|_{Y_0})\big)$. 
Let $G$ be the Galois group of $K(Z'_0)$ over $K(Y_0)$.

By \cite[Lemma 3.2.2]{HuyLehn}, $\sF'$ is not semistable with respect to $\psi^* H|_{Z}$.
Let $\sE'$ be the maximally destabilizing subsheaf of $\sF'$. Then 
$\mu_{(\psi^* \sA)}(\sE')>\mu_{(\psi^* \sA)}(\sF')$. This implies
$\mu_{(\psi^* \sA)}({\sE'}|_{C'})>\mu_{(\psi^* \sA)}({\sF'}|_{C'})$, where
$C'=C\times_Z Z'$.

Because of its uniqueness, the maximally destabilizing subsheaf $\sE'$ of $\sF'$ 
is invariant under the action of $G$ on $\sF'$. Thus, up to shrinking $Y_0$ if necessary, we may assume that 
there is a subsheaf $\sH\subset\sG$ such that $\sE'\simeq \big(j_*\psi_0^* \varphi_0^* ({\sH}|_{Y_0})\big)$.
 By \cite[Lemma 3.2.1]{HuyLehn}, 
$\mu_{(\psi^* \sA)}(\sE')=\mu_{(\psi^* \sA)} ({\sE'}|_{C'})=d \cdot \mu_{\sA} ({\sH}|_{C})$,
and
$\mu_{(\psi^* \sA)}(\sF')=\mu_{(\psi^* \sA)} ({\sF'}|_{C'})=d \cdot \mu_{\sA} ({\sG}|_{C})$,
where $d$ denotes the degree of $C'\to C$. 
\end{proof}

\begin{prop}\label{prop:foliation_max}
Let $X$ be a normal projective variety, 
$\sA$ an ample line bundle on $X$,
and $\sF\subsetneq T_X$ a foliation on $X$.
Suppose that $\mu_{\sA}(\sF)>0$, and
let $C\subset X$ be a general complete intersection curve. 
Then either 
\begin{itemize}
\item $\sF$ is algebraically integrable with rationally connected general leaves, or
\item there exits an algebraically integrable subfoliation $\sG\subsetneq \sF$
with rationally connected general leaf such that 
$\det(\sG)\cdot C \ge \det(\sF)\cdot C$. 
\end{itemize}
\end{prop}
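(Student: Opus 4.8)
\textbf{Proof strategy for Proposition~\ref{prop:foliation_max}.}
The plan is to induct on the rank $r=r_\sF$, using Harder--Narasimhan filtrations to extract destabilizing subfoliations and the Bogomolov--McQuillan criterion (Theorem~\ref{bogomolov_mcquillan}) to detect algebraic integrability with rationally connected general leaves. Throughout, $C$ denotes a general complete intersection curve for $\sF$ and $\sA$ in the sense of Mehta--Ramanathan; then $C$ lies in the smooth locus of $X$, $\sF|_C$ is a vector bundle, $C$ is disjoint from $\textup{Sing}(X)$ and from $\textup{Sing}(\sF)$ (which has codimension at least $2$ since $\sF$ is saturated in $T_X$, so that $\textup{Sing}(\sF)$ is contained in $\textup{Sing}(X)$ together with the non locally free loci of $\sF$ and of $T_X/\sF$), and the Harder--Narasimhan filtration of $\sF$ commutes with restriction to $C$. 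Since $\det(\sG)\cdot C=\deg(\sG|_C)$ for any saturated subsheaf $\sG\subseteq\sF$, the desired inequality is simply $\deg(\sG|_C)\ge\deg(\sF|_C)$.

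If $\sF|_C$ is $\mu_\sA$-semistable, then since $\mu_\sA(\sF)>0$ it is a semistable vector bundle of positive slope on a smooth projective curve, hence ample. As $C$ avoids $\textup{Sing}(X)$ and $\textup{Sing}(\sF)$, Theorem~\ref{bogomolov_mcquillan} shows that $\sF$ is algebraically integrable with rationally connected general leaves, so the first alternative holds. This also settles the base case $r=1$, where $\sF|_C$ is automatically semistable.

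Suppose now that $\sF|_C$ is not semistable, and let $0=\sF_0\subsetneq\sF_1\subsetneq\cdots\subsetneq\sF_k=\sF$, with $k\ge 2$, be the Harder--Narasimhan filtration of $\sF$ with respect to $\sA$. The maximal destabilizing subsheaf $\sF_1$ is semistable with $\mu_\sA(\sF_1)\ge\mu_\sA(\sF)>0$, hence a foliation by Lemma~\ref{lemma:foliation_destabilizing}; applying the previous paragraph to $\sF_1$, it is algebraically integrable with rationally connected general leaves. Let $\pi_0:X_0\to T_0$ be the associated almost proper fibration, chosen equidimensional with general fibers the leaves of $\sF_1$ and with $\sF_1|_{X_0}=T_{X_0/T_0}$ (Lemma~\ref{lemma:leaffoliation}), and let $T$ be a normal projective model of $T_0$. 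Since $\sF_1\subset\sF$, the general fiber of $\pi_0$ lies in a leaf of $\sF$, so by Lemma~\ref{lemma:foliation_morphism} $\sF$ is the pullback via $\pi_0$ of a foliation $\sG'$ on $T$ of rank $r-r_1<r$, with $0\to\sF_1|_{X_0}\to\sF|_{X_0}\to\pi_0^*\sG'\to 0$ after shrinking; hence $\deg(\sF|_C)=\deg(\sF_1|_C)+\deg\big((\pi_0^*\sG')|_C\big)$. If $\deg\big((\pi_0^*\sG')|_C\big)\le 0$, then $\deg(\sF_1|_C)\ge\deg(\sF|_C)$ and $\sG:=\sF_1\subsetneq\sF$ gives the second alternative. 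Otherwise $\deg\big((\pi_0^*\sG')|_C\big)>0$; letting $C'$ be the image of $C$ in $T$ and $\delta>0$ the degree of the finite map $C\to C'$, the projection formula gives $\deg\big((\pi_0^*\sH)|_C\big)=\delta\deg(\sH|_{C'})$ for every sheaf $\sH$ on $T$ locally free along $C'$. Arranging, as in the proof of Lemma~\ref{lemma:HN_on_the_base}, that $C'$ is a general complete intersection curve for $\sG'$ with respect to a suitable ample line bundle on $T$, we get $\mu(\sG')>0$ and may apply the induction hypothesis to $\sG'$ on $T$. If $\sG'$ is algebraically integrable with rationally connected general leaves, then a general leaf of $\sF$ is a fibration with rationally connected general fiber (a leaf of $\sF_1$) over a rationally connected base (a leaf of $\sG'$), hence is rationally connected by \cite{ghs03}, and $\sF$ is algebraically integrable: the first alternative holds. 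If instead the induction hypothesis produces an algebraically integrable subfoliation $\sG''\subsetneq\sG'$ with rationally connected general leaf and $\deg(\sG''|_{C'})\ge\deg(\sG'|_{C'})$, let $\sM$ be the pullback of $\sG''$ via $\pi_0$; then $\sF_1\subset\sM\subsetneq\sF$ (the last inclusion because the rank of $\sG''$ is strictly smaller than that of $\sG'$, both being saturated in $T_T$), $\sM$ is algebraically integrable with rationally connected general leaves (its leaves being $\pi_0$-preimages of leaves of $\sG''$, again by \cite{ghs03}), and
\[
\deg(\sM|_C)=\deg(\sF_1|_C)+\delta\deg(\sG''|_{C'})\ \ge\ \deg(\sF_1|_C)+\delta\deg(\sG'|_{C'})=\deg(\sF|_C),
\]
so $\sG:=\sM$ satisfies the second alternative.

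The main obstacle is the construction underlying the last step: producing the normal projective model $T$ of the leaf space with an ample polarization, and matching the complete intersection curves $C$ on $X$ and $C'$ on $T$ so that $C'$ is general enough on $T$ for the induction hypothesis to apply (to $\sG'$, and to the subfoliations it returns) while the clean degree relation $\deg\big((\pi_0^*\sH)|_C\big)=\delta\deg(\sH|_{C'})$ remains valid. This is exactly the technical content already isolated in Lemma~\ref{lemma:HN_on_the_base} and its proof, which also guarantees that the induction on $r$ is well founded, since $r-r_1<r$.
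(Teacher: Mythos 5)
Your first case (semistable $\Rightarrow$ $\sF|_C$ ample $\Rightarrow$ Theorem~\ref{bogomolov_mcquillan}) matches the paper. But your unstable case has a genuine gap, concentrated exactly where you flag "the main obstacle": the inductive descent to the leaf space $T$ of $\sF_1$. To apply the induction hypothesis to $\sG'$ on $T$ you need an ample polarization $\sA_T$ with $\mu_{\sA_T}(\sG')>0$ and a \emph{general complete intersection curve} $C'$ on $T$; to transfer the resulting inequality $\det(\sG'')\cdot C'\ge\det(\sG')\cdot C'$ back to $X$ via the projection formula you need that same $C'$ to be the image of your general complete intersection curve $C$ on $X$. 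These two requirements are incompatible: the image $\pi_0(C)$ of a general Mehta--Ramanathan curve on $X$ is a moving curve on $T$, but it is not numerically proportional to $\sA_T^{\dim T-1}$ for any ample $\sA_T$, so neither $\deg(\sG'|_{\pi_0(C)})>0$ gives $\mu_{\sA_T}(\sG')>0$, nor does an inequality valid on general complete intersection curves of $T$ descend to $\pi_0(C)$. Lemma~\ref{lemma:HN_on_the_base} does not supply this: it goes the other way (from instability of a pulled-back sheaf on a general curve of $X$ to a destabilizing subsheaf downstairs) and never identifies the image of $C$ with a general complete intersection curve on the base. There are also unaddressed compactification issues ($T_0$ is only a leaf space over an open set, and $\sG'$ must be extended to a projective model before one can speak of its slope).

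The paper's proof shows the entire detour is unnecessary. It never passes to the leaf space of $\sF_1$: it sets $i_0:=\max\{i\ge 1\,|\,\mu_{\sA}(\sF_i/\sF_{i-1})>0\}$ and takes $\sG:=\sF_{i_0}$. By Mehta--Ramanathan each graded piece $(\sF_i/\sF_{i-1})|_C$ with $i\le i_0$ is semistable of positive slope, hence ample by \cite[Theorem 2.4]{hartshorne71}, so $\sG|_C$ is ample as an iterated extension of ample bundles, and Theorem~\ref{bogomolov_mcquillan} applies directly to $\sG$ on $X$. The inequality $\det(\sG)\cdot C\ge\det(\sF)\cdot C$ is then immediate from $\det(\sF_i/\sF_{i-1})\cdot C\le 0$ for $i>i_0$. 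In short: the one idea your argument is missing is to take the \emph{largest} initial segment of the Harder--Narasimhan filtration with positive-slope quotients, rather than only the maximal destabilizing piece $\sF_1$; with that choice no induction on the rank and no descent to a base is needed.
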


\begin{rem}
Let $\sF$ be coherent torsion-free sheaf of $\sO_X$-modules on $X$, and $C\subset X$ a general complete intersection curve. 
Then $\sF$ is locally free along $C$, so that
$\det(\sF)\cdot C$ is well-defined.
\end{rem}

\begin{proof}[{Proof of Proposition \ref{prop:foliation_max}}]
Suppose  that $\sF$ is semistable with respect to $\sA$. Then
$\sF|_{C}$ is semistable with 
slope $\mu({\sF}|_{C})>0$, and
$\sF|_{C}$ is an ample vector bundle 
by \cite[Theorem 2.4]{hartshorne71}. 
Thus $\sF$ is algebraically integrable with rationally connected general leaf by  
Theorem~\ref{bogomolov_mcquillan}.

Suppose that $\sF$ is not semistable.
Let 
$0=\sF_0 \subset \sF_1 \subset \cdots \subset \sF_k=\sF$ be the
Harder-Narasimhan filtration of $\sF$ with respect to $\sA$. 
Note that $k\ge 2$. Set 
$i_0:=\max\{i \ge 1\, |\, \mu_{\sA}(\sF_{i}/\sF_{i-1})>0 \}$, and $\sG:=\sF_{i_0}$.
Since $\mu_{\sA}(\sG)>0$, $\sG\subsetneq T_X$ 
defines a foliation on $X$ by Lemma~\ref{lemma:foliation_destabilizing} above. 
Since $\mu_{\sA}(\sF_1) > \mu_{\sA}(\sF_2/\sF_1) > \cdots >
\mu_{\sA}(\sF_k/\sF_{k-1})$,
we must have
\begin{itemize}
\item $\mu_{\sA}(\sF_{i}/\sF_{i-1})>0$ for all $i\le i_0$, and
\item $\mu_{\sA}(\sF_i/\sF_{i-1})\le 0$ for any $i\ge i_0+1$.
\end{itemize}
Thus
\begin{enumerate}
\item ${(\sF_i/\sF_{i-1})}|_{C}$ is an ample vector bundle on $C$
for any $i\le i_0$ by \cite[Theorem 2.4]{hartshorne71}, and
\item $\det(\sF_i/\sF_{i-1})\cdot C\le 0$ for $i\ge i_0+1$.
\end{enumerate}
From (1) it follows that $\sG|_{C}$ is an ample vector bundle on $C$. 
Thus $\sG$ is algebraically integrable with rationally connected general leaf by Theorem~\ref{bogomolov_mcquillan}.
From (2) it follows that 
$$
\begin{array}{rcl}
\det(\sG)\cdot C & = & \sum_{1\le i\le i_0}\det(\sF_i/\sF_{i-1}))\cdot C\\
& = & \det(\sF)\cdot C-\sum_{i_0+1\le i\le k}\det(\sF_i/\sF_{i-1}))\cdot C\\
& \ge & \det(\sF)\cdot C.\\
\end{array}
$$
\end{proof}

These results allow us to prove Theorem~\ref{thma} in the special case
when $X$ has Picard number $1$.

\begin{prop}\label{proposition:algebraicity_picard_number_one}
Let $\sF$ be a del Pezzo foliation on a smooth projective variety $X\not\simeq \p^n$ with $\rho(X)=1$. 
Then $\sF$ is algebraically integrable with rationally connected general leaf.
\end{prop}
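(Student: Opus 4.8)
The plan is to deduce Proposition~\ref{proposition:algebraicity_picard_number_one} by combining Proposition~\ref{prop:foliation_max} with the Kobayashi--Ochiai-type bound \cite[Theorem~1.1]{adk08}. Since $\sF$ is a del Pezzo foliation, its rank satisfies $r:=r_\sF\ge 2$ and $\det(\sF)=\sO_X(-K_\sF)$ is ample; in particular $\mu_\sA(\sF)>0$ for any ample line bundle $\sA$ on $X$, so the hypotheses of Proposition~\ref{prop:foliation_max} are met. That proposition gives a dichotomy: either $\sF$ is already algebraically integrable with rationally connected general leaf, in which case there is nothing to prove, or there is an algebraically integrable subfoliation $\sG\subsetneq\sF$ with $\det(\sG)\cdot C\ge\det(\sF)\cdot C$ for a general complete intersection curve $C\subset X$. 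The whole point is to show that, under the assumptions $\rho(X)=1$ and $X\not\simeq\p^n$, the second alternative cannot occur.

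So I would assume such a $\sG$ exists. Being a foliation, $\sG$ is saturated in $T_X$, so $T_X/\sG$, and hence $\sF/\sG$, is torsion-free; as $\sG\subsetneq\sF$ is proper this forces $s:=r_\sG\le r-1$. Moreover $\det(\sG)$ is a rank-one reflexive sheaf on the smooth variety $X$, hence a line bundle, so $\sG$ is $1$-Gorenstein. Now I use $\rho(X)=1$: because $X$ is uniruled (Remark~\ref{uniruledness}), $\Pic(X)\cong\bZ$, say generated by an ample line bundle $\sO_X(1)$; consequently a line bundle on $X$ is ample as soon as it has positive degree on $C$, the index $\iota_\sF=r-1$ forces $\det(\sF)\cong\sO_X(r-1)$, and for any Fano foliation $\sG'$ on $X$ the index $\iota_{\sG'}$ equals the integer $n$ with $\det(\sG')\cong\sO_X(n)$. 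Writing $\det(\sG)\cong\sO_X(q)$, the inequality $\det(\sG)\cdot C\ge\det(\sF)\cdot C$ together with $\sO_X(1)\cdot C>0$ yields $q\ge r-1\ge s\ge 1$. Hence $-K_\sG$ is ample, so $\sG\subsetneq T_X$ is a Fano foliation of rank $s$ with $\iota_\sG=q\ge s$. But \cite[Theorem~1.1]{adk08} gives $\iota_\sG\le s$, with equality only if $X\cong\p^n$; therefore $\iota_\sG=s$ and $X\cong\p^n$, contradicting the hypothesis. This contradiction rules out the second alternative, so $\sF$ is algebraically integrable with rationally connected general leaf.

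I expect the only genuinely delicate point to be the reduction $\rho(X)=1\Rightarrow\Pic(X)\cong\bZ$, which is where uniruledness is needed: a nonzero $\Pic^0(X)$ would produce a nonconstant Albanese map, which contracts every rational curve on $X$ and hence pulls an ample class back to a nef, non-ample, non-numerically-trivial line bundle, impossible once $\rho(X)=1$; a similar argument with finite \'etale covers removes any torsion. Granting this standard fact, the argument is just bookkeeping with indices, and no further rational-curve geometry or manipulation of $\Chow(X)$ is required beyond what is already packaged into Proposition~\ref{prop:foliation_max}.
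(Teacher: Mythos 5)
Your proposal is correct and follows essentially the same route as the paper: apply Proposition~\ref{prop:foliation_max} to produce, in the bad case, a subfoliation $\sG\subsetneq\sF$ with $\det(\sG)\simeq\sA^{\otimes k}$ for $k\ge r-1\ge r_{\sG}$ (where $\Pic(X)=\bZ[\sA]$), and then invoke the Kobayashi--Ochiai-type bound of \cite[Theorem 1.1]{adk08} to force $X\simeq\p^n$, a contradiction. The extra details you supply (torsion-freeness of $\sF/\sG$ giving $r_{\sG}\le r-1$, and $\rho(X)=1$ plus uniruledness giving $\Pic(X)\cong\bZ$) are exactly the points the paper leaves implicit.
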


\begin{proof}
By Remark~\ref{uniruledness}, $X$ is uniruled.
Since $\rho(X)=1$, $X$ is in fact a Fano manifold.
Let $\sA$ be an ample line bundle  on $X$ such that $\textup{Pic}(X)=\bZ[\sA]$.
By assumption, $\det(\sF)\simeq\sA^{\otimes r-1}$, where $r=r_\sF \ge 2$.

Suppose that $\sF$ is not 
algebraically integrable with rationally connected general leaf.
By Proposition \ref{prop:foliation_max},
there exits a subfoliation
$\sG\subsetneq \sF$ 
such that
$\det(\sG)\simeq \sA^{\otimes k}$
for some  $k\ge r-1\ge r_{\sG}$.
By \cite[Theorem 1]{adk08}, $(X,\sA)\simeq (\p^n,\sO_{\p^n}(1))$.
\end{proof}

In the next two propositions we address 
algebraically integrability of del Pezzo foliations on projective space bundles.
Recall from \ref{example:ample_on_p^n} the description of degree $0$ foliations on $\p^n$,
$\sH = \sO_{\p^n}(1)^{\oplus r}\subsetneq T_{\p^n}$.
The leaves of $\sH$ are fibers of a linear projection $\p^n\map \p^{n-r}$
from an $(r-1)$-dimensional linear subspace of $\p^n$. 
We want to describe families of such foliations.

\begin{say}[Families of degree $0$ foliations on $\p^m$]\label{V_in_E}
Let $T$ be a smooth positive dimensional variety, 
$\sE$ a locally free sheaf of rank $m+1\geq 2$ on $T$, and set $X:=\p_T(\sE)$.
Denote by $\sO_X(1)$ the tautological line bundle on $X$,
by $\pi:X\to T$ the natural projection, and by $Y\simeq \p^m$ a general fiber of $\pi$.
Let $\sH\subsetneq T_{X/T}$ be a foliation of rank $s\geq 1$ on $X$, and suppose that 
$\sH|_{Y}\simeq \sO_{\p^m}(1)^{\oplus s}\subsetneq T_{\p^m}$.
We first observe that there is an open subset $T_0\subset T$, 
with $\codim_T(T\setminus T_0)\geq 2$,
such that, for any $t\in T_0$, \ $\sH|_{X_t}\simeq \sO_{\p^n}(1)^{\oplus s}\subsetneq T_{\p^m}$, where $X_t=\pi^{-1}(t)\simeq \p^m$. Indeed, there exists 
an open subset $T_0\subset T$, 
with $\codim_T(T\setminus T_0)\geq 2$ such that 
$\big(T_{X/T}\big)/\sH$ is flat over $T_0$.
Then, for any $t\in T_0$, the inclusion $\sH\subsetneq T_{X/T}$ restricts to an inclusion
$\sH|_{X_t}\subsetneq T_{\p^m}$.
In particular, $\sH|_{X_t}$ is torsion free for any $t\in T_0$.
By removing a subset of codimension $\geq 2$ in $T_0$ if necessary, 
we may assume that 
$\sO_{\p^m}(s)\simeq \big(\det (\sH)\big)|_{X_t}\simeq 
\det \big(\sH|_{X_t}\big)\subset \wedge^sT_{\p^m}$
for any $t\in T_0$.
By Bott's formulae, $\sH|_{X_t}$ is saturated in $T_{\p^m}$.
So $\sH|_{X_t}$ is a degree $0$ foliations of rank $s$ on $\p^m$, i.e.,
$\sH|_{X_t}\simeq \sO_{\p^m}(1)^{\oplus s}$.

Set $\sV':=\pi_*(\sH(-1))\subset \pi_*(T_{X/T}(-1))\simeq \sE^*$, and denote by $\sV$
the saturation of $\sV'$ in $\sE^*$. Note that
$\pi^*\sV$ is a reflexive sheaf by
\cite[Proposition 1.9]{hartshorne80}.
The above observations imply that over $T_0$ we have $\sV'=\sV$, and $\pi^*\sV\simeq \sH(-1)$.
Hence $\sH=(\pi^*\sV)(1)$. 
In particular, 
$$
\det (\sH)\ \simeq \ \pi^*(\det \sV)\otimes  \sO_X(s).
$$

Let $\sK$ be the kernel of the dual map $\sE\to \sV^*$.
By removing a subset of codimension $\geq 2$ in $T_0$ if necessary, 
we may assume that there is an exact sequence of vector bundles on $T_0$:
$$
0\ \to \ \sK|_{T_0} \ \to \ \sE|_{T_0} \ \to \ \sV^*|_{T_0} \ \to \ 0.
$$
Consider the $\p^{m-s}$-bundle $Z:=\p_{T_0}\big(\sK|_{T_0}\big)$, 
with natural projection $q:Z\to T_0$.
The above exact sequence induces a rational map 
$p:\pi^{-1}(T_0)\map Z$ over $T_0$, which restricts to a surjective morphism 
$p_0:X_0\to Z$, where $X_0$ is the complement in $\pi^{-1}(T_0)$ of the $\p^{s-1}$-subbundle 
$\p_{T_0}\big(\sV^*|_{T_0} \big)\subset \p(\sE|_{T_0})$. 
By construction, $\sH|_{X_0} = T_{X_0/Z}$.
Note also that $\codim_X(X\setminus X_0)\geq 2$. 
\end{say}

\begin{prop}\label{proposition:p^n-bdle_F_factors}
Let $C$ be a smooth complete curve, $\sE$ an ample locally free sheaf of rank $m+1\geq 3$ on 
$C$, and set $X:=\p_C(\sE)$.  Denote by $\sO_X(1)$ the tautological line bundle on $X$, and by
$\pi:X\to C$ the natural projection.
Let $\sF\subset T_{X/C}$ be a foliation of rank $r\geq 2$ on $X$ such that
$\det(\sF)\simeq\sO_X(r-1)\otimes \pi^*\sL$ for some nef line bundle $\sL$ on $C$.
Then $\sF$ is algebraically integrable with rationally connected general leaf. 
\end{prop}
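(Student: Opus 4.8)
The plan is to argue fibrewise and then reduce, via the space of leaves of a canonical rank-$(r-1)$ subfoliation, to Lemma~\ref{lemma:foliation_by_curve_is_algebraic}. Since $\sE$ is ample on the curve $C$, the tautological line bundle $\sA:=\sO_X(1)$ is ample on $X$; hence $-K_\sF\simeq\sO_X(r-1)\otimes\pi^*\sL$ is a tensor product of an ample and a nef line bundle, so it is ample and $\sF$ is a Fano foliation. For a general point $b\in C$ the restriction $\sF|_{X_b}\subsetneq T_{X_b}\simeq T_{\p^m}$ is a foliation of rank $r$ on $\p^m$ with $\det(\sF|_{X_b})\simeq\sO_{\p^m}(r-1)$, so $\mu_{\sO_{\p^m}(1)}(\sF|_{X_b})=\tfrac{r-1}{r}>0$. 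By Proposition~\ref{prop:foliation_max} applied to $\sF|_{X_b}$ and $\sO_{\p^m}(1)$, either $\sF|_{X_b}$ is algebraically integrable with rationally connected general leaf, or there is an algebraically integrable subfoliation $\sG_b\subsetneq\sF|_{X_b}$ with rationally connected general leaf and $\det(\sG_b)\cdot\ell\ge\det(\sF|_{X_b})\cdot\ell=r-1$ for a line $\ell\subset\p^m$. If the first alternative holds for general $b$, then --- the leaf of $\sF$ through a general point of $X$ being a leaf of $\sF|_{X_b}$ for $b$ general --- $\sF$ is algebraically integrable with rationally connected general leaf, and we are done; so assume the second alternative holds for general $b$.

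Write $\det(\sG_b)\simeq\sO_{\p^m}(a_b)$ and $s_b:=r_{\sG_b}\le r-1$. Since $\sG_b$ is a rank-$s_b$ subsheaf of $T_{\p^m}$, the induced nonzero map $\sO_{\p^m}(a_b)\to\wedge^{s_b}T_{\p^m}$ together with Bott's formulae gives $a_b\le s_b$; combined with $a_b\ge r-1\ge s_b$ this yields $a_b=s_b=r-1$. Thus $\sG_b\simeq\sO_{\p^m}(1)^{\oplus(r-1)}$ is a degree-$0$ foliation on $\p^m$ (see \ref{example:ample_on_p^n}), defined by a linear projection $\p^m\dashrightarrow\p^{m-r+1}$ whose leaves are the $(r-1)$-planes through a fixed $(r-2)$-plane; in particular $h^0\big(X_b,\sF|_{X_b}(-1)\big)=r-1$ and these sections generate $\sG_b$. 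Set $\sV:=\pi_*\big(\sF\otimes\sO_X(-1)\big)$, a torsion-free sheaf of rank $r-1$ on $C$, and let $\sG\subset\sF$ be the saturation of the image of $\pi^*\sV\otimes\sO_X(1)\to\sF$. As in \ref{V_in_E}, $\sG\subset\sF\subset T_{X/C}$ is a rank-$(r-1)$ foliation on $X$ with $\sG|_{X_b}\simeq\sG_b$ for general $b$; hence $\sG$ is algebraically integrable and the closure of its general leaf is a projective space $\p^{r-1}$.

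Let $W$ be the normalization of the closure in $\Chow(X)$ of the subvariety parametrizing general leaves of $\sG$, let $U$ be the normalization of the universal cycle over $W$, and let $\pi_U:U\to W$ and $e:U\to X$ (with $e$ birational) be the natural morphisms; $W$ carries a surjection $\rho:W\to C$ and is generically a $\p^{m-r+1}$-bundle over $C$. By Lemma~\ref{lemma:foliation_morphism}, $\sF$ induces a rank-$1$ foliation $\bar\sF\subset T_{W/C}$ on $W$, and there are dense open subsets $X_0\subset X$, $W_0\subset W$ and a proper surjective equidimensional morphism $\pi_0:X_0\to W_0$ of relative dimension $r-1$ --- the $\sG$-leaf fibration --- fitting in an exact sequence $0\to T_{X_0/W_0}\to\sF|_{X_0}\to\pi_0^*\bar\sF|_{W_0}$ with $\bar\sF|_{W_0}$ an invertible subsheaf of $T_{W_0}$; moreover the general fibre $F\simeq\p^{r-1}$ of $\pi_0$ is rationally connected and $c_1(\sA)^{r-1}\cdot F=1\le 2$. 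Lemma~\ref{lemma:foliation_by_curve_is_algebraic} then shows that $\bar\sF|_{W_0}$ defines a foliation by rational curves on $W_0$, i.e. $\bar\sF$ is algebraically integrable with rational general leaf. Finally, the closure of a general leaf of $\sF$ is, through $e$, a $\p^{r-1}$-bundle over a rational curve, hence a rationally connected projective variety; therefore $\sF$ is algebraically integrable with rationally connected general leaf.

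The step I expect to be most delicate is the globalization in the second paragraph: checking, by cohomology and base change, that $\pi_*(\sF\otimes\sO_X(-1))$ has rank $r-1$ and that the subsheaf $\sG$ it cuts out satisfies $\sG|_{X_b}=\sG_b$ for general $b$ and is closed under the Lie bracket, and then verifying that $\pi_0$ is equidimensional and that the exact sequence and the codimension-$\ge 2$ deletions required to invoke Lemma~\ref{lemma:foliation_by_curve_is_algebraic} are genuinely available.
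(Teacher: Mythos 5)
The first half of your argument is sound and takes a genuinely different route from the paper: where the paper quotes the classification of rank-$r$, index-$(r-1)$ foliations on $\p^m$ (recalled in \ref{example:del_Pezzo_on_p^n}) to see that the only non-integrable case is $\sF|_{X_b}\simeq\sO_{\p^m}(1)^{\oplus(r-1)}\oplus\sO_{\p^m}$, you extract the $\sO_{\p^m}(1)^{\oplus(r-1)}$ piece from the Harder--Narasimhan filtration via Proposition~\ref{prop:foliation_max} together with Bott's formulae. Both roads lead to the same rank-$(r-1)$ subfoliation $\sG=(\pi^*\sV)(1)$ and to the relative linear projection of \ref{V_in_E}, so up to that point your argument is a valid (and arguably more self-contained) variant.

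The gap is in your last paragraph. There is no proper surjective equidimensional morphism $\pi_0:X_0\to W_0$, with $X_0$ dense open in $X$, whose fibres are the closures of the leaves of $\sG$: each such closure is an $(r-1)$-plane containing the fixed $(r-2)$-plane $\p(\sV^*)\cap X_b$, so all of them pass through the centre $\p_C(\sV^*)$ of the relative linear projection, and the evaluation map $e:U\to X$ from the universal cycle contracts a divisor $E$ that \emph{dominates} $W$. The honest leaf map is $p_0:X\setminus\p_C(\sV^*)\to Z$, whose fibres are $\p^{r-1}\setminus\p^{r-2}$, hence not proper, and no shrinking of $W_0$ repairs this. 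So the hypotheses of Lemma~\ref{lemma:foliation_by_curve_is_algebraic} cannot be arranged; and its proof breaks in substance here, because the divisors $\Delta_+$ and $\Delta_-$ comparing $\sF_U$ with $e^*\sF$ would be supported on $E$ and therefore dominate the base curve, destroying the condition $(\pi_C)_*\sO_{U_C}(k\Delta_-)=\sO_C$ needed to invoke Theorem~\ref{thm:-KX/Y_not_ample}. The paper closes the argument differently and you should do the same: by Lemma~\ref{lemma:foliation_morphism} applied to $p_0$, together with the determinant formula, the induced rank-one foliation on $Z=\p_C(\sK)$ is computed explicitly to be $q^*\big(\det(\sV^*)\otimes\sL\big)$, where $\det(\sV^*)$ is ample on $C$ because $\sE$ is; this line bundle has positive degree on a general complete intersection curve $B\subset Z$, so Theorem~\ref{bogomolov_mcquillan} applies directly and shows the foliation on $Z$ is by rational curves, after which your concluding sentence goes through.
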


\begin{proof}
Denote by $Y\simeq \p^m$ the general fiber of $\pi$.
We may assume that $\sF\subsetneq T_{X/C}$.
So the restriction of $\sF$ to $Y$ is a Fano foliation of rank $r$ and index $r-1$ on $\p^m$.
Recall from \ref{example:del_Pezzo_on_p^n} the classification of such foliations established in 
\cite{loray_pereira_touzet_2}. 
The foliation $\sF$ is algebraically integrable if and only if so is $\sF|_{Y}\subsetneq T_{Y}$.
So we may assume that 
$\sF|_{Y}$ is the pullback via a linear projection $\p^m \map \p^{m-r+1}$
of a foliation on $\p^{m-r+1}$ induced by a global holomorphic 
vector field, i.e., $\sF|_{Y}\simeq \sO_{\p^m}(1)^{r-1}\oplus\sO_{\p^m}$.

Set $\sV':=\pi_*(\sF(-1))\subset \pi_*(T_{X/C}(-1))\simeq \sE^*$, and denote by $\sV$
the saturation of $\sV'$ in $\sE^*$.  
Notice that  the inclusion $(\pi^*\sV')(1)\subset \sF$ extends to an inclusion 
$\sH:=(\pi^*\sV)(1)\subset \sF\subset T_{X/C}$.
So $\sH$ is a subfoliation of $\sF$ of rank $r-1$ such that $\sH|_{Y}\simeq  \sO_{\p^n}(1)^{r-1}$.

Let the notation be as in \ref{V_in_E}, with $T=T_0=C$ and $s=r-1$.
There is a surjective morphism  $p_0:X_0\to Z$
such that  $\sH|_{X_0} = T_{X_0/Z}$, and 
$\det (\sH) \simeq  \pi^*\big(\det (\sV)\big)\otimes  \sO_X(r-1)$.

By~Lemma \ref{lemma:foliation_morphism}, $\sF$ induces 
a rank 1 foliation $\sG\subset T_Z$ on $Z$ such that 
$\det (\sF|_{X_0}) \simeq \det \big(T_{X_0/Z}\big)\otimes \ p_0^*\sG$.
(Notice that $\sG$ is an invertible sheaf by \cite[Proposition 1.9]{hartshorne80}.)
Recall that $\codim_X(X\setminus X_0)\geq 2$.
So 
$\sG\simeq q^*\big(\det (\sV^*)\otimes \sL\big)$, and $\det (\sV^*)$ is ample since so is $\sE$.
Let $B\subset Z$ be a general complete intersection curve.  
Then $\sG|_{B}$ is an ample line bundle.
Thus $\sG$ is a foliation by rational curves by Theorem~\ref{bogomolov_mcquillan}.
The general leaf of $\sF$ is the closure of the inverse image by $p_0$ of a general leaf
of $\sG$. Hence it is algebraic and rationally connected.
\end{proof}

\begin{prop}\label{prop:classification_bundle}
Let $\sE$ be an ample locally free sheaf of rank $m+1\geq 2$ on $\p^l$, 
and set $X:=\p_{\p^l}(\sE)$.
Denote by $\sO_X(1)$ the tautological line bundle on $X$, 
$\pi:X\to \p^l$ the natural projection, and $Y\simeq \p^m$ the general fiber of $\pi$.
Let $\sF\subsetneq T_{X}$ be a foliation of rank $r\geq 2$ on $X$ such that
$\det(\sF)\simeq\sO_X(r-1)\otimes \pi^*\sL$ for some nef line bundle $\sL$ 
on $\p^l$.
Suppose that $\sF\nsubseteq T_{X/\p^l}$, and set $\sH:=\sF\cap T_{X/\p^l}$.
Then
\begin{enumerate}
	\item $\sF$ is algebraically integrable with rationally connected general leaf;
	\item $r\in \{2,3\}$, and $r=3$ implies $l=1$;
	\item $\sL\simeq\sO_{\p^l}$ unless $r=2$, $l=1$, and $\sL\simeq\sO_{\p^1}(1)$;
	\item if $m\geq 2$, then $\sH|_{Y}\simeq \sO_{\p^m}(1)^{\oplus r-1}\subsetneq T_{\p^m}$, and 
		there is an exact sequence 
		$$
		0\ \to \ \sH \ \to \ \sF \ \to \ \pi^*\sR,
		$$
		where $\sR\subset T_{\p^l}$ is an ample invertible subsheaf.
	\item if $m=1$, then $l\geq r=3$, 
		$X\simeq \p^1\times \p^l$, $\sH=T_{X/\p^l}$, and $\sF$ 
		is the pullback via the natural projection $\p^1\times \p^l\to \p^l$ 
		of a degree zero foliation $\sO_{\p^l}(1)\oplus\sO_{\p^l}(1)\subsetneq T_{\p^l}$ on $\p^l$. 
\end{enumerate}
\end{prop}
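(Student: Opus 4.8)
My plan is to extract the structure of $\sF$ from its restriction to a general fibre $Y\simeq\p^m$ of $\pi$, using the Kobayashi--Ochiai type bound for foliations on $\p^m$ recalled in~\ref{example:ample_on_p^n}, and then to globalise by means of the construction in~\ref{V_in_E}. First I would note that $\sH:=\sF\cap T_{X/\p^l}$ is a nonzero foliation on $X$: if it were zero, $\sF$ would embed into $\pi^*T_{\p^l}$ and $\det(\sF)|_Y\simeq\sO_{\p^m}(r-1)$ would be trivial, impossible for $r\ge 2$. For general $Y$ the sequence $0\to T_Y\to T_X|_Y\to\sO_Y^{\oplus l}\to 0$ splits (as $H^1(\p^m,T_{\p^m})=0$), and the image $\sB$ of $\sF|_Y$ in the trivial summand has rank $q\ge 1$ (otherwise $\sF\subset T_{X/\p^l}$) and $c_1(\sB)\le 0$; its kernel $\sF|_Y\cap T_Y$ saturates to a foliation $\sK_Y\subseteq T_{\p^m}$ of rank $r-q$ with $c_1(\sK_Y)\ge r-1$. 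If $\sK_Y\subsetneq T_{\p^m}$, the inequality $c_1(\sK_Y)\le\operatorname{rank}(\sK_Y)=r-q$ of~\ref{example:ample_on_p^n} forces $q=1$, $c_1(\sK_Y)=r-1$, so $\sK_Y$ is a degree-$0$ foliation, $\sK_Y\simeq\sO_{\p^m}(1)^{\oplus r-1}$, and in fact $\sH|_Y=\sK_Y$ while the image of $\sF|_Y$ in $\sO_Y^{\oplus l}$ is the trivial line bundle. If instead $\sK_Y=T_{\p^m}$, then $\sH=T_{X/\p^l}$, so $\sF\supseteq T_{X/\p^l}$ and Lemma~\ref{lemma:foliation_morphism} exhibits $\sF$ as the pullback via $\pi$ of a foliation $\sG\subsetneq T_{\p^l}$; comparing the $\sO_X(1)$-degrees of $\det(\sF)$ and $\sO_X(r-1)\otimes\pi^*\sL$ in $\det(\sF)=\det(T_{X/\p^l})\otimes\pi^*\det(\sG)=\sO_X(m+1)\otimes\pi^*(\det(\sE)^{-1}\otimes\det(\sG))$ gives $r=m+2$, $\operatorname{rank}(\sG)=2$ and $\det(\sG)\simeq\det(\sE)\otimes\sL$, of degree $\ge m+1$ since $\sE$ is ample of rank $m+1$ and $\sL$ is nef; for $m\ge 2$ this exceeds the bound $c_1(\sG)\le 2$, a contradiction. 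Hence for $m\ge 2$ we are always in the first alternative, and for $m=1$ always in the second.

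Assume $m\ge 2$. Since $\sH|_Y\simeq\sO_{\p^m}(1)^{\oplus r-1}$ is a degree-$0$ foliation on every general fibre, \ref{V_in_E} applies with $T=\p^l$ and $s=r-1$, producing $\sV\subset\sE^*$ of rank $r-1$, $\sK:=\ker(\sE\to\sV^*)$, a relative linear projection $p_0:X_0\to Z_0=\p_{\p^l}(\sK|_{T_0})$ with $\sH|_{X_0}=T_{X_0/Z_0}$, and $\det(\sH)\simeq\sO_X(r-1)\otimes\pi^*\det(\sV)$. A general fibre of $p_0$ is a leaf of $\sH\subset\sF$, so Lemma~\ref{lemma:foliation_morphism} presents $\sF$ as the pullback via $p_0$ of a rank-$1$ foliation $\sG_Z\subset T_{Z_0}$ (invertible, as $Z$ is smooth), fitting in an exact sequence $0\to\sH\to\sF\to\pi^*\sR$ where $\sR$ is the saturation in $T_{\p^l}$ of $\pi_*(\sF/\sH)$; taking determinants gives $\sR\simeq\sL\otimes\det(\sQ)$ with $\sQ:=\sV^*$ a quotient of the ample bundle $\sE$, so $\sR$ is ample --- this is~(4). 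Since $-K_{\sF}=\sO_X(r-1)\otimes\pi^*\sL$ is ample, $\sF$ is a Fano foliation, and the closures of the leaves of $\sH$ are linear $(r-1)$-planes, so that $c_1(\sO_X(1))^{r-1}$ meets them in degree $1\le 2$; the argument behind Lemma~\ref{lemma:foliation_by_curve_is_algebraic} then shows $\sG_Z$ is a foliation by rational curves, whence $\sF$ is algebraically integrable with general leaf a $\p^{r-1}$-bundle over a rational curve --- in particular rationally connected, which is~(1). Finally $\sR\subset T_{\p^l}$ is ample and invertible, with $c_1(\sR)\ge\operatorname{rank}(\sQ)=r-1$ because $\sQ$ is ample; Wahl's theorem forces $\sR\simeq\sO_{\p^l}(1)$ when $l\ge 2$, while for $l=1$ the saturation $\sR$ is all of $T_{\p^1}\simeq\sO_{\p^1}(2)$; hence $r\le 3$, with $r=3$ possible only if $l=1$, and substituting into $\sL\simeq\sR\otimes\det(\sQ)^{-1}$ and using that $\sL$ is nef pins $\sL$ down as in~(3).

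Assume $m=1$. Then $\sH=T_{X/\p^l}$, so $\sF$ is the pullback via $\pi$ of a rank-$(r-1)$ foliation $\sG\subsetneq T_{\p^l}$; the degree comparison above gives $r=3$, $\operatorname{rank}(\sG)=2$ and $\sL\simeq\det(\sG)\otimes\det(\sE)^{-1}$. Since $\sL$ is nef, $\deg\det(\sG)\ge\deg\det(\sE)\ge 2$, while $\deg\det(\sG)\le 2$ by~\ref{example:ample_on_p^n}; hence $\deg\det(\sE)=\deg\det(\sG)=2$, so $\sL\simeq\sO_{\p^l}$, the bundle $\sE$ is uniform of splitting type $(1,1)$ and therefore $\sE\simeq\sO_{\p^l}(1)^{\oplus 2}$ and $X\simeq\p^1\times\p^l$, and $\sG$ is a degree-$0$ rank-$2$ foliation, hence $\sG\simeq\sO_{\p^l}(1)^{\oplus 2}$; for $\sG$ to be a proper subsheaf of $T_{\p^l}$ one needs $l\ge 3$. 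This gives~(5), and~(1) follows since the general leaf of $\sF$ is then $\p^1\times\p^2$.

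The point I expect to require the most care is twofold. First, passing from the fibrewise statement ``$\sH|_Y$ is a degree-$0$ foliation'' to the global relative linear projection structure: this is exactly what~\ref{V_in_E} delivers, but it requires keeping track of the codimension-$\ge 2$ loci over which the isomorphisms $\sH=(\pi^*\sV)(1)$ and $\sH|_{X_0}=T_{X_0/Z_0}$ are asserted. Second, verifying that the hypotheses of (the argument behind) Lemma~\ref{lemma:foliation_by_curve_is_algebraic} genuinely hold for the family of leaves of $\sH$ --- in particular the numerical condition, which holds only because those leaves are linear spaces --- since this is what makes the Bogomolov--McQuillan criterion of Theorem~\ref{bogomolov_mcquillan} applicable and, combined with the ampleness of $\sE$ and Wahl's theorem, is what caps the rank at $r\le 3$.
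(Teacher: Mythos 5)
Your overall strategy---analyse $\sF$ on a general fiber via the index bound for Fano foliations on $\p^m$, globalise through the construction of \ref{V_in_E} and the relative linear projection, and finish with Bott/Wahl degree bounds and Theorem~\ref{bogomolov_mcquillan}---is essentially the paper's proof. There is, however, one genuine gap, located in the branch where $\sH=T_{X/\p^l}$ (your ``second alternative'', which also governs the entire case $m=1$ and which you must exclude for $m\ge 2$).

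There you write $\det(\sF)=\det(T_{X/\p^l})\otimes\pi^*\det(\sG)$ and deduce $r=m+2$ by comparing $\sO_X(1)$-degrees. But Lemma~\ref{lemma:foliation_morphism} only provides an exact sequence $0\to T_{X/\p^l}\to\sF\to(\pi^*\sG)^{**}$ with no surjectivity on the right, so in general $\det(\sF)\simeq\det(T_{X/\p^l})\otimes\pi^*\det(\sG)\otimes\sO_X(-D)$ for some effective divisor $D$. In your own fibrewise bookkeeping this is exactly the unaddressed possibility $q=1$, $c_1(\sB)=-1$, i.e.\ $r=m+1$, with $\sF|_{\ell}\simeq\sO_{\p^1}(2)\oplus\sO_{\p^1}(1)^{\oplus r-2}\oplus\sO_{\p^1}(-1)$ on a general line $\ell$ of a fiber; it is consistent with $c_1(\sB)\le 0$ and nothing in your argument rules it out. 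The paper excludes it by a separate argument: in this case $\sF$ is the pullback of a rank-one foliation on $\p^l$, and restricting $\sF$ to $\pi^{-1}(C_0)$ for a germ $C_0$ of a leaf of that foliation forces the $\sO_{\p^1}(-1)$ summand to be $\sO_{\p^1}$, a contradiction. Without this (or an equivalent) step, both your claim that $m\ge 2$ forces the first alternative and your identification $r=3$ when $m=1$ are unproven. A secondary, more minor point: Lemma~\ref{lemma:foliation_by_curve_is_algebraic} requires a \emph{proper} equidimensional morphism, whereas the relative linear projection $p_0:X_0\to Z$ has affine-space fibers; the paper instead applies Theorem~\ref{bogomolov_mcquillan} directly to $\sG\simeq q^*\big(\det(\sV^*)\otimes\sL\big)$ restricted to a complete curve in $Z$ not contracted by $q$ and avoiding the singular locus, which is the cleaner route.
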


\begin{proof}
Denote by $\ell\subset Y\simeq \p^m$ a general line.
Set  $\sQ:=\sF/\sH\subseteq \pi^*T_{\p^l}$. 
Notice that $\sH$ is saturated in $T_X$, and stable under the Lie bracket.
So it defines a foliation of rank $r_{\sH}<r$ on $X$. 
Note also that $\sQ$ is torsion-free.
The sheaves $\sF$, $\sH$ and $\sQ$ are locally free in a neighborhood of $\ell$, and
we have an exact sequence of vector bundles
$$
0 \ \to \ \sH|_{\ell} \ \to \ \sF|_{\ell} \ \to \ {\sQ}|_{\ell} \ \to \ 0.
$$ 

Notice that 
${\sQ}|_{\ell}\subset ({\pi^*T_{\p^l}})|_{\ell}\simeq\sO_{\p^1}^{\oplus l}$, and
$\det(\sH|_{\ell})\simeq  \sO_{\p^1}(r-1) \otimes \det(\sQ|_\ell)^* 
\subset \wedge^{r_{\sH}}(T_{\p^m}|_{\ell})$.
So  $\deg\big(\det(\sQ)|_{\ell}\big)\in \{-1, 0\}$.
We claim that $\det(\sQ)|_{\ell}\simeq \sO_{\p^1}$. 
Suppose to the contrary  that $\det(\sQ)|_{\ell}\simeq \sO_{\p^1}(-1)$.
Then  $r_{\sH}=r-1=m$, \ $\sH=T_{X/\p^l}$, and 
$\sF|_{\ell}\simeq \sO_{\p^1}(2)\oplus \sO_{\p^1}(1)^{\oplus r-2}\oplus
\sO_{\p^1}(-1)$.
By lemma \ref{lemma:foliation_morphism}, 
$\sF$ induces a rank 1 foliation on $\p^l$.
Let $C_0 \subset \p^l$ be the germ of a (complex analytic) leaf of this foliation.
Then 
$\pi_0^{-1}(C_0)$ is a germ of a  leaf of $\sF$, and thus 
$f^*\sF\simeq \sO_{\p^1}(2)\oplus \sO_{\p^1}(1)^{\oplus r-2}\oplus
\sO_{\p^1}$, a contradiction.
This proves that $\det(\sQ)|_{\ell}\simeq \sO_{\p^1}$. 
Since $\det(\sH|_{\ell})\simeq  \sO_{\p^1}(r-1) \subset \wedge^{r_{\sH}}(T_{\p^m}|_{\ell})$
and $r_{\sH}<r$, one of the following occurs.
\begin{enumerate}
\item[(a)] 
$m\geq r$, $\sH|_{\ell}\simeq  \sO_{\p^1}(1)^{\oplus r-1}$, and ${\sQ}|_{\ell}\simeq\sO_{\p^1}$; or
\item[(b)] 
$m=r-2$,  $\sH=T_{X/\p^l}$, and ${\sQ}|_{\ell}\simeq\sO_{\p^1}\oplus \sO_{\p^1}$.
\end{enumerate}

\medskip

First we treat case (a).
By generic flatness, we have $\sH|_{Y}\subsetneq T_{\p^m}$.
Notice that $\sH|_{Y}$ is closed under the Lie bracket, and 
$\det \big(\sH|_{Y}\big)\simeq \sO_{\p^m}(r-1)\subset \wedge^{r-1}T_{\p^m}$. 
By Bott's formulae, $\sH|_{Y}$ is saturated in $T_{\p^m}$.
So $\sH|_{Y}$ is a degree $0$ foliations of rank $r-1$ on $\p^m$, i.e.,
$\sH|_{Y}\simeq \sO_{\p^m}(1)^{\oplus r-1}\subsetneq T_{\p^m}$.

By \cite[Proposition 1.9]{hartshorne80}, $\sQ^{**}$ is locally free,
and thus $\sQ^{**}\simeq \pi^*\sR$  
for some invertible subsheaf $\sR\subset T_{\p^l}$. 
We have $\sO_X(r-1)\otimes \pi^*\sL\simeq \det(\sF)\simeq \det(\sH)\otimes \pi^*\sR$.

Let the notation be as in \ref{V_in_E}, with $T=\p^l$ and $s=r-1$. 
So we have a a surjective morphism  $p_0:X_0\to Z$
such that  $\sH|_{X_0} = T_{X_0/Z}$, and 
$\det (\sH) \simeq  \pi^*\big(\det (\sV)\big)\otimes  \sO_X(r-1)$.
Hence $\sR\simeq \det (\sV^*)\otimes \sL \subset T_{\p^l}$. 
Recall from \ref{V_in_E} the exact sequence of vector bundles on $T_0$:
$$
0\ \to \ \sK|_{T_0} \ \to \ \sE|_{T_0} \ \to \ \sV^*|_{T_0} \ \to \ 0,
$$
where $T_0\subset \p^l$ is an open subset such that $\codim_{\p^l}(\p^l\setminus T_0)\geq 2$.
Since $\sE$ is ample, and $\sV$ has rank $r-1$, we must have 
$\det (\sV^*)\simeq  \sO_{\p^l}(k)$ for some $k\geq r-1$. 
By Bott's formulae,  $r\le 3$. 
Moreover, if $l\geq 2$, then $r=2$ and $\sL\simeq \sO_{\p^l}$.
If $r=3$, then $l=1$ and $\sL\simeq \sO_{\p^1}$.
If $r=2$ and $l=1$, then $\sL\simeq \sO_{\p^1}(k)$ for some $k\in \{1,2\}$.

By~Lemma \ref{lemma:foliation_morphism}, $\sF$ induces 
a rank 1 foliation $\sG\subset T_Z$ on $Z$ such that 
$\det (\sF|_{X_0}) \simeq \det \big(T_{X_0/Z}\big)\otimes \ p_0^*\sG$.
Notice that $\sG$ is an invertible sheaf by \cite[Proposition 1.9]{hartshorne80}.
Recall that $\codim_X(X\setminus X_0)\geq 2$.
So we have 
$\sG\simeq q^*\big(\det (\sV^*)\otimes \sL\big)$, and $\det (\sV^*)$ is ample since so is $\sE$.
Recall also that $q:Z\to T_0$ is a  $\p^{m-r+1}$-bundle.
So through a general point of $Z$ there exists a complete curve $B$  not contracted by $q$, 
and avoiding the singular locus of the foliation $\sG\subset T_Z$.
The restriction $\sG|_{B}$ is an ample line bundle.
Thus $\sG$ is a foliation by rational curves by Theorem~\ref{bogomolov_mcquillan}.
The general leaf of $\sF$ is the closure of the inverse image by $p_0$ of a general leaf
of $\sG$. Hence it is algebraic and rationally connected.

\medskip

Next we consider case (b):  $\sH=T_{X/\p^l}$, $l\geq 3$, and 
${\sQ}|_{\ell}\simeq\sO_{\p^1}\oplus \sO_{\p^1}$.
By~Lemma \ref{lemma:foliation_morphism}, $\sF$ is the pullback via 
$\pi$ of a rank 2 foliation $\sG\subset T_{\p^l}$.
In particular, $\det(\sF)\simeq \det(T_{X/\p^l})\otimes \pi^*\det(\sG)$.
Hence $\det(\sG)\simeq \det(\sE)\otimes \sL\subset \wedge^2T_{\p^l}$.
By assumption, $\sL$ is nef and $\sE$ is ample of rank $\geq 2$.
So, by Bott's formulae, we must have 
$\det(\sQ)\simeq\sO_{\p^l}(2)$, $\textup{rank}(\sE)=2$
and
$\sL\simeq \sO_{\p^l}$.
Since $\sE$ is an ample vector bundle, $\sE|_{\p^1}\simeq\sO_{\p^1}(1)^{\oplus 2}$ for any line $\p^1\subset\p^l$.
By \cite[Theorem 3.2.1]{OSS},  
$\sE\simeq \sO_{\p^l}(1)\oplus \sO_{\p^l}(1)$. 
Thus $X\simeq \p^1\times \p^l$, and $\sG$ is a degree zero foliation on $\p^l$.
The leaves of $\sG$ are $2$-planes containing a fixed line in $\p^l$.
Hence the leaves of $\sF$ are algebraic and rationally connected.
\end{proof}

\begin{rem}\label{rmk:log_leaf_p^m_over_p^l}
Let the notation and assumptions be as in Proposition~\ref{prop:classification_bundle},
and denote by $(\tilde F, \tilde \Delta)$ the general log leaf of $\sF$.
If $m\geq 2$, then $\pi$ induces a $\p^{r-1}$-bundle structure $\pi_{\tilde F}:\tilde F\to \p^1$.
If $l=1$, then then $\tilde \Delta$ is a prime divisor of $\pi_{\tilde F}$-relative degree $1$.
If $l>1$ (in which case $r=2$), 
then $\tilde \Delta$ is the union of a prime divisor of $\pi_{\tilde F}$-relative 
degree $1$ and a fiber of $\pi_{\tilde F}$.
If $m=1$, $\pi$ induces a $\p^{1}$-bundle structure $\pi_{\tilde F}:\tilde F\to \p^2$,
and $\tilde \Delta$ is a fiber of $\pi_{\tilde F}$.
Notice that in all cases $(\tilde F, \tilde \Delta)$ is log canonical.
\end{rem}

\begin{rem}
In Section~\ref{section:examples_pn_bundles} we will classify locally free sheaves $\sE$
on $\p^l$ for which $X=\p(\sE)$ admits a del Pezzo foliation $\sF\nsubseteq T_{X/\p^l}$.
Moreover, we will give a precise geometric description of such foliations.
\end{rem}

Now we consider another special case of Theorem~\ref{thma}.

\begin{prop}\label{proposition:p^m_bundle_over_p^n}
Let $\sF$ be a del Pezzo foliation of rank $r\geq 2$ on a smooth projective variety $X$.
Let $H$ be minimal dominating family of rational curves on $X$, with associated 
rationally connected quotient $\pi_0:X_0 \to T_0$.
Suppose that $\sF|_{X_0}\not\subset T_{X_0/T_0}$, and 
$f^*\sF\simeq \sO_{\p^1}(1)^{\oplus r-1}\oplus \sO_{\p^1}$ for a general member
$[f]\in H$. 

Then there are integers $l\geq 1$, $m\geq 2$, and an ample locally free sheaf $\sE$ 
on $\p^l$ such that $X\simeq \p_{\p^l}(\sE)$.
Moreover, under this isomorphism, $\pi_0$ becomes the natural projection
$\p_{\p^l}(\sE)\to \p^l$, and $\det(\sF)\simeq\sO_X(r-1)$, where 
$\sO_X(1)$ is the tautological line bundle on $\p_{\p^l}(\sE)$.

In particular, $\sF$ is algebraically integrable with rationally connected general leaf
by Proposition~\ref{prop:classification_bundle}.
\end{prop}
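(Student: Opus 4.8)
The plan is to show that the hypotheses force $X$ to be a $\p^m$-bundle over $\p^l$ with $\pi_0$ the natural projection, and then to quote Proposition~\ref{prop:classification_bundle} for the last assertion. Write $n=\dim X$ and let $m$, $l=n-m$ be the dimensions of the general fibre of $\pi_0$ and of $T_0$. First, $l\ge 1$: if $l=0$ then $X$ is rationally connected, $X_0=X$ and $T_{X_0/T_0}=T_X|_{X_0}$, contradicting $\sF|_{X_0}\not\subset T_{X_0/T_0}$. By \cite[Proposition 2.7]{adk08}, applied to the minimal dominating family $H$ and the foliation $\sF$, after shrinking $X_0$ and $T_0$ we may assume $\pi_0:X_0\to T_0$ is a $\p^m$-bundle, so that $f^*T_{X_0/T_0}\simeq\sO_{\p^1}(2)\oplus\sO_{\p^1}(1)^{\oplus m-1}$ for general $[f]\in H$. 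I would then rule out $m=1$: in that case the general fibre $X_t\simeq\p^1$ equals $f(\p^1)$, so $\sF|_{X_t}\simeq\sO_{\p^1}(1)^{\oplus r-1}\oplus\sO_{\p^1}$; chasing $0\to T_{X_t}\to T_X|_{X_t}\to N_{X_t/X}\to 0$ (with $T_{X_t}\simeq\sO_{\p^1}(2)$, $N_{X_t/X}\simeq\sO_{\p^1}^{\oplus l}$) shows $(\sF\cap T_{X_0/T_0})|_{X_t}$ is a rank $r-1$ saturated subsheaf of $\sO_{\p^1}(2)$, hence $r=2$ and $(\sF\cap T_{X_0/T_0})|_{X_t}\simeq\sO_{\p^1}(2)$, which does not embed into $\sF|_{X_t}\simeq\sO_{\p^1}(1)\oplus\sO_{\p^1}$ --- a contradiction. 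So $m\ge 2$. Since $X$ is smooth and all fibres of $\pi_0$ are $\p^m$'s (the family $H$ being unsplit), the $\p^m$-bundle structure extends to a $\p^m$-bundle $\pi:X\to T$ over a smooth projective variety $T$ restricting to $\pi_0$; write $X\simeq\p_T(\sE)$ with $\sO_X(1)$ the tautological bundle.

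Next I would normalise $\sE$ and analyse the vertical part of $\sF$. Restricting $\sF$ to a line $\ell\subset Y\simeq\p^m$ in a general fibre gives $\det\sF\cdot\ell=r-1$, so $-K_\sF=\det\sF\simeq\sO_X(r-1)\otimes\pi^*\sN$ for a unique $\sN\in\Pic(T)$. As $\sF$ is del Pezzo, $\iota_\sF=r-1$, so in $\Pic(X)=\Pic(T)\oplus\bZ[\sO_X(1)]$ the class $-K_\sF=(r-1)[\sO_X(1)]+[\pi^*\sN]$ is $(r-1)$ times a Cartier class, which forces $(r-1)$ to divide $\sN$ in $\Pic(T)$; replacing $\sE$ by a suitable twist we may assume $\sN\simeq\sO_T$, i.e. $\det\sF\simeq\sO_X(r-1)$. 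In particular $\sO_X(1)$ is a positive rational multiple of the ample class $-K_\sF$, hence ample, so $\sE$ is an ample vector bundle. Now $\sH:=\sF\cap T_{X/T}$ is a rank $r-1$ foliation (it is saturated in $T_X$ and closed under the Lie bracket, and $r-1<r$ since $\sF\not\subset T_{X/T}$); restricting to a general fibre $Y$, using $\det(\sH|_Y)\simeq\sO_{\p^m}(r-1)$ and Bott's formulae for saturation exactly as in case (a) of the proof of Proposition~\ref{prop:classification_bundle}, one gets $\sH|_Y\simeq\sO_{\p^m}(1)^{\oplus r-1}\subsetneq T_{\p^m}$, a degree $0$ foliation.

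Then I would run the construction of \ref{V_in_E} for $\sH\subset T_{X/T}$ (with $s=r-1$): there is an open $T_0\subset T$ with $\codim_T(T\setminus T_0)\ge 2$, a rank $r-1$ subbundle $\sV\subset\sE^*$ with $\sH=(\pi^*\sV)(1)$ over $T_0$, and a relative linear projection $p_0:X_0\to Z_0:=\p_{T_0}(\sK)$, $\sK=\ker(\sE\to\sV^*)$, with $\sH|_{X_0}=T_{X_0/Z}$. By Lemma~\ref{lemma:foliation_morphism} applied to $p_0$ (whose general fibres, being leaves of $\sH\subset\sF$, lie in leaves of $\sF$), $\sF$ induces a rank $1$ foliation $\sG\subset T_{Z_0}$ with an exact sequence $0\to T_{X_0/Z}\to\sF|_{X_0}\to p_0^*\sG$; comparing determinants with $\det\sF\simeq\sO_X(r-1)$ and $\det(T_{X_0/Z})\simeq\sO_X(r-1)\otimes\pi^*\det\sV$ gives $\sG\simeq q^*\sM$, where $q:Z\to T$ is the projection and $\sM:=\det(\sV^*)$, which is ample since $\sV^*$ is a quotient of the ample bundle $\sE$. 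The composite $\sF|_{X_0}\to p_0^*\sG\hookrightarrow p_0^*T_{Z_0}\to(\pi^*T_T)|_{X_0}$ is the natural map $\sF|_{X_0}\hookrightarrow T_{X_0}\to(\pi^*T_T)|_{X_0}$, which is nonzero because $\sF|_{X_0}\not\subset T_{X_0/T_0}$; hence $\sG=q^*\sM\to q^*T_T$ is nonzero, and by adjunction for the $\p$-bundle $q$ it corresponds to a nonzero map $\sM\to T_{T_0}$, extending (over the codimension $\ge 2$ locus, the target being reflexive) to a nonzero map $\sM\to T_T$ with $\sM$ ample. By Wahl's theorem \cite{wahl83}, $T\simeq\p^l$.

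This yields $X\simeq\p_{\p^l}(\sE)$ with $\sE$ ample, $\pi_0=\pi$ the natural projection, $m\ge 2$, $l\ge 1$ and $\det\sF\simeq\sO_X(r-1)$; in particular $(X,\sF)$ meets the hypotheses of Proposition~\ref{prop:classification_bundle} with $\sL\simeq\sO_{\p^l}$ nef, so $\sF$ is algebraically integrable with rationally connected general leaf. The main obstacle I anticipate is the first paragraph: extracting the $\p^m$-bundle structure from \cite[Proposition 2.7]{adk08} in this setting where $f^*\sF$ is only $\sO_{\p^1}(1)^{\oplus r-1}\oplus\sO_{\p^1}$ (not ample), and then extending the bundle structure over the whole of $X$ with a smooth projective base. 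The computations in the second and third paragraphs run parallel to the proof of Proposition~\ref{prop:classification_bundle}, and once $T\simeq\p^l$ is in hand the statement follows at once.
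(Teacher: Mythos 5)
Your reduction to Proposition~\ref{prop:classification_bundle} is the right target, and several of your intermediate computations (the normalization $\det(\sF)\simeq\sO_X(1)^{\otimes r-1}$ using $\iota_\sF=r-1$, the identification $\sH|_Y\simeq\sO_{\p^m}(1)^{\oplus r-1}$, the relative linear projection from \ref{V_in_E}, and the resulting ample subsheaf $\sM\to T_{T_0}$) do run parallel to the paper. But there is a genuine gap exactly at the step you flag and then wave through: ``the $\p^m$-bundle structure extends to a $\p^m$-bundle $\pi:X\to T$ over a smooth projective variety $T$.'' The rationally connected quotient $\pi_0$ is only a proper equidimensional morphism on an open set $X_0$ with $\codim_X(X\setminus X_0)\geq 2$, and $T_0$ is only quasi-projective; unsplitness of $H$ does not make $\pi_0$ extend to a morphism on all of $X$, nor compactify $T_0$ to a projective variety. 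Since your entire endgame (extending $\sM\to T_{T_0}$ over a codimension-two locus and applying Wahl's theorem) presupposes a normal \emph{projective} base, the argument does not close.

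This extension is precisely what the paper's proof spends most of its length establishing, by a route you do not touch: it first compactifies the leaves of the induced foliation $\sG_0$ on $T_0$ to get $\p^m$-bundles $X_C\to C\simeq\p^1$, applies Proposition~\ref{prop:classification_bundle} to these to obtain algebraic integrability and an explicit description of the general log leaf $(\tilde F,\tilde\Delta)$ as a $\p^{r-1}$-bundle over $\p^1$ with $\tilde\Delta=\tilde\sigma$ or $\tilde\sigma+\tilde f$; it then checks $(\tilde F,\tilde\Delta)$ is log canonical so that Proposition~\ref{lemma:common_point} produces a common point $x_0$ of all general leaves. In the case $\tilde\Delta=\tilde\sigma$ this forces $l=1$ and $X=X_C$ outright; in the case $\tilde\Delta=\tilde\sigma+\tilde f$ it produces a second unsplit family $H'$, shows $\rho(X)=2$ and that $H$ spans an extremal ray whose contraction $\bar\pi:X\to W$ is equidimensional, invokes Fujita's lemma to see $\bar\pi$ is a $\p^m$-bundle over a smooth $W$ with $\rho(W)=1$, and only then applies Wahl to get $W\simeq\p^l$. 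None of this machinery (common point, second family, extremal contraction) appears in your proposal, and without it the passage from $X_0\to T_0$ to $X\to\p^l$ is unsupported.
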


\begin{proof}
Write $\det(\sF)=\sA^{\otimes r-1}$ for an ample line bundle $\sA$ on $X$. Then 
$f^*\sA\simeq \sO_{\p^1}(1)$ for any $[f]\in H$, which implies that $H$ is unsplit.
By \cite[Lemma 2.2]{adk08}, we may assume that 
$\textup{codim}_X(X\setminus X_0)\geq 2$, 
$T_0$ is smooth, and $\pi_0$ is proper, surjective, equidimensional, and has
irreducible and reduced fibers.
We denote by $m$ the relative dimension of $\pi_0$, and set $l:=\dim T_0$.

First we show that  $\pi_0$ is a $\p^m$-bundle.
Set $\sH_0:=\sF|_{X_0}\cap T_{X_0/T_0}=\ker(\sF|_{X_0} \to \pi_0^*T_{T_0})$.
Let $[f]\in H$ be a general member.
By assumption, $f^*\sF\simeq \sO_{\p^1}(1)^{\oplus r-1}\oplus \sO_{\p^1}$ and
$\sF|_{X_0}\not\subset T_{X_0/T_0}$.
Moreover $f^*T_{T_0}\simeq \sO_{\p^1}^{\oplus \dim(T_0)}$.
So we conclude that $\sH_0$ has rank equal to $r-1<m$,
and $f^*\sH_0\simeq \sO_{\p^1}(1)^{\oplus r-1}$.
Thus $\pi_0$ is a $\p^m$-bundle by \cite[Proposition 2.7]{adk08}.
Denote by $Y \simeq \p^m$ a general fiber of $\pi_0$.

Let $\sH\subset\sF$ be a saturated subsheaf extending $\sH_0\subset\sF|_{X_0}$, and
set  $\sQ:=(\sF/\sH)^{**}$. 
Then $\sH$ is reflexive, 
$\sQ$ is locally free of rank one by \cite[Proposition 1.9]{hartshorne80}, and 
$\det(\sF)\simeq \det(\sH)\otimes \sQ$.
Moreover, $\sQ|_Y\simeq \sO_{\p^m}$. 
Therefore there exists an invertible subsheaf $\sG_0\subset T_{T_0}$ such that
$\sQ|_{X_0}= \pi_0^*\sG_0$,
and an inclusion 
\begin{equation}\label{7.8.1}
(\sA|_{X_0})^{\otimes r-1}\ \simeq \ \det (\sF|_{X_0})\  \into \ \wedge^{r-1}T_{X_0/T_0}\otimes \pi_0^*\sG_0 \ . 
\end{equation}

The next step is to show that $\sG_0$ induces a foliation by rational curves on $T_0$.
For this purpose, let $B\subset X_0$ be a general smooth complete curve, set 
$\sG_B:=(\pi_0|_B)^*\sG_0$,  $X_B:=X_0 \times_{T_0} B$, and consider the induced 
$\p^m$-bundle $\pi_B:X_B\to B$.
Denote by $\sA_{X_B}$ the ample line bundle on $X_B$ obtained by pulling back $\sA$ from $X$.
Then \eqref{7.8.1} yields an inclusion 
$$
\sA^{\otimes r-1} \otimes \pi_B^*(\sG_B^*) \ \subset \ \wedge^{r-1}T_{X_B/B} \ .
$$
It follows from \cite[Lemma 5.2]{adk08} that $\deg_B(\sG_B) > 0$.
By Theorem~\ref{bogomolov_mcquillan},
this implies that the general leaf of the foliation induced by $\sG_0\subset T_{T_0}$ is a rational curve.

Let $C\simeq \p^1$ be a smooth compactification of a general leaf $C_0$ of 
the foliation induced by $\sG_0\subset T_{T_0}$, and let $X_C$ be the 
normalization of the closure  of $\pi_0^{-1}(C_0)$ in $X$, with
induced morphism $\pi_C:X_C\to C$. 
Denote by $\sA_{X_C}$ 
the pullback of $\sA$ to $X_C$. 
Every fiber of $\pi_C$ is generically reduced and irreducible since it has degree one with respect to the ample line bundle $\sA_{X_C}$.
Since $C$ is smooth, $\pi_C$ is flat, and $X_C$ is normal, every fiber satisfy Serre's
condition $\textup{S}_1$, and hence it is integral.
Therefore $\pi_C:X_C\to C\simeq \p^1$ is a $\p^m$-bundle by \cite[Corollary 5.4]{fujita75}.
Notice that the image of $X_C$ in $X$ is invariant under $\sF$.
Thus, by Lemma~\ref{lemma:extensionpfafffields},  
$\sF$ induces a foliation $\sF_{X_C}$ of rank $r$ on $X_C$ such that
$\det(\sF_{X_C})\simeq (\sA_{X_C})^{\otimes r-1}\otimes \pi_C^*\sL$
for some nef line bundle $\sL$ on $C$.
It follows from Proposition~\ref{prop:classification_bundle} that 
$r\in\{2,3\}$, and $\sF_{X_C}$ is algebraically integrable
with rationally connected general leaf. Hence the same holds for $\sF$.

Let $(\tilde F,\tilde \Delta)$ be a general log leaf of $\sF$.
Denote by $\tilde e:\tilde F\to X$ the natural  morphism, and 
by $\sA_{\tilde F}$ the pullback of $\sA$ to $\tilde F$.
Recall the formula:
\begin{equation}\notag
(\sA_{\tilde F}^{\oplus r-1}) \otimes \sO_{\tilde F}( \tilde \Delta) \ \simeq \  
\sO_{\tilde F}\big(\tilde e^*(-K_{\sF}) + \tilde \Delta\big) \ = \  \sO_{\tilde F}(-K_{\tilde F}).
\end{equation}

Notice that $\tilde F$ is also the normalization of a general leaf of $\sF_{X_C}$
for some $C$ as above. 
By Remark~\ref{rmk:log_leaf_p^m_over_p^l}, 
$\pi_C$ induces a $\p^{r-1}$-bundle structure $\pi_{\tilde F}:\tilde F\to \p^1$.
So we can write 
$\tilde F \simeq \p_{\p^1}\big((\pi_{\tilde F})_*\sA_{\tilde F}\big)$, and 
$(\pi_{\tilde F})_*\sA_{\tilde F}\simeq \sO_{\p^1}(a_1)\oplus\cdots \oplus\sO_{\p^1}(a_{r})$,
with $1\leq a_1\leq \cdots \leq a_r$. 
Then $\sO_{\tilde F}(-K_{\tilde F})\simeq \pi_{\tilde F}^*\sO_{\p^1}(-a_1-\cdots - a_{r}+2)
\otimes \sA_{\tilde F}^{\otimes r}$.
Substituting this in the formula above, we get:
$$
\tilde \Delta \in \ \big| \pi_{\tilde F}^*\sO_{\p^1}(-a_1-\cdots - a_{r}+2) \otimes \sA_{\tilde F}\big|.
$$
In particular, we see that $\tilde \Delta$ contains a unique irreducible component that dominates 
$\p^1$ under $\pi_{\tilde F}$, which we denote by $\tilde \sigma$. Moreover, the restriction of 
$\pi_{\tilde F}$ to $\tilde \sigma$ makes it a $\p^{r-2}$-bundle over $\p^1$.

Let $\sigma':\p^1\to \tilde F$ be the section 
of $ \pi_{\tilde F}$ corresponding to a general surjection $\sE\twoheadrightarrow \sO_{\p^1}(a_r)$, and 
set $C'=\sigma'(\p^1)\subset \tilde F$.
Then $C'$ is a moving curve on $\tilde F$, and thus
$$
0 \ \leq \ \tilde \Delta\cdot C' \ = \ -a_1-\cdots - a_{r-1}+2 \ \leq \ 3-r.
$$
If $r=3$, then $\tilde \Delta\cdot C'=0$, which implies that $\tilde \Delta$ does not contain any
fiber of $\pi_{\tilde F}$ as irreducible component, i.e., $\tilde \Delta = \tilde \sigma$.
Similarly, if $r=2$, then either $\tilde \Delta = \tilde \sigma$, or $\tilde \Delta = \tilde \sigma+\tilde f$,
where $\tilde f$ is a fiber of $\pi_{\tilde F}$.
In any case, we see that $(\tilde F, \tilde \Delta)$ is log canonical. 
Therefore, by Proposition~\ref{lemma:common_point}, there is a point $x_0\in X$ contained 
in the closure of a general leaf of $\sF$.

Suppose that $\tilde \Delta = \tilde \sigma$. We will show that $l=1$.
Let $T$ be the normalization of the closure of $T_0$ in $\textup{Chow}(X)$, with universal family 
morphisms $\pi:U\to T$ and $e:U\to X$. 
We denote by $\sF_U$ the foliation on $U$ induced by $\sF$, and by $\sG_T$ the foliation on $T$ induced by $\sG_0$.
Consider the commutative diagram:
\[
\xymatrix{
\tilde F \ar[dr]_{\pi_{\tilde F}}\ar[r] \ar@/^2pc/[rrr]^{\tilde e} & X_C \ar[d]^{\pi_C}\ar[r] & U \ar[d]^{\pi} \ar[r]^e & X. \\
& C \ar[r] & T} 
\]
Let $E\subset U$ be the exceptional locus of $e$. Since $X$ is smooth, $E$ has pure codimension one in $U$.
Since $c_1(\sA)^m\cdot Y=1$, the fibers of $\pi$ are irreducible, and thus $E$
is a union of fibers of $\pi$.

We claim that the image of $\tilde F$ in $U$ does not meet $E$. This implies that $e$ is an isomorphism 
over a neighborhood of $x_0$.
Suppose otherwise that there is a point $c\in C$ that is mapped into $\pi(E)\subset T$.
Set $\tilde f:=\pi_{\tilde F}^{-1}(c)\subset \tilde F$, and denote by $x\in e(E)\subset X$ the
image of a general point of $\tilde f$. 
Note that $e^{-1}(x)$ is positive dimensional, while its intersection with the image of $\tilde F$ 
in $U$ is zero-dimensional.  
Hence there is a positive dimensional family of general leaves of $\sF_U$ meeting 
$e^{-1}(x)$, yielding a positive dimensional family of general leaves of $\sF$ passing through $x$.
This shows that $\tilde f\subset \tilde \Delta$, contradicting the assumption that 
$\tilde \Delta = \tilde \sigma$, and proving the claim.

Since $e$ is an isomorphism over a neighborhood of $x_0$, and the general leaf of $\sF$ 
contains $x_0$, we conclude that the general leaf of $\sG_T$  contains the point
$t_0=\pi\big(e^{-1}(x_0)\big)\in T$. 
Let $u\in \pi^{-1}(t_0)$ be a general point, and let $c\in C$ be a point mapped to $t_0$.
Then there is a leaf of $\sF_{X_C}$  whose image in $U$ contains the point $u$.
Thus, if $l=\dim T>1$, then we can find a positive dimensional family of general leaves of $\sF_U$
containing $u$. 
Since $e$ is birational at $u$, we conclude that $u$ lies in the singular locus of $\sF$. 
Thus $e\big(\pi^{-1}(t_0)\big)$ is contained in the singular locus of $\sF$, 
and $\tilde \Delta$ contains a fiber of
$\pi_{\tilde F}$, contradicting our assumptions.
We have just proved that if $\tilde \Delta = \tilde \sigma$, then $l=1$, and thus 
$X=X_C\simeq \p_{\p^1}\big(\pi_*\sA\big)$.

From now on suppose that $r=2$ and $\tilde \Delta = \tilde \sigma+\tilde f$.
In particular we must have $l>1$.
We must show that in this case $\pi_0$ extends to  a $\p^m$-bundle $\pi: X\to \p^l$.
Let $H'\subset \RatCurves(X)$ be a family that parametrizes (among possibly other curves) the image
of $\tilde \sigma$ in $X$. Since $\sA_{\tilde F}\cdot \tilde \sigma = 1$, $H'$ is unsplit.
Notice that $H$ and $H'$ are numerically independent in $N_1(X)$, and $X$ is $(H,H')$-rationally
connected. 
The latter is because $\tilde F$ is itself rationally connected with respect to families obtained
from restriction of  $H$ and $H'$, and there is a point $x_0\in X$ contained 
in the closure of a general leaf of $\sF$.
It follows from \cite[IV.3.13.3]{kollar96} that  $\rho(X) = 2$.

Next we show that $H$ generates an extremal ray of  the Mori cone $\NE(X)$
(see \cite{kollar_mori} for the definition and properties of the Mori cone).
First we claim that the common point $x_0$ lies in the image of $\tilde \sigma$ in $X$.
In any case, $x_0\in \tilde e(\tilde \Delta)$ by Lemma~\ref{lemma:singular_locus_normalization}.
Notice that $\sF_{X_C}$ is regular at the generic point of any fiber  of $\pi_C$.
Thus the image of the singular locus of $\sF_{X_C}$ in $\tilde F$ is $\tilde \sigma$.
Hence, given any point in the image of $\tilde F\setminus \tilde \sigma$ in $X_C$, there is a leaf of $\sF_{X_C}$
that does not pass through this point.
So we must have 
$x_0\in \ell':=\tilde e(\tilde \sigma)$.
Now let $Z\subset X$ be the closure of the union of the curves $ \ell'$
when $F$ 
runs through general leaves of $\sF$. 
Then $Z$ is irreducible, it dominates $T_0$, and $\dim Z=\dim T_0$. 
By \cite[IV.3.13.3]{kollar96}, $N_1(Z)$ is generated by $[\ell']$.
By construction, a general point of $X$ can be connected to $Z$ by a curve from $H$.
Since $H$ is unsplit, this is a closed condition, and it holds for every point of $X$. 
It follows from \cite[(Proof of) Lemma 1.4.5]{beltrametti_sommese_wisniewski92} 
(see also \cite[Remark 3.3]{occhetta06}) that any curve on $X$ is numerically equivalent to
a linear combination $\lambda \ell'+\mu \ell$, where $\lambda\geq 0$ and $\ell$ is a 
curve parametrized by $H$.
This implies that $[\ell]$ generates an extremal ray of $\NE(X)$.
Indeed, suppose $[\ell] = \alpha_1 + \alpha_2$, with $\alpha_1, \alpha_2\in \NE(X)\setminus \{0\}$.
Write $\alpha_i=\lambda_i [\ell']+\mu_i [\ell]$, with $\lambda_i\geq 0$. Then
$(1-\mu_1-\mu_2)\ell \equiv (\lambda_1+\lambda_2)\ell'$. 
Since $\ell$ and $\ell'$ are numerically independent, we must have $\lambda_1=\lambda_2=0$.
Thus $[\ell]$ generates an extremal ray of $\NE(X)$, and $\pi_0$ extends 
to a morphism $\bar\pi:X\to W$, namely the contraction of the extremal ray generated by $H$.

We claim that the morphism $\bar\pi:X\to W$ is equidimensional. 
Let $X_t$ be  a component of a fiber of $\bar\pi$. 
Since $\bar\pi$ is the contraction of the extremal rays generated by $H$, 
$N_1(X_t)$ is generated by classes  of curves from the family $H$.
Moreover, since any point of $X$ can be connected to $Z$ by a curve from $H$, $X_t$ meets $Z$. 
On the other hand, $N_1(Z)$ is generated by $[\ell']$.   
Thus $Z\cap X_t$ must be $0$-dimensional. 
We conclude from these observations that $\dim X_t +\dim Z=\dim X$, and $\bar\pi$ is equidimensional.

By \cite[Lemma 2.12]{fujita87}, $W$ is smooth and $\pi$ is a $\p^m$-bundle. 
Recall from the beginning of the proof that there exists a 
line bundle $\sG \subset T_W$ on $W$ such that  $\sG \cdot B>0$ 
for a  curve  $B\subset W$. Since $\rho(W)=1$, it follows that $\sG$ is ample.
By \cite{wahl83}, since $l>1$, we must have $(W,\sG) \simeq (\p^l,\sO_{\p^l}(1))$. 
\end{proof}

We end this section by proving Theorem~\ref{thma}.

\begin{proof}[{Proof of Theorem~\ref{thma}}]

Write $\det(\sF)=\sA^{\otimes r-1}$ for an ample line bundle $\sA$ on $X$. 
Set $r:=r_\sF \ge 2$ and $n:=\dim(X)\geq 3$. The proof is by induction on $n$.

By Remark~\ref{uniruledness}, we know that $X$ is uniruled.
Fix a minimal dominating family $H$ of rational curves on $X$, 
and let $\pi_0:X_0 \to T_0$ be the $H$-rationally connected quotient of $X$.

\begin{step} 
Let $[f]\in H$ be a general member.
We show that one of the following holds.
\begin{enumerate}
	\item $f^*\sF\simeq \sO_{\p^1}(1)^{\oplus r-1}\oplus
	\sO_{\p^1}$, and $H$ is unsplit, or
	
	\item $f^*\sF\simeq \sO_{\p^1}(2)\oplus
	\sO_{\p^1}$ ($r=2$), or
	
	\item $f^*\sF\simeq \sO_{\p^1}(2)\oplus \sO_{\p^1}(1)^{\oplus r-3}\oplus
	\sO_{\p^1}^{\oplus 2}$ ($r \ge 3$), and $H$ is unsplit.
\end{enumerate}

\medskip

Write $f^*\sF\simeq \sO_{\p^1}(a_1)\oplus\cdots\oplus \sO_{\p^1}(a_r)$, where
$a_1 \leq \cdots \leq a_r$ and $a_1+\cdots+a_r=(r-1)\sA\cdot\ell$.
By \cite[IV.2.9]{kollar96}, $f^*T_X\simeq \sO_{\p^1}(2)\oplus \sO_{\p^1}(1)^{\oplus d}\oplus
\sO_{\p^1}^{\oplus (n-d-1)}$ where $d=\deg(f^*T_X)-2\geq 0$. 
By Lemma~\ref{lemma:Fano_ample}, $f^*\sF$ cannot be ample.
Therefore, either the $a_i$'s are as in one of the three cases listed above, or 
$f^*\sF\simeq \sO_{\p^1}(2)\oplus \sO_{\p^1}(1)^{\oplus r-2}\oplus
\sO_{\p^1}(-1)$.
Moreover, in the latter case and in cases (1) or (3) above, we have $\sA\cdot \ell=1$, 
and  hence $H$ is unsplit.

We claim that $f^*\sF\not \simeq \sO_{\p^1}(2)\oplus \sO_{\p^1}(1)^{\oplus r-2}\oplus
\sO_{\p^1}(-1)$.
Suppose otherwise.
Then $T_{\p^1} \subset f^*\sF$ for general $[f] \in H$, and
Lemma~\ref{lemma:foliation_rc_quotient}
implies that  $T_{X_0/T_0} \subsetneq \sF|_{X_0}$.
Hence $f^*T_{X_0/Y_0}\simeq \sO_{\p^1}(2)\oplus \sO_{\p^1}(1)^{\oplus r-2}$.
By \cite{araujo06}, $\pi_0$ is a projective space bundle, and 
we may assume that $\textup{codim}_X(X\setminus X_0)\geq 2$.
By lemma \ref{lemma:foliation_morphism}, $\sF$ induces a foliation by curves on $T_0$. For a general point 
$t_0$ in $T_0$, let $C_0 \subset T_0$ be the germ of the leaf (in the complex analytic topology) through $t_0$. Then 
$\pi_0^{-1}(C_0)$ is the germ of a leaf of $\sF$, and thus 
$f^*\sF\simeq \sO_{\p^1}(2)\oplus \sO_{\p^1}(1)^{\oplus r-2}\oplus
\sO_{\p^1}$, a contradiction.
\end{step}

\begin{step} We treat case (1): $f^*\sF\simeq \sO_{\p^1}(1)^{\oplus r-1}\oplus \sO_{\p^1}$ and $H$ 
is unsplit. 

\medskip

By \cite[Lemma 2.2]{adk08}, we may assume that 
$\textup{codim}_X(X\setminus X_0)\geq 2$, 
$T_0$ is smooth, and $\pi_0$ is proper, surjective, equidimensional, and has
irreducible and reduced fibers.

If $\dim(T_0)=0$, then, since the family $H$ is unsplit, $\rho(X)=1$, and the result follows from 
Proposition~\ref{proposition:algebraicity_picard_number_one}.
So we may assume  that $\dim(T_0) \ge 1$.
By Proposition~\ref{proposition:p^m_bundle_over_p^n}, we may assume also that
$\sF|_{X_0}\subsetneq T_{X_0/T_0}$.
Denote by $Y$ a general fiber of $\pi_0$, and set $m=\dim Y$. 
Then $\sF|_Y\subset T_Y$ is a Del Pezzo foliation on the 
smooth projective variety $Y$. 
If $Y\not\simeq \p^m$, then the result follows by induction on the dimension. 
So we may assume that $Y\simeq \p^m$.
Since $\sA|_Y\simeq \sO_{\p^m}(1)$, $\pi_0$ is a $\p^m$-bundle by \cite[Corollary 5.4]{fujita75}.
By removing a subset of codimension $\geq 2$ in $T_0$ if necessary, we may assume that $\sF|_{X_0}$ is a subbundle of $T_{X_0/T_0}$.

Let $B\subset X_0$ be a general smooth complete curve, set 
$X_B:=X_0 \times_{T_0} B$, and consider the induced 
$\p^m$-bundle $\pi_B:X_B\to B$.
Denote by $\sA_{X_B}$ 
the pullback from $X$ of the line bundle $\sA$, 
and by $\sF_{X_B}\subsetneq T_{X_B/B}$ the pullback of $\sF$.  
Then $\det(\sF_{X_B})=\sA_{X_B}^{\otimes r-1}$.
Thus $\sF_{X_B}$ is algebraically integrable and has rationally connected general leaf by 
Proposition~\ref{proposition:p^n-bdle_F_factors}.
Since $B$ is general, the same holds for $\sF$.
\end{step}

\begin{step} We treat case (2): $r=2$ and $f^*\sF\simeq \sO_{\p^1}(2)\oplus \sO_{\p^1}$. 
We will show in particular that if $\sF|_{X_0}\not\subset T_{X_0/T_0}$,
then $\pi_0$ is a $\p^1$-bundle, and $\sF$ is the pullback via $\pi_0$ of a foliation by rational 
curves on $T_0$. 

\medskip

Let $x\in X$ be a general point, and $\cC_x\subset \p(T_xX^*)$ the variety of
minimal rational tangents at $x$ associated to $H$.
Since $T_{\p^1} \subset f^*\sF$ for general $[f] \in H$, we have 
$\cC_x \subset \p(\sF_x^*)\subset \p(T_xX^*)$.

We claim that $\dim(H_x)=0$.
Indeed, suppose $\dim(H_x)>0$. Then $\cC_x=\p(\sF_x^*)\simeq\p^1$.
By \cite{araujo06}, after shrinking $X_0$ and $T_0$ if necessary,
$\pi_0:X_0\to T_0$ becomes a $\p^2$-bundle over a smooth base.
Hence 
$\sF|_{X_0}=T_{X_0/T_0}\into T_{X_0}$, and thus $f^*\sF\simeq \sO_{\p^1}(2)\oplus \sO_{\p^1}(1)$,
a contradiction. This proves the claim.

Suppose $\sharp(H_x)\ge 2$, and fix $[\ell]\in H_x$.
Then the surface obtained as the union of curves from $H$ meeting $\ell$ at general points is 
invariant under $\sF$, and thus it is the leaf of $\sF$ through $x$.
This shows that the general leaf of $\sF$  is algebraic and rationally connected.

From now on we assume that  $\sharp(H_x)=1$.
After shrinking $X_0$ and $T_0$ if necessary,
we may assume that $\pi_0$ is a $\p^1$-bundle over a smooth base, and 
there is an exact sequence
$$
0 \to T_{X_0/T_0} \to \sF|_{X_0} \to (\pi_0^*\sG_0),
$$
where $\sG_0$ is an invertible subsheaf of $T_{T_0}$.
By Lemma~\ref{lemma:foliation_by_curve_is_algebraic}, 
$\sG_0$ defines a foliation by rational curves on $T_0$.
The general leaf of $\sF$ is the closure of the inverse image by $\pi_0$ of a general leaf
of $\sG$. Hence it is algebraic and rationally connected.
\end{step}

\begin{step} Finally we treat case (3): $r \ge 3$, 
$f^*\sF\simeq \sO_{\p^1}(2)\oplus \sO_{\p^1}(1)^{\oplus r-3}\oplus \sO_{\p^1}^{\oplus 2}$ 
and $H$ is unsplit.
In particular, we show that  one of the following holds:
\begin{itemize}
	\item  $\pi_0$ is a quadric bundle of relative dimension $r-1$, 
		and $\sF$ is the pullback via $\pi_0$ of a foliation by rational 
		curves on $T_0$. 
	\item  $\pi_0$ is a $\p^{r-2}$-bundle, and $\sF$ is the pullback by $\pi_0$ of a foliation by 
		rationally connected surfaces  on $T_0$. 
\end{itemize}

Since $\sO_{\p^1}(2)\subset f^*\sF$, we must have $T_{X_0/T_0} \subset \sF|_{X_0}$ by lemma \ref{lemma:foliation_rc_quotient}.
Thus $f^*T_{X_0/T_0}\simeq \sO_{\p^1}(2)\oplus \sO_{\p^1}(1)^{\oplus r-3}\oplus
\sO_{\p^1}^{\oplus k}$ with $k\in\{0,1,2\}$, and $\dim T_0>0$.

Suppose that $k=2$. Then $\sF|_{X_0}=T_{X_0/T_0}$. In particular, $\sF$ has log canonical singularities along a general leaf.
But this contradicts Proposition~\ref {lemma:common_point}, which asserts that there is a common point through a general leaf of $\sF$. 

Next suppose that $k=1$, and denote by $Y$ the general fiber of $\pi_0$. Then $\dim(T_0)\ge 2$, $\dim Y=r-1$, and $\det(T_Y)\simeq (\sA|_{Y})^{\otimes r-1}$.
Thus $Y\simeq Q_{r-1}$ by \cite{kobayashi_ochiai}.
By Lemmas~\ref{lemma:foliation_morphism} and \ref{lemma:foliation_by_curve_is_algebraic},
$\sF$ induces a foliation $\sG_0$ by rational curves on $T_0$.
The general leaf of $\sF$ is the closure of the inverse image under $\pi_0$ of a general leaf of $\sG_0$. 
Hence it is algebraic and rationally connected.

Finally we suppose that $k=0$, and denote by $Y$ the general fiber of $\pi_0$. Then $\dim(T_0)\ge 3$, $\dim Y=r-2$, and $\det(T_Y)\simeq (\sA|_{Y})^{\otimes r-1}$.
Thus $Y\simeq \p^{r-2}$ by \cite{kobayashi_ochiai}, and, since $H$ is unsplit, $\pi_0$ can be extended in codimension $1$ to a $\p^{r-2}$-bundle over
a smooth base. We still denote this extension by $\pi_0:X_0\to T_0$. 
By lemma \ref{lemma:foliation_morphism}, $\sF$ induces a rank $2$ foliation $\sG_0$ on $T_0$.
By removing a subset of codimension $\geq 2$ in $T_0$ if necessary, we may assume that $\sG_0$ is a subbundle of $T_{T_0}$.
Since $\pi_0$ is smooth, $\sF|_{X_0}=(d\pi_0)^{-1}(\pi_0^*\sG_0)$, and  there exists an
exact sequence of vector bundles
$$
0 \to T_{X_0/Y_0} \to \sF|_{X_0} \to \pi_0^*\sG_0 \to 0.
$$

Let $\sQ$ be a coherent sheaf of $\sO_X$-modules extending 
$\pi_0^*\sG_{0}$.
Let $B\subset X_0$ be a general complete intersection curve for $\sQ$ and $\sA$ in the sense of
Mehta-Ramanathan. 
Consider the induced $\p^{r-2}$-bundle 
$\pi_B:X_B=B\times_{T_0}X_0\to B$, with natural morphisms:
\[
\xymatrix{
X_B \ar[d]_{\pi_B}\ar[r]\ar@/^2pc/[rr]^{q} & X_0 \ar[d]^{\pi_0} \ar[r] & X. \\
B \ar[r]_n & T_0} 
\]
Then 
$\sO_{X_B}\big(q^*(-K_{\sF})\big)=\sO_{X_B}(-K_{X_B/B})\otimes \pi_B^*\big(n^*(\det (\sG_0))\big)$. 
We know that $-K_{X_B/B}$ is not ample by \cite[Theorem 2]{miyaoka93}. Hence $\deg_B\big(n^*(\det (\sG_0))\big) > 0$.

If $n^*\sG_0$ is ample, then the general leaf of $\sG_0$ is algebraic and rationally connected  by
Theorem~\ref{bogomolov_mcquillan}.  
Since the general leaf of $\sF$ is the closure of the inverse image under $\pi_0$ of a general leaf of $\sG_0$,
it follows that the general leaf of $\sF$ is algebraic and rationally connected.
So we may assume that $n^*\sG_0$ is not ample,
and hence not semistable.

By Lemma~\ref{lemma:HN_on_the_base},  there is a saturated rank 1 subsheaf $\sM_0\subset \sG_0$ such that
$\deg_B(n^*\sM_0) > \frac{1}{2} \deg_B(n^*\sG_0) > 0$. 
By Theorem~\ref{bogomolov_mcquillan}, the general leaf of the foliation 
$\sM_0\subset T_{T_0}$ is a rational curve. 
Let $C\simeq \p^1$ be a smooth compactification of a general leaf $C_0$ of 
$\sM_0$, and let $X_C$ be the 
normalization of the closure  of $\pi_0^{-1}(C_0)$ in $X$, with
induced morphism $\pi_C:X_C\to C$. 
Denote by $\sA_{X_C}$ the pullback of $\sA$ to $X_C$.
Every fiber of $\pi_C$ is generically reduced and irreducible since it has degree one with respect to the ample line bundle $\sA_{X_C}$.
Since $C$ is smooth, $\pi_C$ is flat, and $X_C$ is normal, every fiber satisfy Serre's
condition $\textup{S}_1$, and hence it is integral.
Therefore $\pi_C:X_C\to C\simeq \p^1$ is a $\p^m$-bundle by \cite[Corollary 5.4]{fujita75}.
In particular, $X_C$ is smooth.

By removing a subset of codimension $\geq 2$ in $T_0$ if necessary, we may assume that 
$\sM_0$ is a line bundle on $T_0$, and that there is a line bundle $\sM_0'$ on $T_0$ fitting into an exact sequence
$$
0 \to \sM_0 \to \sG_0 \to \sM'_0 \to 0.
$$
Let $\sL$ and $\sL'$ be the unique line bundles on $X$ extending $\pi_0^*\sM_0$ and $\pi_0^*\sM'_0$,
respectively. 
Let $\sK$ be the saturated subsheaf of $T_X$ extending $(d\pi_0)^{-1}(\pi_0^*\sM_0)$. 
Then  $\sK$ is a foliation of rank $r-1$ on $X$, and $\det(\sK)\simeq\sA^{\otimes r-1}\otimes \sL'^{*}$.
Notice also that $X_C$ is the normalization of a general 
leaf of $\sK$, and that $\sK$ is regular along a general fiber of $\pi_0$.

Denote by $(X_C, D)$ the general log leaf of $\sK$, and by $q:X_C\to X$ the natural morphism. 
Since $\sK$ is regular along a general fiber of $\pi_0$,
$D$ is supported on a finite union of fibers of $\pi_C$. 
Recall that $\sO_{X_C}(-K_{X_C})\simeq q^*\det(\sK)\otimes \sO_{X_C}(D)$.
So we have 
$$
q^*\sL'\ \simeq \ \sO_{X_C}(K_{X_C})\otimes \sA_{X_C}^{\otimes r-1} \otimes \sO_{X_C}(D).
$$
On the other hand, since $\dim X_C = r-1$ and $\sA_{X_C}$ is ample, 
Fujita's theorem (\cite[]{}) implies that $\sO_{X_C}(K_{X_C})\otimes \sA_{X_C}^{\otimes r-1}$ is nef. 
We remark that exactly one of the following holds.
\begin{enumerate}
	\item[(a)] For any moving section $\sigma\subset X_C$ of $\pi_C$, we have $q^*\sL'\cdot \sigma >0$.
	\item[(b)] $\sO_{X_C}(-K_{X_C})\simeq \sA_{X_C}^{\otimes r-1}$ and $D=0$. In this case 
$X_C\simeq \p^1\times \p^1$, $\sA_{X_C}\simeq\sO_{\p^1\times \p^1}(1,1)$ and $q^*\sL'\simeq \sO_{X_C}$.
\end{enumerate}

Let $W$ be the normalization of the closure in $\Chow(X)$ of the subvariety parametrizing 
general leaves of $\sK$, and  $U$ the normalization of the universal cycle over $W$,
with universal family morphisms:

\centerline{
\xymatrix{
U \ar[r]^{e}\ar[d]_{\pi} & X \ . \\
 W &
}
}
\noindent There are open subsets $X_1\subset X$ and $W_1\subset W$, with $\codim_X(X\setminus X_1)\geq 2$, 
such that $e$ is an isomorphism over $X_1$, and 
$\pi$ induces an equidimensional morphism $\pi_1:X_1\to W_1$ with connected fibers.
Notice that $\sK|_{X_1}=T_{X_1/W_1}$. 
By Lemma~\ref{lemma:foliation_morphism}, $\sF$ induces a rank $1$ foliation $\sN_1\subset T_{W_1}$, and  an
exact sequence 
$$
0 \to \sK|_{X_1} \to \sF|_{X_1} \to (\pi_1^*\sN_1)^{**}.
$$ 
Thus, there is a canonically defined effective divisor $D_1$ on $X_1$ such that 
$$
\sL'|_{X_1}\otimes\sO_{X_1}(D_1) \simeq (\pi_1^*\sN_1)^{**}.
$$
By removing a subset of codimension $\geq 2$ in $W_1$ if necessary, we may assume that 
$W_1$ is smooth and $\sN_1$ is locally free.
Our aim is to show that the general leaf of $\sN_1$ is a rational curve.
Since the general leaf of $\sF$ is the closure of the inverse image under $\pi_1$ of a general leaf of $\sN_1$,
it will follow that the general leaf of $\sF$ is algebraic and rationally connected.

Let $\sigma\subset X_C$ be a moving section. Then its image in $X$ is a moving curve.
Let $\ell'$ be a general deformation of $q(\sigma)$. In particular, we may assume that $\ell'\subset X_0\cap X_1$, and that
both $\sF$ and $\sK$ are regular along $\ell'$ by \cite[II.3.7]{kollar96}.

Suppose that $\ell'$ is not tangent to $\sK$. By Lemma~\ref{lemma:curve_tangent_to_F}, we must have $D\neq 0$. So we are in case (a)
above, and thus $ \sL'\cdot \ell' = q^*\sL'\cdot \sigma >0$. 
Now consider the curve $\pi_1(\ell')$ in $W_1$. It is a moving curve, the foliation $\sN_1\subset T_{W_1}$ is regular along $\pi_1(\ell')$, and
$\sN_1\cdot \pi_1(\ell')>0$.
By Theorem~\ref{bogomolov_mcquillan}, this implies that  the leaves of 
$\sN_1$ are rational curves.

Next we suppose that $\ell'$ is tangent to $\sK$. 
Set $C'=\pi_0(\ell')\subset T_0$. Then $C'$ is a leaf of $\sM_0\subset T_0$, and $\sM_0$ is regular along $C'$, i.e., $\sM_0|_{C'}\simeq T_{C'}$.
Moreover, the same holds for a general deformation of $C'$ by  Lemma~\ref{lemma:curve_tangent_to_F}.
Thus $T_{T_0}|_{C'}\simeq \sO_{\p^1}(2)\oplus \sO_{\p^1}^{\oplus \dim T_0 -1}$.
As in Step 1, we see that $\sG_0|_{C'}\simeq \sO_{\p^1}(2)\oplus \sO_{\p^1}$.
This implies that $\sL'\cdot \ell'=\sM_0\cdot C'=0$.
So we are in case (b) above:
$X_C\simeq \p^1\times \p^1$, $\sA_{X_C}\simeq\sO_{\p^1\times \p^1}(1,1)$ and $D=0$.
We may assume that $\sigma$ is the ruling of $X_C$ dominating $C$.
Thus $\ell'$ determines an unsplit dominating
family $H'$ of rational curves on $X$, and
$\pi_1:X_1\to W_1$ is nothing but 
the $(H,H')$-rationally connected quotient of $X$. 
By Lemma~\ref{lemma:foliation_by_curve_is_algebraic},
$\sN_1$ is a foliation by rational curves on $W_1$.
\end{step} 
\end{proof}


\section{On del Pezzo foliations with mild singularities}\label{section:thmb}

In this section we prove Theorem~\ref{thmb}.
In fact, Theorem~\ref{thmb} 
follows immediately from 
Theorem~\ref{thmb'} below and Proposition~\ref{lemma:common_point}.

\begin{thm}\label{thmb'}
Let $\sF\subsetneq T_X$ be a del Pezzo foliation of rank $r$ on a smooth projective $n$-dimensional variety $X$.
Suppose that $\sF$ is locally free along  the closure of  a general leaf.
Then 
\begin{enumerate}
	\item either $X$ a $\p^m$-bundle over $\p^{n-m}$, with $1\leq m\leq n-1$, or
	\item for any minimal dominating family of rational curves on $X$, with associated
		rationally connected quotient $\pi_0:X_0\to T_0$, we have $\sF|_{X_0}\subset T_{X_0/T_0}$.
\end{enumerate}
\end{thm}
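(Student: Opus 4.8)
The plan is to reproduce the opening steps of the proof of Theorem~\ref{thma} and then use the local freeness hypothesis to collapse the resulting case analysis to the single projective‑bundle case. First, if $X\simeq\p^n$ then conclusion~(2) holds trivially: the family of lines is a minimal dominating family whose rationally connected quotient is a point, so $\sF|_{X_0}\subset T_{X_0}=T_{X_0/T_0}$. So assume $X\not\simeq\p^n$; by Theorem~\ref{thma}, $\sF$ is algebraically integrable with rationally connected general leaf, so its general log leaf $(\tilde F,\tilde\Delta)$ is defined. Suppose~(2) fails: there is a minimal dominating family $H$, with rationally connected quotient $\pi_0:X_0\to T_0$, such that $\sF|_{X_0}\not\subset T_{X_0/T_0}$; I must deduce~(1). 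Applying Step~1 of the proof of Theorem~\ref{thma} verbatim, for a general $[f]\in H$ one of the following holds: (i) $f^*\sF\simeq\sO_{\p^1}(1)^{\oplus r-1}\oplus\sO_{\p^1}$ and $H$ is unsplit; (ii) $r=2$ and $f^*\sF\simeq\sO_{\p^1}(2)\oplus\sO_{\p^1}$; (iii) $r\ge 3$, $f^*\sF\simeq\sO_{\p^1}(2)\oplus\sO_{\p^1}(1)^{\oplus r-3}\oplus\sO_{\p^1}^{\oplus 2}$, and $H$ is unsplit.

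In case~(i), Proposition~\ref{proposition:p^m_bundle_over_p^n} applies word for word and gives $X\simeq\p_{\p^l}(\sE)$ for an ample locally free sheaf $\sE$ on $\p^l$, with $l\ge 1$ and relative dimension $m\ge 2$; since $n=l+m$, conclusion~(1) holds with $1\le m\le n-1$. In cases~(ii) and~(iii) I would first reduce to a pullback situation. In case~(iii), $T_{\p^1}=\sO_{\p^1}(2)$ is a direct summand of $f^*\sF$ and $H$ is unsplit, so Lemma~\ref{lemma:foliation_rc_quotient} gives $T_{X_0/T_0}\subsetneq\sF|_{X_0}$. In case~(ii) I would use Step~3 of the proof of Theorem~\ref{thma}: $\dim H_x=0$ for general $x$ (otherwise $\sF|_{X_0}=T_{X_0/T_0}$), and $\sharp H_x\ge 2$ is impossible, since then the surface swept by members of $H$ through general points of a fixed $\ell\in H_x$ is contained in a single $H$‑equivalence class, hence in a fibre of $\pi_0$, forcing $\sF|_{X_0}\subset T_{X_0/T_0}$; so $\sharp H_x=1$ and again $T_{X_0/T_0}\subsetneq\sF|_{X_0}$ with $\pi_0$ a $\p^1$‑bundle over a smooth base (after shrinking). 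In every surviving subcase, Lemma~\ref{lemma:foliation_morphism} exhibits $\sF$ as the pullback via $\pi_0$ of a foliation $\sG_0\subset T_{T_0}$ of rank $r-m\in\{1,2\}$, where $m$ is the relative dimension of $\pi_0$; by Lemma~\ref{lemma:foliation_by_curve_is_algebraic} and Theorem~\ref{bogomolov_mcquillan}, $\sG_0$ has rationally connected general leaf $L_0$ (a rational curve when $r-m=1$), and, as in Step~4 of the proof of Theorem~\ref{thma}, the general fibre $Y$ of $\pi_0$ satisfies $-K_Y=(r-1)(\sA|_Y)$ with $\sA|_Y$ of degree $1$ on a minimal rational curve of $Y$, so $Y\simeq\p^{r-2}$ or $Y\simeq Q_{r-1}$ by Kobayashi--Ochiai. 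Thus the general leaf $F$ of $\sF$ is the closure of a $Y$‑bundle over $L_0$.

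The crux --- and the only place the hypothesis ``$\sF$ locally free along a general leaf'' is used --- is to show that this pullback picture forces $X$ itself to be a $\p^m$‑bundle over $\p^{n-m}$ (with $m=1$ when $r=2$, or $m=r-2$ and $Y=\p^{r-2}$ when $r\ge 3$), ruling out $Y=Q_{r-1}$ and the degenerate bundle structures. Arguing as in Propositions~\ref{proposition:p^n-bdle_F_factors} and~\ref{prop:classification_bundle} --- using $c_1(\sA)^{\dim Y}\cdot Y=1$ to get generically reduced fibres, and normality of $X_C$ to get integral fibres, hence an honest $Y$‑bundle structure on the normalization $\tilde F$ over a smooth rationally connected compactification of $L_0$ --- I would compute $\sO_{\tilde F}(K_{\tilde F}+\tilde\Delta)\simeq n^*\sO_X(K_\sF)=n^*(\sA^{-(r-1)})$, with $\tilde\Delta\ge 0$ supported on fibres of $\tilde F$ over that base; comparing with the relative canonical bundle formula pins down $\tilde F$ and $\tilde\Delta$, and since $n^*\sA^{r-1}$ is the pullback of an ample class one concludes that $\tilde F$ is a Fano manifold of index $\ge\dim F-1$ carrying a fibre‑bundle structure over a positive‑dimensional base. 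By Kobayashi--Ochiai and Fujita's classification of del Pezzo manifolds, together with $\rho(\tilde F)\ge 2$, this either gives an outright contradiction (for $Y=Q_{r-1}$ and for most fibre types) or forces the base, and hence $T_0$, to be a projective space with $\pi_0$ extending to the asserted $\p^m$‑bundle. I expect the two genuinely delicate points to be: (a) controlling $\tilde\Delta$ precisely, since local freeness of $\sF$ along $F$ excludes the non--locally--free locus of $\sF$ but not the locus where $\sF\hookrightarrow T_X$ drops rank, so one must check separately that the Pfaff field induced on the general leaf is generically an isomorphism; and (b) eliminating the small‑rank del Pezzo leaves ($r=3$, $F\simeq\p^1\times\p^1\times\p^1$ or $F\simeq\p(T_{\p^2})$), where the numerology alone does not close the argument and one must feed the global structure back through a second extremal contraction, exactly as in the last part of the proof of Proposition~\ref{proposition:p^m_bundle_over_p^n}.
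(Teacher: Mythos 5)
Your reduction to the three cases from Step 1 of the proof of Theorem~\ref{thma}, and the treatment of case (i) via Proposition~\ref{proposition:p^m_bundle_over_p^n}, match the paper exactly. The gap is in your ``crux''. You assert that the normalization $\tilde F$ of a general leaf is a Fano manifold of index $\ge \dim F-1$ and propose to finish by Kobayashi--Ochiai and Fujita's classification. This is not justified: the log leaf formula gives $-K_{\tilde F}=n^*(-K_{\sF})+\tilde\Delta$ with $\tilde\Delta$ effective, so $-K_{\tilde F}$ is big but has no reason to be ample, nef, or divisible by $r-1$; moreover $\tilde F$ need not be smooth. The paper works instead on a \emph{canonical desingularization} $Y$ of $\tilde F$ (Theorem~\ref{thm:canonical_resolution}), writes $-K_Y=\bar e^*(-K_{\sF})+\Delta+D$, and extracts information by intersecting with a \emph{minimal horizontal family} of rational curves on $Y$ relative to the fibration induced by $\pi_0$, using the bound $-K_Y\cdot\ell'\le\dim(\text{base})+1$ of Lemma~\ref{lemma:horizontal}. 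The other essential ingredient you omit is Remark~\ref{rem:D+Delta}: $\Delta+D\neq 0$, which follows from the common-point property of Proposition~\ref{lemma:common_point}. Without this non-vanishing, the degenerate subcases (e.g.\ $\tilde F\simeq\p^1\times\p^1$ with $\tilde\Delta=0$ in case (ii)) cannot be excluded, and your ``relative canonical bundle comparison'' does not pin down $\tilde\Delta$.

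Concretely, two of the paper's arguments are invisible to your approach. In case (ii) the paper runs a surface computation on $Y\to\p^1$: a section $C_0$ with $C_0^2<0$ would force $(\Delta+D)\cdot C_0=0$ and, by induction over components of reducible fibers, $\Delta+D=0$, contradicting Remark~\ref{rem:D+Delta}; this yields $\tilde F\simeq\p^1\times\p^1$ and $\tilde\Delta=$ a fiber, producing a second unsplit family $H'$ that reduces to case (i). In case (3a) (the quadric-bundle subcase) the contradiction is global on $X$, not on the leaf: the numerology forces every fiber of $\pi_B:Y\to\p^1$ to meet the regular locus of $\sF$, so $\Delta+D\neq 0$ forces reducible fibers whose images sweep out a divisor in $X$, contradicting that $\pi_0$ extends with irreducible fibers in codimension~$1$. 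A classification of the leaf as a Fano manifold cannot detect either phenomenon (a quadric bundle over $\p^1$ can perfectly well be Fano). So while your outline identifies the right case division and correctly locates where the local-freeness hypothesis enters, the argument that is supposed to close cases (ii) and (iii) rests on a false premise and omits the two lemmas (\ref{lemma:horizontal} and Remark~\ref{rem:D+Delta}) that actually drive the proof.
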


When $X\not\simeq \p^n$, we know from Theorem~\ref{thma} that a del Pezzo foliation $\sF$ on $X$
is algebraically integrable. 
In the proof of Theorem~\ref{thmb'} in this case, we will consider a suitable resolution of singularities of the general leaf of
$\sF$, whose existence  is guaranteed by the following theorem.

\begin{thm}[{\cite[Theorem 3.35, 3.45]{kollar07} and \cite[Corollary 4.7]{greb_kebekus_kovacs10}}]
\label{thm:canonical_resolution}
Let $X$ be a normal variety. Then there exists a resolution of singularities
$d:Y\to X$ such that 
\begin{itemize}
\item $d$ is an isomorphism over $X\setminus\textup{Sing}(X)$, and
\item $d_* T_Y(-\textup{log }D)\simeq T_X$ where $D$ is the largest
reduced divisor contained  in $d^{-1}(\textup{Sing}(X))$. 
\end{itemize}
\end{thm}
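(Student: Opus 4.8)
The plan is to obtain the resolution from the \emph{functoriality} of resolution of singularities, the lifting of vector fields being an infinitesimal consequence of functoriality. First I would fix, using \cite[Theorem~3.35 and 3.45]{kollar07}, a resolution $d\colon Y\to X$ that is canonical in the strong sense: it is an isomorphism over $X_{\textup{sm}}:=X\setminus\textup{Sing}(X)$, the reduced divisor $D$ supported on $d^{-1}(\textup{Sing}(X))$ has simple normal crossings, and the construction of $d$ commutes with smooth (in particular \'etale) morphisms, with localization and completion, and with automorphisms. Next I would construct the natural comparison map. Since $\Omega^1_Y\subseteq\Omega^1_Y(\textup{log }D)$ has torsion quotient, dualizing gives $T_Y(-\textup{log }D)\hookrightarrow T_Y$, and a section of $T_Y(-\textup{log }D)$ over $d^{-1}(W)$, for $W\subseteq X$ open, restricts under the isomorphism $d^{-1}(W\cap X_{\textup{sm}})\cong W\cap X_{\textup{sm}}$ to a vector field on $W\cap X_{\textup{sm}}$; since $\codim_X(\textup{Sing}(X))\ge 2$ and $T_X=(\Omega^1_X)^*$ is reflexive, hence $S_2$, this vector field extends uniquely to a section of $T_X$ over $W$. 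This defines a morphism $\psi\colon d_*T_Y(-\textup{log }D)\to T_X$ which is an isomorphism over $X_{\textup{sm}}$, and which is injective because its source is torsion-free and $d^{-1}(X_{\textup{sm}})$ is dense in $Y$. Thus, the cokernel being a sheaf, it remains to show that every point of $X$ has a neighbourhood over which $\psi$ is surjective on sections; that is, to lift an arbitrary local vector field on $X$ to a vector field on $Y$ tangent to $D$.

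For that local lifting I would proceed as follows. Let $v$ be a vector field on an affine open $U\subseteq X$, regarded as the infinitesimal automorphism $\mathrm{id}+\varepsilon v$ of $U\times_{\mathbb{C}}\Spec\,\mathbb{C}[\varepsilon]/(\varepsilon^2)$ over $\Spec\,\mathbb{C}[\varepsilon]/(\varepsilon^2)$, reducing to the identity modulo $\varepsilon$; over $\mathbb{C}$ one may equivalently integrate $v$ to an analytic local one-parameter group of biholomorphisms $(\phi_t)$. Each member of this family preserves $\textup{Sing}(X)$, so by the functoriality of $d$ it lifts uniquely to an automorphism of $d^{-1}(U)$ (respectively of $d^{-1}(U)\times_{\mathbb{C}}\Spec\,\mathbb{C}[\varepsilon]/(\varepsilon^2)$) over the base, compatible with $d$, and reducing to the identity modulo $\varepsilon$ (respectively with $\tilde\phi_0=\mathrm{id}$, $\tilde\phi_{s+t}=\tilde\phi_s\circ\tilde\phi_t$, and $\tilde\phi_t$ holomorphic in $t$). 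Uniqueness of such lifts — two vector fields on the smooth variety $Y$ that agree on the dense open $d^{-1}(X_{\textup{sm}})$ coincide — makes the construction canonical, so the local lifts glue. Differentiating at $\varepsilon=0$ (respectively at $t=0$) yields a vector field $\tilde v$ on $d^{-1}(U)$ with $d_*\tilde v=v$; and since the lifted automorphism preserves $d^{-1}(\textup{Sing}(X))$, hence the canonically defined divisor $D$, the field $\tilde v$ is logarithmic along $D$, i.e.\ a section of $T_Y(-\textup{log }D)$. This produces the required local lifts, so $\psi$ is surjective, hence an isomorphism.

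The one genuinely non-formal point — and the main obstacle — is the functoriality input itself: one must know that resolution of singularities can be carried out depending only on the singularities of $X$, with enough compatibility with base change (in particular with the parameter $\varepsilon$, or with the analytic parameter $t$) that the resulting family of lifted automorphisms is differentiable in the parameter and that its derivative is tangent to $D$. This is exactly what \cite[Theorem~3.35 and 3.45]{kollar07} provide (see also \cite[Corollary~4.7]{greb_kebekus_kovacs10}); everything else in the argument is the formal bookkeeping of reflexive sheaves on normal varieties recorded above. I would also note that this is precisely the shape in which the theorem gets used in the proof of Proposition~\ref{prop:minimal_singularities}, with $X$ replaced by the normalization of the universal cycle over a family of leaves.
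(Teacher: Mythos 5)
The paper gives no proof of this statement---it is imported verbatim from Koll\'ar and Greb--Kebekus--Kov\'acs---and your argument is essentially the proof found in those references: construct the comparison map $d_*T_Y(-\log D)\to T_X$ from the isomorphism over the smooth locus together with reflexivity (hence normality) of $T_X$, check injectivity by torsion-freeness, and obtain surjectivity by lifting a local vector field, viewed as an infinitesimal automorphism of $U\times_{\mathbb{C}}\Spec\,\mathbb{C}[\varepsilon]/(\varepsilon^2)$, through the functorial resolution, the lift being tangent to $D$ because the lifted automorphism fixes each exceptional divisor (being connected to the identity, it cannot permute them). Your isolation of the one non-formal input---that the functoriality package is robust enough to apply to the non-reduced base change by $\Spec\,\mathbb{C}[\varepsilon]/(\varepsilon^2)$, equivalently to local analytic flows depending holomorphically on the parameter---is exactly right: that is precisely what the cited theorems of Koll\'ar supply, and the remainder of your argument is correct.
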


We call a resolution $d$ as in Theorem~\ref{thm:canonical_resolution} a 
\emph{canonical desingularization} of $X$.

\begin{say} \label{notation:D+Delta}
Let $X$ be a normal projective variety, and $\sF$ an algebraically integrable 1-Gorenstein foliation on $X$.
We denote by $F$ the closure of a general leaf of $\sF$, $\tilde F$ its normalization, $(\tilde F,\tilde \Delta)$
the corresponding log leaf, and $Y$ a canonical 
desingularization of $\tilde F$: 

\centerline{
\xymatrix{
Y \ar[r]^{d}\ar@/^2pc/[rrr]^{\bar e} & \tilde F \ar[r] \ar@/_2pc/[rr] _{\tilde e} & F \ar[r]_{e} & X. \\
}
}

Suppose that $\sF$ is locally free along $F$.
Let $D\subset Y$
be the largest
reduced divisor contained in $d^{-1}(\textup{Sing}(\tilde F))$. 
Then
$\bar e^*\sF\subset T_{Y} (-\log D) \subset T_{Y}$.
Therefore there exists an effective divisor $\Delta$ on $Y$ such that 
$$
\bar e^*(-K_\sF)+\Delta+D= -K_{Y}.
$$
Recall from Definition \ref{defn:log_leaf} that $K_{\tilde F}+\tilde \Delta = \tilde e^*K_{\sF}$.
Hence we have  $\Delta+D=d^*\tilde \Delta-K_{Y/\tilde F}$.
Moreover $\textup{Sing}(\tilde F)\subset\Supp(\tilde\Delta)$
by Lemma \ref{lemma:singular_locus_normalization}.
Thus $\Supp(\Delta+D)\subset d^{-1}(\Supp(\tilde\Delta))$.
\end{say}

\begin{rem} \label{rem:D+Delta}
Let the notation and assumptions be as in \ref{notation:D+Delta} above, and
suppose moreover that $\sF$ is a Fano foliation.
Then $\Delta+D \neq 0$. 
Indeed, if $\Delta+D=0$, it follows from the above discussion that $\tilde F$ is smooth and $\tilde \Delta=0$. 
Therefore, $\sF$ is induced by an almost proper map $X\map T$, contradicting  
Proposition~\ref{lemma:common_point}.
\end{rem}

\begin{proof}[Proof of Theorem~\ref{thmb'}]
Write $\det(\sF)=\sA^{\otimes r-1}$, with $\sA$ an ample 
line bundle on $X$, and denote by $S$ the singular locus of $\sF$.
We follow the notation introduced in \ref{notation:D+Delta} above.

Let $H$ be a minimal dominating family of rational curves on $X$, and  $\pi_0:X_0\to T_0$
the associated rationally connected quotient.
Let $[f]\in H$ be a general member.
Suppose that $\sF|_{X_0}\not\subset T_{X_0/T_0}$.
Recall from the proof of Theorem~\ref{thma} that one of the following holds.
\begin{enumerate}
	\item Either
	$f^*\sF\simeq \sO_{\p^1}(1)^{\oplus r-1}\oplus
	\sO_{\p^1}$, or
	
	\item $r=2$, $f^*\sF\simeq \sO_{\p^1}(2)\oplus
	\sO_{\p^1}$, $\pi_0$ is a $\p^1$-bundle, 
	and $\sF$ is the pullback via $\pi_0$ of a foliation by rational 
	curves on $T_0$, or
	
	\item $r\geq 3$, $H$ is unsplit, $f^*\sF\simeq \sO_{\p^1}(2)\oplus \sO_{\p^1}(1)^{\oplus r-3}\oplus
	\sO_{\p^1}^{\oplus 2}$, and one of the following holds.
		\begin{enumerate}
			\item Either $\pi_0$ is a quadric bundle of relative dimension $r-1$, 
				and $\sF$ is the pullback via $\pi_0$ of a foliation $\sG_0$ by rational 
				curves on $T_0$, or
			\item  $\pi_0$ is a $\p^{r-2}$-bundle, and $\sF$ is the pullback by $\pi_0$ of a foliation 
				$\sG_0$ by 
				rationally connected surfaces  on $T_0$. 
		\end{enumerate}
\end{enumerate}

If we are in case (1), then $\pi_0$ makes $X$ a $\p^m$-bundle over $\p^{n-m}$, with $1\leq m\leq n-1$, by Proposition~\ref{proposition:p^m_bundle_over_p^n}.

\medskip

Suppose we are in case (2). Notice that $\sF$ is regular along a general curve from $H$.

The restriction of  $\pi_0$ to $F\cap X_0$ induces a surjective morphism with connected fibers 
$\varphi:Y\to \p^1$. Let $f\subset Y$ be a general fiber of $\varphi$ , and set
$\ell:=\bar e(f)\subset F$. Then $\ell\cap S=\emptyset$, and $F$ is smooth along $\ell$.
Thus $f \cap \Supp(\Delta+D)=\emptyset$. Hence $\Supp(\Delta+D)$ is  a union 
of irreducible components of fibers of $\varphi$.

We claim that $Y=\tilde F\cong \p^1\times \p^1$. Suppose otherwise. Then there exists a section 
$C_0$ of $\varphi$ such that $C_0^2<0$. Since $C_0\not\subset \Supp(\Delta+D)$, we have:
$$
1\ \geq \ C_0^2 +2 \ =\  -K_Y\cdot C_0 \ =\  -K_\sF\cdot \bar e_*(C_0)+(\Delta+D)\cdot C_0\ \geq \ 1.
$$
Hence we must have $(\Delta+D)\cdot C_0=0$, and thus the inclusion $\bar e^*\sF\subset T_Y$
is an isomorphism in a neighborhood of $C_0$. In particular, no smooth fiber of $\varphi$ 
is contained in $\Supp(\Delta+D)$. 
Suppose $\varphi$ has reducible fibers, and let $C_1$ be an irreducible component 
of a reducible fiber such that $C_0\cap C_1\not=\emptyset$.
Then $C_1^2<0$ and $C_1\not\subset \Supp(\Delta+D)$. As before, we get that 
$(\Delta+D)\cdot C_1=0$, and thus the inclusion $\bar e^*\sF\subset T_Y$
is an isomorphism in a neighborhood of $C_1$.
Proceeding by induction, we conclude that no irreducible component of a reducible fiber 
of $\varphi$ is contained in $\Supp(\Delta+D)$.
Thus $\Delta+D=0$. But this is impossible by Remark~\ref{rem:D+Delta}.
Therefore we must have $Y=\tilde F\cong \p^1\times \p^1$, as claimed. 
In particular, $D=0$.

Let $C_0$ be a section of $\varphi$ such that $C_0^2=0$, and denote by $f$ a fiber of $\varphi$.
By  Remark~\ref{rem:D+Delta}, $\Delta \neq 0$. Thus $\Delta\equiv mf$ for some integer $m\geq 1$. 
We have:
$$
2 \ = \ -K_Y\cdot C_0\  = \ -K_\sF\cdot \bar e_*(C_0)+\Delta\cdot C_0\ \geq \ 1 + m\ \geq \ 2.
$$
Hence we must have $-K_\sF\cdot \bar e_*(C_0)=1$ and $\Delta = f$. 
Set $\ell'=\bar e(C_0)$, and 
let $H'$ be the family of rational curves on $X$ containing $[\ell']$.
Then  $-K_\sF\cdot \ell'=1$, and thus $H'$ is unsplit. 
Let  $[f']\in H'$ be a general member.
As in Step 1 of the proof of Theorem~\ref{thma}, we see that $(f')^*\sF\simeq \sO_{\p^1}(1)\oplus \sO_{\p^1}$.
Let $\pi':X'\to T'$ be the $H'$-rationally connected quotient of $X$.
Notice that $\ell$ and $\ell'$ are numerically independent in $X$.
Therefore $\ell$ is not contracted by $\pi'$.
On the other  hand,  $\ell$ is contained in a leaf of $\sF$.
So we must have $\sF|_{X'}\not\subset T_{X'/T'}$.
From the analysis of case (1) above and Proposition~\ref{proposition:p^m_bundle_over_p^n}, 
we conclude that $\pi'$ makes $X$ a $\p^{n-1}$-bundle over $\p^{1}$.

\medskip

Next we show that case (3a) does not occur.

Suppose to the contrary that $H$ is unsplit, $\pi_0:X_0\to T_0$
 is a quadric bundle of relative dimension $r-1$, 
and $\sF$ is the pullback via $\pi_0$ of a foliation $\sG_0$ by rational curves on $T_0$. 
By \cite[Lemma 2.2]{adk08}, $\pi_0$ can be extended in codimension $1$ in $X$ to a proper surjective equidimensional morphism 
with irreducible and reduced fibers. We still denote this extension by $\pi_0:X_0\to T_0$.

The morphism $\pi_0$ induces a morphism  $\pi_B:Y\to \p^1$, where $\p^1\map B$ is a smooth 
compactification of a general leaf $B$ of the foliation $\sG_0$ on $T_0$. 
Since $\sF$ is regular along a general fiber of $\pi_0$, $\Supp\big(\Delta+D\big)$ is a union 
of irreducible components of fibers of $\pi_B$.

Let $\ell'\subset Y$  be a general curve from a minimal horizontal 
family of rational curves with respect to $\pi_B$. 
Then $\ell'\not\subset \Supp(\Delta+D)$, and 
$$
-K_Y\cdot \ell' = -K_{\sF}\cdot \bar e_*\ell' + (\Delta+D)\cdot \ell' 
= (r-1)\sA\cdot \bar e_*\ell' + (\Delta+D)\cdot \ell' \geq r-1\geq 2.
$$
On the other hand, by Lemma~\ref{lemma:horizontal}, $-K_Y\cdot \ell' \leq 2$.
So we must have $r=3$, $\sA\cdot \ell'=1$, and $(\Delta+D)\cdot \ell'=0$.
The latter implies that $\bar e(\ell')\cap S=\emptyset$. 
Thus $\bar e(\ell')\cap \textup{Sing}(F)=\emptyset$, 
and all fibers of $\pi_B$ meet the regular locus of $\sF$.
By Remark~\ref{rem:D+Delta},  $\Delta +D \neq 0$. 
Thus $\pi_B$ has at least one reducible fiber, and at least one irreducible component 
of such reducible fiber meets  the regular locus of $\sF$.
By letting $B$ run through general leaves of the foliation $\sG_0$, 
the images in $X$ of reducible fibers of $\pi_B$ sweep out a divisor on $X$. 
But this contradicts the fact that $\pi_0$ has irreducible fibers in codimension $1$.

\medskip

Finally we show that case (3b) can only occur if $X$ is a $\p^m$-bundle over $\p^{n-m}$.

So suppose $H$ is unsplit,  $r\geq 3$,  $f^*\sF\simeq \sO_{\p^1}(2)\oplus \sO_{\p^1}(1)^{\oplus r-3}\oplus
\sO_{\p^1}^{\oplus 2}$,  $\pi_0$ is a $\p^{r-2}$-bundle, and $\sF$ is the pullback via 
$\pi_0$ of a foliation $\sG_0$ by rationally connected surfaces  on $T_0$. 
Recall that $\pi_0$ can be extended to a $\p^{r-2}$-bundle in codimension $1$ in $X$.
We still denote this extension by $\pi_0:X_0\to T_0$.

Let $Z$ be the normalization of a general leaf of $\sG_0$. Then $\pi_0$ induces a 
$\p^{r-2}$-bundle $\pi_Z:Y_0\to Z_0$, where $Y_0$ and $Z_0$ are dense open subsets
of $Y$ and $Z$, respectively.
Let $H'_Y$ be a minimal h-dominating
family of rational curves with respect to $\pi_Z$, and $[\ell']\in H'_Y$ a general member.
Fix a family  $H'$ of rational curves on $X$
containing a point of $\rat(X)$ corresponding to $\bar e(\ell')$.

Since $\sF$ is regular along a general fiber of $\pi_0$, $\ell'\not\subset \Supp(\Delta+D)$, and
 $$
-K_Y\cdot \ell' = -K_{\sF}\cdot \bar e_*\ell' + (\Delta+D)\cdot \ell' 
= (r-1)\sA\cdot \bar e_*\ell' + (\Delta+D)\cdot \ell' \geq r-1\geq 2.
$$
On the other hand, by Lemma~\ref{lemma:horizontal}, $-K_Y\cdot \ell' \leq 3$.
So $-K_Y\cdot \ell' \in \{2,3\}$. Moreover $\sA\cdot\bar e_*\ell' =1$, and thus $H'$ is unsplit.

First let us assume that $-K_Y\cdot \ell' =3$. Then $r=4-(\Delta+D)\cdot \ell'\in \{3,4\}$.
By Lemma~\ref{lemma:horizontal}, $H'_Y$ is a dominating family of rational curves on $Y$.
Thus $H'$ is an unsplit dominating family of rational curves on $X$.
Let $[f']\in H'$ be a general member, and $\pi':X'\to T'$ 
the $H'$-rationally connected quotient of $X$.
Notice that $H$ and $H'$ are numerically independent in $X$.
Therefore the general curve from $H$ is not contracted by $\pi'$, while
it is contained in a leaf of $\sF$.
So we must have $\sF|_{X'}\not\subset T_{X'/T'}$.
From the analysis of the previous cases, we conclude that
either $f'$ falls under case (1) above, and so $X$ is a $\p^m$-bundle over $\p^{n-m}$,
or $f'^*\sF\simeq \sO_{\p^1}(2)\oplus \sO_{\p^1}(1)^{\oplus r-3}\oplus
\sO_{\p^1}^{\oplus 2}$,  $\pi'$ is a $\p^{r-2}$-bundle, and $T_{X'/T'}\subset \sF|_{X'}$.
In the latter case, $\sF$ is regular along a general fiber of $\pi'$.
Thus $(\Delta+D)\cdot \ell'=0$ and $r=4$.
Let $\pi'':X''\to T''$ be
the ($H,H'$)-rationally connected quotient of $X$.
By Lemma~\ref{lemma:foliation_rc_quotient}, $T_{X''/T''}\subset \sF|_{X''}$, 
and thus $rank(T_{X''/T''})\leq rank(\sF)=4$. On the other hand,
since $H$ and $H'$ are numerically independent in $X$,
the fibers of the $\p^2$-bundles $\pi_0$ and $\pi'$ 
cannot meet along a positive dimensional variety. Therefore the fibers of 
$\pi''$ have dimension at least $4$. 
We conclude that $\sF|_{X''}= T_{X''/T''}$. But this is impossible by Proposition~\ref{lemma:common_point}.

From now on we assume that $-K_Y\cdot \ell' =2$. Then $(\Delta+D)\cdot \ell'=0$ and $r=3$.
By Lemma~\ref{lemma:horizontal}, either $H'_Y$ is a dominating family of rational curves on $Y$,
or $Locus(H'_Y)$ has codimension $1$ in $Y$.

If $H'_Y$ dominating, then  $H'$ is an unsplit dominating family of rational curves on $X$.
Let $\pi':X'\to T'$ be
the $H'$-rationally connected quotient of $X$.
As before, we conclude that $T_{X'/T'}\subset \sF|_{X'}$.
Let $\pi'':X''\to T''$ be
the ($H,H'$)-rationally connected quotient of $X$.
By Lemma~\ref{lemma:foliation_rc_quotient}, $T_{X''/T''}\subset \sF|_{X''}$. 
By Proposition~\ref{lemma:common_point}, $T_{X''/T''}\neq \sF|_{X''}$. Thus 
$rank(T_{X''/T''})=2$, and $\sF$ is the pullback via $\pi''$ of a foliation by rational curves on $T''$. 
The same argument used in case (3a) above shows that this is impossible.

Finally, we assume that $Locus(H'_Y)$ has codimension $1$ in $Y$.
Since $(\Delta+D)\cdot \ell'=0$, we have $\bar e(\ell')\cap S=\emptyset$.
Therefore the general member of $H'$ avoids $S$ and is tangent to $\sF$ by 
Lemma~\ref{lemma:curve_tangent_to_F}. 
Let $\pi'':X''\to T''$ be
the ($H,H'$)-rationally connected quotient of $X$, and denote by $F''$ a general fiber of $\pi''$.
By Lemma~\ref{lemma:foliation_rc_quotientB}, $T_{X''/T''}\subset \sF|_{X''}$. 
In particular,  $\dim F'' \leq 3=rank(\sF)$. 
We will show that $\dim F''=3$. 
From this it follows that $\sF|_{X''}=T_{X''/T''}$, contradicting 
Proposition~\ref{lemma:common_point}, and finishing the proof of Theorem~\ref{thmb'}.

Let $y\in Locus(H'_Y)$ be a general point. By \cite[IV.2.6.1]{kollar96},
$$
3+2 =\dim Y + (-K_Y\cdot \ell' ) \leq \dim \big(Locus(H'_Y)\big) + 
\dim\Big(Locus\big((H'_Y)_y\big)\Big) +1  \leq 2 +  \dim\Big(Locus\big((H'_Y)_y\big)\Big)   +1. 
$$
Thus $\dim\Big(Locus\big((H'_Y)_y\big)\Big)=2$. 
Since $Locus\big((H'_Y)_y\big) \subset Locus(H'_Y)$, and the latter is irreducible and 
$2$-dimensional, we conclude that $\Big(Locus\big((H'_Y)_y\big)\Big) = Locus(H'_Y)$.
Then the image of $Locus(H'_Y)$ in $X$ is contained in a general fiber of $\pi''$.
Moreover,  it  does not contain any curve from the family $H'$,
since $H$ and $H'$ are numerically independent in $X$. Thus $\dim F''= 3$. 
\end{proof}


\section{Del Pezzo foliations on projective space bundles}\label{section:examples_pn_bundles}


Our first aim in this section is to give a precise geometric description of 
del Pezzo foliations $\sF$ on projective space bundles $X\to \p^l$ such that 
$\sF\nsubseteq T_{X/\p^l}$.

\begin{say}[Two special cases]\label{say:special_cases}
Let $\sE$ be an ample locally free sheaf of rank $m+1\geq 2$ on $\p^l$, 
and set $X:=\p_{\p^l}(\sE)$.
Denote by $\sO_X(1)$ the tautological line bundle on $X$, and by
$\pi:X\to \p^l$ the natural projection.
Let $\sF\nsubseteq T_{X/\p^l}$ be a del Pezzo foliation on $X$, and write
$\det(\sF)\simeq \sA^{\otimes r_{\sF}-1}$ for an ample 
line bundle $\sA$ on $X$.
By Proposition~\ref{prop:classification_bundle}, $r_{\sF}\in \{2,3\}$.
We first determine the restriction of $\sA$ to a general line $\ell$ on a fiber 
of $\pi$. 
As in Step 1 of the proof of Theorem~\ref{thma}, we verify that $\sA\cdot \ell=1$ unless 
$m=1$, $r_{\sF}=2\leq l$, $\sF|_{\ell}\simeq \sO(2)\oplus \sO$,  and 
$T_{X/\p^l}\subsetneq \sF$.

Suppose we are in the latter case.
We claim that $X\simeq \p^1\times \p^l$, and $\sF$ is the pullback 
via $\pi$ of a degree 0 foliation of rank 1 on $\p^l$.
Indeed, by Lemma~\ref{lemma:foliation_morphism}, 
$\sF$ is the pullback by  $\pi$ of a rank 1 foliation $\sG\subset T_{\p^l}$.
So $\det(\sF)\simeq\det(T_{X/\p^l})\otimes \pi^*\det(\sG)$, and 
$\sA\simeq \sO_X(2)\otimes \pi^*\big(\det(\sE^*)\otimes\sG\big)$.
Write $\sG\simeq\sO_{\p^l}(k)$ for some integer $k$.
If $k\le 0$, then $\det(T_{X/\p^l})\simeq\sA\otimes \pi^*\sO_{\p^m}(-k)$ is ample, contradicting
\cite[Theorem 2]{miyaoka93}.
By Bott's formulae, $k=1$.
Let $\p^1\subset\p^l$ be a line, and write 
$\sE|_{\p^1}\simeq\sO_{\p^1}(a)\oplus\sO_{\p^1}(b)$ with $a\le b$.
Let $\sigma:\p^1\to \p_{\p^1}(\sE|_{\p^1})$ be the section corresponding to
the projection $\sO_{\p^1}(a)\oplus\sO_{\p^1}(b)\twoheadrightarrow\sO_{\p^1}(a)$, and set
$C:=\sigma(\p^1)$. Then
$$1 \le \sA\cdot C=\big(\sO_X(2)\otimes \pi^*(\det(\sE^*)\otimes\sG)\big)\cdot C =2a-(a+b)+1.$$
Thus $a\ge b$, and so $a=b$. By \cite[Theorem 3.2.1]{OSS}, 
$\sE\simeq\sO_{\p^m}(a)^{\oplus 2}$. 
This proves the claim.

So we may restrict ourselves to the case when $\sA$ restricts to $\sO(1)$ on the fibers 
of $\pi$. Then, by replacing $\sE$ with $\pi_*\sA$ if necessary, we may assume that 
$\det(\sF)\simeq \sO_X(r_{\sF}-1)$.

By Proposition~\ref{prop:classification_bundle}, if $m=1$, then 
$l\geq r=3$, $X\simeq \p^1\times \p^l$, $\sH=T_{X/\p^l}$, and $\sF$ 
is the pullback via the natural projection $\p^1\times \p^l\to \p^l$ 
of a degree zero foliation $\sO_{\p^l}(1)\oplus\sO_{\p^l}(1)\subsetneq T_{\p^l}$ on $\p^l$. 
So we may assume that $m\geq 2$.
\end{say}

\begin{thm}\label{thm:description}
Let $\sE$ be an ample locally free sheaf of rank $m+1\geq 3$ on $\p^l$, 
and set $X:=\p_{\p^l}(\sE)$.
Denote by $\sO_X(1)$ the tautological line bundle on $X$, and by
$\pi:X\to \p^l$ the natural projection.
Let $\sF\nsubseteq T_{X/\p^l}$ be a foliation of rank $r\geq 2$ on $X$ 
such that $\det(\sF)\simeq\sO_X(r-1)$.
\begin{enumerate}
	\item The possible values for the pair  $(l,r)$ are $(1,2)$, $(1,3)$ and $(l,2)$, with $l\geq 2$.
	\item There exists a subbundle $\sV\subset \sE^*$ such that 
		$\sF\cap T_{X/\p^l}\simeq (\pi^*\sV)(1)$, an inclusion $j:\det(\sV^*)\into T_{\p^l}$, and 
		a commutative diagram of exact sequences

\centerline{
\xymatrix{
0 \ar[r] &   \sF\cap T_{X/\p^l}\simeq(\pi^*\sV)(1) \ar[r]\ar[d] & \sF \ar[r]\ar[d] & 
\sI_{W}\otimes \pi^*\det(\sV^*) \ar[d]^{\pi^*j}\ar[r] & 0 \\
0 \ar[r] &  T_{X/\p^l} \ar[r] &  T_X \ar[r] & \pi^*T_{\p^l} \ar[r] & 0
}
}
\noindent where $W\subset X$ is a closed subscheme with $\codim_XW\geq 2$.

	\item If $l\geq 2$, then  $\sV\simeq \sO_{\p^l}(-1)$. 
		If $l=1$, then either $\sV\simeq\sO_{\p^1}(-1)$, or $\sV\simeq\sO_{\p^1}(-2)$, 
		or $\sV\simeq\sO_{\p^1}(-1)^{\oplus 2}$.
	\item Let $\sK$ be the kernel of the dual map $\sE\twoheadrightarrow \sV^*$, and consider
		the $\p^{m-r+1}$-bundle $Z:=\p_{\p^l}(\sK)$, with natural projection $q:Z\to \p^l$.
		Then $\sF$ is the pullback by the linear projection $X/\p^l\map Z/\p^l$
		of a foliation on $Z$ induced by a nonzero global section of $T_Z\otimes q^*\det(\sV)$.
	\item If $l=1$ and $m\geq r+1$, then $\sF$ is locally free and $W=\emptyset$. 
		If moreover $\sV\simeq\sO_{\p^1}(-1)$, then $\sF\simeq (\pi^*\sV)(1) \oplus \pi^*\det(\sV^*)$.
\end{enumerate}
\end{thm}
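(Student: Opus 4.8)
The plan is to build the proof by extracting structure from the restriction of $\sF$ to the fibres of $\pi$ and to general lines in those fibres, following exactly the bookkeeping already set up in Proposition~\ref{prop:classification_bundle} and in \ref{V_in_E}. First I would set $\sH:=\sF\cap T_{X/\p^l}$; since $\sF\nsubseteq T_{X/\p^l}$ this is a foliation of rank $r_{\sH}<r$, saturated in $T_X$, and $\sQ:=\sF/\sH$ injects into $\pi^*T_{\p^l}$ and is torsion-free of rank $r-r_{\sH}$. Restricting to a general line $\ell$ in a general fibre $Y\simeq\p^m$ and using $\det(\sF)\simeq\sO_X(r-1)$ together with $({\pi^*T_{\p^l}})|_\ell\simeq\sO_{\p^1}^{\oplus l}$, the degree count in the proof of Proposition~\ref{prop:classification_bundle} (the dichotomy (a)/(b) there) shows $\det(\sQ)|_\ell\simeq\sO_{\p^1}$ and, since here $\sF$ genuinely meets $\pi^*T_{\p^l}$, we are in case (a): $m\ge r$, $\sH|_\ell\simeq\sO_{\p^1}(1)^{\oplus r-1}$, $r_{\sH}=r-1$, and $\sQ|_\ell\simeq\sO_{\p^1}$. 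Then $\sH|_Y\subsetneq T_{\p^m}$ is a degree-$0$ foliation of rank $r-1$ (Bott's formulae give saturatedness), so $\sH|_Y\simeq\sO_{\p^m}(1)^{\oplus r-1}$; this is precisely the setup of \ref{V_in_E} with $s=r-1$, producing the subbundle $\sV\subset\sE^*$ (the saturation of $\pi_*(\sH(-1))$) with $\sH\simeq(\pi^*\sV)(1)$ on a codimension-$\ge 2$ open set, hence everywhere since both sides are reflexive. This gives the left column and first row of the diagram in (2); the cokernel $\sQ^{**}$ is locally free of rank one by \cite[Proposition 1.9]{hartshorne80}, equal to $\pi^*\det(\sV^*)$ (comparing determinants: $\sO_X(r-1)\simeq\det(\sF)\simeq\det(\sH)\otimes\sQ^{**}\simeq\pi^*\det\sV\otimes\sO_X(r-1)\otimes\pi^*\det(\sV^*)$, consistent), so $\sQ=\sI_W\otimes\pi^*\det(\sV^*)$ for some $W$ with $\codim_X W\ge 2$, and $\det(\sV^*)\into T_{\p^l}$ is the induced $j$. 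For (1) and (3): as in Proposition~\ref{prop:classification_bundle} one has the exact sequence $0\to\sK|_{T_0}\to\sE|_{T_0}\to\sV^*|_{T_0}\to 0$ on a big open $T_0\subset\p^l$; since $\sE$ is ample and $\operatorname{rk}\sV=r-1$, $\det(\sV^*)\simeq\sO_{\p^l}(k)$ with $k\ge r-1$, and Bott's formulae ($H^0(\p^l,T_{\p^l}(-k))\ne 0$ forces $k\le 1$ when $l\ge 2$, and $k\le 2$ when $l=1$) pin down $r\le 3$, $r=3\Rightarrow l=1$, and the listed possibilities for $\sV$ (using $\sV$ has rank $r-1$ and all its line-bundle summands on $\p^1$ must be $\sO(-1)$ by the degree-$1$-on-lines computation, giving $\sV\simeq\sO_{\p^1}(-1)^{\oplus 2}$ in the rank-$2$ case and $\sV\simeq\sO_{\p^1}(-1)$ or $\sO_{\p^1}(-2)$ in the rank-$1$ case).

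For (4), I would use \ref{V_in_E} directly: with $\sK=\ker(\sE\to\sV^*)$ and $Z=\p_{\p^l}(\sK)$, the relative linear projection $p_0:X_0\to Z$ satisfies $\sH|_{X_0}=T_{X_0/Z}$, and by Lemma~\ref{lemma:foliation_morphism} $\sF$ descends to a rank-one foliation $\sG\subset T_Z$ with $\det(\sF|_{X_0})\simeq\det(T_{X_0/Z})\otimes p_0^*\sG$; computing on the codimension-$\ge 2$ complement gives $\sG\simeq q^*\big(\det(\sV^*)\otimes(\text{triv})\big)$, i.e.\ a nonzero section of $T_Z\otimes q^*\det(\sV)$, and since both $X$ and $Z$ are projectivizations over $\p^l$ the descent of $\sF$ is the pullback of this foliation by the (rational) linear projection $X/\p^l\dashrightarrow Z/\p^l$; the indeterminacy locus of that projection is contained in $W$, matching (2). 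Finally for (5), when $l=1$ the bundle $\sE$ on $\p^1$ splits and the exact sequence $0\to\sK\to\sE\to\sV^*\to 0$ splits, so already globally $X$ contains the sub-projective-bundle on which $\sH=T_{X/Z}$ extends, the map $p$ is a morphism, $W=\emptyset$, and $\sF$ is locally free; if moreover $\sV\simeq\sO_{\p^1}(-1)$ then $\sQ\simeq\pi^*\sO_{\p^1}(1)$ is a line bundle, and the extension $0\to(\pi^*\sV)(1)\to\sF\to\pi^*\det(\sV^*)\to 0$ splits because $\operatorname{Ext}^1\big(\pi^*\det(\sV^*),(\pi^*\sV)(1)\big)\simeq H^1\big(X,(\pi^*\sV)(1)\otimes\pi^*\det\sV\big)$ which by Leray and the projective-bundle formula reduces to $H^1\big(\p^1,(\sV\otimes\det\sV)\otimes\pi_*\sO_X(1)\big)=H^1\big(\p^1,(\sV\otimes\det\sV)\otimes\sE\big)$; a direct Bott/dimension check (all relevant summands have degree $\ge -1$ after the twist) shows this vanishes, giving $\sF\simeq(\pi^*\sV)(1)\oplus\pi^*\det(\sV^*)$.

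I expect the main obstacle to be part (4): pinning down precisely which section of $T_Z\otimes q^*\det(\sV)$ arises, and checking that the foliation $\sG\subset T_Z$ genuinely comes from a \emph{global} section (not merely a section over the big open $T_0$ of $\p^l$) and that $W$ is exactly the indeterminacy scheme of the linear projection rather than something larger. This requires carefully tracking reflexive hulls and the codimension-$\ge 2$ deletions made in \ref{V_in_E}, and verifying that $p_0$ extends to the claimed rational map on all of $X$ with the stated base locus; the key technical inputs are \cite[Propositions 1.6 and 1.9]{hartshorne80} (normality and local freeness of reflexive sheaves in codimension $\le 2$) and Lemma~\ref{lemma:foliation_morphism}. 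The remaining parts (1)--(3) and (5) are essentially determinant and Bott-formula bookkeeping once the structural sequence of (2) is in place, plus, for (5), an $\operatorname{Ext}^1$-vanishing on $\p^1$ that is routine given the degree constraints already established in (3).
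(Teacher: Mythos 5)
Your treatment of parts (1)--(4) follows the paper's own route essentially verbatim: reduce to case (a) of Proposition~\ref{prop:classification_bundle}, identify $\sH=\sF\cap T_{X/\p^l}$ with $(\pi^*\sV)(1)$ via \ref{V_in_E}, pin down $\sV$ using the ampleness of $\sE$ and Bott's formulae, and descend $\sF$ along the relative linear projection $X\map Z=\p_{\p^l}(\sK)$ via Lemma~\ref{lemma:foliation_morphism}. That much is correct and is the paper's argument.

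Part (5), however, has a genuine gap. You argue that because $0\to\sK\to\sE\to\sV^*\to 0$ splits on $\p^1$, ``the map $p$ is a morphism, $W=\emptyset$, and $\sF$ is locally free.'' But $p:X\map Z$ is the relative linear projection whose indeterminacy locus is the $\p^{r-2}$-subbundle $\p_{\p^1}(\sV^*)\subset\p_{\p^1}(\sE)$; this center has dimension $r-1\geq 1$, so $p$ is never a morphism, split sequence or not, and a splitting of the sequence of bundles on the base says nothing about local freeness of the extension $0\to\sH\to\sF\to\sQ\to 0$ on $X$. Tellingly, your argument never uses the hypothesis $m\geq r+1$, which is exactly what makes (5) true: for \emph{every} fiber $X_t\simeq\p^m$ the degree-zero foliation $\sH|_{X_t}\simeq\sO_{\p^m}(1)^{\oplus r-1}\subset T_{\p^m}$ is singular along a linear $\p^{r-2}$, so the locus where $\det(\sH)\to\wedge^{r-1}T_X$ vanishes has dimension $1+(r-2)=r-1$, and $r-1\leq \dim(X)-3$ precisely when $m\geq r+1$; the paper then applies Lemma~\ref{lemma:local_freeness} to the sequence $0\to\sH\to\sF\to\sQ\to 0$ to conclude that $\sF$ and $\sQ$ are locally free, i.e.\ $W=\emptyset$. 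Your closing $\textup{Ext}^1$ computation for the splitting when $\sV\simeq\sO_{\p^1}(-1)$ is correct and matches the paper's, but it only becomes applicable once $W=\emptyset$ has been established (otherwise the quotient is $\sI_W\otimes\pi^*\det(\sV^*)$ and the relevant extension group is a different one), so it cannot substitute for the missing step.
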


\begin{proof}
Item (1) was proved in Proposition~\ref{prop:classification_bundle}

Set $\sH:=\sF\cap T_{X/\p^l}$, and recall from the proof of Proposition~\ref{prop:classification_bundle} 
that $\sH\simeq (\pi^*\sV)(1)$, where $\sV\subset \sE^*$ is a saturated subsheaf of rank $r-1$ 
(and thus a subbundle in codimension $1$ in $\p^l$). 
Moreover, there is an inclusion $\det(\sV^*)\subset  T_{\p^l}$, and an isomorphism
$\big(\sF/\sH\big)^{**}\simeq  \pi^*\det(\sV^*)$.
We claim that $\sV$ is in fact a subbundle of $\sE^*$. If $l=1$, then this is clear.
Moreover, in this case either $\sV\simeq\sO_{\p^1}(-1)$, or $\sV\simeq\sO_{\p^1}(-2)$, or
 $\sV\simeq\sO_{\p^1}(-1)^{\oplus 2}$.
If $l\geq 2$, then $r=2$, and $\sV$ is locally free of rank $1$. 
The condition $\det(\sV^*)\subset T_{\p^l}$ implies that $\sV\simeq \sO_{\p^l}(-1)$.
Since $\sE$ is ample, $\sV$ must be a subbundle of $\sE^*$.
This proves (2) and (3).

Let $\sK$ be the kernel of the dual map $\sE\twoheadrightarrow \sV^*$.
Consider the $\p^{m-r+1}$-bundle $Z:=\p_{\p^l}(\sK)$, with natural projection $q:Z\to \p^l$.
The surjection $\sE\twoheadrightarrow\sV^*$ induces a rational map 
$p:X\map Z$ over $\p^l$, which restricts to a surjective morphism 
$p_0:X_0\to Z$, where $X_0$ is the complement in $X$ of the $\p^{r-2}$-subbundle 
$\p(\sV^*)\subset \p(\sE)$. 
As in \ref{V_in_E}, we have $\sH|_{X_0} = T_{X_0/Z}$.
By Lemma~\ref{lemma:foliation_morphism}, $\sF|_{X_0}$ is the pullback via $p_0$
of a rank 1 foliation $\sG\subsetneq T_Z$.
One checks easily that $\sG\simeq q^*\det(\sV^*)$. 
This proves (4).

In order to prove (5), recall from \ref{V_in_E} that, since $l=1$, 
$\sH|_F$ is a degree 0 foliation of rank $r-1$ on 
$F\simeq \p^m$ for \emph{any} fiber $F$ of $\pi$.
Thus the map $\det(\sH)\hookrightarrow\wedge^{r-1}T_X$ vanishes along a closed subset 
of dimension equal to $1+(r-2)=r-1\le \dim(X)-3$.
I.e., $\sH$ is a subbundle of $\sF$ in codimension $\le 2$.
By lemma \ref{lemma:local_freeness},
$\sF$ is locally free and $W=\emptyset$. 
If moreover $\sV\simeq\sO_{\p^1}(-1)$, then
$$
\begin{array}{cccl}
H^1\big(X,\sH\otimes \pi^*\det(\sV)\big) & \simeq  & 
H^1\big(X,\pi^*\sV(1)\otimes \pi^*\sO_{\p^1}(-1)\big)
& \\
& \simeq & H^1\big(\p^1,\sO_{\p^1}(-2)^{\oplus 2}\otimes\sE\big) 
& \text{ by Leray's spectral sequence}\\
& = & 0 & \text{ since $\sE$ is ample.}
\end{array}
$$
Hence $\sF\simeq \sH\oplus \pi^*\det(\sV^*)$.
\end{proof}

Our next goal is to classify  locally free sheaves $\sE$
on $\p^l$ for which $X=\p_{\p^l}(\sE)$ admits a del Pezzo foliation $\sF\nsubseteq T_{X/\p^l}$.
For that purpose, we first recall the definition and basic properties of the Atiyah class of  
locally free sheaves on smooth varieties.

\begin{say}[The Atiyah class of a locally free sheaf]  \label{say:atiyah}
Let $T$ be a smooth variety, and $\sE$ a locally free sheaf of rank $m+1\geq 1$ on $T$.
Let $J_T^1(\sE)$ be the sheaf of $1$-jets of $\sE$. 
I.e., as a sheaf of abelian groups on $T$, 
$J_T^1(\sE)\simeq \sE\oplus (\Omega_T^1\otimes\sE)$,  and the $\sO_T$-module structure is given 
by $f(e,\alpha)=(fe,f\alpha-df\otimes e)$, where $f$, $e$ and $\alpha$ are 
local sections of $\sO_T$, $\sE$ and $\Omega_T^1\otimes\sE$, respectively. 
The \emph{Atiyah class} of $\sE$ is defined to be the element 
$at(\sE)\in H^1(T,\sE\textit{nd}(\sE)\otimes\Omega_T^1)$ 
corresponding to the Atiyah extension 
$$
0\to \Omega_T^1\otimes\sE \to J_T^1(\sE) \to \sE \to 0.
$$
It can be explicitly described as follows. 
Choose an affine open cover $(U_i)_{i\in I}$ of $T$ such that $\sE$ admits a frame
$f_i:\sO_{U_i}^{m+1} \overset{\sim}{\to}\sE|_{U_i}$ for each $U_i$. 
For $i,j\in I$, define 
$f_{ij}:={f_j^{-1}}|_{U_{ij}}\circ {f_i}|_{U_{ij}}$. Then 
$$
at(\sE)=\big[(-{f_j}|_{U_{ij}}\circ {df_{ij}}|_{U_{ij}}\circ {f_i^{-1}}|_{U_{ij}})_{i,j}\big]\in 
H^1(T,\sE\textit{nd}(\sE)\otimes\Omega_T^1).
$$ 
(See \cite[Proof of Theorem 5]{atiyah57}.).

Set $X:=\p_T(\sE)$, and denote by $\pi:X\to T$ the natural projection.
The push-forwarded Euler sequence
$0 \to \sO_T \to \sE\textit{nd}(\sE) \to \pi_* T_{X/T} \to 0$
yields a map
$$
H^1(T,\sE\textit{nd}(\sE)\otimes \Omega_T^1) \ \to \ 
H^1(T,\pi_*T_{X/T}\otimes \Omega_T^1) \ \simeq \ 
H^1(X,T_{X/T}\otimes \pi^*\Omega_T^1),
$$
where the last isomorphism is given by Leray's spectral sequence.
We denote by $\bar{at}(\sE)\in H^1(X,T_{X/T}\otimes \pi^*\Omega_T^1)$ 
the image of $at(\sE)$ under this map.

We claim that $\bar{at}(\sE)$ is the class in $ H^1(X,T_{X/T}\otimes \pi^*\Omega_T^1)$
of the exact sequence 
\begin{equation} \label{eq:atiyah}
0 \ \to \  T_{X/T} \ \to \  T_X \ \to \ \pi^*T_T\ \to \ 0 \ .
\end{equation}
To show this, we compute a cocycle that represents the extension class of \eqref{eq:atiyah}.
Let $(U_i)_{i\in I}$ be the affine open cover  of $T$ chosen above.
By shrinking $U_i$ if necessary, we may assume that $T_T$ admits a frame
$t_i:\sO_{U_i}^{l} \overset{\sim}{\to}{T_T}|_{U_i}$ for each $U_i$, where $l=\dim(T)$.
Let $t_i^{\vee}$ be dual frame of $\Omega_T^1$, and set
$\pi_i:=\pi|_{U_i}$.
The frame $f_i$ induces an isomorphism $U_i\times \p^m  \simeq \p_{U_i}(\sE|_{U_i})$ 
over $U_i$,
and a splitting $s_i:\pi_i^* ({T_T}|_{U_i})\to {T_{X}}|_{V_i}$
of \eqref{eq:atiyah} over $V_i:=\pi_i^{-1}(U_i)$.
For $i,j\in I$, define 
$$
a_{i,j}:= ({s_j}|_{V_{ij}} \otimes {{id}_{\pi^*\Omega_T^1}}|_{V_{ij}}
-{s_i}|_{V_{ij}} \otimes {{id}_{\pi^*\Omega_T^1}}|_{V_{ij}})
({\pi_i^*t_i}|_{V_{ij}}\otimes {\pi_i^*t_i^{\vee}}|_{V_{ij}}),
$$
where $t_i$ is viewed as a line vector whose entries are local sections of $T_T$, and
$t_i^{\vee}$ as a column vector. 
Then $\big[(a_{i,j})_{i,j}\big]\in H^1(X,T_{X/T}\otimes \pi^*\Omega_T^1)$
is a cocycle representing the class of \eqref{eq:atiyah}. 
Write $df_{ij}=(\alpha_{ijkn})_{k,n\in\{0,\ldots,m\}}$ with 
$\alpha_{ijkn}\in H^0(U_{ij},{\Omega_T^1}|_{U_{ij}})$ for $0\le k,n\le m$,
and set $\overline{df_{ij}}:=\Big(\alpha_{ijkn}y_n\frac{\partial}{\partial y_n}\Big)_{k,n\in\{0,\ldots,m\}}$,
where $(y_0:\cdots:y_m)$ are homogeneous coordinates on $\p^m$ 
associated to  the frame $f_i$, and 
$\alpha_{ijkn}y_n\frac{\partial}{\partial y_n}\in 
H^0\big(V_{ij},{T_{X/T}\otimes \pi^*\Omega_T^1}|_{V_{ij}}\big)$.
 We get
\begin{eqnarray*}
a_{i,j} & =  & \big({s_j}|_{V_{ij}} \otimes {{id}_{\pi^*\Omega_T^1}}|_{V_{ij}}
-{s_i}|_{V_{ij}} \otimes {{id}_{\pi^*\Omega_T^1}}|_{V_{ij}}\big)
\big({\pi_i^*t_i}|_{V_{ij}}\otimes {\pi_i^*t_i^{\vee}}|_{V_{ij}}\big)\\
& = & \big({s_j}|_{V_{ij}}({\pi_i^*t_i}|_{V_{ij}})-{s_i}|_{V_{ij}}({\pi_i^*t_i}|_{V_{ij}})\big)
\otimes {\pi_i^*t_i^{\vee}}|_{V_{ij}}\\
& = & \big(- {f_j}|_{U_{ij}}\cdot {{\overline{df_{ij}}}}|_{U_{ij}}\big)
\otimes {\pi_i^*t_i^{\vee}}|_{V_{ij}}.
\end{eqnarray*}
This proves our claim.
\end{say}

\begin{say}[Equivariance for locally free sheaves]\label{say:equivariant}
Let the notation be as in \ref{say:atiyah}.
Let $\sW$ be an invertible sheaf on $T$, and $V\in H^0(T,T_T\otimes\sW)$
a twisted vector field on $T$.
We say that $\sE$ is \emph{$V$-equivariant} if there exists a $\mathbb C$-linear map
$\tilde V: \sE\to \sW\otimes\sE$ lifting the derivation $V:\sO_T\to \sW$ 
(see \cite{carrell_lieberman}).
By \cite[Proposition 1.1]{carrell_lieberman}, $\sE$ is $V$-equivariant
if and only if $V_*at(\sE)\in H^1(T,\sE\textit{nd}(\sE)\otimes\sW)$
vanishes.
\end{say}

\begin{lemma}\label{lemma:extending_morphism}
Let $T$ be a smooth variety, and $\sE$ a locally free sheaf of rank $m+1\geq 1$ on $T$.
Set $X:=\p_T(\sE)$, denote by $\pi:X\to T$ the natural projection, and by $\sO_X(1)$
the tautological line bundle on $X$.
Suppose that there are locally free subsheaves
$i:\sH\into T_{X/T}$ and $j:\sQ\into T_T$ fitting into an exact sequence
$$
0 \ \to \ \sH \ \to \ \sF \ \to \ \pi^*\sQ \ \to \ 0.
$$
Denote by $e\in H^1(X,\sH\otimes \pi^*\sQ^*)$
the class of this extension.

\begin{enumerate}

\item There exists a morphism of $\sO_X$-modules 
$\sF \to T_X$ extending
$i:\sH \to T_{X/Y}$ and $\pi^*j:\pi^*\sQ \to \pi^* T_T$ if and only if 
$i_*e=j^*\bar{at}(\sE)$ in 
$H^1(X,T_{X/T}\otimes \pi^*\sQ^*)\simeq H^1(T,\pi_*T_{X/T}\otimes\sQ^*)$.

\item The set of morphisms of $\sO_X$-modules $\sF\to T_X$ extending $i$ and $\pi^*j$ is either empty,
or it is a torsor under
$Hom_{\sO_X}(\pi^*\sQ,T_{X/T})\simeq Hom_{\sO_T}(\sQ,\sE\textit{nd}(\sE))$.

\item Suppose that $\sH(-1)=\pi^*\sV$ for some locally free sheaf $\sV$ on $T$. 
Notice that
$i:\sH\to T_{X/T}$ induces a map $\sV\to \pi_*(T_{X/Y}(-1))\simeq \sE^*$.
Denote by $\bar e$ the image of $e$ under the composite map
$H^1(X,\sH\otimes \pi^*\sQ^*)\simeq H^1(T,\sV\otimes\sE\otimes\sQ^*)
\to H^1(T,\sE\textit{nd}(\sE)\otimes\sQ^*)$.
Then 
there exists a morphism of $\sO_X$-modules 
$\sF \to T_X$ extending
$i$ and $\pi^*j$ if and only if
$\bar e-j^*at(\sE) \in H^1(T,\sE\textit{nd}(\sE)\otimes\sQ^*)$
is in the image of the natural map
$H^1(T,\sQ^*)\to H^1(T,\sE\textit{nd}(\sE)\otimes \sQ^*)$.
\end{enumerate}
\end{lemma}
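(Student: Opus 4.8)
\textbf{Proof plan for Lemma~\ref{lemma:extending_morphism}.}

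The plan is to phrase everything in terms of the standard \v{C}ech computation of the extension classes involved, using the affine open cover $(U_i)_{i\in I}$ of $T$ and the frames $f_i$ of $\sE$ fixed in \ref{say:atiyah}, and to match cocycles representing $i_*e$ and $j^*\bar{at}(\sE)$ in $H^1(X,T_{X/T}\otimes\pi^*\sQ^*)$. First I would set up the bookkeeping: over each $V_i=\pi_i^{-1}(U_i)$ the frame $f_i$ gives the isomorphism $U_i\times\p^m\simeq\p_{U_i}(\sE|_{U_i})$ and hence a local splitting $s_i:\pi_i^*(T_T|_{U_i})\to T_X|_{V_i}$ of the Atiyah sequence \eqref{eq:atiyah}, whose difference cocycle $(a_{i,j})$ was computed in \ref{say:atiyah} and represents $\bar{at}(\sE)$. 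Similarly, I would choose local splittings $\sigma_i:\pi^*\sQ|_{V_i}\to\sF|_{V_i}$ of the given extension $0\to\sH\to\sF\to\pi^*\sQ\to 0$; their difference cocycle $(\sigma_j-\sigma_i)$ with values in $\sH\otimes\pi^*\sQ^*$ represents $e$. A morphism $\sF\to T_X$ extending $i$ and $\pi^*j$ amounts, over each $V_i$, to giving a lift $\psi_i:\pi^*\sQ|_{V_i}\to T_X|_{V_i}$ of $\pi^*j$; the obstruction to gluing the local data $i\oplus\psi_i$ into a global map is precisely the class $i_*e-j^*\bar{at}(\sE)$, since on overlaps the mismatch is $i\circ(\sigma_j-\sigma_i)\otimes\mathrm{id}-(s_j-s_i)\circ(\pi^*j\otimes\mathrm{id})$ (the first term because the $\sF$-splittings disagree, the second because the $T_X$-splittings disagree). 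This gives (1).

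For (2), once one morphism $\varphi_0:\sF\to T_X$ extending $i$ and $\pi^*j$ is fixed, any other such $\varphi$ satisfies $\varphi-\varphi_0=0$ on $\sH$ and maps into $T_{X/T}$ after projecting to $\pi^*T_T$ (since both induce $\pi^*j$), so $\varphi-\varphi_0$ factors through a map $\pi^*\sQ\to T_{X/T}$; conversely adding any such map preserves the extension property. This identifies the set of extensions, when nonempty, with a torsor under $\mathrm{Hom}_{\sO_X}(\pi^*\sQ,T_{X/T})$, and the isomorphism with $\mathrm{Hom}_{\sO_T}(\sQ,\sO_T)\otimes\pi_*T_{X/T}$, i.e.\ $\mathrm{Hom}_{\sO_T}(\sQ,\sE\textit{nd}(\sE))$, follows by the projection formula together with $\pi_*\sO_X=\sO_T$, $\pi_*T_{X/T}\simeq\sE\textit{nd}(\sE)/\sO_T$ —here I should be slightly careful: the relevant sheaf is $\pi_*T_{X/T}$, and I would either state (2) with $\pi_*T_{X/T}$ in place of $\sE\textit{nd}(\sE)$ or note that the Euler sequence $0\to\sO_T\to\sE\textit{nd}(\sE)\to\pi_*T_{X/T}\to 0$ makes the two differ only by $\mathrm{Hom}_{\sO_T}(\sQ,\sO_T)$, which is absorbed in the reparametrization. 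The cleanest route is to use $\pi_*T_{X/T}$ throughout and invoke the Euler sequence only at the very end if the statement literally wants $\sE\textit{nd}(\sE)$.

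For (3), I would trace through the identifications $H^1(X,\sH\otimes\pi^*\sQ^*)\simeq H^1(X,\pi^*(\sV\otimes\sE)\otimes\sQ^*)\simeq H^1(T,\sV\otimes\sE\otimes\sQ^*)$, the first using $\sH(-1)\simeq\pi^*\sV$ and $\pi_*(\sO_X(1))\simeq\sE$ hence $\pi_*(\sH\otimes\pi^*\sQ^*)\simeq\sV\otimes\sE\otimes\sQ^*$ with vanishing higher direct images, the second Leray. Under the map $\sV\otimes\sE\to\sE^*\otimes\sE=\sE\textit{nd}(\sE)$ coming from $\sV\to\sE^*$, the class $e$ goes to $\bar e$, while by the explicit cocycle description $\bar{at}(\sE)$ is the image of $at(\sE)$ under the Euler-quotient map $\sE\textit{nd}(\sE)\to\pi_*T_{X/T}$; therefore the condition $i_*e=j^*\bar{at}(\sE)$ of (1) becomes, after pushing into $H^1(T,\pi_*T_{X/T}\otimes\sQ^*)$, the statement that $\bar e-j^*at(\sE)$ dies in $H^1(T,\pi_*T_{X/T}\otimes\sQ^*)$, i.e.\ lies in the image of $H^1(T,\sQ^*)\to H^1(T,\sE\textit{nd}(\sE)\otimes\sQ^*)$ by the long exact cohomology sequence of the Euler sequence tensored with $\sQ^*$.

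The main obstacle I anticipate is purely bookkeeping: keeping the twists and the Leray identifications consistent, and in particular checking that the natural map $\sH\otimes\pi^*\sQ^*\to T_{X/T}\otimes\pi^*\sQ^*$ induced by $i$, after taking $\pi_*$ and composing with $\sV\otimes\sE\to\sE\textit{nd}(\sE)$, genuinely agrees on cocycles with the composite used to define $\bar{at}(\sE)$ from $at(\sE)$. Once the cocycle dictionary between \ref{say:atiyah} and the present extensions is set up carefully, all three parts are formal consequences of the long exact sequence in cohomology and the projection formula; no hard geometric input is needed beyond $R^{>0}\pi_*\sO_X(\le 1)=0$ and the computation of $\bar{at}(\sE)$ already carried out in \ref{say:atiyah}.
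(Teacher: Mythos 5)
Your proposal is correct and follows essentially the same route as the paper's proof: part (1) is the identification of the existence of a lift with the equality $i_*e=j^*\bar{at}(\sE)$, part (2) is the standard difference-of-two-lifts torsor argument, and part (3) is read off from the long exact cohomology sequence of the pushed-forward Euler sequence tensored with $\sQ^*$, exactly as in the paper. The only cosmetic difference is in (1), where you compute the obstruction by explicit \v{C}ech cocycles while the paper compares the pushout $T_{X/T}\sqcup_{\sH}\sF$ with the pullback $T_X\times_{\pi^*T_T}\pi^*\sQ$; your caveat in (2) about $\pi_*T_{X/T}$ versus $\sE\textit{nd}(\sE)$ is also well taken, since the paper's own proof only establishes the torsor structure under $Hom_{\sO_X}(\pi^*\sQ,T_{X/T})$.
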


\begin{proof}
Notice that $i_*e$ is the extension class of the lower exact sequence
in the commutative diagram

\centerline{
\xymatrix{
0 \ar[r] &   \sH \ar[r]\ar[d]^{i} & \sF \ar[r]\ar[d] & \pi^*\sQ \ar@{=}[d]\ar[r] & 0 \\
0 \ar[r] &  T_{X/T} \ar[r] &  T_{X/T} \sqcup_\sH \sF \ar[r] & \pi^*\sQ \ar[r] & 0
}
}
\noindent where the left hand square is co-cartesian.
Similarly, $j^*(\bar{at}(\sE)) $ is the extension class of the upper exact sequence in
the commutative diagram

\centerline{
\xymatrix{
0 \ar[r] &  T_{X/T}  \ar[r]\ar@{=}[d] &  T_X \times_{\pi^*T_T} \pi^*\sQ \ar[r]\ar[d] & \pi^*\sQ \ar[d]^{\pi^*j}\ar[r] & 0 \\
0 \ar[r] &  T_{X/T} \ar[r] &  T_X \ar[r] & \pi^*T_T \ar[r] & 0
}
}
\noindent where the right hand square is cartesian.
Thus there exists a morphism of $\sO_X$-modules $k:\sF \to T_X$ that fits into a commutative diagram

\centerline{
\xymatrix{
0 \ar[r] &   \sH \ar[r]\ar[d]^{i} & \sF \ar[r]\ar[d]^{k} & \pi^*\sQ \ar[d]^{\pi^*j}\ar[r] & 0 \\
0 \ar[r] &  T_{X/T} \ar[r] &  T_X \ar[r] & \pi^*T_T \ar[r] & 0
}
}
\noindent if and only if $i_*e=j^*(\bar{at}(\sE))$ in 
$H^1(X,T_{X/T}\otimes \pi^*\sQ^*)\simeq H^1(T,\pi_*T_{X/T}\otimes\sQ^*)$.
This proves (1).

Let $k_1,k_2:\sF \to T_X$ be morphisms of $\sO_X$-modules
extending $i$ and $\pi^*j$. Then their difference
$k_1-k_2$ lies in $Hom_{\sO_X}(\sF,T_{X/T})\subset Hom_{\sO_X}(\sF,T_X)$, 
and $(k_1-k_2)|_{\sH}\equiv 0$. Conversely, if 
$\varphi \in Hom_{\sO_X}(\pi^*\sQ,T_{X/T})$, then $k_1+\varphi\circ p:\sF\to T_X$
extends $i$ and $\pi^*j$, where $p:\sF\to \pi^*\sQ$ is the surjective map from above.
This proves (2).

For statement $(3)$, 
observe that $j^*(\bar{at}(\sE))$ is the image of $at(\sE)$ under the composite map 
$$H^1(T,\sE\textit{nd}(\sE)\otimes\Omega_T^1 ) \to H^1(T,\sE\textit{nd}(\sE)\otimes\sQ^* )\to 
H^1(T,\pi_*T_{X/T}\otimes\sQ^*).$$
Moreover, $i:\sH\to T_{X/T}$ induces a map $\sV\to \pi_*(T_{X/Y}(-1))\simeq \sE^*$.
The class $i_*e$ is the image of $e\in H^1(\sH\otimes \pi^*\sQ^*)\simeq 
H^1(T,\sV\otimes\sE\otimes\sQ^*)$ under the composite map
$$H^1(T,\sV\otimes\sE\otimes\sQ^*) \to  H^1(T,\sE\textit{nd}(\sE)\otimes\sQ^* )\to 
H^1(T,\pi_*T_{X/T}\otimes\sQ^*)$$
since the map $\pi^*\sE\textit{nd}(\sE)\to T_{X/T}$ factors through
the natural map $\pi^*\sE\textit{nd}(\sE)\to \pi^*\sE^*(1)$.
The cohomology of the exact sequence 
$0 \to \sO_T \to \sE\textit{nd}(\sE) \to \pi_* T_{X/T} \to 0$
twisted by $\sQ^*$ yields the exact sequence
$$ H^1(T,\sQ^*) \to H^1(T,\sE\textit{nd}(\sE)\otimes\sQ^* )\to H^1(T,\pi_*T_{X/T}\otimes \sQ^*).$$
These observations put together prove (3). 
\end{proof}

We return to the problem of classifying locally free sheaves $\sE$
on $\p^l$ for which $X=\p_{\p^l}(\sE)$ admits a del Pezzo foliation $\sF\nsubseteq T_{X/\p^l}$.

\begin{thm}\label{thm:classification}
Let $\sE$ be an ample locally free sheaf of rank $m+1\geq 3$ on $\p^l$, 
and set $X:=\p_{\p^l}(\sE)$.
Denote by $\sO_X(1)$ the tautological line bundle on $X$, and by
$\pi:X\to \p^l$ the natural projection.
Let $r$ be an integer such that $2\leq r\leq m+l-1$.
Then there exists a foliation $\sF\nsubseteq T_{X/\p^l}$ of rank $r$ on $X$ 
such that $\det(\sF)\simeq\sO_X(r-1)$ if and only if
one of the following holds.
\begin{enumerate}
\item $l=1$, $r=2$, and $\sE\simeq\sO_{\p^1}(1)\oplus\sK$ for some ample vector bundle
$\sK$ on $\p^1$ such that $\sK\not\simeq\sO_{\p^1}(a)^{\oplus m}$ for
any integer $a$.
\item $l=1$, $r=2$, and 
$\sE\simeq \sO_{\p^1}(2)\oplus \sO_{\p^1}(a)^{\oplus m}$
for some integer $a \ge 1$.
\item $l=1$, $r= 3$, and 
$\sE\simeq \sO_{\p^1}(1)^{\oplus 2}\oplus \sO_{\p^1}(a)^{\oplus m-1}$
for some integer $a \ge 1$.
\item $l \ge 2$, $r=2$, and 
there exists a $V$-equivariant vector bundle $\sK$ on $\p^l$
for some
$V\in H^0(\p^l,T_{\p^l}\otimes\sO_{\p^l}(-1))\setminus \{0\}$
and an exact sequence 
$0\to\sK
\to\sE\to\sO_{\p^l}(1)
\to 0$. 
\end{enumerate}
\end{thm}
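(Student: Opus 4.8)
The strategy is to translate the existence of a foliation $\sF\nsubseteq T_{X/\p^l}$ with $\det(\sF)\simeq\sO_X(r-1)$ into cohomological conditions on $\sE$ via Theorem~\ref{thm:description} and Lemma~\ref{lemma:extending_morphism}, and then to unwind those conditions in each of the cases $(l,r)\in\{(1,2),(1,3),(l,2)\text{ with }l\ge 2\}$ singled out in Theorem~\ref{thm:description}(1). For the forward direction, given such an $\sF$, Theorem~\ref{thm:description}(2),(3) produces a subbundle $\sV\subset\sE^*$ of rank $r-1$ with $\sH:=\sF\cap T_{X/\p^l}\simeq(\pi^*\sV)(1)$, an inclusion $j:\sQ:=\det(\sV^*)\into T_{\p^l}$, and the exact sequence $0\to\sH\to\sF\to\sI_W\otimes\pi^*\sQ\to 0$ with $\codim_X W\ge 2$; moreover the possible $\sV$ are $\sO_{\p^1}(-1)$, $\sO_{\p^1}(-2)$, $\sO_{\p^1}(-1)^{\oplus 2}$ (for $l=1$) or $\sO_{\p^l}(-1)$ (for $l\ge 2$). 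I would first reduce to the case $W=\emptyset$: when $l=1$ this is automatic by Theorem~\ref{thm:description}(5); when $l\ge 2$ one has $r=2$, $\sV=\sO_{\p^l}(-1)$, and I would invoke Lemma~\ref{lemma:local_freeness} (as in the proof of Theorem~\ref{thm:description}(5), using that $\sH|_F$ is a degree-$0$ foliation on each fiber $F\simeq\p^m$, so $\sH$ is a subbundle of $\sF$ in codimension $\le 2$) to conclude $\sF$ is locally free and $W=\emptyset$. Then apply Lemma~\ref{lemma:extending_morphism}(3): the extension class $e\in H^1(X,\sH\otimes\pi^*\sQ^*)\simeq H^1(\p^l,\sV\otimes\sE\otimes\sQ^*)$ gives a class $\bar e\in H^1(\p^l,\sEnd(\sE)\otimes\sQ^*)$, and existence of the inclusion $\sF\into T_X$ forces $\bar e - j^*at(\sE)$ to lie in the image of $H^1(\p^l,\sQ^*)\to H^1(\p^l,\sEnd(\sE)\otimes\sQ^*)$.

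Now I would split into cases. For $l\ge 2$ (so $r=2$, $\sV\simeq\sO_{\p^l}(-1)$, $\sQ\simeq\sO_{\p^l}(1)$): here $H^1(\p^l,\sQ^*)=H^1(\p^l,\sO_{\p^l}(-1))=0$, so the condition becomes $j^*at(\sE)=V_*at(\sE)=0$ for the twisted vector field $V=j\in H^0(\p^l,T_{\p^l}\otimes\sO_{\p^l}(-1))\setminus\{0\}$, which by \ref{say:equivariant} (\cite[Prop.~1.1]{carrell_lieberman}) says exactly that $\sE$ — and hence the kernel $\sK$ of $\sE\twoheadrightarrow\sO_{\p^l}(1)$, since $\sO_{\p^l}(1)$ is automatically equivariant — is $V$-equivariant; combined with the exact sequence $0\to\sK\to\sE\to\sO_{\p^l}(1)\to 0$ this is precisely case (4). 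For $l=1$: $\sQ^*\simeq\sO_{\p^1}(-\deg\sV^*)$ has $H^1(\p^1,\sQ^*)\ne 0$ in general, so the image condition is vacuous once one checks the relevant map $H^1(\p^1,\sO_{\p^1}(-\deg\sV^*))\to H^1(\p^1,\sEnd(\sE)(-\deg\sV^*))$ is surjective — which follows because $\sEnd(\sE)$ contains $\sO_{\p^1}$ as a direct summand (via $\tfrac1{m+1}\mathrm{tr}$ splitting $\sO\into\sEnd(\sE)$). Thus on $\p^1$ the only constraints come from: (i) the numerical data $\det\sF\simeq\sO_X(r-1)$, i.e.\ $\det\sV^*\otimes$(fiber contribution) matching, which pins down $\deg\sV^*$ to be the value making $\sO_{\p^1}(\deg\sV^*)\into T_{\p^1}=\sO_{\p^1}(2)$, forcing $\deg\sV^*\in\{1,2\}$; (ii) ampleness of $\sE$ together with $\sV\subset\sE^*$ a subbundle. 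Enumerating: if $\sV\simeq\sO_{\p^1}(-1)$ then $r=2$, $\sK=\ker(\sE\twoheadrightarrow\sO_{\p^1}(1))$ is ample of rank $m$, and $\sE\simeq\sO_{\p^1}(1)\oplus\sK$ by the splitting coming from $\sQ=\det\sV^*=\sO_{\p^1}(1)$ mapping into $T_{\p^1}=\sO_{\p^1}(2)$ — here I must also rule out $\sK\simeq\sO_{\p^1}(a)^{\oplus m}$, because in that case $X\simeq\p^m\times\p^1$ up to twist and a direct check (or Theorem~\ref{thma}/the description of del Pezzo foliations on $\p^m$-bundles) shows the putative foliation would actually lie in $T_{X/\p^l}$ or fail to be del Pezzo; this gives case (1). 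If $\sV\simeq\sO_{\p^1}(-2)$ then $r=2$, $\det\sV^*=\sO_{\p^1}(2)$, and the exact sequence $0\to\sK\to\sE\to\sO_{\p^1}(2)\to 0$ with $\sE$ ample and $\sK$ ample of rank $m$ splits and forces $\sK\simeq\sO_{\p^1}(a)^{\oplus m}$ with $a\ge 1$ (otherwise the extension cannot be made to satisfy the Atiyah-class condition — one computes $H^1(\p^1,\sK^\vee(2))$ and uses the equivariance criterion), giving case (2). If $\sV\simeq\sO_{\p^1}(-1)^{\oplus 2}$ then $r=3$, $\det\sV^*=\sO_{\p^1}(2)$, $\sK$ has rank $m-1$, and the analogous analysis gives $\sE\simeq\sO_{\p^1}(1)^{\oplus 2}\oplus\sO_{\p^1}(a)^{\oplus m-1}$ with $a\ge 1$, i.e.\ case (3).

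For the converse (each of (1)--(4) yields a foliation), in each case I would exhibit $\sV\subset\sE^*$ explicitly ($\sV=\sO_{\p^1}(-1)$ dual to the $\sO_{\p^1}(1)$-summand in (1), $\sV=\sO_{\p^1}(-2)$ in (2), $\sV=\sO_{\p^1}(-1)^{\oplus 2}$ in (3), $\sV=\sO_{\p^l}(-1)$ with $\sE^*\twoheadrightarrow\sV^*=\sO_{\p^l}(1)$ dual to $\sK\into\sE$ in (4)), set $\sH:=(\pi^*\sV)(1)$, $\sQ:=\det\sV^*$, take $j:\sQ\into T_{\p^l}$ to be the standard inclusion (in (4), $j=V$), and use Lemma~\ref{lemma:extending_morphism}(3): the required compatibility $\bar e-j^*at(\sE)\in\mathrm{Im}\,H^1(\p^l,\sQ^*)$ holds because either $H^1(\p^l,\sQ^*)$ surjects (cases (1)--(3), via the trace splitting) or $j^*at(\sE)=0$ by the assumed $V$-equivariance (case (4)), so a map $\sF\to T_X$ extending $\sH\into T_{X/\p^l}$ and $\pi^*j$ exists; one then checks this $\sF$ is a genuine foliation (saturated and Lie-bracket closed — the latter because $\sF$ is, by Theorem~\ref{thm:description}(4), the pullback of $T_Z\otimes q^*\det\sV$-section foliation on $Z=\p_{\p^l}(\sK)$, and pullbacks of foliations are foliations by Lemma~\ref{lemma:foliation_morphism} read in reverse, or directly) with $\det\sF\simeq\sO_X(r-1)$ and $\sF\nsubseteq T_{X/\p^l}$, and that $-K_\sF=(r-1)\sO_X(1)$ is ample so $\sF$ is del Pezzo of index $r-1$.

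The main obstacle I expect is the bookkeeping in the $l=1$ cases — precisely, showing that the Atiyah-class obstruction in Lemma~\ref{lemma:extending_morphism}(3) is \emph{always} satisfiable when $H^1(\p^1,\sQ^*)$ is nonzero \emph{except} in the excluded subcase $\sK\simeq\sO_{\p^1}(a)^{\oplus m}$ of case (1), and conversely that in cases (2),(3) ampleness of $\sE$ plus the numerical constraint genuinely forces $\sK$ to be a direct sum of identical line bundles. This requires a careful computation of the maps $H^1(\p^1,\sQ^*)\to H^1(\p^1,\sEnd(\sE)\otimes\sQ^*)$ and of $at(\sE)$ for $\sE$ a sum of line bundles on $\p^1$, together with the observation that on $\p^1$ every vector bundle splits, so $\sEnd(\sE)\otimes\sQ^*$ and its cohomology are completely explicit; the delicate point is matching the splitting type of $\sK$ with the condition that the resulting $\sF$ not drop into $T_{X/\p^l}$, for which I would appeal to the already-established description in Proposition~\ref{prop:classification_bundle} and Theorem~\ref{thm:description}.
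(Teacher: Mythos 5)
There is a genuine gap, and it sits at the heart of the argument. Your central cohomological claim for $l=1$ --- that the map $H^1(\p^1,\sQ^*)\to H^1(\p^1,\sE\textit{nd}(\sE)\otimes\sQ^*)$ is surjective because the trace splits $\sO\into\sE\textit{nd}(\sE)$ --- is false: a split injection of sheaves gives an \emph{injection} on $H^1$, not a surjection, and for $\sE=\bigoplus\sO_{\p^1}(a_i)$ the target decomposes as $\bigoplus_{i,j}H^1(\sO_{\p^1}(a_i-a_j+\deg\sQ^*))$, of which the image of $H^1(\sQ^*)$ is only the diagonal scalar part. If the condition were vacuous as you claim, every ample $\sE$ on $\p^1$ would carry such a foliation, contradicting the statement itself (cases (2) and (3) force $\sK\simeq\sO_{\p^1}(a)^{\oplus\bullet}$, and that constraint is exactly the non-surjectivity you are discarding). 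Your own case-(2) discussion then quietly reinstates a constraint, but via an appeal to ``the equivariance criterion,'' which is not the right test when $\det(\sV)\simeq\sO_{\p^1}(-2)$ since $H^1(\p^1,\sO_{\p^1}(-2))\neq 0$; the correct condition is that $j_*at(\sK)$ be a \emph{scalar} class, i.e.\ $a_0=\cdots=a_k$.

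Relatedly, you apply the obstruction to the wrong bundle. The paper uses Theorem~\ref{thm:description}(4) to reduce the entire problem to lifting $q^*j$ to an inclusion $q^*\det(\sV^*)\into T_Z$ on $Z=\p_{\p^l}(\sK)$, so the relevant Atiyah class is $at(\sK)$, there is no unknown extension class, and integrability is automatic (rank one). Working on $X$ with $at(\sE)$ you must carry the class $\bar e$ of $0\to\sH\to\sF\to\pi^*\sQ\to 0$, which you drop: for $l\ge 2$ the condition is $\bar e=j^*at(\sE)$ (an existence question over the image of $H^1(\p^l,\sV\otimes\sE\otimes\sQ^*)$), not $V_*at(\sE)=0$, and in any case ``$\sE$ is $V$-equivariant'' is not equivalent to the theorem's condition that $\sK$ is $V$-equivariant. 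Two further points: the exclusion of $\sK\simeq\sO_{\p^1}(a)^{\oplus m}$ in case (1) is proved in the paper by showing that then $Z\simeq\p^1\times\p^{m-1}$ and every nonzero section of $T_Z\otimes q^*\det(\sV)$ vanishes along a hypersurface, so $q^*\det(\sV^*)$ cannot be saturated in $T_Z$ --- not because $X\simeq\p^m\times\p^1$ (false unless $a=1$) or because the foliation drops into $T_{X/\p^1}$; and your reduction to $W=\emptyset$ for $l\ge 2$ is unsupported (Theorem~\ref{thm:description}(5) assumes $l=1$ and $m\ge r+1$, and the codimension count behind it does not carry over), though the paper's route through $Z$ makes that reduction unnecessary.
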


\begin{proof}
First we show that these are necessary conditions. 
Suppose  there exists a foliation $\sF\nsubseteq T_{X/\p^l}$ of rank $r$ on $X$ 
such that $\det(\sF)\simeq\sO_X(r-1)$.
By Theorem~\ref{thm:description}, we know that the possible values for the pair 
$(l,r)$ are $(1,2)$, $(1,3)$ and $(l,2)$, with $l\geq 2$.
Set $\sH:=\sF\cap T_{X/\p^l}$, and recall from Theorem~\ref{thm:description}
that $\sH\simeq (\pi^*\sV)(1)$, where $\sV\subset \sE^*$ is a subbundle of rank $r-1$.
Moreover if $l\geq 2$, then  $\sV\simeq \sO_{\p^l}(-1)$. 
If $l=1$, then either $\sV\simeq\sO_{\p^1}(-1)$, or $\sV\simeq\sO_{\p^1}(-2)$, 
or $\sV\simeq\sO_{\p^1}(-1)^{\oplus 2}$.
There is an inclusion $j:\det(\sV^*)\into T_{\p^l}$ .
Finally,  let $\sK$ be the kernel of the dual map $\sE\twoheadrightarrow \sV^*$, and consider
the $\p^{m-r+1}$-bundle $Z:=\p_{\p^l}(\sK)$, with natural projection $q:Z\to \p^l$.
By Theorem~\ref{thm:description}, $\sF$ is the pullback by the linear projection $X/\p^l\map Z/\p^l$
of a foliation on $Z$ induced by an inclusion $q^*\det(\sV^*)\subset T_Z$ that
lifts $q^*j:q^*\det(\sV^*)\into q^*T_{\p^l}$ .

Let  $at(\sK)\in H^1(\p^l,\sE\textit{nd}(\sK)\otimes\Omega_{\p^l}^1)$ be the Atiyah class of
$\sK$. Let $V\in H^0(\p^l,T_{\p^l}\otimes\det(\sV))$ be the section associated to  
$j:\det(\sV^*)\hookrightarrow T_{\p^l}$.
By Lemma \ref{lemma:extending_morphism},
there exists a map $q^*\det(\sV^*)\to T_Z$ lifting
$q^*j$
if and only if
$j_*at(\sK)\in H^1(\p^l,\sE\textit{nd}(\sK)\otimes\det(\sV))$ is in the image of
the natural map
$H^1(\p^l,\det(\sV))\to H^1(\p^l,\sE\textit{nd}(\sK)\otimes\det(\sV))$.

If $\sV\simeq\sO_{\p^l}(-1)$, then
$H^1(\p^l,\det(\sV))=0$. Thus 
there exists $q^*\det(\sV^*)\to T_Z$ lifting
$q^*j:q^*\det(\sV^*)\hookrightarrow q^*T_{\p^l}$ if and only if
$\sK$ is $V$-equivariant.
If $l\geq 2$, this is case (4) above.

From now on suppose $l=1$, and write $\sK\simeq \sO_{\p^1}(a_0)\oplus\cdots\oplus\sO_{\p^1}(a_{k})$,
with $k=m-r+1$.
In terms of this decomposition we have:

\medskip

\noindent 
$
\begin{array}{rcl}
{\begin{pmatrix}
at(\sO_{\p^1}(a_0)) & 0 & \cdots & 0\\
0 & \ddots & \ddots & \vdots \\
\vdots  & \ddots  & & 0\\
0  & \cdots & 0 & at(\sO_{\p^1}(a_{k}))\\
\end{pmatrix}}
& =  & at(\sK)
\end{array}
$

$
\hfill
\begin{array}{rcl}
\in H^1({\p^1},\sE\textit{nd}(\sK)\otimes\Omega_{\p^1}^1) & = &
{\begin{pmatrix}
H^1({\p^1},\sO_{\p^1}(a_0-a_0)\otimes\Omega_{\p^1}^1) & \cdots & 
H^1({\p^1},\sO_{\p^1}(a_0-a_{k})\otimes\Omega_{\p^1}^1)\\
\vdots & & \vdots \\
H^1({\p^1},\sO_{\p^1}(a_{k}-a_0)\otimes\Omega_{\p^1}^1) & \cdots & 
H^1({\p^1},\sO_{\p^1}(a_k-a_{k})\otimes\Omega_{\p^1}^1)\\
\end{pmatrix}}\\
\end{array}
$

\medskip

If $\det(\sV)\simeq\sO_{\p^1}(-2)\simeq \Omega_{\p^1}^1$,
then 
there exists a map $q^*\det(\sV^*)\to T_Z$ lifting
$q^*j$
if and only if
$a_0=\cdots=a_{k}$.
Since $\sE$ is an ample vector bundle, this implies that 
$\sE\simeq \sV^*\oplus \sK$.
This is case (2) or (3) above.

Finally, suppose that $\sV\simeq\sO_{\p^1}(-1)$.
Since $\sE$ is ample, we must have $\sE\simeq \sO_{\p^1}(1)\oplus \sK$,
and $\sK$ must be ample. 
Suppose that $\sK\simeq\sO_{\p^1}(a)^{\oplus n}$ for
some integer $a$. Then $Z\simeq\p^1\times\p^{m-1}$.
Denote by $g:Z\to \p^{m-1}$ the second projection.
Then 
$T_Z\otimes q^*\det(\sV)\simeq 
q^*\sO_{\p^1}(-1)\oplus \big(q^*\sO_{\p^1}(-1)\otimes g^*T_{\p^{m-1}}\big)$. Thus any 
nonzero global section of $T_Z\otimes q^*\det(\sV)$ vanishes along an hypersurface in $Z$, 
contradicting the fact that $q^*\det(\sV)$ is saturated in $T_Z$.

\medskip

Conversely, let us show that these are sufficient conditions.
Given $l$, $\sE$ and $r$ satisfying one of the conditions above, we will construct 
a foliation $\sF\nsubseteq T_{X/\p^l}$ of rank $r$ on $X$ 
such that $\det(\sF)\simeq\sO_X(r-1)$ in steps.
First we will find a vector bundle $\sV$ of rank $r-1$ on $\p^l$ 
fitting into an exact sequence of vector bundles 
$$
0 \ \to \ \sK \ \to \ \sE \ \to \ \sV^*\ \to 0,
$$
and such that 
there is an inclusion $j:\det(\sV^*)\hookrightarrow T_{\p^l}$.
We then set $Z:=\p_{\p^l}(\sK)$, with natural projection $q:Z\to \p^l$.
The exact sequence above  induces a rational map 
$p:X\map Z$ over $\p^l$, which restricts to a surjective morphism 
$p_0:X_0\to Z$, where $X_0$ is the complement in $X$ of the $\p^{r-2}$-subbundle 
$\p(\sV^*)\subset \p(\sE)$. 
Note that $\codim_X(X\setminus X_0)\geq 2$.
The next step consists of lifting the inclusion $j:\det(\sV^*)\hookrightarrow T_{\p^l}$
to an inclusion $q^*\det(\sV)\subset T_Z$.
We then check that $q^*\det(\sV)$ is saturated in $T_Z$, and let
$\sF$ be the unique saturated subsheaf of  $T_X$ extending
$dp_0^{-1}\big(q^*\det(\sV^*)\big)\subset T_{X_0}$.
It is a foliation on $X$ satisfying $\det(\sF)\simeq \sO_X(r-1)$.

\medskip

Case (1). Suppose that $l=1$, $r=2$, 
and $\sE\simeq\sO_{\p^1}(1)\oplus\sK$ for some ample vector bundle
$\sK$ on $\p^1$ such that $\sK\not\simeq\sO_{\p^1}(a)^{\oplus m}$ for
any integer $a$.
We set $\sV:=\sO_{\p^1}(-1)$ and 
let $j:\sV^*\simeq \sO_{\p^1}(1)\hookrightarrow T_{\p^1}$ be the inclusion associated to some 
$V\in H^0\big(\p^1,T_{\p^1}\otimes\sO_{\p^1}(-1)\big)\setminus \{0\}$. 
Then $\sK$ is 
$V$-equivariant, and so there exists a map $q^*\sO_{\p^1}(1)\into T_Z$ lifting $q^*j$.
It remains to show that $q^*\sO_{\p^1}(1)$ is saturated in $T_Z$.
To prove this, by Lemma~\ \ref{lemma:saturation}, it is enough to show that
$q^*\sO_{\p^1}(1)$ is a subbundle of $T_Z$ in codimension $1$. 
Suppose to the contrary that the map  $q^*\sO_{\p^1}(1)\to T_Z$ 
vanishes along an hypersurface $\Sigma$ in $Z$. Then the composed map 
$q^*\sO_{\p^1}(1)\to T_Z\to q^*T_{\p^1}$ vanishes along $\Sigma$, 
and $\Sigma$ must be a fiber of $q$.
By Lemma \ref{lemma:extending_morphism}, $\sK\simeq\sO_{\p^1}(a)^{\oplus n}$
for some $a\ge 1$, contradicting our assumptions.

\medskip

Case (2). Suppose that $l=1$, $r=2$, and 
$\sE\simeq \sO_{\p^1}(2)\oplus \sO_{\p^1}(a)^{\oplus m}$
for some integer $a \ge 1$.
We set $\sV:=\sO_{\p^1}(-2)$ and fix an isomorphism  $j:\sV^*\simeq T_{\p^1}$.
Then $Z\simeq\p^1\times\p^{m-1}$, and $q^*j$ lifts to a foliation 
$q^*T_{\p^1}\subset T_Z$.

\medskip

Case (3). Suppose that $l=1$, $r= 3$, and
$\sE\simeq \sO_{\p^1}(1)^{\oplus 2}\oplus \sO_{\p^1}(a)^{\oplus m-1}$
for some integer $a \ge 1$.
We set $\sV:=\sO_{\p^1}(-1)^{\oplus 2}$ and  fix an isomorphism  
$j:\det(\sV^*)\simeq T_{\p^1}$.
Then $Z\simeq\p^1\times\p^{m-2}$, and $q^*j$ lifts to a foliation 
$q^*T_{\p^1}\subset T_Z$.

\medskip

Case (4). Suppose that $l \ge 2$, $r=2$, and 
there exists a $V$-equivariant vector bundle $\sK$ on $\p^l$
for some
$V\in H^0(\p^l,T_{\p^l}\otimes\sO_{\p^l}(-1))\setminus \{0\}$
and an exact sequence 
$0\to\sK
\to\sE\to\sO_{\p^l}(1)
\to 0$. 
We set $\sV:=\sO_{\p^l}(-1)$ and 
let $j:\sV^*\simeq \sO_{\p^l}(1)\hookrightarrow T_{\p^l}$ be the inclusion associated to 
$V$. 
Since $\sK$ is $V$-equivariant, there exists a map $q^*\sO_{\p^l}(1)\into T_Z$ lifting $q^*j$.
It remains to show that $q^*\sO_{\p^l}(1)$ is saturated in $T_Z$.
To prove this, by Lemma~\ \ref{lemma:saturation}, it is enough to show that
$q^*\sO_{\p^l}(1)$ is a subbundle of $T_Z$ in codimension $1$. 
Suppose to the contrary that the map  $q^*\sO_{\p^l}(1)\to T_Z$ 
vanishes along an hypersurface $\Sigma$ in $Z$. Then the composed map 
$q^*\sO_{\p^l}(1)\to T_Z\to q^*T_{\p^l}$ vanishes along $\Sigma$, and
$q(\Sigma)$ has codimension $1$ in $\p^l$.
This is saying that 
$\sO_{\p^l}(1)\into T_{\p^l}$ vanishes in codimension 1 in $\p^l$, which is impossible since 
$l\ge 2$.
\end{proof}

\begin{lemma}\label{lemma:saturation}
Let $X$ be a normal variety, and
$\sE\subset\sF$ coherent sheaves of $\sO_X$-modules,  with
$\sE$ locally free and $\sF$ torsion-free. Then $\sE$ is saturated in $\sF$ 
if and only if $\sE$ is a subbundle of $\sF$ in codimension $1$.
\end{lemma}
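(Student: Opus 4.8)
The plan is to prove Lemma~\ref{lemma:saturation}, the statement that for coherent sheaves $\sE\subset\sF$ on a normal variety $X$ with $\sE$ locally free and $\sF$ torsion-free, $\sE$ is saturated in $\sF$ if and only if $\sE$ is a subbundle of $\sF$ in codimension $1$. Recall that $\sE$ being saturated in $\sF$ means $\sF/\sE$ is torsion-free, and $\sE$ being a subbundle in codimension $1$ means there is an open subset $U\subset X$ with $\codim_X(X\setminus U)\geq 2$ such that $\sE|_U$ is a subbundle of $\sF|_U$, i.e.\ the quotient $(\sF/\sE)|_U$ is locally free.

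First I would treat the easy implication. Suppose $\sE$ is a subbundle of $\sF$ in codimension $1$, say over an open set $U$ with complement of codimension $\geq 2$. Let $\sT\subset \sF/\sE$ be the torsion subsheaf. Over $U$, the quotient $(\sF/\sE)|_U$ is locally free, hence torsion-free, so $\sT$ is supported on $X\setminus U$, a set of codimension $\geq 2$. On the other hand, $\sT$, being a subsheaf of the torsion-free sheaf $\sF$ via the inclusion $\sT\subset \sF/\sE$... wait, that is not quite an inclusion into $\sF$; instead I argue directly: the preimage $\sE'$ of $\sT$ in $\sF$ contains $\sE$ with $\sE'/\sE=\sT$ torsion, and $\sE'$ is torsion-free of the same rank as $\sE$. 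Then $\sE'/\sE$ is a torsion sheaf supported in codimension $\geq 2$; but any torsion subsheaf of $\sF$ is zero since $\sF$ is torsion-free, so in fact I only need that $\sT=0$. The cleanest route: a torsion sheaf on a \emph{normal} (hence $S_2$, or at least $S_1$ in codimension $1$) variety cannot be supported in codimension $\geq 2$ unless... no — that is false in general (skyscrapers exist). The correct observation is that $\sT$ is a subsheaf of $\sF/\sE$, and $\sF/\sE$ has no sections supported in codimension $\geq 2$ that are torsion \emph{when $\sE$ is saturated}, which is circular. Let me instead use: since $\sE$ is locally free and $\sF$ torsion-free, the map $\sE\to \sF$ is injective with locally free source, so $\sE$ is saturated iff the double dual map behaves well; concretely, $\sE$ is saturated iff $\sE = \ker(\sF\to (\sF/\sE)/\text{torsion}) = \ker(\sF\to ((\sF/\sE)^{**}))$ generically. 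The honest argument for this direction: the torsion of $\sF/\sE$ is supported in codimension $\geq 2$; but $\sF/\sE$ embeds into $(\sF/\sE)^{**}$ away from codimension... Actually the simplest correct statement is that the torsion subsheaf $\sT$ of $\sF/\sE$ is reflexive-trivial: since $\sF$ is torsion-free and $X$ is normal, $\sF \hookrightarrow \sF^{**}$ and $\sF^{**}$ is reflexive hence $S_2$; lifting, the saturation $\overline\sE$ of $\sE$ in $\sF$ agrees with $\sE$ over $U$, so $\overline\sE/\sE$ is supported in codimension $\geq 2$, and since $\overline\sE$ is torsion-free and $\overline\sE/\sE$ would give a torsion subsheaf of $\sF^{**}$ after... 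I will present this via: $\sE$ locally free $\Rightarrow$ depth considerations show $\overline\sE/\sE$, being a subsheaf of the $S_2$-sheaf $\sF^{**}$ modulo $\sE$ which is locally free, must vanish in codimension $\geq 2$.

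Given the delicacy, I expect the main obstacle to be phrasing this first implication rigorously without hand-waving about depth. The cleanest fix I would actually use: reduce to the local ring $\sO_{X,x}$ at a codimension-$1$ point $x$; there $X$ is regular (normal $\Rightarrow$ regular in codimension $1$), in fact a DVR up to the relevant discrete valuation, and over the DVR every finitely generated torsion-free module is free, so $\sF_x$ and $\sE_x$ are free and $(\sF/\sE)_x$ is free iff $\sE_x$ is saturated iff $\sE_x$ is a direct summand; the hypothesis says $(\sF/\sE)_x$ is free for all codimension-$1$ points $x$, hence $\sF/\sE$ is torsion-free at all codimension-$1$ points; since any torsion sheaf has associated points, and the torsion of $\sF/\sE$ injects into nothing forcing those associated points to be of codimension $\leq 1$ — this last step again needs care. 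I would instead conclude by: the torsion subsheaf $\sT\subset\sF/\sE$ pulls back to a subsheaf $\sE\subset\sE'\subset\sF$ with $\sE'$ torsion-free (as a subsheaf of $\sF$) of the same rank as $\sE$; then $\sE'^{**}$ is reflexive, contains $\sE$, equals $\sE$ in codimension $1$ by hypothesis, and reflexive sheaves are determined by their restriction to any open set with complement of codimension $\geq 2$, whence $\sE'^{**}=\sE^{**}=\sE$; thus $\sE'\subseteq \sE'^{**}=\sE$, so $\sE'=\sE$ and $\sT=0$. For the converse, if $\sE$ is saturated then $\sF/\sE$ is torsion-free, hence locally free on a dense open set whose complement — the non-locally-free locus of a torsion-free sheaf on a normal (hence $S_2$ after replacing by reflexive hull) variety — has codimension $\geq 2$ by \cite[Corollary 1.4]{hartshorne80} applied to $(\sF/\sE)$; over that open set $\sE$ is a subbundle of $\sF$, giving the codimension-$1$ subbundle property. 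I would write this up in the "reflexive hull is determined in codimension one" style, citing \cite{hartshorne80}, since that is the tool already used throughout the paper.
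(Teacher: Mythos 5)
Your final argument is correct and rests on the same two ingredients as the paper's own proof: for ``saturated $\Rightarrow$ subbundle in codimension $1$'' you use that a torsion-free coherent sheaf on a normal variety is locally free at codimension-one points (the local rings there being DVRs), and for the converse you use that a locally free sheaf on a normal variety is normal/reflexive, so that the preimage $\sE'$ of the torsion of $\sF/\sE$, which agrees with $\sE$ outside a closed set of codimension $\geq 2$, satisfies $\sE'\subseteq\sE'^{**}=\sE$ --- the paper phrases this last step by extending a single rational section $s=(fs)/f$ of $\sE$ across the bad locus via Hartogs rather than by comparing $\sE$ with $\sE'^{**}$, but the mechanism is identical. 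The only needed repair is editorial: the submitted text retains several abandoned false starts (the worry about skyscraper torsion, the detour through $S_2$ and reflexive hulls) that must be excised so that only the final, correct paragraph-level argument remains.
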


\begin{proof}
To say that $\sE$ is saturated in $\sF$ is equivalent to saying that $\sF/\sE$ is torsion-free.
To say that $\sE$ is a subbundle of $\sF$ in codimension $1$ is equivalent to saying that 
$\sF/\sE$ is locally free in codimension $1$.
Since $X$ is normal, if $\sF/\sE$ is torsion-free, then it is locally free in codimension $1$.
Conversely, suppose  that $\sF/\sE$ is locally free in codimension $1$, and let us show that 
$\sF/\sE$ is torsion-free.
Let $f$ be a nonzero local section of $\sO_X$ and $s$ a local section of $\sF$
such that $fs$ is a local section of $\sE$. 
Since $\sF$ is torsion-free, $s$ is a rational section of $\sE$.
By assumption, $s$ is regular in codimension $1$.
Since $X$ is normal and $\sE$ is locally free, 
it follows that $s$ is a regular local section of $\sE$.
\end{proof}

\begin{rem}
Lemma \ref{lemma:saturation} fails to be true if $\sF$ is not torsion-free. 
Let $\sF:=\sE\oplus\sT$ where $\sT$ is a torsion sheaf whose support has codimension $\ge 2$ in $X$. 
Then $\sE$ is a subbundle of $\sF$ in codimension $1$ but $\sF/\sE=\sT$ is a torsion sheaf.
\end{rem}

\begin{lemma}\label{lemma:local_freeness}
Let $0\to \sH \to \sF \to \sQ \to 0$ be an exact sequence of coherent sheaves
on a noetherian integral Cohen-Macaulay scheme $X$. 
Suppose that $\sF$ is reflexive,
$\sH$ and $\sQ^{**}$ are locally free, and
$\sQ$ is 
locally free in codimension $2$.
Then $\sF$ and $\sQ$ are locally free.
\end{lemma}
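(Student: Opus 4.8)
The plan is to reduce the statement to the stalks and then run a short computation with depth. Local freeness can be checked stalkwise, so I would fix a point $x\in X$, set $R:=\mathcal{O}_{X,x}$ --- a Cohen--Macaulay local ring of dimension $n$ --- and write $H$, $F$, $Q$ for the $R$-modules $\sH_x$, $\sF_x$, $\sQ_x$. Then $0\to H\to F\to Q\to 0$ is exact with $H$ free, $F$ reflexive, $Q^{**}$ free, and $Q_{\mathfrak{p}}$ free for every prime $\mathfrak{p}\subseteq R$ of height $\le 2$ (this being the meaning of ``locally free in codimension $2$''). The goal becomes: $F$ and $Q$ are free $R$-modules.

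First I would record two depth estimates. On the one hand, a reflexive sheaf over a Cohen--Macaulay scheme satisfies Serre's condition $S_{2}$, so $\operatorname{depth}(F_{\mathfrak{p}})\ge\min(2,\operatorname{ht}\mathfrak{p})$ for every prime $\mathfrak{p}$ --- this is standard, and follows from exhibiting $F=\operatorname{Hom}_{R}(F^{*},R)$ as a second syzygy. On the other hand, the depth lemma applied to $0\to H\to F\to Q\to 0$, together with $\operatorname{depth}(H_{\mathfrak{p}})=\operatorname{ht}\mathfrak{p}$, gives
\[
\operatorname{depth}(Q_{\mathfrak{p}})\ \ge\ \min\!\big(\operatorname{depth}(H_{\mathfrak{p}})-1,\ \operatorname{depth}(F_{\mathfrak{p}})\big)\ =\ \min\!\big(\operatorname{ht}\mathfrak{p}-1,\ 2\big).
\]
In particular $\operatorname{depth}(Q_{\mathfrak{p}})\ge 2$ whenever $\operatorname{ht}\mathfrak{p}\ge 3$, while $\operatorname{depth}(Q_{\mathfrak{p}})\ge 1$ for every nonzero $\mathfrak{p}$ (using the hypothesis when $\operatorname{ht}\mathfrak{p}\le 2$); hence no nonzero prime lies in $\operatorname{Ass}(Q)$, so $Q$ is torsion-free and $Q\hookrightarrow Q^{**}$.

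The heart of the argument is to show $T:=Q^{**}/Q=0$. Since $Q$ is free --- hence reflexive, hence equal to $Q^{**}$ --- at every prime of height $\le 2$, the support of $T$ consists of primes of height $\ge 3$. If $T\ne 0$, choose $\mathfrak{p}\in\operatorname{Ass}(T)$, so that $\operatorname{ht}\mathfrak{p}\ge 3$ and $\operatorname{depth}(T_{\mathfrak{p}})=0$. Localizing $0\to Q\to Q^{**}\to T\to 0$ at $\mathfrak{p}$, where $(Q^{**})_{\mathfrak{p}}$ is free of depth $\operatorname{ht}\mathfrak{p}\ge 2$, the long exact sequence of $\operatorname{Ext}^{\bullet}_{R_{\mathfrak{p}}}(k(\mathfrak{p}),-)$ yields $\operatorname{Ext}^{1}_{R_{\mathfrak{p}}}(k(\mathfrak{p}),Q_{\mathfrak{p}})\cong\operatorname{Hom}_{R_{\mathfrak{p}}}(k(\mathfrak{p}),T_{\mathfrak{p}})\ne 0$, so $\operatorname{depth}(Q_{\mathfrak{p}})\le 1$, contradicting the estimate above. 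Therefore $T=0$, i.e. $\sQ=\sQ^{**}$ is locally free. Finally, once $\sQ$ is locally free, for each $x$ the stalk $\sQ_{x}$ is free hence projective, so the sequence $0\to\sH_{x}\to\sF_{x}\to\sQ_{x}\to 0$ splits and $\sF_{x}\cong\sH_{x}\oplus\sQ_{x}$ is free; hence $\sF$ is locally free as well.

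I do not expect a genuine obstacle here: the proof is pure commutative algebra, and the only slightly non-elementary input is ``reflexive $\Rightarrow S_{2}$ over a Cohen--Macaulay scheme'', which I would either cite or justify in one line. The one point requiring care is the bookkeeping with the depth inequalities and the correct localization of $0\to Q\to Q^{**}\to T\to 0$ at an associated prime of $T$; the rest is routine.
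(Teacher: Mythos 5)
Your proof is correct, but it takes a genuinely different route from the paper's. You work entirely at the stalks: reflexive $\Rightarrow S_2$ over a Cohen--Macaulay base via the second-syzygy presentation $0\to F\to R^b\to R^a$, then the depth lemma applied to $0\to H\to F\to Q\to 0$ forces $\operatorname{depth}(Q_{\mathfrak p})\ge 2$ in height $\ge 3$, which kills any associated prime of $Q^{**}/Q$; once $\sQ=\sQ^{**}$ is locally free the sequence splits stalkwise and $\sF$ is locally free. The paper instead argues globally with the extension class: since $\sH\otimes\sQ^*$ is locally free, hence Cohen--Macaulay, restriction $H^1(X,\sH\otimes\sQ^*)\to H^1(U,(\sH\otimes\sQ^*)|_U)$ is an isomorphism for the open set $U$ with $\codim_X(X\setminus U)\ge 3$ on which $\sQ$ is locally free; the class of $0\to\sH|_U\to\sF|_U\to\sQ|_U\to 0$ therefore extends to a class of an extension $0\to\sH\to\sF'\to\sQ^{**}\to 0$ on all of $X$, so $\sF'$ is locally free and coincides with the reflexive sheaf $\sF$ by \cite[Proposition 1.6]{hartshorne80}. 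So the paper establishes local freeness of $\sF$ first and reads off $\sQ\simeq\sQ^{**}$ afterwards, while you establish $\sQ\simeq\sQ^{**}$ first and deduce $\sF$; your version is more elementary and self-contained (no local-cohomology vanishing for Cohen--Macaulay sheaves, no extension of sections across codimension $\ge 3$), at the cost of more depth bookkeeping, and both use the same essential inputs (Cohen--Macaulayness of $X$, reflexivity of $\sF$, and the codimension hypothesis on $\sQ$) in the same essential way.
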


\begin{proof}
By hypothesis, there is an open subset $U\subset X$, with $\codim_X(X\setminus U)\geq 3$,
such that $\sH|_{U}$ and $\sQ|_{U}$ are locally free.
Moreover $\sH\otimes\sQ^{*}$ is locally free, and hence a Cohen-Macaulay sheaf. 
So we have $H^1(X,\sH\otimes\sQ^{*})\simeq H^1\big(U,({\sH\otimes\sQ^{*}})|_{U}\big)$. 
Therefore, the extension class of
the exact sequence $0\to \sH|_{U} \to \sF|_{U} \to \sQ|_{U} \to 0$ on $U$ yields
an exact sequence $0\to \sH \to \sF' \to \sQ^{**} \to 0$ on $X$ such that 
$\sF'|_{U}\simeq\sF|_{U}$. 
Since $X$ is reduced and both $\sH$ and $\sQ^{**}$ are locally free,
so is $\sF'$.
Since $\sF$ is reflexive, $\sF\simeq\sF'$ by \cite[Proposition 1.6]{hartshorne80}.
\end{proof}

Our next goal is to construct del Pezzo foliations $\sF$ on 
projective space bundles $X\to T$ such that $\sF\subsetneq T_{X/T}$.
These can be viewed as families of del Pezzo foliations on 
projective spaces.

\begin{const}
Let $T$ be a positive dimensional smooth projective variety, 
$\sE$ an ample locally free sheaf of rank $m+1\geq 2$ on $T$, and set $X:=\p_T(\sE)$.
Denote by $\sO_X(1)$ the tautological line bundle on $X$, and
by $\pi:X\to T$ the natural projection.
Let $r$ be an integer such that $2 \le r \le m-1$.

We explain how to construct del Pezzo foliation $\sF$ of rank $r$ on $X$ such that 
$\sF\subsetneq T_{X/T}$.

Suppose that there are  locally free sheaves $\sQ$ and $\sK$ on $T$, of 
rank $r-1\ge 1$ and $m-r+2 \ge 3$, respectively, fitting into 
an exact sequence 
$$
0 \  \to \ \sK \ \to \ \sE \ \to \ \sQ \ \to \ 0.
$$
In particular, $\sQ$ is ample.
Denote by $e\in H^1(T,\sK\otimes\sQ^*)$ the class of this extension.

Set $Z:=\p_T(\sK)\to T$, denote by $\sO_Z(1)$ the tautological line bundle on $Z$, and
by $q:Z\to T$ the natural projection.
Recall the pushed-forwarded Euler's sequence: 
\begin{equation}\label{eq:euler}
0 \ \to \ \sO_T \ \to \ \sE\hspace{-0.05cm}\textit{nd}_{\sO_T}(\sK) \ \to \ q_*T_{Z/T} \ \to \ 0.
\end{equation}

Let $c:Y\to X$ be the blow up of
$X$ along $L:=\p_T(\sQ)$ where the inclusion $L\subset X$ is induced by the surjection
$\sE\twoheadrightarrow \sQ$.
Let $E\subset Y$ be the exceptional divisor of $c$. By Leray's spectral sequence,
there is a natural isomorphism
$H^1(Z,\sO_Z(1)\otimes q^*\sQ^*)\simeq H^1(T,\sK\otimes \sQ^*)$. 
Let $\sH$ be the 
vector bundle on $Z$ associated to the image of $e$ in $H^1(Z,\sO_Z(1)\otimes q^*\sQ^*)$.
Then $\sH$ has rank $r$, $q_*\sH=\sE$, and 
there is a commutative diagram of exact sequences:

\centerline{
\xymatrix{
0 \ar[r] 
 &  q^*\sK  \ar[r]\ar[d] 
 &  q^*\sE \ar[r]\ar[d] 
 &  q^*\sQ \ar@{=}[d]\ar[r] 
 & 0 \\
0 \ar[r] 
 &  \sO_Z(1) \ar[r] 
 &  \sH \ar[r] 
 &  q^*\sQ \ar[r] & 0.
}
}

Denote by $g:\p_Z(\sH) \to Z$ the natural projection, and by 
$\sO_{\p_Z(\sH)}(1)$ the the tautological line bundle. 
Observe that there 
is an isomorphism $Y\simeq \p_Z(\sH)$ 
that fits into the commutative diagram

\centerline{
\xymatrix{
& Y\simeq\p_Z(\sH)\ar[dl]_{c}\ar[dr]^{g}\ar[dd] & \\
X=\p_T(\sE) \ar[dr]_{\pi} & & Z=\p_T(\sK), \ar[dl]^{q}\\
& T &
}
}
\noindent 
where $c$ is induced by the surjection
$$
c^*\sE=g^*(q^*\sE) \twoheadrightarrow g^*\sH\twoheadrightarrow \sO_{\p_Z(\sH)}(1).
$$

In order to construct a del Pezzo foliation $\sF$ on $X$ we make the following assumptions.
\begin{enumerate}
	\item There is an injective map 
	$\phi:\det(\sQ)\hookrightarrow \sE\hspace{-0.05cm}\textit{nd}_{\sO_T}(\sK)$ 
	(equivalently, 
	$h^0\big(T,\det(\sQ)^*\otimes \sE\hspace{-0.05cm}\textit{nd}_{\sO_T}(\sK)\big)\neq 0$).
	\item The inclusion $q^*\det(\sQ)\subset T_{Z/T}\subset T_Z$ induced by $\varphi$ 
	via \eqref{eq:euler} defines a foliation on $Z$.
	By Lemma~\ref{lemma:saturation} this is equivalent to requiring that the map 
	$q^*\det(\sQ)\into T_{Z/T}$ is nonzero in codimension $1$.		
\end{enumerate}

We then set 
$\sF_Y:=dg^{-1}(q^*\det(\sQ))\subset T_Y$, and $\sF:=df(\sF_Y)\subset T_{X/T}\subset T_X$. 

For any $t\in T$, let $v_t\in End_{\mathbb C}(\sK_t)$ be an endomorphism induced by $\varphi$
at $t\in T$.
Then the foliation on $X_t\simeq\p^m$ induced by $\sF$ is
the linear pullback of a foliation $\sG_t$ on $Z_t\simeq \p^{m-r+1}$ induced by the global holomorphic vector field $\vec v_t$ associated to $v_t$. 
One can prove that
the closure of a general leaf of $\sG_t$ 
is a rational curve $C$ meeting the singular locus of $\sG_t$ at a single point.
Moreover, either this point is a cusp on $C$, or $\vec v_t$ 
viewed as a local vector field on $C$ vanishes with multiplicity at least $2$.

\medskip 

Once assumption (1) above is fulfilled, we investigate when assumption (2) holds.

First we claim that  $v_t$ is a nilpotent endomorphism for any $t\in T$. 
Indeed,
the composite map
$$
\begin{array}{ccccc}
\det(\sQ)^{\otimes k} & \longrightarrow & 
\sE\hspace{-0.05cm}\textit{nd}_{\sO_T}(\sK)^{\otimes k} & \longrightarrow 
& \sO_T \\
 \alpha_1\otimes\cdots\otimes\alpha_k & \longmapsto & 
 \phi(\alpha_1)\otimes\cdots\otimes\phi(\alpha_k) & \longmapsto & 
\textup{Tr}(\phi(\alpha_1)\circ\cdots\circ\phi(\alpha_k)) 
\end{array}
$$
is zero since $\det(\sQ)$ is ample. Thus
$\textup{Tr}(\underbrace{v_t\circ\cdots\circ v_t}_{k \text{ times}})=0$ for any $k\ge 1$,
showing that $v_t$ is  nilpotent.

Notice that $q^*\det(\sQ)$ is saturated in $T_{Z/T}$  if and only if the following holds.
For a general point $t\in T$, $v_t$ has rank $\ge 2$, and 
there exists an open subset $T_0\subset T$, with $\codim_T(T\setminus T_0)\ge 2$, such that
$v_t$ has rank $\ge 1$ for any point $t\in T_0$.

Finally, observe that the assumptions are fulfilled 
if $\sK$ contains $\det(\sQ)\oplus\det(\sQ)^{\otimes 2}\oplus\det(\sQ)^{\otimes 3}$ as a direct summand, and $\det(\sQ)\hookrightarrow
\sE\hspace{-0.05cm}\textit{nd}_{\sO_T}\big(\det(\sQ)\oplus\det(\sQ)^{\otimes 2}\oplus\det(\sQ)^{\otimes 3}\big)$ is associated to 

$$
\begin{pmatrix}
0 & 0 & 0\\
1 & 0 & 0\\
0 & 1 & 0
\end{pmatrix}
.$$

\medskip

\end{const}

We end this section by addressing Fano Pfaff fields on projective space bundles.

\begin{prop}\label{lemma:projective_space_bundle3}
Let $T$ be a smooth projective variety, 
$\sE$ a locally free sheaf of rank $m+1\geq 2$ on $T$, and set $X:=\p_T(\sE)$.
Denote by $\sO_X(1)$ the tautological line bundle on $X$, and
by $\pi:X\to T$ the natural projection.
Let $r\ge 2$ be an integer. 
\begin{enumerate}
\item If $r\ge m+3$ then $h^0\big(X,\wedge^{r}T_{X}(-r+1)\big)=0$.
\item If $h^0\big(X,\wedge^{r}T_{X}(-r+1)\big)\neq 0$, then 
$h^0\big(X,\wedge^{r-s}T_{X/T}(-r+1)\otimes \pi^*(\wedge^s T_T)\big)\neq 0$
for some $s\in\{0,1,2\}$. If $h^0\big(X,\wedge^{r-2}T_{X/T}(-r+1)\otimes \pi^*(\wedge^2 T_T)\big)\neq 0$ then $r=m+2\ge 3$.
\item If $\sE$ is an ample vector bundle and 
$h^0\big(X,\wedge^{r-2}T_{X/T}(-r+1)\otimes \pi^*(\wedge^2 T_T)\big)\neq 0$,
then either
$T\simeq\p^1\times\p^1$ and $r-2=m=1$, or
$T\simeq \p^2$ and $r-2=m=2$, or
$T\simeq \p^l$ ($l\ge 2$) and $r-2=m=1$. 
\item If $\sE$ is an ample vector bundle, $l=\dim(T)\ge 1$, 
$h^0\big(X,\wedge^{r-1}T_{X/T}(-r+1)\otimes \pi^*T_T\big)\neq 0$ and
$\rho(T)=1$, 
then $T\simeq\p^{l}$ and $r\le 3$.
\end{enumerate}
\end{prop}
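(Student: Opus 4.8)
The plan is to prove the four statements in order, exploiting the pushed-forward Euler sequence $0\to\sO_X(-1)\to\pi^*\sE\to T_{X/T}(-1)\to 0$ (equivalently $0\to\sO_X\to\pi^*\sE(1)\to T_{X/T}\to 0$) together with Bott's formulae along the fibers $\p^m$ and the relative-to-base exact sequence $0\to T_{X/T}\to T_X\to\pi^*T_T\to 0$.

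\textbf{Statement (1) and (2).} Taking exterior powers of $0\to T_{X/T}\to T_X\to\pi^*T_T\to 0$ gives a filtration of $\wedge^r T_X(-r+1)$ with graded pieces $\wedge^{r-s}T_{X/T}(-r+1)\otimes\pi^*(\wedge^s T_T)$ for $0\le s\le l$. Pushing forward to $T$ and using that $\wedge^s T_T=0$ for $s>l$, I would first observe that $R^j\pi_*\big(\wedge^{r-s}T_{X/T}(-r+1)\big)$ can be computed fiberwise via Bott's formulae on $\p^m$: one uses the Euler sequence to show $\wedge^{k}T_{X/T}$ is a quotient of $\wedge^{k}(\pi^*\sE(1))=\pi^*(\wedge^k\sE)(k)$, hence $\wedge^{k}T_{X/T}(-r+1)$ restricted to a fiber is $\wedge^{k}T_{\p^m}(-r+1)$, and $h^0(\p^m,\wedge^k T_{\p^m}(-r+1))=0$ once $k<r-1$ by Bott (the relevant binomial $\binom{(-r+1)+m+1+k}{\cdots}\binom{\cdots}{m-k}$ vanishes). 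This forces $s\ge 2$ for a nonzero section, and since $s\le l$ and $s\le 2$ are needed in the borderline count — in particular $\wedge^{r-2}T_{X/T}(-r+1)$ having a fiberwise section forces $r-2=m$, i.e. $r=m+2$. That simultaneously yields (1): if $r\ge m+3$ then even the $s=2$ piece has $k=r-2\ge m+1>m$, so $\wedge^k T_{X/T}=0$ on fibers and everything vanishes. For (2) one then pushes the global section through Leray: $H^0(X,\wedge^r T_X(-r+1))\hookrightarrow\bigoplus_s H^0\big(T,\pi_*(\wedge^{r-s}T_{X/T}(-r+1))\otimes\wedge^s T_T\big)$ after checking the relevant $R^{\ge1}\pi_*$ pieces do not interfere, which again follows from Bott's vanishing along the fibers.

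\textbf{Statement (3).} Here $r=m+2$ and $k=r-2=m$, so $\wedge^{m}T_{X/T}(-r+1)\simeq T_{X/T}^*\otimes\det(T_{X/T})\otimes\sO_X(-m-1)$; using $\det T_{X/T}\simeq\sO_X(m+1)\otimes\pi^*\det\sE^*$ and $T_{X/T}^*\simeq\Omega^1_{X/T}$, a direct computation gives $\pi_*\big(\wedge^{m}T_{X/T}(-m-1)\big)\simeq\sE\otimes\det\sE^*\otimes\text{(something)}$; more precisely from the Euler sequence $\pi_*(T_{X/T}(-1))\simeq\sE^*$, and iterating one finds $\pi_*\big(\wedge^m T_{X/T}(-m-1)\big)\simeq\sE^{**}\otimes\det(\sE)^{-1}\cdot(\cdots)$ — I will pin this down so that the nonvanishing of $h^0(X,\wedge^{m}T_{X/T}(-m-1)\otimes\pi^*\wedge^2 T_T)$ becomes a statement about $h^0(T,\sE\otimes\det\sE^*\otimes\wedge^2 T_T)\ne0$ (up to twist). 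Since $\sE$ is ample, $\sE\otimes\det\sE^*$ is ``small'' and one plays this against positivity of $T_T$: a nonzero section forces $\wedge^2 T_T$ to be quite positive, and combined with $\rho$-type constraints and the fact that $m\le$ small, one is pushed to the three listed cases $(T,r-2,m)\in\{(\p^1\times\p^1,1,1),(\p^2,2,2),(\p^l,1,1)\}$. The argument here parallels the case-(b) analysis in the proof of Proposition~\ref{prop:classification_bundle}, invoking Bott's formulae on $T$ when $T=\p^l$ and \cite[Theorem 3.2.1]{OSS} to pin down $\sE$.

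\textbf{Statement (4).} Now the relevant piece is $s=1$, $k=r-1$, so $\wedge^{r-1}T_{X/T}(-r+1)$; on a fiber this is $\wedge^{r-1}T_{\p^m}(-r+1)$, whose $H^0$ is nonzero iff $r-1\le m$, and pushing forward one gets, via the Euler sequence, $\pi_*\big(\wedge^{r-1}T_{X/T}(-r+1)\big)\simeq\wedge^{r-1}\sE^*\otimes(\cdots)$ — concretely a subsheaf/quotient built from $\wedge^{r-1}\sE^*$. So nonvanishing of $h^0(X,\wedge^{r-1}T_{X/T}(-r+1)\otimes\pi^*T_T)$ yields an inclusion $\det(\sV^*)\hookrightarrow T_T$ for some rank-$(r-1)$ quotient $\sE\twoheadrightarrow\sV^*$ with $\det\sV^*=\det(\text{sub of }\sE)^{*}$ appropriately positive. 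Since $\rho(T)=1$ and $T_T$ admits a nonzero twisted-down vector field (the inclusion $\det\sV^*\hookrightarrow T_T$ with $\det\sV^*$ ample because $\sE$ is ample), I would invoke Wahl's theorem \cite{wahl83}: a projective manifold carrying an ample line subsheaf of its tangent bundle is $\p^l$. (If $\det\sV^*$ fails to be ample one uses $\rho(T)=1$ to force it still ample, or falls into $l=1$.) Once $T\simeq\p^l$, Bott's formulae on $\p^l$ bound the positivity: $\det\sV^*\hookrightarrow T_{\p^l}\simeq$ has sections only in low twists, and $\det\sV^*\simeq\sO_{\p^l}(k)$ with $k\ge r-1$ (since $\sV\subset\sE^*$ with $\sE$ ample of rank $\ge2$, the rank-$(r-1)$ subbundle has $\det$ of degree $\le -(r-1)$), forcing $r-1\le 2$, i.e. $r\le3$.

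\textbf{Main obstacle.} The computational heart — and the step I expect to be most delicate — is the precise identification of $\pi_*\big(\wedge^{k}T_{X/T}(-r+1)\big)$ as an explicit sheaf on $T$ built from $\sE$ (for $k=m$ and $k=r-1$), together with checking that the higher direct images $R^{\ge 1}\pi_*$ of the other graded pieces vanish so that the Leray filtration of $H^0(X,\wedge^r T_X(-r+1))$ really is controlled by the three summands $s\in\{0,1,2\}$ claimed in (2). All of this is ``just'' Bott's formulae fiber by fiber plus the Euler sequence, but one must be careful with the twists and with which binomials vanish; the borderline cases $r=m+2$ and $r=m+1$ are exactly where a naive estimate is not enough and the exterior-power Euler sequence $0\to\wedge^{k}(\pi^*\sE)(k-?)\to\cdots$ must be resolved term by term.
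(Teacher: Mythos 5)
Your skeleton matches the paper's: the filtration of $\wedge^r T_X(-r+1)$ induced by $0\to T_{X/T}\to T_X\to\pi^*T_T\to 0$, fiberwise Bott vanishing to kill the graded pieces with $s\ge 3$, and identification of the surviving direct images. Parts (1) and (2) are essentially the paper's argument, up to two slips: ``this forces $s\ge 2$'' should read ``$s\le 2$'', and your blanket claim that $h^0(\p^m,\wedge^k T_{\p^m}(-r+1))=0$ whenever $k<r-1$ is false precisely in the borderline case $k=m$ and $-r+1=-m-1$, where $\wedge^{m}T_{\p^m}(-m-1)\simeq\sO_{\p^m}$ has a section --- this is exactly the $s=2$ case that you then correctly isolate as $r=m+2$. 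Also no Leray spectral sequence (and no $R^{\ge 1}\pi_*$ check) is needed for (2): a nonzero section of a filtered sheaf already has nonzero image in some graded quotient.

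The genuine gaps are in (3) and (4). In (3) the identification of the relevant sheaf is wrong: since $\wedge^{m}T_{X/T}(-m-1)=\det(T_{X/T})\otimes\sO_X(-m-1)\simeq\pi^*\det(\sE^*)$, the hypothesis is exactly $h^0(T,\wedge^2 T_T\otimes\det(\sE^*))\neq 0$, not a statement about $\sE\otimes\det(\sE^*)$. More importantly, passing from this to the list $\p^1\times\p^1$, $\p^2$, $\p^l$ is not a matter of ``playing positivity against $\rho$-type constraints'': it requires the classification of projective manifolds $T$ with $h^0(T,\wedge^2T_T\otimes\sL^*)\neq 0$ for $\sL$ ample, which the paper imports from \cite{druel_paris}; your proposal neither proves nor cites such a result, so the heart of (3) is missing. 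In (4), after identifying $\pi_*\big(\wedge^{r-1}T_{X/T}(-r+1)\big)\simeq\wedge^{r-1}\sE^*$ (which does need the vanishing of $\pi_*$ and $R^1\pi_*$ of $\wedge^{r-2}T_{X/T}(-r+1)$, available here because $r-1\le m$), you assert that a nonzero section of $\wedge^{r-1}\sE^*\otimes T_T$ yields an inclusion $\det(\sV^*)\hookrightarrow T_T$ for a rank-$(r-1)$ quotient $\sE\twoheadrightarrow\sV^*$. A nonzero map $\wedge^{r-1}\sE\to T_T$ does not factor through the determinant of a quotient in general, so this step is unjustified, and Wahl's theorem then has nothing to apply to: it needs an ample invertible subsheaf of $T_T$, and the saturated image of your map may have rank $\ge 2$. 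The paper instead applies \cite[Corollary 4.3]{aprodu_kebekus_peternell08} directly to $H^0(T,\wedge^{r-1}\sE^*\otimes T_T)\neq 0$ to get $T\simeq\p^l$, and then bounds $r$ by restricting the nonzero map $\wedge^{r-1}\sE\to T_{\p^l}$ to a general line and comparing splitting types, giving $r-1\le a_0+\cdots+a_{r-2}\le 2$; your final inequality is the right one, but as written it rests on the unproven quotient structure.
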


\begin{proof}
The short exact sequence
$$0\to T_{X/T} \to T_X \to \pi^*T_T \to 0$$
yields a filtration 
$$\wedge^rT_X=F_0\supset F_1\supset\cdots\supset F_{r+1}=0$$
such that
$$F_i/F_{i+1}\simeq\wedge^iT_{X/T}\otimes \pi^* (\wedge^{r-i}T_T).$$
By Bott's formulae, $h^0\big(\p^m,\wedge^{i} T_{\p^m}(-r+1)\big)=0$
if either $0\le i \le r-3$ and $r-2 \le m$, or $i=r-2$ and $r-2<m$. 
So in these cases we have 
$h^0\big(X,(F_i/F_{i+1})(-r+1)\big)=0$,
proving (1) and (2). 

\medskip

From now on suppose that $\sE$ is an ample vector bundle.
If
$h^0\big(X,\wedge^{r-2}T_{X/T}(-r+1)\otimes \pi^*(\wedge^2 T_T)\big)\neq 0$,
then $r-2=m \ge 1$, $r\ge 3$, and $\wedge^{r-2}T_{X/T}(-r+1)\simeq \pi^*\det(\sE^*)$.
Hence $h^0\big(T,\wedge^2 T_T\otimes \det(\sE^*)\big)\neq 0$. 
By \cite{druel_paris}, 
either
$T\simeq\p^1\times\p^1$ and $r-2=m=1$, or
$T\simeq \p^2$ and $r-2=m=2$, or
$T\simeq \p^l$ ($l\ge 2$) and $r-2=m=1$, proving (3).

\medskip

Now suppose that $\rho(T)=1$, and $h^0\big(X,\wedge^{r-1}T_{X/T}(-r+1)\otimes \pi^*T_T\big)\neq 0$.
Euler's sequence
$$
0 \ \to \ \sO_X \ \to \ \pi^*\sE^*(1) \ \to \ T_{X/T} \ \to \ 0
$$
induces an exact sequence 
$$
0 \ \to \ \wedge^{r-2}T_{X/T}(-r+1) \ \to \
\wedge^{r-1}\pi^*(\sE^*)
\ \to \ \wedge^{r-1}T_{X/T}(-r+1) 
\ \to \ 0.
$$
By Bott's formulae, $h^0\big(\p^m,\wedge^{r-2} T_{\p^m}(-r+1)\big)=0$ and 
$h^1\big(\p^m,\wedge^{r-2} T_{\p^m}(-r+1)\big)=0$. 
Hence $\pi_*\big(\wedge^{r-2}T_{X/T}(-r+1)\big)=0$
and
$R^1\pi_*\big(\wedge^{r-2}T_{X/T}(-r+1)\big)=0$. 
Thus, by pushing forward by $\pi$ the above exact sequence, we conclude that
$\wedge^{r-1}\sE^*\simeq \pi_*\big(\wedge^{r-1}T_{X/T}(-r+1)\big)$. 
The projection formula then yields an isomorphism 
$$
H^0(T,\wedge^{r-1}\sE^*\otimes T_T)\simeq
H^0\big(X,\wedge^{r-1}T_{X/T}(-r+1)\otimes p^*T_T\big)\neq \{0\}.
$$
By \cite[Corollary 4.3]{aprodu_kebekus_peternell08}, we must have
$T\simeq\p^{l}$.

A nonzero section of $\wedge^{r-1}\sE^*\otimes T_{\p^l}$ yields a nonzero map 
$\alpha:\wedge^{r-1}\sE\to T_{\p^l}$.
Let $\ell\subset \p^l$ be a general line, and write 
$\sE|_{\ell}\simeq \sO_{\p^1}(a_0)\oplus\cdots\oplus\sO_{\p^1}(a_m)$, with
$1\le a_0\le \cdots\le a_m$.
Then $\alpha$ induces a nonzero map 
$$
\wedge^{r-1}\big(\sO_{\p^1}(a_0)\oplus\cdots\oplus\sO_{\p^1}(a_m)\big)
\to \sO_{\p^1}(2)\oplus\sO_{\p^1}(1)^{\oplus l-1}.
$$ 
Thus $r-1\le a_0+\cdots +a_{r-2}\le 2$, proving $(4)$.
\end{proof}

\bibliographystyle{amsalpha}
\bibliography{foliation}

\end{document}